\newtheorem{thm}{Theorem}[section]
\newtheorem{cor}[thm]{Corollary}
\newtheorem{lem}[thm]{Lemma}
\newtheorem{prop}[thm]{Proposition}
\theoremstyle{definition}
\newtheorem{defn}[thm]{Definition}
\newtheorem{conj}{Conjecture} 
\newtheorem{ex}[thm]{Examples}
\newtheorem{example}[thm]{Example}
\theoremstyle{remark}
\newtheorem{rem}[thm]{Remark}
\numberwithin{equation}{section}
\newcommand{\Z}{\mathbb Z}
\newcommand{\C}{\mathbb C}
\newcommand{\R}{\mathbb R}
\newcommand{\N}{\mathbb N}
\newcommand{\Pro}{\mathbb P}
\newcommand{\gr}{\mathrm{gr}}
\newcommand{\Ext}{\mathrm{Ext}}
\newcommand{\csmb}{\mathrm{cmb}}
\font \rus= wncyr10
\newcommand{\sha}{\, \hbox{\rus x} \,}
\newcommand{\MT}{\mathcal{MT}}
\newcommand{\an}{\mathrm{an}}
\newcommand{\Isom}{\mathrm{Isom}}
\newcommand{\ev}{\mathrm{ev}}
\newcommand{\MM}{\mathcal{MM}}
\newcommand{\zetam}{\zeta^{ \mathfrak{m}}}
\newcommand{\Q}{\mathbb Q}
\newcommand{\Li}{\mathrm{Li}}
\newcommand{\Lo}{\mathcal{L}}
\newcommand{\U}{\mathcal{U}}
\newcommand{\s}{\mathbf{s}}
\newcommand{\To}{\longrightarrow}
\newcommand{\A}{\mathbb{A}}
\newcommand{\G}{\mathcal{G}}
\newcommand{\tone}{\overset{\rightarrow}{1}\!}
\newcommand{\Aut}{Aut}
\newcommand{\smb}{\mathrm{smb}}
\newcommand{\dR}{\mathfrak{dr}}
\newcommand{\Or}{\mathcal{O}}
\newcommand{\V}{\mathcal{V}}
\newcommand{\BB}{\mathbb{B}}
\newcommand{\dec}{\mathrm{dec}}
\newcommand{\mm}{\mathfrak{m} }
\newcommand{\HH}{\mathcal{H} }
\newcommand{\Lef}{\mathbb{L} }
\newcommand{\Ao}{\mathcal{A} }
\newcommand{\GG}{\mathbb{G} }
\newcommand{\id}{\mathrm{id} }
\newcommand{\T}{\mathcal{T} }
\newcommand{\X}{ X}
\newcommand{\Y}{ Y}
\newcommand{\Lie}{\mathrm{Lie}\,}
\newcommand{\LL}{\mathbb{L}}
\newcommand{\tr}{\mathrm{tr}}
\newcommand{\per}{\mathrm{per}}
\newcommand{\uu}{\mathfrak{u}}
\newcommand{\Spec}{\mathrm{Spec} \,}
\newcommand{\Pe}{\mathcal{P}}
\newcommand{\comp}{\mathrm{comp}}
\newcommand{\Hom}{\mathrm{Hom}}
\begin{document}
\author{Francis Brown}
\begin{title}[Notes on motivic periods]{Notes on motivic periods}\end{title}
\maketitle

\begin{abstract} The second part of a set of notes based on lectures given at the IHES in 2015 on Feynman amplitudes and motivic periods. 
\end{abstract} 

These notes started out as an appendix to \cite{Cosmic}.
The  aim is merely to provide some basic definitions and  tools to describe a
 certain class of numbers and functions defined  by integrals of algebraic forms over algebraic domains.

\section{Introduction}
A period, according to an elementary definition of Kontsevich and Zagier, is a complex number whose real and imaginary parts are  given by an
 integral of a rational function over a domain defined by polynomial inequalities \cite{KoZa}.
 
 An example is the number 
 $$\pi = \int_{x^2+y^2\leq 1 } dx dy  \ .$$
 The number $\pi$ is ubiquitous and clearly deserves a name of its own. In these notes, we seek to address the problem of how to   describe general periods,  and families of periods depending on parameters. 
 \vspace{0.05in}
 
 Following Grothendieck, periods can  be viewed as the coefficients of a comparison isomorphism between two cohomology theories. 
For simplicity, consider a period 
 $I=\int_{\sigma} \omega$, where $\sigma$ is a closed cycle representing  an element in the Betti homology $H_n(X(\C);\Q)$   of a smooth affine algebraic variety $X$ over $\Q$, 
 and $\omega$  is a regular differential form over $\Q$ representing an  element in the  algebraic de Rham  cohomology $H^n_{dR}(X;\Q)$. The Grothendieck-de Rham comparison isomorphism:
 $$\mathrm{comp}: H^n_{dR}(X;\Q) \otimes_{\Q} \C \overset{\sim}{\To} H^n(X(\C);\Q)\otimes_{\Q} \C  $$
 is induced by integration of algebraic  differential forms over closed cycles. 
   This data can be represented  in a  category $\T$ of triples $(V_B, V_{dR}, c)$ where $V_B$, $V_{dR}$ are finite-dimensional vector spaces over $\Q$, and
$c:V_{dR}\otimes_{\Q} \C \overset{\sim}{\rightarrow} V_{B} \otimes_{\Q} \C$ is  an isomorphism.  The  period integral can be encoded by algebraic data:   an object
$( H^n(X(\C);\Q), H^n_{dR}(X;\Q),\mathrm{comp})$ in the category $\T$, together with a class $[\sigma]$, an element of the dual of the first vector space,  and $[\omega]$,
an element of the second. 
Define a  space of periods $\Pe^{\mm}_{\T}$ of $\T$ to be  the $\Q$-vector space spanned by symbols 
\begin{equation} \label{introtriples} 
((V_B, V_{dR}, c) , \sigma, \omega)\quad  \hbox{ where } \sigma\in V_B^{\vee}\ , \  \omega \in V_{dR} 
\end{equation} 
modulo a certain equivalence relation (linearity in $\sigma$, $\omega$, and functoriality with respect to morphisms in $\T$), which reflects  the fact that  periods can have different integral representations. The space $\Pe^{\mm}_{\T}$ forms  a  ring, and   is  equipped with   a homomorphism called the period map
 $$\per : \Pe^{\mm}_{\T} \To \C\ ,$$
 which sends the class of $(\ref{introtriples})$ to $\sigma(c(\omega))$. In this way, we obtain an element $I^{\mm}$ in the ring $\Pe^{\mm}_{\T}$ whose
 period $\per(I^{\mm})=I$ is the  integral we started  off with. 
 
 The crucial point   is that $\T$ is a Tannakian category, which automatically endows $\Pe_{\T}^{\mm}$ with the action of  a group\footnote{in fact, two groups, one for each fiber functor Betti or de Rham.}, whose action on elements such as $I^{\mm}$ is a prototype for  a `Galois group of periods'. 
 In more classical language, $\Pe^{\mm}_{\T}$ is simply the affine ring of tensor isomorphisms from the de Rham to Betti fiber functors on $\T$. 
  The idea of a `Galois theory of periods' has its origins in Grothendieck's Tannakian philosophy of mixed motives,  and has been developed by Nori, Kontsevich, Andr\'e, and most recently Ayoub, Huber and M\"uller-Stach.
  The more common, and most sophisticated, approach  to this subject  involves replacing $\T$ with a suitable category of mixed motives. Several different approaches are possible. In this context, Grothendieck's period conjecture states that the period map is injective.

 The naive category $\T$ defined above is the simplest possible framework in which  one can  set up a working Galois theory of periods. However, much is gained  by adding just a little more; namely the requirement that $V_B, V_{dR}$ are equipped with filtrations  forming a mixed Hodge structure.  This leads to a  category $\HH$ of triples $(V_B, V_{dR}, c)$ carrying some extra data  (Hodge  and weight filtrations, and a  real Frobenius involution, which encodes the action of complex conjugation).  This category  was first introduced by Deligne who proved that it is Tannakian. 
The upshot is that one obtains a `ring of $\HH$-periods' $\Pe^{\mm}_{\HH}$ defined entirely  in terms of linear algebra which encapsulates many fundamental features of periods. A `motivic' period, for us, is then an element of $\Pe^{\mm}_{\HH}$  that comes from the cohomology of an algebraic variety in a specific way. 
We shall use the adjective `motivic'  for such a period, although much of this paper is in fact Hodge-theoretic.  The ring $\Pe^{\mm}_{\HH}$ has the universal property that the periods of any reasonable category of mixed motives  admitting Betti and de Rham realisations will factor through it. \vspace{0.05in}

These notes  explore some simple consequences of this  general notion of $\HH$-period, and explain how to compute using these objects.
For this, one is obliged to use the language of Hopf algebras and matrix coefficients,  since the fundamental objects are not the (motivic) Galois groups themselves
but their  affine rings.  This  elementary formalism already  enables one to  attach a   panoply of invariants to an $\HH$-period, such as its weight, rank, dimension, Hodge  polynomial, and more  elaborate notions such as its single-valued versions, unipotency filtration and  Galois groups. A glossary of non-standard terms is given in  \S\ref{sectGlossary}. Much of this work is motivated by applications to physics, where some of these concepts  (such as the notion of `transcendental weight') have  already taken root and have several applications, and we felt there was a need to place these notions in a rigorous context.
\vspace{0.05in}

One key point, that must be mentioned from the outset,  is that all the concepts in these notes  translate immediately into a suitable Tannakian  subcategory of mixed motives over the rationals $\MM_{\Q}$, whenever it is  defined. Any reasonable candidate for such a category has Betti and de Rham realisations, so we obtain a map of rings of periods (rings  $\Pe^{\mm}_{\bullet}=\Or(\mathrm{Isom}^{\otimes}_{ \bullet} (\omega_{dR}, \omega_B))$, with $\bullet = \MM(\Q), \HH$)
\begin{equation} \label{introPemtoH}
\Pe^{\mm}_{\MM_{\Q}} \To \Pe^{\mm}_{\HH}\ .
\end{equation} 
All the constructions in this paper can be pulled  back to the ring on the left   without  any difficulty. Possible choices of categories include  Nori's category of motives, or  the abelian category of mixed Tate motives over number fields  \cite{LevineTate}.
If one wants to prove  \emph{independence} of periods, then  it is enough to work 
 in the elementary category $\HH$, and  defining invariants of  $\Pe^{\mm}_{\HH}$  provides tools to do precisely that.\footnote{the relations should come through the back door as a consequence of bounds on algebraic $K$-theory  and the Tannakian formalism.  The theory of multiple zeta values provides  many examples of relations between  periods which can be proved
 by analytic methods but  for which a proof using algebraic correspondences are not presently known. }

Another important point is that if the Hodge realisation $\MM_{\Q} \rightarrow \HH$ is fully faithful, as one  hopes, then $(\ref{introPemtoH})$
is injective and the action of the motivic Galois group of $\MM_{\Q}$ is already  correctly calculated by the action of the `elementary' Galois group of $\HH$. This is the case for mixed Tate motives over number fields, so we can identify motivic periods in $\Pe^{\mm}_{\MT(
\Q)}$, for example,  with their image in $\Pe^{\mm}_{\HH}$ with impunity.
For  these reasons, we choose to work unconditionally  in $\HH$,  whilst waiting for the dust to settle on the final definition of $\MM_{\Q}$. 

\subsection{Contents}  In \S\ref{sectGeneralities}  we gather some properties of Tannakian categories, matrix coefficients and unipotent algebraic groups 
needed for the rest of the notes. This section, which is one of the most technical,  can be referred to when needed. If the reader is only interested in periods over $\Q$, then \S\ref{sectTannakianCase}  can be  simplified by taking the rings  $B_1,B_2,k$ to be equal to $\Q$. In \S\ref{sectMotperQ}  we attach some basic invariants to elements of $\Pe^{\mm}_{\HH}$. Section \ref{sectFurther} defines further concepts including single-valued versions of de Rham $\HH$-periods generalising the single-valued multiple zeta values of \cite{SVMP}, and a certain projection map, which can be used, for example, to   infer results about $p$-adic periods of mixed Tate motives from their complex periods.
Section \ref{sectExamples} offers some  basic examples and can be read in parallel with the previous sections for illustration. In \S\ref{sectClassif}, we study  a decomposition map which enables us to break up an arbitrary $\HH$-period into elementary pieces.  
It takes the form of a canonical isomorphism
$$\Phi: \gr^{C}_{\bullet }\Pe^{\mm}_{\HH} \overset{\sim}{\To} \Pe^{\mm}_{\HH^{ss}} \otimes_{\Q} T^c(H)$$
where $\Pe^{\mm}_{\HH^{ss}}$ is the ring of pure or semi-simple periods;  $T^c$ is  the tensor  coalgebra (or shuffle algebra) graded by length of tensors;  $H$ is a certain  explicitly-defined 
vector space which is a direct sum of pure Hodge structures; and $C$ (for coradical) is a certain filtration on $\HH$-periods by unipotency degree.   

\begin{example} The map $\Phi$  is a  generalisation to all periods of the `highest length' part of the map $\phi$ of \cite{MZVdecomp},  which assigns to any motivic multiple zeta value an element of a shuffle algebra on certain symbols. 
  For example,
   $$\Phi (\zetam(2n+1)) = 1\otimes f_{2n+1}  \qquad \hbox{ for all } n\geq 1$$  where $f_{2n+1} \in H$ are certain elements which span a copy of $\Q(-2n-1)$.  Let $\Lef^{\mm}$ denote the motivic period corresponding to $2\pi i$. Since it is  the period of a pure object, it satisfies $\Phi( \Lef^{\mm}) = \Lef^{\mm} \otimes 1$.      From these ingredients,  one can then decompose the   $\HH$-periods corresponding to multiple zeta values.
For example, one finds   
  $$\Phi(\zetam(2,3)) = 3  (\Lef^{\mm})^2 \otimes f_3    \qquad \hbox{ and } \qquad \Phi(\zetam(3,2)) = -2 (\Lef^{\mm})^2 \otimes  f_3 \ ,$$
As a further example, $\zetam(3,5) \in C_2 \Pe^{\mm}_{\HH}$  satisfies $\Phi(\zetam(3,5)) = -5 \otimes (f_5 \otimes f_3)$.
  \end{example}
 The map $\Phi$ may provide a  useful model with which to think about  the structure of periods. 
 In section \ref{sectFamilies}, we very briefly describe how one might  set up similar notions for period integrals depending on parameters.
 The notion of `families of $\HH$-periods' is  very rich, and we barely  do it justice.
 In section \S\ref{sectSymbols}   we associate  various notions of symbol associated to a family of  $\HH$-periods. For the symbol
to exist, the family must underly a (globally) unipotent vector bundle with integrable connection, which always holds in the mixed Tate case.
 The symbol is a tensor product of differential $1$-forms modulo some relations. In the words of one of the referees, this should clarify many clumsy approaches in the recent physics literature.  One can think of the decomposition map as a kind of  analogue of a symbol for constant periods. 
 We also define single-valued versions of families of $\HH$-periods, again with applications to physics in mind. 
  Finally, in \S\ref{sect:  geometric examples} we provide some  geometric  examples and prove some technical results  required for \cite{Cosmic}.  Lastly, we discuss some examples in  the case of the projective line minus three points
  for illustrative purposes. 
\vspace{0.02in}

Further background about periods can be found in  the book project \cite{PeriodsBook}, the surveys \cite{KoZa} and \cite{An4} and references therein.
An obvious omission from this paper is a discussion of Tannakian ideas relating to limiting mixed Hodge structures and the regularisation of divergent (motivic) periods. 

Since these notes are based on lectures, there is not much in the way of technical argumentation. In order to keep the length of the paper down, we provide essential technical arguments only where constructions are new or absent from the literature, and omit proofs which are straightforward or  well-known.

\section{Generalities} \label{sectGeneralities}
\subsection{Recap on Tannakian categories}
Let  $k$  be a field. Following  \cite{Tannaka} \S1.2, a \emph{tensor  category} $\T$ over $k$  is a  $k$-linear  rigid abelian tensor category, which is  ACU and satisfies $k \overset{\sim}{\rightarrow} \mathrm{End}(1)$ (\cite{Tannaka}, \S\S2.1, 2.7, 2.8). A \emph{fiber functor} from $\T$ to a scheme  $S$ over $k$ is an exact $k$-linear functor from $\T$ to the category of quasi-coherent sheaves on $S$, which is compatible with the tensor product and the ACU constraints.  A \emph{Tannakian category} is  a tensor category equipped with a fiber functor  $\omega$ to a non-empty scheme $S$. If $S = \Spec(B)$ is affine, then due to rigidity, $\omega$ necessarily  
lands in the  category of projective $B$-modules of finite type.

\begin{thm} \label{Tannakatheorem} (\cite{Saavedra}, corrected in \cite{Tannaka}). Let $\T$ be a Tannakian category with a fiber functor to $S$, a non-empty scheme over $k$.  Then 
the groupoid of tensor automorphisms $\Aut^{\otimes}_{\T}(\omega)$ is faithfully flat on $S\times S$, and  $\omega$ defines an equivalence of categories from $\T$ to the category of representations of  $\Aut^{\otimes}_{\T}(\omega)$.
\end{thm}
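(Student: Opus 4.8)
The statement is the fundamental theorem of Tannakian duality over a general base, and the plan is to follow the strategy of Deligne's \cite{Tannaka}, which corrects the affine-base case treated in \cite{Saavedra}. First I would reduce to the affine case $S = \Spec(B)$: since the assertion is local on $S$, and a fiber functor to $S$ restricts to one on any open affine, it suffices to prove faithful flatness of the groupoid scheme $\Aut^{\otimes}_{\T}(\omega)$ over $S \times_k S$ and the equivalence with representations after passing to an affine cover. By rigidity, as noted just before the statement, $\omega$ lands in finitely generated projective $B$-modules, so every object of $\T$ gives a locally free sheaf of finite rank on $S$; this is what makes the groupoid-scheme formalism available.

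Next I would set up the groupoid scheme $G = \Aut^{\otimes}_{\T}(\omega)$ acting on $S$, i.e. the functor on $k$-algebras $R$ sending $R$ to pairs $(s_1, s_2) \in (S \times_k S)(R)$ together with a tensor-isomorphism $s_1^*\omega \overset{\sim}{\To} s_2^*\omega$ of the two pulled-back fiber functors with values in $R$-modules. The key input is that this functor is representable by an affine scheme over $S \times_k S$; this is proved exactly as in the neutral case by writing $G$ as a (filtered, using that $\T$ is generated by its objects) limit over objects $X \in \T$ of the schemes of isomorphisms respecting the tensor structure on the subcategory generated by $X$, each of which is a closed subscheme of an affine scheme $\underline{\mathrm{Isom}}(s_1^*\omega(X), s_2^*\omega(X))$ cut out by the tensor-compatibility equations. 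Representability plus the explicit description gives the coalgebra $A = \Or(G)$ as a colimit of the coend-type construction $\bigoplus_X \omega(X)^{\vee} \otimes \omega(X)$ modulo functoriality relations — precisely the shape of the period ring $\Pe^{\mm}_{\T}$ in the introduction.

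The main obstacle — and the genuinely substantive part — is proving \emph{faithful flatness} of $G$ over $S \times_k S$, equivalently that $A$ is a faithfully flat $(B \otimes_k B)$-comodule-algebra; everything else (that $\omega$ factors through $\mathrm{Rep}(G)$, that the functor is exact, faithful, tensor-compatible) is comparatively formal once one has the representability and the reconstruction of objects from comodules. Deligne's argument for faithful flatness proceeds by a clever reduction: one shows first that the construction is compatible with extension of the base field and with the passage to ind-objects, then verifies flatness by a local criterion after base-changing to a point, where one can use that over a field any Tannakian category with fiber functor to a nonempty scheme becomes neutral after a finite flat extension. I would cite this verbatim from \cite{Tannaka} rather than reproduce it. Finally, to get the equivalence $\T \simeq \mathrm{Rep}(G)$ I would construct the quasi-inverse by sending a representation to the corresponding object via the reconstruction theorem for comodule categories — faithful flatness of $A$ is exactly what guarantees this functor is well-defined, exact, and essentially surjective — and check the tensor-compatibility and the unit/associativity constraints match, which is routine diagram-chasing using the ACU structure.

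Since the full proof is long and appears in \cite{Saavedra} and \cite{Tannaka}, I would in practice only sketch the representability step and the statement of the faithful flatness reduction, referring the reader to those sources for details, as is consistent with the stated policy of this paper of omitting well-known arguments.
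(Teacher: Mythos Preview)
The paper does not prove this theorem at all: it is stated with attribution to \cite{Saavedra} and \cite{Tannaka} and no argument is given, consistent with the paper's declared policy of omitting well-known proofs. Your sketch of Deligne's strategy (reduction to the affine case, representability of the groupoid via the coend construction, and faithful flatness as the substantive step) is a reasonable outline of what is in \cite{Tannaka}, but it goes well beyond what the paper itself does, which is simply to cite the result and move on.
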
 

In the applications, $k=\Q$ and all Tannakian categories we consider  will be neutralised by the Betti realisation. They  will possess  a   fiber functor $\omega_B$ to the category of vector spaces over $\Q$, and  a second fiber functor $\omega_{dR}$ to a smooth scheme $S$ over $\Q$. The space $S(\C)$ will be the domain for a family of periods.

\subsection{Matrix coefficients.}  The following construction is paraphrased from \cite{Tannaka} \S4.7.
 \label{sectMatrixcoeffs} Let $\T$ be a (small)  category over $k$, and let $B_1, B_2$ be two $k$-algebras, not necessarily commutative. The following discussion is excessively general
 for our purposes,   but this  actually simplifies the presentation.   Let 
 $\omega_i$, for $i=1,2$,  be a functor from $\T$ to the category of right projective $B_i$ modules of finite type. 

\begin{defn} 
A \emph{matrix coefficient} \label{gloss: matrixcoeff} in $\T$ is a   triple $ (M, f, v)$, where $M$ is an object of $\T$,   $f\in \omega_1(M)^{\vee}$ and $v\in \omega_2(M)$.
Consider  the following $k$-vector space 
$$\Pe^{\omega_1, \omega_2}_{\T} = \langle (M,f,v) \rangle_{k} / \sim$$
 spanned by symbols $(M,f,v)$ 
 modulo the following relations:
 \vspace{0.1in}

$(i)$. (Bimodule structure).   For all $\lambda_1,\lambda_2\in B_2$, and $\mu_1,\mu_2 \in B_1$, 
\begin{eqnarray}
(M,f,  v_1 \lambda_1+  v_2 \lambda_2) & \sim &   (M, f, v_1 )\lambda_1 + (M, f, v_2)   \lambda_2\nonumber \\
(M,\mu_1 f_1 + \mu_2 f_2, v) & \sim &  \mu_1 (M,  f_1 ,v ) + \mu_2 (M,  f_2,v)  \nonumber 
\end{eqnarray} 
Furthermore, if  $\lambda \in k$, then $(M,f,v) \lambda \sim \lambda (M,f,v)$. 
Thus $\Pe^{\omega_1, \omega_2}_{\T}$ is a left $B_1$-module and right $B_2$-module, whose induced $k$-vector space structures coincide. We shall 
call such an object a $(B_1,B_2)$-bimodule over $k$. 
\\

$(ii)$.  (Morphisms). If $\phi: M_1 \rightarrow M_2$ is a morphism in $\T$ then 
\begin{equation} 
(M_1, f_1, v_1) \sim (M_2, f_2, v_2) \nonumber 
\end{equation} 
\qquad \qquad whenever  $v_2 = \omega_2(\phi)(v_1)$ and $f_1 = (\omega_1(\phi))^t f_2$, where $t$ is the transpose.
\vspace{0.1in}

\noindent Denote the equivalence class of $(M,f,v)$ by $[M,f, v]_{\T}^{\omega_1, \omega_2}$, or simply $[M,f, v]^{\omega_1, \omega_2}$ when there is no ambiguity about the ambient category. \end{defn}

The space $\Pe^{\omega_1, \omega_2}_{\T}$ is denoted by  $L_k(\omega_1,\omega_2)$ in  \cite{Tannaka}. We use the letter  $\Pe$ because we shall eventually  think of elements of $\Pe^{\omega_1, \omega_2}_{\T}$ as periods.  Condition $(i)$ implies  that for all objects $M$ in $\T$,  there is a  morphism $f\otimes v \mapsto [M,f,v]^{\omega_1,\omega_2}$
\begin{equation} \label{omegaMuniversalmap}
\omega_1(M)^{\vee} \otimes_k \omega_2(M) \To \Pe^{\omega_1, \omega_2}_{\T}
\end{equation}
of $(B_1,B_2)$-bimodules over $k$,
which is functorial in $M$.   This is a  universal property satisfied  by  $\Pe^{\omega_1,\omega_2}_{\T}$: any such collection of functorial maps
from $\omega_1(M)^{\vee} \otimes_k \omega_2(M)$ for all $M$ into a $(B_1,B_2)$-bimodule  over $k$ factors through $\Pe^{\omega_1,\omega_2}_{\T}$.

 There is a natural $k$-linear map  $k \rightarrow \omega_2(M) \otimes_{B_2} \omega_2(M)^{\vee}$,
 which sends $1$ to the element corresponding to the identity via the isomorphism
$$\omega_2(M) \otimes_{B_2}  \omega_2(M)^{\vee} \overset{\sim}{\To} \mathrm{Hom}_{B_2} ( \omega_2(M), \omega_2(M))\ .  $$
Writing $ \omega_1(M)^{\vee} \otimes_k \omega_2(M)  =  \omega_1(M)^{\vee}\otimes_k  k \otimes_k\omega_2(M)  $ we deduce a map
 $$ \omega_1(M)^{\vee} \otimes_k \omega_2(M)    \To   \omega_1(M)^{\vee} \otimes_k \omega_2(M) \otimes_{B_2} \omega_2(M)^{\vee} \otimes_k \omega_2(M)$$
 which in turn induces a morphism of $(B_1,B_2)$-bimodules over $k$:
 \begin{equation} \label{eqn: coaction}
\Delta : \Pe^{\omega_1, \omega_2}_{\T}    \To  \Pe^{\omega_1, \omega_2}_{\T}  \otimes_{B_2} \Pe^{\omega_2, \omega_2}_{\T}\ .
\end{equation}
One verifies that it defines  a right coaction of $\Pe^{\omega_2, \omega_2}_{\T}$ on $\Pe^{\omega_1, \omega_2}_{\T}$. 
 Since $\omega_2(M)$ is projective of finite type, we can write $\omega_2(M)$ as a direct summand of $B_2^n$ for some $n$ and let $e_i$  (respectively $e_i^{\vee}$) for $1\leq i \leq n$ be  coordinates of  $B_2^n \rightarrow \omega_2(M)$ (respectively  $\omega_2(M) \rightarrow B_2^n$).   
The element $\sum_{i=1}^n  e_i \otimes e_i^{\vee}$ represents the identity on $\omega_2(M)$. This gives the following   formula for $(\ref{eqn: coaction})$ on the level of matrix coefficients:
\begin{equation}   
\Delta {[}M, f, v]^{\omega_1, \omega_2}  =  \sum_{i} [M, f, e_i]^{\omega_1, \omega_2} \otimes  [M, e^{\vee}_i, v]^{\omega_2, \omega_2}  
\end{equation} 
   In a similar way, the space $\Pe^{\omega_1,\omega_1}_{\T}$ naturally coacts on $\Pe^{\omega_1,\omega_2}_{\T}$ on the left.

\subsection{Tannakian case} \label{sectTannakianCase} Now suppose that $B_1,B_2$ are commutative, $\T$ is a tensor category  and $\omega_i$ is a fiber functor to $\Spec(B_i)$, for $i=1,2$. 
The tensor structure on $\T$ implies that 
 $\Pe^{\omega_1, \omega_2}_{\T}$ is a commutative $k$-algebra. In formulae:
$$[M_1, f_1, v_1]^{\omega_1, \omega_2} \times [M_2, f_2, v_2]^{\omega_1, \omega_2}  = [M_1\otimes M_2, f_1\otimes f_2, v_1\otimes v_2]^{\omega_1, \omega_2} \ ,$$
which is well-defined as one easily checks.  

Consider the affine scheme over $B=B_1\otimes_k B_2$ defined by 
$$\Hom_{\T}^{\otimes}(\omega_2,\omega_1) = \Spec \, \Pe^{\omega_1,\omega_2}_{\T}\ .$$
 If  $R$ is any commutative $B$-algebra then its $R$-points are given (\cite{Tannaka}, proposition 6.6) by collections of 
 homomorphisms of $R$-modules 
 \begin{equation} \label{phiM}  {\phi}_M: R \otimes_{B} \big( B_1 \otimes_k \,\omega_2(M) \big) \To  \big( \omega_1(M) \otimes_{k} B_2 \big) \otimes_{B} R\ 
 \end{equation}
 which are functorial in $M$ and respect the tensor product. The corresponding homomorphism $\phi: \Pe^{\omega_1,\omega_2}_{\T} \rightarrow R$ is given on matrix coefficients  by the formula  
 $$  \phi [M, f, v]^{\omega_1,\omega_2}  = f(\phi_M(  v)) \ . $$
 It follows from the existence and properties of duals in $\T$ that  the $\phi_M$'s are  automatically isomorphisms and therefore 
   $\Isom_{\T}^{\otimes}(\omega_2, \omega_1)  \overset{\sim}{\rightarrow} \Hom_{\T}^{\otimes}(\omega_2, \omega_1) $ is an isomorphism  and we indeed have  $\Pe^{\omega_1,\omega_2}_{\T} = \Or( \Isom_{\T}^{\otimes}(\omega_2, \omega_1))$.

Applying the above in the case when both fiber functors are equal  implies that   for $i=1,2$, $\Pe^{\omega_i,\omega_i}_{\T}$ is a commutative bialgebra over $B_i$. 
It has  an antipode, which on matrix coefficients is the involution $S: [M,f,v]^{\omega_i,\omega_i} \rightarrow [M^{\vee}, v,f]^{\omega_i, \omega_i}$, unit $[1,1,1]^{\omega_i,\omega_i}$  and counit $\varepsilon: [M,f,v]^{\omega_i,\omega_i} \mapsto f(v)$, 
and is a Hopf algebra with respect to these structures.  It therefore defines an affine group scheme  $\mathrm{Aut}_{\T}^{\otimes}(\omega_i)$ over $B_i$ which we shall denote by 
$$G_{\T}^{\omega_i} = \mathrm{Spec}\, \Pe^{\omega_i, \omega_i}_{\T} \ .$$
 If $R$ is a commutative $B_i$-algebra, then an $R$-valued point $g\in G_{\T}^{\omega_i}(R)$ can be viewed
as a functorial  collection of isomorphisms 
 $$g_M:  R \otimes_{B_i} \omega_i(M) \overset{\sim}{\To} \omega_i(M) \otimes_{B_i} R$$ for every object $M$  of $\T$, which are compatible with the tensor product.  The homomorphism from $P^{\omega_i,\omega_i}_{\T}$ to $R$ is defined by   the formula
$  g [M, f, v]^{\omega_i, \omega_i} = f ( g_M v)$, and every such homomorphism arises in this way.

We shall use the main theorem \ref{Tannakatheorem} in the following case:

\begin{thm} \label{thm:  Tannaka}   The functor  $\omega_i: \T \rightarrow \mathrm{Rep}\, (G_{\T}^{\omega_i})$
is an equivalence of categories. 
\end{thm}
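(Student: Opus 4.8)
The plan is to deduce this from Theorem \ref{Tannakatheorem} by restricting along the diagonal. Write $S = \Spec(B_i)$, and let $\G_i = \Aut^{\otimes}_{\T}(\omega_i)$ be the affine groupoid scheme over $S\times S$ furnished by Theorem \ref{Tannakatheorem}: its points over a map $(a,b)\colon \Spec(R)\to S\times S$ are the tensor isomorphisms $a^*\omega_i\overset{\sim}{\To} b^*\omega_i$, it is faithfully flat on $S\times S$, and $\omega_i$ induces an equivalence of $\T$ with the category $\mathrm{Rep}(\G_i)$ of quasi-coherent $\Ok_S$-modules carrying a compatible $\G_i$-action. It then suffices to upgrade this to an equivalence with $\mathrm{Rep}(G_{\T}^{\omega_i})$.

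First I would check that $G_{\T}^{\omega_i}$ is the group scheme $\G_i|_{\Delta}$ obtained by pulling $\G_i$ back along the diagonal $S\hookrightarrow S\times S$. This is essentially a comparison of $R$-points: the $R$-points of $\G_i|_{\Delta}$, for a commutative $B_i$-algebra $R$, are the functorial, tensor-compatible families $g_M\colon R\otimes_{B_i}\omega_i(M)\overset{\sim}{\To}\omega_i(M)\otimes_{B_i}R$, which is exactly the description of the $R$-points of $G_{\T}^{\omega_i}=\Spec\,\Pe^{\omega_i,\omega_i}_{\T}$ recorded in \S\ref{sectTannakianCase} (take $a=b$ in $(\ref{phiM})$); equivalently, the Hopf algebra structure on $\Pe^{\omega_i,\omega_i}_{\T}$ agrees with the one read off from the groupoid structure on $\G_i$ along $\Delta$. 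Granting this, restriction of the action along $S\hookrightarrow\G_i$ gives a functor $\mathrm{Rep}(\G_i)\to\mathrm{Rep}(G_{\T}^{\omega_i})$, and precomposing with the equivalence $\omega_i\colon\T\overset{\sim}{\To}\mathrm{Rep}(\G_i)$ of Theorem \ref{Tannakatheorem} returns the functor in the statement.

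The substantive step, and the one I expect to be the main obstacle, is to show that this restriction functor $\mathrm{Rep}(\G_i)\to\mathrm{Rep}(G_{\T}^{\omega_i})$ is itself an equivalence. This is where faithful flatness of $(s,t)\colon\G_i\to S\times S$ --- i.e.\ the first assertion of Theorem \ref{Tannakatheorem} --- is used: a $G_{\T}^{\omega_i}$-action on a quasi-coherent sheaf $V$ on $S$ should extend uniquely to a $\G_i$-action by faithfully flat descent along $S\times S$, while conversely a $\G_i$-action is determined by its restriction to the diagonal because $\G_i$ is generated, as a groupoid, by $\G_i|_{\Delta}$ together with its flat structure maps. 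I would quote \cite{Tannaka} (\S6) for the precise form of this statement: for an affine groupoid acting transitively on an affine scheme, the category of representations depends only on the diagonal group scheme. Composing, $\T\overset{\sim}{\To}\mathrm{Rep}(\G_i)\overset{\sim}{\To}\mathrm{Rep}(G_{\T}^{\omega_i})$ is then the desired equivalence.

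Finally, I would remark that in the neutralised case relevant to the applications --- $B_i=\Q$, so that $S$ is a point --- the groupoid $\G_i$ is already the affine group scheme $G_{\T}^{\omega_i}$ over $\Q$, and the statement reduces immediately to Theorem \ref{Tannakatheorem}, with no descent step required.
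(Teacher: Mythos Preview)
Your proposal is correct and follows the same approach as the paper: the paper simply presents this theorem as a direct application of Theorem~\ref{Tannakatheorem} (introduced by the phrase ``We shall use the main theorem~\ref{Tannakatheorem} in the following case'') and gives no further argument. Your sketch of how to pass from the groupoid to the diagonal group scheme, and your observation that in the neutralised case $B_i=\Q$ the statement is immediate, fills in exactly the detail the paper omits.
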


The scheme
$ \Isom^{\otimes}_{\T}(\omega_2, \omega_1 ) =  \mathrm{Spec}\, \Pe^{\omega_1, \omega_2}_{\T}$  is a $G_{\T}^{\omega_1}\times G_{\T}^{\omega_2}$ - bitorsor, with $G_{\T}^{\omega_1}$ acting on the left, $G_{\T}^{\omega_2}$ on the right. The  action of $G_{\T}^{\omega_1}\times G_{\T}^{\omega_2}$ on $\Pe^{\omega_1, \omega_2}_{\T}$ is given as  follows. 
The right coaction $(\ref{eqn: coaction})$ is equivalent to a left action of $G_{\T}^{\omega_2}$
\begin{equation} \label{eqn: leftgroupaction}
G_{\T}^{\omega_2} \times \Pe^{\omega_1, \omega_2}_{\T} \To   \Pe^{\omega_1, \omega_2}_{\T}
\end{equation} 
via  the formula $g(\xi) = (\id \otimes g)\Delta(\xi)$.  We shall call  $(\ref{eqn: leftgroupaction})$  the `Galois action' on $\Pe^{\omega_1,\omega_2}_{\T}$. 
 Concretely, if $R$ is a $B_2$-algebra, then  a point $g\in G_{\T}^{\omega_2}(R)$ acts by
$$ g [ M, f, v]^{\omega_1, \omega_2} = [ M, f, g_M ( v)]^{\omega_1, \omega_2}\ .$$
There is  correspondingly a right action on $\Pe^{\omega_1,\omega_2}_{\T}$  by  $G^{\omega_1}_{\T}$ which acts  on the element $f \in \omega_1(M)^{\vee}$ on the right.  Depending on the situation, it can happen that one or other action of $G^{\omega_i}_{\T}$ on $\Pe^{\omega_1,\omega_2}_{\T}$ plays a more important role than the other.

\subsection{Minimal  objects}  \label{sectminimalobject}
Return to the situation of \S\ref{sectMatrixcoeffs}. The following useful lemma  will be used to attach quantities to  motivic periods. 

Suppose that  $\T$ is an abelian category such that  every object of $\T$ has finite length, and suppose that the functors $\omega_1,\omega_2$ are exact. 

\begin{lem} 
 Let $\xi \in \Pe^{\omega_1,\omega_2}_{\T}$. There exists a smallest object $M(\xi)$ of $\T$, unique up to isomorphism, such  that $\xi$ is a matrix coefficient of $M(\xi)$, i.e.,   every object of $\T$ of which   $\xi$ is a matrix coefficient admits a subquotient isomorphic to  $M(\xi)$.\end{lem}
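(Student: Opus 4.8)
The plan is to realize $M(\xi)$ as an intersection of the objects of which $\xi$ is a matrix coefficient, using the finite length hypothesis to make this well-defined. First I would observe that if $\xi$ is a matrix coefficient of $M$, say $\xi = [M,f,v]^{\omega_1,\omega_2}$, and $M' \subseteq M$ is the smallest subobject such that $v \in \omega_2(M')$ — which exists since $\omega_2$ is exact and $\omega_2(M)$ has finitely many $\omega_2$-subobjects coming from subobjects of the finite-length object $M$ — then using relation $(ii)$ for the inclusion $M' \hookrightarrow M$ we may replace $M$ by $M'$; dually, passing to the quotient $M'' = M'/N$ where $N$ is the largest subobject of $M'$ with $f|_{\omega_1(N)} = 0$, and applying $(ii)$ to the surjection $M' \twoheadrightarrow M''$, we get $\xi = [M'', \bar f, \bar v]^{\omega_1,\omega_2}$ with $M''$ generated by $\bar v$ and cogenerated by $\bar f$. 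Call such a representative \emph{reduced}.

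The key step is to show any two reduced representatives $(M_1,f_1,v_1)$ and $(M_2,f_2,v_2)$ of $\xi$ are isomorphic, and that $M_i$ embeds as a subquotient of every object of which $\xi$ is a matrix coefficient. For the latter: given $\xi = [N,g,w]^{\omega_1,\omega_2}$, reduce $(N,g,w)$ to $(N',\bar g,\bar w)$ as above with $N'$ a subquotient of $N$; it then suffices to prove a reduced representative is unique up to isomorphism. For uniqueness, I would use the Tannakian dictionary: work in the category $\mathrm{Rep}(G^{\omega_2}_{\T})$ (Theorem~\ref{thm: Tannaka}), or more elementarily argue directly that the reduced condition pins down $M_i$. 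Concretely, consider $M_1 \oplus M_2$; the difference of the two representatives, viewed in $\Pe^{\omega_1,\omega_2}_{\T}$, is zero, and by the relations defining the quotient this forces the existence of a common object $P$ with maps to/from $M_1$ and $M_2$ intertwining the data; minimality of each $M_i$ (generated by $v_i$, cogenerated by $f_i$) then forces these maps to be isomorphisms. One clean way to package this: the functor sending an object $M$ with chosen $(f,v)$ to its reduced form is well-behaved, and the subcategory of objects generated by a single vector in $\omega_2$ and cogenerated by a single covector in $\omega_1$ has the property that a nonzero matrix coefficient determines the object.

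The main obstacle is the uniqueness/minimality argument: relation $(ii)$ only gives identifications along a \emph{single} morphism $\phi: M_1 \to M_2$, whereas to compare two unrelated reduced representatives one needs to produce such a morphism (or a zig-zag) from the mere equality of matrix coefficients in the quotient space $\Pe^{\omega_1,\omega_2}_{\T}$. The honest way around this is to make precise that $\Pe^{\omega_1,\omega_2}_{\T} = \mathrm{colim}_M\, \omega_1(M)^\vee \otimes_k \omega_2(M)$ over the category of objects and morphisms of $\T$, so that $[M_1,f_1,v_1] = [M_2,f_2,v_2]$ means exactly that $(f_1 \otimes v_1)$ and $(f_2 \otimes v_2)$ become identified after pushing forward along a finite zig-zag of morphisms; then one chases the zig-zag, at each stage replacing objects by their reduced subquotients and checking — using exactness of $\omega_1,\omega_2$ and that the reduced form is generated/cogenerated by the relevant (co)vectors — that the reduced object is preserved up to canonical isomorphism. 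I expect the finite-length hypothesis to be used precisely here, to guarantee that the descending chains of subobjects (and quotients) involved in forming reduced representatives terminate, so that $M(\xi)$ genuinely exists and is the smallest such object.
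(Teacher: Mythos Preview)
Your plan is essentially the paper's own argument: construct the ``reduced'' representative by first passing to the smallest subobject $M_v$ with $v\in\omega_2(M_v)$ and then to the smallest quotient ${}_fM$ with $f\in\omega_1({}_fM)^\vee$, and then verify that the isomorphism class of ${}_fM_v$ is invariant under relation~$(ii)$ by factoring an arbitrary morphism as a surjection followed by an injection and checking each case. Two small remarks. First, the lemma is stated and proved in the generality of \S\ref{sectMatrixcoeffs} (abelian, finite length, exact $\omega_i$), \emph{before} the Tannakian hypotheses of \S\ref{sectTannakianCase}; so your suggestion to invoke the Tannaka equivalence is out of place here --- stick with the direct zig-zag argument, which is exactly what the paper does. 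Second, the paper isolates one ingredient you leave implicit: the commutation ${}_f(M_v)\cong({}_fM)_v$, proved via a small diagram chase. You will need this when handling surjections versus injections, since the sub-construction $M_v$ is stable under injections while the quotient-construction ${}_fM$ is stable under surjections, and you must know the two orders of reduction agree in order to compare.
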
 

\begin{proof} First observe that if $[M_1, f_1, v_1]$ and $[M_2, f_2, v_2]$ are two matrix coefficients in $\T$, then the relations imply that 
$$[M_1, f_1, v_1] + [M_2, f_2, v_2] = [ M_1 \oplus M_2, (f_1,f_2), (v_1,v_2)]\ .$$
Similarly, every linear combination of  elements $\xi \in \Pe^{\omega_1,\omega_2}_{\T}$ can be represented 
by a single matrix coefficient $[M, f, v]$.  In fact, $\Pe^{\omega_1,\omega_2}_{\T}$ can alternatively be defined as  the set of equivalence classes $(M,f,v)$ with respect to 
relation $(ii)$ of \S\ref{sectMatrixcoeffs}, equipped  with a  well-defined bimodule structure  $(i)$.
 
Now let $M$ be an object of $\T$, and $f\in \omega_1(M)^{\vee}, v\in \omega_2(M)$. Define
$M_v$
to be the smallest subobject of $M$ such that $v$ is in the image of $\omega_2(M_v)$. It exists, because if every subobject $N\subsetneq M$ satisfies $v\notin \mathrm{Im} \, \omega_2(N)$, then 
$M_v= M$, otherwise we can replace $M$ by any subobject $N$ such that $v\in \mathrm{Im}\, \omega_2(N)$, and proceed by induction on the length. It is unique up to isomorphism;  
if $v \in  \omega_2(N_1)$ and $v\in \omega_2(N_2)$, where $N_1,N_2$ are both minimal subobjects of $M$, then by writing 
$N_1 \cap N_2 = \ker (N \rightarrow N/N_1 \oplus N/N_2)$ and using 
the exactness of $\omega_2$, it follows that 
$v \in  \omega_2(N_1 \cap N_2) \cong \omega_2(N_1) \cap \omega_2(N_2)$ and hence by minimality $N_1 \cong N_1 \cap N_2 \cong N_2$.  Similarly, let ${}_f M$ be the smallest quotient object of $M$ such that $ f\in \omega_1({}_f M)^{\vee}$. 

Now consider a morphism $\phi: M \rightarrow M'$.  We first show that 
$$\phi(M_v)\cong \phi(M_v)_{\phi(v)} \cong M'_{\phi(v)}\ .$$
The second isomorphism holds  since $\phi(M_v)$ is a subobject of $M'$. Note that $\omega_2(\phi(M_v))$ contains $\phi(v)$.
We show that $\phi(M_v)$  is minimal for this property.  For if  $N\subset \phi(M_v)$ is a 
subobject such that $\phi(v) \in \omega_2(N)$, then $\phi^{-1}(N):= \ker (M_v \rightarrow \phi(M_v)/N)$ satisfies $v\in \omega_2(\phi^{-1}(N))$, and
so by definition of $M_v$ we have $\phi^{-1}(N) \cong M_v$, and $N\cong  \phi(M_v)$. This proves the first isomorphism.

It follows that if $\phi: M \rightarrow M'$ is surjective, so too is $\phi: M_v \rightarrow M'_{\phi(v)}$. It follows from the definition
that if $\phi$ is injective, $M_v \cong M'_{\phi(v)}$ is an isomorphism.  The same statement holds for ${}_fM$ with the words
injective and surjective interchanged.

Now apply the first remark   to the surjective map $M_v \rightarrow \!\!\!\!\!\rightarrow {}_f(M_v)$. Denote the image of $v$ in $\omega_2({}_f(M_v))$ by $v$ also. We obtain a commutative diagram
$$
\begin{array}{ccc}
(M_v)_v  &  \rightarrow \!\!\!\!\!\rightarrow    & ({}_f(M_v))_v  \\
 \downarrow &   & \downarrow  \\
   M_v  &  \rightarrow \!\!\!\!\!\rightarrow   &  {}_f(M_v)
\end{array}
$$
where the two vertical maps are injective. Since $(M_v)_v = M_v$, it follows that the  vertical map on the right is an isomorphism. On the other
hand, applying the second remark to $M_v \hookrightarrow M$, we obtain an injection ${}_f(M_v) \hookrightarrow {}_fM$, and hence an isomorphism
$ ({}_f (M_v))_{v} \cong ({}_fM)_v$. We conclude that 
$${}_f (M_v) \cong ({}_fM)_v$$
and we shall subsequently denote this object simply by ${}_f M_v$.  

We now show that the map which assigns to a matrix coefficient $(M,f,v)$ the isomorphism class of ${}_f M_v$ respects the equivalence relation
$(ii)$.  Consider an equivalence $ [ M, f, v] = [N, f', v']$ arising from a morphism $\phi: M \rightarrow N$.   
In other words, $f'= \phi^t(f)$ and $v' = \phi (v)$. If   $\phi$ is injective, $M_v \cong N_{v'}$ by definition and hence
${}_f M_v  \cong {}_f(M_v) \cong {}_f (N_{v'})   \cong {}_{f'} N_{v'} $. When $\phi$ is surjective,   we again have  
${}_f M_v \cong {}_{f'} N_{v'} $ by a  similar argument.  Since any morphism can be expressed as a composition of an injection  and surjection, 
we deduce that  the isomorphism class of $M(\xi):= {}_fM_v$ only depends on the equivalence class $\xi = [M,f,v].$ It follows from the definitions that the morphisms  $M\rightarrow {}_fM \leftarrow {}_fM_v$ define an equivalence $\xi = [ {}_fM_v, f, v]$. 
\end{proof} 

Now suppose that $\T$   satisfies the more stringent conditions of \S\ref{sectTannakianCase}, and is in particular Tannakian. This implies that every object has finite length.

Let  $\xi \in \Pe_{\T}^{\omega_1, \omega_2}$. The $G_{\T}^{\omega_2}$ representation it generates is the $\omega_2$-image of an object of $\T$,
by the Tannaka theorem. We show that it is isomorphic to $M(\xi)$.

\begin{cor} \label{corGomegarep}  Consider  $\xi \in \Pe_{\T}^{\omega_1, \omega_2}$.
Let $\langle G_{\T}^{\omega_2} \xi \rangle_{B_2}$ (respectively   $\langle \xi\, G_{\T}^{\omega_1}\rangle_{B_1}$)  denote the representation of $G_{\T}^{\omega_2}$ (resp. $G_{\T}^{\omega_1}$) it generates. Then 
\begin{equation} 
\omega_2 (M(\xi)) \cong \langle G_{\T}^{\omega_2} \xi \rangle_{B_2}  \qquad \hbox{ and }  \qquad\omega_1(M(\xi)) \cong \langle \xi G_{\T}^{\omega_1}\rangle_{B_1} \ . 
\end{equation}

\end{cor}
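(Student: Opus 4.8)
The plan is to identify the $G_{\T}^{\omega_2}$-representation $\langle G_{\T}^{\omega_2}\xi\rangle_{B_2}$ generated by $\xi$ with $\omega_2(M(\xi))$ using Theorem~\ref{thm:  Tannaka}, and then match it up with the minimal object from the previous lemma by exploiting the concrete description of the Galois action on matrix coefficients. Write $\xi = [M,f,v]^{\omega_1,\omega_2}$ for some object $M$, with $f\in\omega_1(M)^\vee$, $v\in\omega_2(M)$; by the lemma we may in fact take $M = M(\xi) = {}_fM_v$. The first step is to recall from~$(\ref{eqn: leftgroupaction})$ that for a $B_2$-algebra $R$ and $g\in G_{\T}^{\omega_2}(R)$ one has $g\,[M,f,v]^{\omega_1,\omega_2} = [M,f,g_M(v)]^{\omega_1,\omega_2}$, so the sub-$B_2$-module of $\Pe_{\T}^{\omega_1,\omega_2}$ generated by the orbit of $\xi$ is the image of $\omega_1(M)^\vee\otimes_k\langle G_{\T}^{\omega_2}\cdot v\rangle$ under the universal map $(\ref{omegaMuniversalmap})$, restricted to the factor $f\otimes(-)$.

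Next I would analyze $\langle G_{\T}^{\omega_2}\cdot v\rangle_{B_2}\subseteq\omega_2(M)$. Under the equivalence $\omega_2:\T\xrightarrow{\sim}\mathrm{Rep}\,(G_{\T}^{\omega_2})$ of Theorem~\ref{thm:  Tannaka}, this sub-representation corresponds to a subobject $N\subseteq M$, namely the smallest subobject with $v\in\omega_2(N)$ — which is precisely $M_v$ by the construction in the lemma (here exactness of $\omega_2$ ensures $\omega_2$ of a subobject is the corresponding subrepresentation). So $\langle G_{\T}^{\omega_2}\cdot v\rangle_{B_2} = \omega_2(M_v)$. Replacing $M$ by $M_v$ changes neither $\xi$ (by relation $(ii)$ applied to the inclusion $M_v\hookrightarrow M$) nor the generated representation, so we may assume $M = M_v$, i.e.\ $v$ generates $\omega_2(M)$ as a $G_{\T}^{\omega_2}$-representation. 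Then $\langle G_{\T}^{\omega_2}\xi\rangle_{B_2}$ is the image of $\omega_1(M)^\vee\otimes_{k}\omega_2(M)$ under $(\ref{omegaMuniversalmap})$ composed with evaluating the first slot on the fixed $f$ — but since $[M,f,v]\mapsto[M,f,g_Mv]$ only moves the second coordinate, the generated module is the image of the single-slot map $\omega_2(M)\to\Pe_{\T}^{\omega_1,\omega_2}$, $w\mapsto[M,f,w]$.

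The heart of the argument, and the step I expect to be the main obstacle, is to show this last map $\omega_2(M)\to\langle G_{\T}^{\omega_2}\xi\rangle_{B_2}$, $w\mapsto[M,f,w]$, is an \emph{isomorphism} of $G_{\T}^{\omega_2}$-representations when $M = {}_fM_v$, and not merely a surjection. It is $G_{\T}^{\omega_2}$-equivariant by the orbit formula above and surjective by construction, so the content is injectivity: if $[M,f,w] = 0$ in $\Pe_{\T}^{\omega_1,\omega_2}$ then $w = 0$. For this I would use that $M = {}_fM$ is the smallest quotient on which $f$ lives, which forces $f$ to be "non-degenerate" on $M$ in the sense that no nonzero subquotient pairs trivially with it; combined with relation $(ii)$, a nonzero $w$ in the kernel would produce a proper subquotient through which $[M,f,-]$ factors, contradicting minimality of ${}_fM_v$. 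Concretely, one checks that $\ker\big(\omega_2(M)\to\Pe^{\omega_1,\omega_2}_{\T}\big)$ is a subrepresentation $\omega_2(K)$ with $v\notin$ its complement forced to be all of $\omega_2(M)$ by $M = M_v$, while $f$ vanishing on the quotient image forces $K = 0$ by $M = {}_fM$ — and the two minimality conditions together pin down $M(\xi) = {}_fM_v$ exactly. Running the symmetric argument with the left coaction and $G_{\T}^{\omega_1}$ acting on the $f$-slot gives $\omega_1(M(\xi))\cong\langle\xi\,G_{\T}^{\omega_1}\rangle_{B_1}$, completing the proof.
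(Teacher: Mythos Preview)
Your overall strategy matches the paper's: reduce to $M = {}_fM_v$, consider the $G_{\T}^{\omega_2}$-equivariant map $w\mapsto [M,f,w]^{\omega_1,\omega_2}$ from $\omega_2(M)$ to $\Pe_{\T}^{\omega_1,\omega_2}$, observe its image is $\langle G_{\T}^{\omega_2}\xi\rangle_{B_2}$ using $M=M_v$, and then argue injectivity using $M={}_fM$. You correctly flag injectivity as the crux.

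The gap is precisely there. You note that the kernel is $\omega_2(K)$ for some subobject $K\subseteq M$ (Tannaka), and then assert that ``$f$ vanishing on the quotient image forces $K=0$ by $M={}_fM$''. But the hypothesis you have is $[K,f|_K,w]^{\omega_1,\omega_2}=0$ for all $w\in\omega_2(K)$, which is a statement in the $\omega_2$-world, while $f|_K\in\omega_1(K)^\vee$ lives in the $\omega_1$-world. Nothing you have written explains why the former forces $f|_{\omega_1(K)}=0$; you are asserting the very thing that needs to be proved. One cannot deduce $f|_K=0$ from relations $(i)$, $(ii)$ alone without invoking the comparison between the two fiber functors.

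The paper bridges this gap by a genuinely Tannakian move you are missing. The equivariant map $w\mapsto[M,f,w]^{\omega_1,\omega_2}$ is the $\omega_2$-image of a morphism $e_f\colon M\to\Pe_{\T}^{\omega_1,\bullet}$ of ind-objects of $\T$. Now apply the \emph{other} functor $\omega_1$: one gets $p\mapsto[M,f,p]^{\omega_1,\omega_1}$ from $\omega_1(M)$ to $\Pe_{\T}^{\omega_1,\omega_1}$, and the counit $\varepsilon$ pulls back to $f$ under its dual. If $N$ denotes the image of $e_f$ (so $\omega_2(N)=\langle G_{\T}^{\omega_2}\xi\rangle_{B_2}$), then $e_f\colon M\twoheadrightarrow N$ and $f$ lies in the image of $\omega_1(N)^\vee\hookrightarrow\omega_1(M)^\vee$. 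Now $M={}_fM$ forces $M\cong N$. Equivalently: applying $\omega_1$ to $e_f$ shows $f|_{\omega_1(K)}=0$, which is exactly what your argument was missing. (An alternative patch with the same content: evaluate $[K,f|_K,w]^{\omega_1,\omega_2}=0$ at any $R$-point $\phi$ of the torsor $\Isom^\otimes(\omega_2,\omega_1)$ to get $f|_K(\phi_K(w))=0$ for all $w$, hence $f|_K=0$.)
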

\begin{proof}
The morphisms $(\ref{omegaMuniversalmap})$ induce functorial  morphisms 
$$\omega_1(M)^{\vee} \otimes \omega_2(M) \To \Pe^{\omega_1,\omega_2}_{\T}$$
in the (ind)-category of left $G^{\omega_2}_{\T}$-representations. By the Tannaka theorem, 
there exists an ind-object $ \Pe^{\omega_1,\bullet}_{\T}$ of $\T$ whose $\omega_2$-image is $\Pe^{\omega_1,\omega_2}_{\T}$, and the 
above morphisms are the $\omega_2$-image of a family of functorial morphisms 
$$\omega_1(M)^{\vee} \otimes M \To \Pe^{\omega_1,\bullet}_{\T}\ $$
of ind-objects of $\T$. 
Apply   $\omega_1$ to  obtain a family of functorial morphisms
$$\omega_1(M)^{\vee} \otimes \omega_1(M) \To  \omega_1(\Pe^{\omega_1,\bullet}_{\T})\ ,$$
and by the universal property we deduce that $\omega_1(\Pe^{\omega_1,\bullet}_{\T}) \cong \Pe^{\omega_1,\omega_1}_{\T}$.

 Now suppose that $\xi = [M, f, v]^{\omega_1,\omega_2}$. By the previous lemma,
we can assume that  $M =  {}_fM_v$.
  Consider the morphism
\begin{equation} \label{inproofetapage8} 
 \eta \mapsto [ M, f,  \eta ]^{\omega_1,\omega_2}  \quad : \quad  \omega_2(M) \To  \Pe^{\omega_1, \omega_2 }_{\T}\ . 
 \end{equation}
 It respects the action of $G^{\omega_2}_{\T}$ on both sides. Therefore 
 by the Tannaka theorem it is the $\omega_2$-image of a  morphism 
$$e_f: M \To \Pe^{\omega_1, \bullet }_{\T} $$
 of ind-objects of $\T$. Its  $\omega_1$-image is 
$$
p  \mapsto  [ M, f,  p ]^{\omega_1,\omega_1} 
\quad : \quad  \omega_1(M)  \overset{\omega_1(e_f)}{\To}   \Pe^{\omega_1, \omega_1 }_{\T} \ .$$
Under the dual map, the counit $\varepsilon: \Pe^{\omega_1, \omega_1}_{\T} \rightarrow k$ maps to $f \in \omega_1(M)^{\vee}$.  
Incidentally, this shows that $e_f$ defines an equivalence $\xi = [\Pe^{\omega_1, \bullet}_{\T}, \varepsilon, \xi]^{\omega_1, \omega_2}$.

Let $N \subset \Pe^{\omega_1, \bullet }_{\T}$ be the sub object such that $\omega_2(N)\subset \Pe^{\omega_1, \omega_2}_{\T} $ is the $G_{\T}^{\omega_2}$-representation generated by $\xi$.   We obtain a commutative diagram
$$
\begin{array}{ccc}
 \omega_2(M)   &  \overset{(\ref{inproofetapage8})}{\To}   &   \Pe^{\omega_1,\omega_2}_{\T} \\
  \cup  &   &  \cup  \\
 \langle G^{\omega_2}_{\T} v \rangle   &   \To  &    \omega_2(N) 
\end{array}
$$
By assumption,  $M= M_v$ and hence the vertical inclusion on the left is an equality.  The map along the bottom is surjective since $\omega_2(N)$ contains $\xi$, the image of $v$ under $(\ref{inproofetapage8})$.
Therefore $\omega_2(N)$ is isomorphic to the image of $\omega_2(M)$ in $\Pe^{\omega_1,\omega_2}_{\T}$,  and the map $e_f$ factors as
$$e_f: M \To  \!\!\!\!\!\!\!\!\! \To N \subset \Pe^{\omega_1, \bullet}_{\T} .$$
 On the other hand, the image of the counit $\varepsilon \in \Pe^{\omega_1, \omega_1}_{\T}$ in $\omega_1(N)$ maps to $f\in \omega_1(M)^{\vee}$. Since $M= {}_fM$, it has no non-trivial such quotients, and  $M\cong N$. This proves that 
 $\omega_2(M(\xi)) = \langle G_{\T}^{\omega_2} \xi \rangle_{B_2}$. The other statement is similar.
 \end{proof}

\subsection{Coradical filtration and decomposition} \label{sectCoradical} 
Let $U$ be a pro-unipotent algebraic group over a field $k$ of characteristic $0$, and let $M$ be a left $U$-module over $k$, i.e.,  $M$ is a  right $\Or(U)$-comodule.  Denote the coaction by  $\Delta: M \rightarrow M \otimes_k \Or(U)$, and let  $\Or(U)_+$ be  the kernel of the augmentation $\varepsilon: \Or(U) \rightarrow k$.

Define a filtration $C_i M$ on $M$ by $C_{-1}M=0$, and 
$$C_i M = \{x \in M  \ :\  \Delta(x) = x\otimes 1 \pmod{C_{i-1}M}\}$$
  Equivalently, 
 $C_iM$ is the fastest increasing filtration on $M$ such that $C_{-1}M =0$ and such that  $U$ acts trivially on $\gr^C M$.  In particular, $C_0 M = M^U$.    This filtration is functorial with respect to morphisms of $U$-modules.
    It exhausts $M$, i.e., 
 $M = \bigcup_{i\geq 0} C_i M$.
 This  follows from Engel's theorem in the case when $M$ is of finite type over $k$, for then the action of $U$ on $M$ factors through a unipotent algebraic matrix group, and $M$ has a non-trivial fixed vector $v\in C_0M$.  Replacing $M$ with $M/C_0M$ and by induction on the dimension of $M$,  we deduce that $M=C_nM$ for some $n$. The general case  follows from the fact that  $M$ is the inductive limit of its sub $\Or(U)$-comodules   of finite type.
 
 Now consider $\Or(U)$, viewed as a right $\Or(U)$-comodule in the natural way, and denote by
 $\Delta : \Or(U) \rightarrow \Or(U) \otimes_k \Or(U)$  the coproduct dual to the  multiplication in $U$. The above construction defines an increasing 
 filtration $C_i \Or(U)$. It satisfies $C_{0} \Or(U) = k$, because $\id = (\varepsilon\otimes \id) \Delta$ in any Hopf algebra and hence $x \in C_0 \Or(U)$ satisfies
 $x = \varepsilon(x)$ and is constant, via  $\Or(U) = \Or(U)_+ \oplus k$. 
 
Now let us denote by $\Delta^r=\Delta - \id \otimes 1: M \rightarrow M\otimes_k \Or(U)$, and 
$$\Delta' = \Delta - \id \otimes 1 - 1 \otimes \id : \Or(U) \rightarrow \Or(U) \otimes_k \Or(U)\ .$$
Note that $x \in C_n M$  if and only if $(\Delta^r)^{n+1}x = 0$. If $M = \Or(U)$ and $n \geq 1$, $x \in C_n\Or(U)_+$ if and only if 
 $(\Delta')^n x=0$. This follows since 
   $$\Delta^r x = \Delta' x + 1 \otimes x \equiv  \Delta' x \pmod{C_{i-1}\Or(U)}$$
for all $x \in C_i\Or(U)$, and $i\geq 1$. In particular, $\gr^C_1 \Or(U) = (C_1 \Or(U))_+$ is the space  of primitive elements in $\Or(U)$, which we shall occasionally denote by  $\hbox{Prim}(\Or(U))$.

 \begin{lem} The coaction satisfies 
$$\Delta C_n(M) \subseteq \sum_{i+j=n} C_i(M) \otimes_k C_j (\Or(U))\ .$$
\end{lem}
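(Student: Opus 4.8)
The plan is to reduce the statement to the reduced coaction $\Delta^r=\Delta-(\id\otimes 1)$ and then to combine control of the $M$-factor of $\Delta^r(x)$, coming from the recursive definition of $C_\bullet M$ applied in quotient comodules, with control of the $\Or(U)$-factor, coming from the kernel description of $C_\bullet\Or(U)_+$. Write $A=\Or(U)$, $A_+=\Or(U)_+$ and $C_jA_+:=C_jA\cap A_+$, and let $\Delta'$ be the reduced coproduct on $A_+$ as in the text. Since $\Delta(x)=x\otimes 1+\Delta^r(x)$ and $x\otimes 1\in C_nM\otimes C_0A$, it is enough to prove that $\Delta^r(C_nM)\subseteq\sum_{i+j=n,\ j\geq 1}C_iM\otimes_k C_jA_+$.

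First I would record some bookkeeping. From the comodule coassociativity and counit axioms one checks the reduced coassociativity identity $(\Delta^r\otimes\id)\circ\Delta^r=(\id\otimes\Delta')\circ\Delta^r$, and then a routine induction gives $(\Delta^r)^{k}=(\id\otimes(\Delta')^{k-1})\circ\Delta^r$ for every comodule and $k\geq 1$, with the convention $(\Delta')^0=\id_{A_+}$. Combined with the description of the filtration by kernels recalled just before the lemma, this yields the key observation
\[
\Delta^r(C_mN)\ \subseteq\ N\otimes_k C_mA_+\qquad (\dagger)
\]
valid for every comodule $N$ and every $m\geq 0$: indeed $x\in C_mN$ forces $(\Delta^r)^{m+1}x=0$, i.e.\ $\Delta^r(x)$ lies in the kernel of the map $\id\otimes(\Delta')^m\colon N\otimes A_+\to N\otimes A_+^{\otimes(m+1)}$, and since $N$ is flat over the field $k$ this kernel equals $N\otimes\ker((\Delta')^m)=N\otimes C_mA_+$.

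Next I would upgrade $(\dagger)$ to a family of ``wedge'' inclusions. Fix $0\leq i\leq n-1$ and pass to the quotient comodule $\bar M=M/C_iM$, with projection $q\colon M\to\bar M$. A short induction on $j$, using $C_0\bar M=(\bar M)^U$ and the defining recursion for $C_\bullet$, shows that the induced filtration on $\bar M$ is simply shifted: $C_j\bar M=C_{i+1+j}M/C_iM$ for all $j\geq -1$. Hence for $x\in C_nM$ the class $\bar x=q(x)$ lies in $C_{n-1-i}\bar M$, so applying $(\dagger)$ to $\bar M$ with $m=n-1-i$ gives $(q\otimes\id)\Delta^r(x)=\Delta^r_{\bar M}(\bar x)\in\bar M\otimes C_{n-1-i}A_+$. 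As $\ker(q\otimes\id)=C_iM\otimes A_+$, pulling back shows $\Delta^r(C_nM)\subseteq C_iM\otimes A_+ + M\otimes C_{n-1-i}A_+$ for every $0\leq i\leq n-1$. Together with $(\dagger)$ for $M$ itself (the case $m=n$), this puts $\Delta^r(x)$ into $(M\otimes C_nA_+)\cap\bigcap_{i=0}^{n-1}\bigl(C_iM\otimes A_+ + M\otimes C_{n-1-i}A_+\bigr)$.

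It remains to evaluate this intersection. Choosing bases of $M$ and of $A_+$ adapted to the exhaustive increasing filtrations $C_\bullet$, every subspace in sight is spanned by a subset of the resulting basis $\{e_a\otimes f_b\}$ of $M\otimes A_+$, so the intersection becomes a purely combinatorial question about index sets: a careful inspection shows that $e_a\otimes f_b$ survives all these conditions precisely when $\deg(e_a)+\deg(f_b)\leq n$, and the span of such vectors is exactly $\sum_{i+j=n,\ j\geq 1}C_iM\otimes C_jA_+$; by the reduction in the first paragraph this completes the proof. (For $M=\Or(U)$ one recovers the classical fact that the coradical filtration of a coalgebra is a coalgebra filtration.) I expect the main obstacle to be this last step: although it is elementary linear algebra, it is the \emph{conjunction} of $(\dagger)$ with all the wedge inclusions — rather than any single one of them — that forces the diagonal bound $\sum_{i+j=n}$, and keeping the index bookkeeping straight is the part most likely to go wrong. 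The other slightly delicate point is the shift formula $C_j(M/C_iM)=C_{i+1+j}M/C_iM$, which is where the recursive definition of the filtration is really used.
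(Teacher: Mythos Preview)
Your proof is correct. It shares the essential input with the paper's argument --- the reduced coassociativity identity $(\Delta^r\otimes\id)\Delta^r=(\id\otimes\Delta')\Delta^r$ and its iterate --- but organises things differently. The paper is more direct: having written $\Delta^r x=\sum_i x_i\otimes\alpha_i$ with $x_i\in C_iM$ linearly independent modulo $C_{i-1}M$, it observes that iterating the \emph{definition} of $C_\bullet$ gives $(\Delta^r)^{k+1}x\in C_{n-k-1}M\otimes A_+^{\otimes(k+1)}$, rewrites the left side as $\sum_i x_i\otimes(\Delta')^k\alpha_i$ via the identity, and reads off from linear independence that $(\Delta')^k\alpha_i=0$ for $i\geq n-k$, i.e.\ $\alpha_i\in C_{n-i}A_+$. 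Your route is in a sense dual: rather than controlling the $M$-side first and deducing the $A$-side, you control the $A$-side first via $(\dagger)$, transfer this to the $M$-side by passing to quotient comodules (where your shift formula enters), and only then introduce an adapted basis to evaluate the resulting intersection of wedge regions. The paper's packaging is more economical, since the adapted decomposition is introduced up front and the conclusion is immediate without the final combinatorial step; your approach makes the role of the kernel description $C_mA_+=\ker((\Delta')^m)$ and of the quotient comodules more explicit.
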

\begin{proof}  
 The coassociativity  $(\Delta \otimes \id ) \Delta = (\id \otimes \Delta) \Delta$ implies, by substituting in the definitions of $\Delta'$ and $\Delta^r$, the following 
 identity
 \begin{equation} \label{Deltarcoassoc} 
 (\Delta^r\otimes \id) \Delta^r = (\id \otimes \Delta') \Delta^r\ .
 \end{equation}
 Now let $x\in C_n(M)$, and write  using a variant of Sweedler's notation $$\Delta^r x = \sum_{0 \leq i \leq n}  x_i \otimes \alpha_i$$
   where the  $x_i$ are in $C_i(M)$ and are linearly independent modulo $C_{i-1}(M)$. 
 Since $x\in C_n(M)$, we have $(\Delta^r)^{k+1}x\in C_{n-k-1}M $. By coassociativity  $(\ref{Deltarcoassoc})$
 $$(\Delta^r)^{k+1}x  = (\id \otimes \Delta')^k \Delta^r x =  \sum_{0 \leq i \leq n}  x_i \otimes (\Delta')^k \alpha_i \ . $$
 By the independence assumption on the $x_i$, we must have $(\Delta')^k \alpha_i =0 $ whenever $i\geq n-k$, and hence $\alpha_{n-k} \in C_{k}$.
\end{proof}

It follows from the lemma that 
$$\Delta^r \, C_n M \,  \subseteq \,  C_{n-1} M \otimes_k C_1\Or(U) + C_{n-2}M \otimes_k \Or(U) $$
and  taking the quotient modulo $C_{n-2}M\otimes_k \Or(U)$  defines a map 
$$\delta:  \gr^C_n M \To \gr^C_{n-1} M \otimes_k C_1\Or(U)$$
which is injective by definition of the filtration $C$.  The exact sequence 
$$0 \To C_0 \Or(U) \To C_1 \Or(U) \To \gr^C_1 \Or(U)\To 0$$
splits via the augmentation map $\varepsilon$ which sends $C_1 \Or(U) \rightarrow C_0\Or(U) =k$. Thus 
$C_1 \Or(U) = \gr^C_1 \Or(U) \oplus k$. Since in  any Hopf algebra $(\id \otimes \varepsilon )\Delta^r =0$,
the map $\delta$ lands in the kernel of $\id \otimes \varepsilon$, namely $ \gr^C_{n-1} M \otimes_k \gr^C_1\Or(U)$.
\begin{defn} Iterating $\delta$ we deduce an injective map 
\begin{equation}  \label{firstdefnofPhi} \Phi:  \gr^C M \To M^U  \otimes_k T^c( \gr^C_1 \Or(U)) 
\end{equation}
where $T^c(V) = \bigoplus_{n\geq0 } V^{\otimes n}$ denotes the tensor coalgebra on  $V$, graded by the length of 
tensors.  Recall that $M^U = \gr^C_0 M$. The map $\Phi$ respects the grading on both sides.  We shall call this map the \emph{decomposition into primitives}. \label{gloss: decompprim} 
\end{defn}

In the case $M= \Or(U)$,  $M^U=k$ and $(\ref{firstdefnofPhi})$ yields  an injective  graded map
$$ \Phi: \gr^C_n \Or(U) \To T^c( \gr^C_1 \Or(U))\ .$$
Equip the tensor coalgebra $T^c(V)$  with the deconcatenation coproduct
\begin{eqnarray} 
\Delta^{\dec} : T^c(V)  &\To & T^c(V) \otimes_k T^c(V)   \nonumber \\
   v_1\otimes \ldots \otimes v_n  & \mapsto  &  \sum_{k=0}^n (v_1 \otimes \ldots \otimes v_k) \otimes (v_{k+1} \otimes \ldots \otimes v_n) \ .\nonumber
\end{eqnarray}

\begin{lem} 
The following diagram commutes
$$
\begin{array}{ccc}
 \gr^C M  & \overset{\gr^C \Delta}{\To}   & \gr^C M \otimes_k  \gr^C \Or(U)   \\
  \downarrow &   & \downarrow   \\
   M^U \otimes_k T^c(\gr^C_1 \Or(U))    & \overset{\id \otimes \Delta^{\dec}}{\To}    &   M^U \otimes_k T^c(\gr^C_1 \Or(U))  \otimes_k T^c(\gr^C_1 \Or(U)) 
\end{array}\ .
$$
where the vertical map on the left is $\Phi$, and that on the right is $\Phi \otimes \Phi$. 
\end{lem}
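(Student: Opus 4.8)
The plan is to verify commutativity of the square by unwinding both composites on an element of $\gr^C_n M$ and comparing the results term by term, using the iterative definition of $\Phi$ together with the previous lemma (which controls how $\Delta$ interacts with the filtration $C$). First I would recall that $\Phi$ on $\gr^C_n M$ is obtained by applying $\delta: \gr^C_i M \to \gr^C_{i-1}M \otimes_k \gr^C_1\Or(U)$ repeatedly $n$ times and then composing with the inclusion $\gr^C_0 M = M^U \hookrightarrow M^U$; each application of $\delta$ is, up to lower-order terms, just $\Delta^r$ followed by the projection to $\gr^C_{\bullet-1}\otimes \gr^C_1$. So the statement to prove is really that iterating $\Delta^r$ and then deconcatenating is the same as first applying $\Delta$ (i.e. $\Delta^r$ once) and then iterating $\Delta^r$ on each factor; this is precisely a manifestation of coassociativity, in the form of the identity $(\Delta^r\otimes\id)\Delta^r = (\id\otimes\Delta')\Delta^r$ established in the proof of the previous lemma.

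The key steps, in order: (1) reduce to checking the diagram on the associated graded, so that all error terms in $C_{\bullet-1}$ may be discarded and one works with the honest maps $\delta$ and $\gr^C\Delta^r$; (2) observe that $\gr^C\Delta = \gr^C\Delta^r$ (the $\id\otimes 1$ part lands in $\gr^C_n M \otimes \gr^C_0\Or(U)$ with $\gr^C_0\Or(U)=k$, so after passing to $\gr^C$ and using $C_1\Or(U) = \gr^C_1\Or(U)\oplus k$ it contributes the $v_1\otimes\cdots\otimes v_n \mapsto (v_1\otimes\cdots\otimes v_n)\otimes 1 + 1 \otimes(\cdots)$ boundary terms of deconcatenation); (3) prove by induction on $n$ the compatibility
$$
(\gr^C\delta \otimes \id)\circ \gr^C\Delta^r \;=\; (\id \otimes \Delta^{\dec})\circ(\gr^C\delta),
$$
using $(\ref{Deltarcoassoc})$ to swap a $\Delta^r$ past a $\delta$ at the cost of replacing it by $\Delta'$ on $\Or(U)$, and noting that $\gr^C$ of $\Delta'$ on $\gr^C_1\Or(U)$ is zero while on the tensor coalgebra the iterated $\delta$'s assemble into $\Delta^{\dec}$ by the very definition of deconcatenation; (4) package the induction: $\Phi = (\id\otimes \Phi_{T^c})\circ$ (something), so that the inductive hypothesis for $\gr^C_{n-1}M$ feeds directly into the step for $\gr^C_n M$, and the deconcatenation coproduct on $T^c(\gr^C_1\Or(U))$ emerges as the bookkeeping of "where one cuts the string of iterated $\delta$'s".

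Concretely, for $x\in \gr^C_n M$ write $\delta(x) = \sum x_i \otimes \alpha_i$ with $x_i \in \gr^C_{n-1}M$ and $\alpha_i \in \gr^C_1\Or(U)$; then $\Phi(x) = \sum \Phi(x_i)\otimes \alpha_i$ (reading the last tensor slot as the new rightmost letter). Both composites applied to $x$ produce, after full expansion, a sum over all ways of writing $\Phi(x) = m \otimes (\ell_1\otimes\cdots\otimes\ell_n)$ and all cut points $0\le k \le n$ of the term $\big(m\otimes(\ell_1\otimes\cdots\otimes\ell_k)\big)\otimes\big(1\otimes(\ell_{k+1}\otimes\cdots\otimes\ell_n)\big)$ — for the top route because $\gr^C\Delta$ produces exactly the $\Delta^r$-expansion whose iterated-$\delta$ image is the left block while the remaining letters form the right block, and for the bottom route because that is the definition of $\id\otimes\Delta^{\dec}$ applied to $\Phi(x)$. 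Matching these term by term via coassociativity finishes it.

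The main obstacle I expect is the bookkeeping in step (3): one must be careful that when $\Delta^r$ is applied to an element of $\gr^C_n M$ and then $\delta$ is iterated, the "lower-order" corrections controlled by the previous lemma (the statement $\Delta C_n M \subseteq \sum_{i+j=n} C_iM \otimes_k C_j\Or(U)$) are exactly what guarantees that after passing to $\gr^C$ the $\Or(U)$-factors distribute into $\gr^C_1\Or(U)$-pieces in the degree pattern matching deconcatenation; keeping the two filtration indices $i$ and $j$ synchronised correctly — so that a cut after $k$ letters corresponds to the summand with $j = n-k$ — is the delicate point, and it is precisely where $(\ref{Deltarcoassoc})$, rather than bare coassociativity, is needed so that the $\Or(U)\otimes\Or(U)$ side is governed by $\Delta'$ (hence stays inside the span of $\gr^C_1$-tensors after grading) rather than by $\Delta$.
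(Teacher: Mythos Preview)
Your approach is correct and is precisely the direct route the paper alludes to in its first sentence (``The lemma can be proved directly from the recursive definition of $\Phi$ in terms of $\delta$'') but does not write out. The paper instead gives a conceptual argument: it reinterprets the iterated $\delta$ as coming from a left action of the free Lie algebra $\LL(\uu^{ab})$ on $\gr^C M$ (built from the map $\uu^{ab}\times \gr^C_n M \to \gr^C_{n-1}M$ extracted from the two-step extension $C_nM/C_{n-2}M$), identifies the affine ring of $\LL(\uu^{ab})$ with the tensor coalgebra $T^c(\gr^C_1\Or(U))$, and then notes that the commutative square is simply the dual of the axiom that a left action is associative, projected onto $C_0M$. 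Your computation buys self-containedness and avoids introducing the Lie-algebra picture; the paper's argument buys a one-line explanation of \emph{why} the deconcatenation coproduct appears (it is the coproduct on the affine ring of a free group) and sidesteps exactly the index-synchronisation bookkeeping you flag as the delicate point in your step (3).
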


\begin{proof}
The lemma can be proved directly from the recursive definition of $\Phi$ in terms of $\delta$. It is more instructive to  give a  conceptual  proof by interpreting $\Phi$  in the following way.
Consider the sequence
$$0 \To \gr^C_{n-1} M \To C_nM / C_{n-2} M \To \gr^C_{n} M \To 0\ .$$
Since $U$ acts trivially on $\gr^C M$, it follows that the action of $U$ on $C_n M / C_{n-2}M$ factors through   $ U^{ab}$.  If we write  $\uu=\Lie\, U$, then we deduce a map 
$$\uu^{ab} \times  \gr^C_n M \To \gr^C_{n-1} M $$
and hence, denoting $\LL(\uu^{ab})$ the free Lie algebra on $\uu^{ab}$, we have
\begin{equation} \label{LLuaction} 
\LL (\uu^{ab}) \times \gr^C M \To \gr^C M\ .
\end{equation}
The affine ring of $\LL(\uu^{ab})$ is  the tensor coalgebra $T^c(\gr^C_1 \Or(U))$, so the dual of the previous map
is $\gr^C M \rightarrow \gr^C M \otimes_k T^c  (\gr^C_1 \Or(U))$. The map $\Phi$ is obtained by projecting onto  the component 
of $\gr^C M$ of degree zero.  The fact that 
$\LL (\uu^{ab}) \times \gr^C M \rightarrow \gr^C M$ is a left action can be expressed as a commutative diagram. Dualising it and projecting onto $C_0M$
gives precisely the statement of the lemma.
\end{proof}

\subsubsection{Multiplicative structure}
 Now suppose in addition that $M$ is a commutative $k$-algebra, and that the action of $U$ respects the multiplication $\mu$ on $M$. An induction on the indices $i, j$ shows that $\mu ( C_i M \times C_j M) \subseteq C_{i+j}M$.  In particular $C_0M = M^U$ is a subalgebra of $M$. 
Recall that the tensor coalgebra $T^c(V)$ is equipped with a commutative product  $\sha$ called the shuffle product.

 \begin{lem} If $M$ is a commutative $k$-algebra, then $\Phi$ is a homomorphism of graded commutative $k$-algebras, 
 where  $T^c(V)$ is equipped with the shuffle product. 
 \end{lem}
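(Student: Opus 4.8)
The plan is to reduce the multiplicativity statement to the previous lemma (the commutative-diagram characterisation of $\Phi$) together with the compatibility between the shuffle product on the target and the deconcatenation coproduct, using the standard fact that these two structures make $T^c(V)$ a commutative Hopf algebra. Concretely, I would first recall that $\Phi$ arises as the degree-zero component of the dual of a left action $\LL(\uu^{ab}) \times \gr^C M \to \gr^C M$, and that when $M$ is a $k$-algebra on which $U$ (hence $\LL(\uu^{ab})$) acts by algebra homomorphisms, this action is by derivations on the associated graded. So the first step is: observe that $\mu(C_iM \times C_jM) \subseteq C_{i+j}M$ (the induction sketched in the text just above the lemma) makes $\gr^C M$ a graded $k$-algebra, and that the induced action of $\LL(\uu^{ab})$ respects this product in the appropriate Hopf-compatible sense (i.e. the coaction $\gr^C M \to \gr^C M \otimes_k T^c(\gr^C_1\Or(U))$ is a morphism of algebras, where the target carries the tensor product of the product on $\gr^C M$ and the shuffle product on $T^c$).

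The key step is therefore: the coaction $\rho: \gr^C M \to \gr^C M \otimes_k T^c(\gr^C_1\Or(U))$ dual to $(\ref{LLuaction})$ is a homomorphism of $k$-algebras when $T^c$ carries the shuffle product. This follows because the original coaction $\Delta: M \to M \otimes_k \Or(U)$ is an algebra map (as $U$ acts by algebra automorphisms, equivalently $M$ is a comodule algebra), this passes to the associated graded for the filtration $C$ by the multiplicativity $\mu(C_i \times C_j)\subseteq C_{i+j}$, and on $\gr^C \Or(U)$ the product induced from that of $\Or(U)$ is precisely the shuffle product on $T^c(\gr^C_1\Or(U))$ — this last identification is a classical fact about the coradical filtration of a commutative Hopf algebra (the associated graded of the coradical filtration of $\Or(U)$ is the shuffle algebra on its primitives), and it is the one place I would either cite the literature or give a short induction. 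Granting this, $\Phi$ is obtained from $\rho$ by composing with $\gr^C M \to M^U$ (projection onto degree $0$) on the left factor; since that projection is an algebra homomorphism ($M^U = C_0 M$ is a subalgebra) and the counit/grading behaves multiplicatively, $\Phi$ is a composite of algebra maps and hence an algebra map. That it is graded and injective is already established in Definition~\ref{gloss: decompprim}.

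The main obstacle I anticipate is making precise and justifying the identification of the product on $\gr^C \Or(U)$ with the shuffle product on $T^c(\gr^C_1\Or(U))$, compatibly with the map $\Phi: \gr^C_n\Or(U) \to T^c(\gr^C_1\Or(U))$ already constructed via iterating $\delta$. One has to check that the abstract isomorphism "$\Or(\LL(\uu^{ab})) = T^c(\gr^C_1\Or(U))$" used in the proof of the previous lemma is an isomorphism of algebras, not merely of coalgebras — i.e. that the multiplication on the affine ring of the free Lie algebra, transported along the earlier identifications, is exactly the shuffle product. A clean way to do this is to verify it directly on primitives: $\gr^C_1\Or(U) = \hbox{Prim}(\Or(U))$ consists of primitive elements, the product of two primitives $a,b$ lands in $C_2\Or(U)$ with $\Delta'(ab) = a\otimes b + b\otimes a$, and $\delta(ab) = a\otimes b + b\otimes a = a \sha b$ in $T^c$-degree $2$; an induction on length, using the Lemma on $\Delta C_n(M)$ and the fact that $\delta$ is a derivation-like map compatible with $\Delta^{\dec}$ (which is the content of the previous lemma), then propagates this to all degrees. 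Alternatively, and more economically, one invokes the universal property: $\Phi$ is the unique graded coalgebra map $\gr^C\Or(U) \to T^c(\gr^C_1\Or(U))$ inducing the identity on primitives, and since both source and target are commutative Hopf algebras with the source generated in degree $1$ and $\Phi$ an iso in degree $1$, $\Phi$ is automatically an algebra map. I would present the short direct computation on primitives plus the inductive step as the core of the proof, and relegate the structural remarks to a citation.
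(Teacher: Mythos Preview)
Your approach is correct, but it differs from the paper's main argument. The paper gives a direct inductive proof: it first shows that $\delta$ is a derivation, i.e.\ $\delta(xy) = (x\otimes 1)\delta(y) + \delta(x)(y\otimes 1)$ on $\gr^C M$, which follows immediately from expanding $\Delta^r(xy)$ using that $\Delta$ is multiplicative. Then, using the recursive relation $\partial\Phi = (\Phi\otimes\id)\delta$ (where $\partial$ is right deconcatenation on $T^c$), it proves $\Phi(xy)=\Phi(x)\sha\Phi(y)$ by induction on total $C$-degree, recognising at the inductive step precisely the recursive characterisation of the shuffle product via $\partial(a\sha b) = a\sha\partial b + b\sha\partial a$. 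This is completely elementary and self-contained: it never needs to identify the algebra structure on $\gr^C\Or(U)$ or invoke the Hopf structure of $T^c$ beyond the definition of $\sha$.

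Your route is the structural one: dualise the fact that the $\LL(\uu^{ab})$-action on $\gr^C M$ respects multiplication, so the coaction $\gr^C M \to \gr^C M \otimes T^c$ is an algebra map, and then project. The paper in fact mentions exactly this as an alternative immediately after its proof. What your approach buys is a cleaner conceptual picture; what it costs is the extra verification you correctly flag as the obstacle, namely that the commutative algebra structure on $T^c(\gr^C_1\Or(U))$ qua affine ring of the free Lie algebra is the shuffle product. This is standard but not entirely trivial, and the paper sidesteps it completely. Your proposed check on primitives plus induction would essentially reconstruct the paper's direct argument in disguise, so if you unwind it you converge on the same computation.
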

 \begin{proof} Since  $\Delta$ is a homomorphism, $\Delta^r(xy) = \Delta^r(x) (y \otimes 1) + ( x\otimes 1) \Delta^r(y) + \Delta^r(x) \Delta^r(y)$
 for all $x,y\in M$.  It follows that $\delta$ is a derivation 
 \begin{equation}
 \delta(xy) = (x\otimes 1 ) \delta(y) + \delta(x)  (y \otimes 1)
 \end{equation}
 for $x,y \in \gr^C M$,
 and  multiplication by $1$ denotes the identity on $\Or(U)_+$. Now denote by $\partial$ the right deconcatenation map
  $\partial : T^c(V) \rightarrow T^c(V) \otimes V$
  which is given by the formula  $\partial (v_1\otimes \ldots \otimes v_n ) = (v_1\otimes \ldots \otimes v_{n-1}) \otimes v_n$.   It follows from the  definition of the map $\Phi$ as the iteration of $\delta$ that  
  $$\partial \Phi  = (\Phi\otimes \id) \delta \ .$$
  Suppose that $\Phi(ab) = \Phi(a) \sha \Phi(b)$ for  all $a ,b \in \gr^C M$ of total degree  $< n$. If $a$ or $b$ is in $\gr^C_0M$ then the statement is trivial.  Then for 
  $x,y \in \gr^C M$ of total degree $n$ and degree $\geq 1$, we have
  $$\partial \Phi(xy) = (\Phi\otimes \id) \delta(xy) = (\Phi\otimes \id) \big( (x\otimes 1 ) \delta(y) + \delta(x)  (y \otimes 1)\big)\ .$$
  By induction hypothesis applied to $\Phi\otimes \id$,  the right hand side is 
  $$(\Phi\otimes \id)(x\otimes 1) \sha (\Phi\otimes \id) \delta(y) + (\Phi\otimes \id)(y \otimes 1) \sha (\Phi\otimes \id) \delta(x)$$
 which by the previous equation gives 
 $\partial \Phi(xy) = \Phi(x) \sha \partial \Phi(y) + \Phi(y) \sha \partial \Phi(x)$.   This is in fact  one of the many  equivalent definitions of the  shuffle product, and proves,  by the injectivity of $\partial$,  that $\Phi(xy) = \Phi(x) \sha \Phi(y)$.  
 \end{proof} 

Another way to see this lemma is simply to note that 
the action $(\ref{LLuaction})$ respects the multiplication on $\gr^C_M$ and to encode this by a commutative diagram. The dual diagram,
after projecting to $C_0M$ in the appropriate place implies the lemma.

\begin{rem} \label{remunipmonod} One can think of $C_i \Or(U)$ as functions of
`unipotent monodromy' of degree $i$   in the  following way. 
For any   $f \in \Or(U)$, viewed as a function on groups $U(R)$, for  all commutative $k$-algebras $R$,   define  a new function $M_u f$  by  
$(M_u f)  (x)  = f(ux)-f(x)$.  An $ f\in C_i(\Or(U))$  satisfies
$M_{u_1} \ldots M_{u_{n}} f =0$  for all $u_1,\ldots, u_n \in U(R)$
whenever $n\geq i+1$.  In particular, elements of $C_0 \Or(U)$ are constant, and elements of  $C_1 \Or(U)$ are functions $f$ satisfying $f(ab) = f(a) + f(b)$. 
\end{rem}

\subsubsection{Cohomological interpretation} Let $M, U$ be as above.  
Consider the normalised cobar complex,  dual to  (\cite{Weibel}, page 283),
$$ 0 \rightarrow M   \rightarrow M\otimes_k \Or(U) \rightarrow  M \otimes_k \Or(U)_+ \otimes_k \Or(U) \rightarrow   M \otimes_k \Or(U)^{\otimes 2}_+ \otimes_k \Or(U) \rightarrow \ldots $$
where the $(n+1)^{\mathrm{th}}$ arrow is  given by 
$$ d_n =  \sum_{i=0}^n (-1)^i\,  \id^{\otimes i}\otimes \Delta \otimes \id^{\otimes n-i}\ ,$$
(e.g.,  $d_1 = \Delta$, and $d_2 = \Delta \otimes \id - \id \otimes \Delta$), followed by projection onto $\Or(U)_+$ in the appropriate places. 
It is a resolution of $M$ in the category of $\Or(U)$-comodules.  
Derive  the functor 
$\otimes_{\Or(U)} k$, where $k$ is viewed as an $\Or(U)$-module via the augmentation map, by applying it  to the previous complex with the first two terms removed. It defines a complex
$$\mathcal{R}_M: \qquad 0 \To M  \overset{\Delta}{\To} M\otimes_k \Or(U)_+  {\To} M \otimes_k \Or(U)_+^{\otimes 2} {\To} \ldots \ ,$$
with essentially the same differentials $d_n$ as before. 
Since the category of $\Or(U)$-comodules  is equivalent to the category of representations of $U$ of finite type, we deduce that
$$H^n(\mathcal{R}_M) =\mathrm{Ext}^n_{\Or(U)-\hbox{comod}}(k, M)=  \mathrm{Ext}^n_{\mathrm{Rep}(U)}(k, M)=  H^n(U; M) \ .$$
Therefore  $H^0(U;M)  = H^0(\mathcal{R}_M)) =  C_0 M$, since the image of   $\Delta m$ vanishes in $ M \otimes \Or(U)_+$ if and only if  
$\Delta m = m\otimes 1$. In the special case $M= k$,  we have
$$\mathcal{R}_{k} : \qquad 0 \To k  \overset{0}{\To}  \Or(U)_+  \overset{\Delta'}{\To} \Or(U)_+^{\otimes 2}{\To} \ldots $$
and it follows that   $H^0(U;k)=k$, and 
\begin{equation} 
H^1(U; k ) = \gr^C_1(\Or(U)) \ . 
\end{equation}
This is a cohomological interpretation for the terms in the right-hand side of the decomposition map $\Phi$.  The map $\delta$ can in fact be viewed
as a differential in a certain  spectral sequence, as we shall show below.

\begin{lem} Viewing $\Or(U)$ as a left $U$-module (right $\Or(U)$-comodule)
$$H^0(U;\Or(U))=k \qquad \hbox{ and } \qquad H^n(U; \Or(U))= 0  \hbox{  for all  } n \geq 1 \ . $$
\end{lem}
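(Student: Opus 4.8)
The plan is to recognise $\Or(U)$, equipped with its natural right $\Or(U)$-comodule structure (coaction $=\Delta$), as an injective object of the category of $\Or(U)$-comodules, so that $H^n(U;\Or(U))=\Ext^n_{\Or(U)-\hbox{comod}}(k,\Or(U))$ — the identification recalled just above — vanishes for $n\geq 1$ for purely formal reasons, and then to pin down $H^0$ by hand. Conceptually this is Shapiro's lemma: $\Or(U)$ is the comodule coinduced from the trivial subgroup, so its cohomology is that of the trivial group.

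Concretely, the one thing to verify is that, for every $k$-vector space $V$, the \emph{cofree} comodule $V\otimes_k\Or(U)$ (with coaction $\id_V\otimes\Delta$) represents the functor $N\mapsto\Hom_k(N,V)$, i.e.
$$\Hom_{\Or(U)-\hbox{comod}}(N,\, V\otimes_k\Or(U)) \;\cong\; \Hom_k(N,V)$$
naturally in the $\Or(U)$-comodule $N$. The map sends $g\colon N\to V$ to $(g\otimes\id_{\Or(U)})\circ\rho_N$, where $\rho_N$ is the coaction on $N$; coassociativity of $\rho_N$ shows this is a comodule morphism, and the identity $(\varepsilon\otimes\id)\Delta=\id$ together with coassociativity show that $\phi\mapsto(\id_V\otimes\varepsilon)\circ\phi$ is a two-sided inverse. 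Since $\Hom_k(-,V)$ is exact, $V\otimes_k\Or(U)$ is an injective object of $\Or(U)-\hbox{comod}$; taking $V=k$ gives that $\Or(U)$ itself is injective, and hence $\Ext^n_{\Or(U)-\hbox{comod}}(k,\Or(U))=0$ for all $n\geq 1$, which is the asserted vanishing.

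For $H^0$ I would use whichever of two routes reads better in context: specialise the displayed isomorphism to $N=V=k$, giving $H^0(U;\Or(U))=\Hom_{\Or(U)-\hbox{comod}}(k,\Or(U))\cong k$; or invoke the already-established identity $H^0(U;M)=C_0M$ together with the computation $C_0\Or(U)=k$ made above (any $x\in C_0\Or(U)$ satisfies $x=\varepsilon(x)$ because $\id=(\varepsilon\otimes\id)\Delta$, hence is constant). If one prefers to stay at the level of the explicit complexes, the same input yields a direct proof: the cobar resolution $0\to\Or(U)\to\Or(U)\otimes_k\Or(U)\to\Or(U)\otimes_k\Or(U)_+\otimes_k\Or(U)\to\cdots$ is a resolution of $\Or(U)$ by cofree, hence injective, comodules, applying $\Hom_{\Or(U)-\hbox{comod}}(k,-)$ recovers $\mathcal{R}_{\Or(U)}$, and the retractions furnished by the isomorphism above assemble into a contracting homotopy for $\mathcal{R}_{\Or(U)}$ in degrees $\geq 1$.

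This lemma is essentially formal, so there is no serious obstacle; the main point requiring care is the bookkeeping around conventions — one must be sure the $\Or(U)$-comodule structure meant here is the regular coaction $\Delta$ (the one for which $C_0\Or(U)=k$) and not a variant twisted by the antipode, and, in the complex-level argument, that the identification of $\mathcal{R}_{\Or(U)}$ with the derived functor matches the differentials on the nose. Once those choices are fixed, everything follows from the cofreeness adjunction above.
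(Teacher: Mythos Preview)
Your argument is correct, but the paper takes a different and somewhat slicker route. Rather than invoking injectivity of cofree comodules abstractly, the paper observes that if one takes the cobar resolution with $M=k$ and reverses all tensors, one obtains an exact complex
$$0 \rightarrow k \rightarrow \Or(U) \rightarrow \Or(U)\otimes_k \Or(U)_+ \rightarrow \Or(U)\otimes_k \Or(U)_+^{\otimes 2} \rightarrow \cdots$$
which agrees with $\mathcal{R}_{\Or(U)}$ from the second term onwards (with the same differentials up to sign), so one can read off the cohomology directly. Your approach is the standard categorical one (cofree $\Rightarrow$ injective $\Rightarrow$ acyclic), and it has the virtue of making the underlying reason transparent and of generalising cleanly; the paper's approach is more hands-on, stays entirely within the explicit complexes already set up in the section, and avoids having to state and verify the cofreeness adjunction. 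Either is perfectly adequate here.
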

\begin{proof} Take the  cobar resolution  with $M=k$ and reverse all tensors to give 
$$ 0 \rightarrow k   \rightarrow  \Or(U) \rightarrow  \Or(U) \otimes_k \Or(U)_+ \rightarrow  \Or(U) \otimes_k  \Or(U)^{\otimes 2}_+  \rightarrow \ldots $$
It is  a resolution for the same reasons as the cobar resolution. It agrees with $\mathcal{R}_{\Or(U)}$ from the 
second term onwards, and has the same differentials up to a possible overall sign,  so we can read off the cohomology of $\Or(U)$. \end{proof}

\begin{prop} Suppose that $U$ has cohomological dimension $1$. Then 
$$0 \To \gr^C_n M \overset{\delta}{\To} \gr^C_{n-1} M \otimes_k H^1(U;k) \To \gr^C_{n-1} H^1(U;M) \To 0$$
is exact for all $n \geq 1$.
\end{prop}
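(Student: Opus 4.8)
The plan is to exhibit $\delta$ as the differential induced on associated graded by the coradical filtration on a small complex computing $H^\bullet(U;M)$, and to read the sequence off from the degeneration of the resulting spectral sequence, degeneration being forced by the hypothesis $\mathrm{cd}(U)=1$. As a first step I would use the cohomological dimension hypothesis to replace the (large) cobar complex $\mathcal{R}_M$ by a two-term model. Since $U$ is pro-unipotent, ``$\mathrm{cd}(U)=1$'' amounts to $H^{\geq 2}(U;k)=0$, hence to $\uu=\Lie\,U$ being a free pro-nilpotent Lie algebra; equivalently $\Or(U)$ is isomorphic as a Hopf algebra to the shuffle algebra $T^c(H^1(U;k))$, whose coradical filtration is precisely the filtration by tensor length. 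Using this (Koszul duality for the tensor algebra $T(H^1(U;k)^\vee)$, or the small Chevalley--Eilenberg complex of $\uu$), the cohomology $H^\bullet(U;M)$ is computed by the two-term complex
\[
K^\bullet_M:\qquad 0\To M\overset{\delta_M}{\To}M\otimes_k H^1(U;k)\To 0
\]
placed in degrees $0,1$, where $\delta_M$ is the length-one component of the coaction $\Delta^r$. In particular $H^0(U;M)=\ker\delta_M=M^U$, $H^1(U;M)=\bigl(M\otimes_k H^1(U;k)\bigr)/\delta_M(M)$, and $H^i(U;M)=0$ for $i\geq 2$.

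Next I would filter $K^\bullet_M$ by the coradical degree. Because $U$ acts unipotently we have $(u-\id)\,C_nM\subseteq C_{n-1}M$, hence $\delta_M(C_nM)\subseteq C_{n-1}M\otimes_k H^1(U;k)$, so the subcomplexes $F_nK^\bullet_M:=[\,C_nM\to C_{n-1}M\otimes_k H^1(U;k)\,]$ form an exhaustive, bounded-below filtration of $K^\bullet_M$ compatible with $\delta_M$, and the map induced on the $n$-th associated graded is exactly the map $\delta:\gr^C_nM\to\gr^C_{n-1}M\otimes_k H^1(U;k)$ of the Definition. I then \emph{define} the coradical filtration on $H^1(U;M)$ by $C_{n-1}H^1(U;M):=F_nH^1(U;M)$; equivalently $C_jH^1(U;M)$ is the image of $C_jM\otimes_k H^1(U;k)=C_j\bigl(M\otimes_k H^1(U;k)\bigr)$ under the projection to $H^1(U;M)$, using that $U$ acts trivially on $H^1(U;k)$.

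I would then run the spectral sequence of the filtered complex $(K^\bullet_M,F)$. Its $E_0$-page in filtration degree $n$ is the complex $[\,\gr^C_nM\overset{\delta}{\To}\gr^C_{n-1}M\otimes_k H^1(U;k)\,]$, so $E_1^{n,0}=\ker\delta$, $E_1^{n,1}=\mathrm{coker}\,\delta$, and $E_1^{n,q}=0$ for $q\geq 2$. Since $\delta$ is injective on $\gr^C_nM$ for every $n\geq 1$ — immediate from the definition of the coradical filtration $C$ — the column $q=0$ collapses to $E_1^{0,0}=M^U$; thus the $E_1$-page has only the two columns $q=0,1$, all differentials $d_r$ with $r\geq 1$ vanish for degree reasons, and $E_2=E_\infty$. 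Convergence then gives, for all $n\geq 1$,
\[
\gr^C_{n-1}H^1(U;M)\;=\;\gr^F_nH^1(U;M)\;=\;E_\infty^{n,1}\;=\;\mathrm{coker}\bigl(\delta:\gr^C_nM\To\gr^C_{n-1}M\otimes_k H^1(U;k)\bigr),
\]
which, together with the injectivity of $\delta$, is exactly the asserted short exact sequence.

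The step I expect to be the real work is the first one: converting $\mathrm{cd}(U)=1$ into the two-term model $K^\bullet_M$ and, above all, checking that its differential agrees on associated graded with the previously defined map $\delta$ (equivalently, with the dual of the action of $\uu^{ab}$ on $\gr^C M$ used to build $\Phi$). This rests on the structural fact that a pro-unipotent group of cohomological dimension $1$ has free Lie algebra, so $\Or(U)\cong T^c(H^1(U;k))$, together with careful bookkeeping of the relevant duality. If one prefers not to invoke freeness of $\uu$ directly, one can instead filter the full cobar complex $\mathcal{R}_M$ by total coradical weight: the Lemma above shows $\Delta$ preserves this filtration, its $E_1$-page is $\mathrm{Cotor}^\bullet_{\gr^C\Or(U)}(k,\gr^C M)$, and one is reduced to the same computation using that $\gr^C\Or(U)$ is cofree. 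Everything after the identification of the $E_1$-page is routine.
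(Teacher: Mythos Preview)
Your argument is correct, but it is organised differently from the paper's. You first invoke the structural consequence of $\mathrm{cd}(U)=1$ (freeness of $\uu$, hence $\Or(U)\cong T^c(H^1(U;k))$) to replace the cobar complex by the two--term Koszul model $K^\bullet_M$, and only then filter by the coradical degree; the spectral sequence then collapses for trivial length reasons. The paper does the opposite: it keeps the full cobar resolution $\mathcal{R}_M$, filters it by $F^p\mathcal{R}_M=\mathcal{R}_{C_{-p}M}$ (the $C$-filtration on $M$ alone), and computes $E_1^{p,q}=\gr^C_{-p}M\otimes_k H^{p+q}(U;k)$ since $\gr^C M$ is a trivial $U$-module. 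The hypothesis $H^{\geq 2}(U;k)=0$ then confines $E_1$ to the two diagonals $p+q\in\{0,1\}$, forcing degeneration at $E_2$; the differential $d_1$ on the $p+q=0$ line is identified with $\delta$, and the four--term exact sequence drops out, with the first term vanishing for $n\geq 1$ because $H^0(U;M)=C_0M$.

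The trade--off is this: the paper never needs to know that $\uu$ is free or to build the small model, and the identification $d_1=\delta$ is immediate from the definitions. Your route front--loads more input (the freeness theorem, and the check that the length--one component of $\Delta^r$ really is the differential of the small complex and induces the paper's $\delta$ on $\gr^C$), but once that is in place the endgame is a one--liner. Your closing remark about filtering the full cobar complex ``by total coradical weight'' is close in spirit to what the paper actually does, though note that the paper filters only through the $C$-filtration on the $M$-factor, which already yields $E_1=\gr^C M\otimes H^\bullet(U;k)$ directly and avoids computing $\mathrm{Cotor}$ over $\gr^C\Or(U)$.
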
 
\begin{proof}Filter the complex  $\mathcal{R}_M$ by $F^p \mathcal{R}_M = \mathcal{R}_{C_{-p}M}$.  It defines  a spectral sequence with 
$E^0_{p,q} = \gr_F^p \mathcal{R}_M^{p+q}$
and $E^1_{p,q} = H^{p+q}(U; \gr^{C}_{-p} M)$, and converges to $\gr_F^p H^{p+q}(U; M)$.  Since $\gr^C M$ is a trivial $U$-module we have
$$E^1_{p,q} = \gr^C_{-p} M \otimes_k H^{p+q}(U;k)\ .$$
The differential $d^1: E^1_{-p,p} \rightarrow E^1_{1-p,p}$ is the operator $\delta$ defined earlier.  Since $H^j(U;k)$ vanishes for all $j\geq 2$,
 $E^1_{p,q}$ vanishes unless  $p+q\in \{0,1\}$ and the spectral sequence degenerates.  Therefore the following sequence is exact: 
 $$0 \rightarrow   \gr_F^{-n} H^0(U;M) \rightarrow \gr^C_n M \overset{\delta}{\rightarrow} \gr^C_{n-1} M \otimes_k H^1(U;k) \rightarrow \gr_F^{1-n} H^1(U;M) \rightarrow 0$$
The result follows since $H^0(U;M) = C_0 M$ and $\gr^{-n}_F H^0(U;M)=0$ if $n \geq 1$.
\end{proof} 
\begin{cor} \label{cordecompisom} Suppose that $M=  T\otimes_k \Or(U) $, where $T$ is a trivial $U$-module, and $U$ has cohomological dimension $1$.  
Then the  map $\Phi$  is an isomorphism 
$$\Phi: \gr^{C}  M \overset{\sim}{\To} T \otimes_k  T^c(H^1(U;k))\ .$$
\end{cor}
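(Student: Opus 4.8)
The plan is to feed the vanishing $H^1(U; T\otimes_k \Or(U)) = 0$ into the preceding proposition and then iterate the resulting isomorphisms; modulo those two inputs the statement is essentially formal.

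First I would identify the two invariants of $M = T\otimes_k \Or(U)$ that enter the proposition. Since $T$ is a trivial $U$-module, the coaction on $M$ is $\id_T \otimes \Delta$, so the complex $\mathcal{R}_M$ computing $H^\bullet(U;M)$ is obtained from $\mathcal{R}_{\Or(U)}$ by applying the exact functor $T\otimes_k(-)$; hence $H^n(U;M) \cong T \otimes_k H^n(U;\Or(U))$ for all $n$. By the lemma on the cohomology of $\Or(U)$ as a left $U$-module, this gives $M^U = C_0 M = T$ and, crucially, $H^1(U;M) = 0$. (Equivalently, one checks directly from $\Delta^r_M = \id_T \otimes \Delta^r_{\Or(U)}$ that $C_n M = T\otimes_k C_n \Or(U)$, so that the whole situation is the base change along $T$ of the case $M = \Or(U)$, and $\Phi$ for $M$ is $\id_T \otimes \Phi$ for $\Or(U)$.)

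Next I would invoke the preceding proposition: $U$ has cohomological dimension $1$ and $H^1(U;M)=0$, so the exact sequence
$$0 \To \gr^C_n M \overset{\delta}{\To} \gr^C_{n-1} M \otimes_k H^1(U;k) \To \gr^C_{n-1} H^1(U;M) \To 0$$
degenerates to an isomorphism $\delta\colon \gr^C_n M \overset{\sim}{\To} \gr^C_{n-1} M \otimes_k H^1(U;k)$ for every $n\geq 1$. By definition $\Phi$ restricted to $\gr^C_n M$ is the $n$-fold iterate of $\delta$ (each step after the first tensored with the identity on the accumulated factors $\gr^C_1\Or(U)^{\otimes j}$), hence a composite of isomorphisms, with image $C_0 M \otimes_k H^1(U;k)^{\otimes n} = T\otimes_k H^1(U;k)^{\otimes n}$. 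Recalling that $\gr^C_1\Or(U) = H^1(U;k)$ (established in the cohomological interpretation) and that $T^c(V) = \bigoplus_{n\geq 0} V^{\otimes n}$ with its length grading, summing over $n$ yields that $\Phi\colon \gr^C M \overset{\sim}{\To} T\otimes_k T^c(H^1(U;k))$ is an isomorphism. Injectivity of $\Phi$ is already known from Definition \ref{gloss: decompprim}; the only new content is the surjectivity in each degree, which is exactly the surjectivity of each individual $\delta$.

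The one genuinely non-formal ingredient — so, such as it is, the hard part — is the vanishing $H^1(U; T\otimes_k \Or(U))=0$, which rests on the cofreeness (relative injectivity) of $\Or(U)$ as an $\Or(U)$-comodule, i.e.\ the lemma that $H^n(U;\Or(U))$ vanishes for $n\geq 1$; the hypothesis that $T$ is a trivial module is precisely what lets this factor be pulled outside the cohomology. Everything else is bookkeeping with the coradical filtration and the iteration of $\delta$.
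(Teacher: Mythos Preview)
Your proposal is correct and follows essentially the same route as the paper: you use the vanishing $H^1(U;M)\cong T\otimes_k H^1(U;\Or(U))=0$ (from the lemma on the cohomology of $\Or(U)$, with $T$ trivial so it pulls out) to kill the rightmost term in the proposition's exact sequence, making each $\delta$ an isomorphism, and then iterate. The paper's proof is the same argument in two lines, phrasing the new content as surjectivity of $\delta$ (injectivity being already built into the definition).
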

\begin{proof} By the previous two lemmas, $H^1(U;M) = H^1(U; \Or(U))\otimes_k T=0$, and therefore 
$\delta : \gr^C_n M \rightarrow \gr^C_{n-1} M \otimes_k H^1(U;k)$ is surjective as the last term in the exact sequence of the previous lemma vanishes. The iterations of $\delta$ are therefore also surjective,
hence so is $\Phi$. 
\end{proof}

\section{Motivic periods over $\Q$} \label{sectMotperQ}
For  the rest of this section, we only require the results of \S\ref{sectMatrixcoeffs} and \S\ref{sectTannakianCase} in the case $k=B_1= B_2 =\Q$.

\subsection{Category $\HH$ of Betti and de Rham realisations}  \label{sect: PeriodsoverQ}
Based on \cite{DeP1} \S1.10, consider the category $\HH$ whose objects are triples 
$ 
(V_{B}, V_{dR},  c) $
consisting of the following data:
\begin{enumerate}
\item A finite-dimensional $\Q$-vector space $V_B$ with a finite increasing (weight)  filtration $W_{\bullet} V_B$. 
\item A finite-dimensional $\Q$-vector space $V_{dR}$ with a finite increasing  (weight) filtration $W_{\bullet} V_{dR}$ and finite decreasing (Hodge) filtration $F^{\bullet} V_{dR}$. 
\item An isomorphism 
$$c: V_{dR} \otimes_{\Q} \C \overset{\sim}{\To} V_B\otimes_{\Q}\C$$
which respects the filtrations $W_{\bullet}$ on both sides.
\item A linear involution $F_{\infty}: V_B \overset{\sim}{\rightarrow} V_B$ called the real Frobenius.
\end{enumerate}
This data is subject to the conditions:
\begin{itemize}
\item  if  $c_{dR}$ (resp. $c_B$) is the $\C$-antilinear involution on $V_{dR} \otimes \C$ (resp. $V_B \otimes \C$) given by $x\otimes \lambda \mapsto x \otimes \overline{\lambda}$, then  the following diagram commutes:
$$
\begin{array}{ccc}
V_{dR} \otimes \C   &  \overset{c}{\To}   &   V_B \otimes \C  \\
 \downarrow_{c_{dR}}  &   &   \downarrow_{F_{\infty} \otimes c_B}  \\
 V_{dR} \otimes \C   & \overset{c}{\To}    &   V_B \otimes \C  
\end{array}
$$ 
  In particular, $c\, c_{dR} \, c^{-1}$  preserves the lattice $V_B \subset V_B \otimes \C$. 
\item that $V_B$, equipped with the weight filtration $W_{\bullet}$ and  Hodge filtration $c F^{\bullet}$ on $V_{\C}=V_B \otimes_{\Q} \C$, is a $\Q$-mixed Hodge structure. Writing $F$ instead of $cF$,  this is equivalent to $\gr^{W}_{n} V_{\C} =\bigoplus_{p+q=n} F^p \cap \overline{F}^q$.
We assume furthermore that this mixed Hodge structure is graded-polarizable. 
\end{itemize}
The morphisms in the category $\HH$ are given by morphisms of triples respecting the above data.  It is shown in \cite{DeP1}, 1.10,  that $\HH$ is a Tannakian category,
the essential point being that mixed Hodge structures form an abelian category \cite{De2}.   This category could be further enriched
by adding more realisations.

By $(3$), the weight filtration defines a filtration on $(V_B, V_{dR}, c)$ by subobjects:
\begin{equation} \label{eqn: Wnsubobj}
W_n (V_B, V_{dR}, c) = (W_n V_B, W_n V_{dR}, c\big|_{W_n}) \ .
\end{equation} 
Denote the \emph{Hodge numbers} \label{gloss: Hodgenumbers}   of an object $V=  (V_B, V_{dR}, c) $ in $\HH$ by 
\begin{equation}  \label{eqn: Hodgenos}
h_{p,q}(V) = \dim_{\Q} (W_{p+q} \cap F^p) V_{dR} = \dim_{\C} (F^p \cap \overline{F}^q) V_{\C}\ .
\end{equation}
The Tate objects $\Q(n)$, where $n\in \Z$, are the unique triples $(\Q,\Q, c)$ such that $c(1) = (2\pi i)^{-n}$, 
with weight $-2n$ and Hodge filtration on the second vector space $\Q$ defined by $F^{-n} \Q =\Q$, $F^{1-n} \Q =0$.  

\subsection{The ring of $\HH$-periods} The category $\HH$  has two fiber functors
\begin{eqnarray} 
\omega_{\bullet} : \HH   &\To &  \mathrm{Vec}_{\Q} \qquad \qquad \bullet= B \hbox{ or } dR \nonumber \\ 
 (V_{B}, V_{dR}, c) & \mapsto & V_{\bullet} \ ,\nonumber 
\end{eqnarray}
so we can apply  $\S\ref{sectMatrixcoeffs}$ with $\T=\HH$, $k=\Q$.  Let us write\footnote{In \cite{SVMP, BrMTZ} I wrote $\mm = ( \omega_{dR}, \omega_{B})$, and put the coaction on the left, for purely psychological reasons. The corresponding rings of periods are identical.}
$$ \mm = ( \omega_B, \omega_{dR}) \qquad \hbox{ and } \qquad \dR = (\omega_{dR}, \omega_{dR})\ .$$
This defines
 rings $\Pe^{\mm}_{\HH}$ and $\Pe^{\dR}_{\HH}$ as in \S\ref{sectMatrixcoeffs} and \S\ref{sectTannakianCase}, and a canonical element
$$c \in \mathrm{Isom}_{\HH}^{\otimes}(\omega_{dR}, \omega_B)(\C)$$ which is  given by the data $(3).$
\begin{defn} The  ring of $\HH$-periods  is  
$\Pe^{\mm}_{\HH}$. It is equipped with 
\begin{itemize}
\item a period homomorphism 
$$\per: \Pe^{\mm}_{\HH} \To \C$$
which sends $[(V_B, V_{dR}, c), \sigma, \omega]^{\mm}$ to $\sigma c (\omega)$. 
\item an  increasing weight filtration 
$$W_n \Pe^{\mm}_{\HH} = \langle  [(V_{B}, V_{dR}, c),  \sigma ,\omega ]^{\mm}: \hbox{ where } \omega \in W_{n} V_{dR}  \rangle_{\Q} $$
Equivalently, this is the subspace spanned by $[M, \sigma, \omega]^{\mm}$ for objects $M$ in $\HH$  satisfying  $W_n M =M$ where $W_n$ was defined in (\ref{eqn: Wnsubobj}).
\item a right coaction
$
\Delta^{\mm}: \Pe^{\mm} \To \Pe^{\mm}_{\HH}  \otimes_{\Q} \Pe^{\dR}_{\HH}  $
or equivalently, a left action 
$$G_{\HH}^{dR} \times \Pe_{\HH}^{\mm} \To \Pe_{\HH}^{\mm}\ .$$
It respects the weight filtration on $\Pe_{\HH}^{\mm}$ by $(\ref{eqn: Wnsubobj})$. Likewise,
   $\Pe^{\mm}_{\HH}$  admits a right action of the Betti Galois group $G_{\HH}^B$ which also preserves $W$.  
\item the real  Frobenius  involution
$$F_{\infty}:  \Pe^{\mm}_{\HH} \overset{\sim}{\To}    \Pe^{\mm}_{\HH} $$
defined by $F_{\infty} [M,  \sigma ,\omega ]^{\mm} = [M,  \sigma \circ F_{\infty},\omega]^{\mm}$. It has the property that 
$$\per (F_{\infty} \xi) = \overline{ \per(\xi)}\ ,$$
where the bar denotes complex conjugation.
In particular, $F_{\infty}$-invariant motivic periods have  periods in $\R$. The $G_{\HH}^{dR}$-action    commutes with $F_{\infty}$.
\end{itemize}
\end{defn}
Note that, since $\Pe^{\mm}_{\HH}$ admits a $G^{dR}_{\HH}$-action, it is the $\omega_{dR}$ image of  an (ind-)object of the category $\HH$ via theorem $\ref{thm:  Tannaka}$.
Therefore $\Pe^{\mm}_{\HH}$ also carries, in addition to the weight filtration, a decreasing Hodge filtration $F$. The subspace $F^n \Pe^{\mm}_{\HH}$
is spanned by the $[M,\sigma,v]^{\mm}$ with $v\in F^n M_{dR}$.  The Hodge filtration is not preserved by the group $G^{dR}_{\HH}$, but  will nonetheless play a role later on, and is of course preserved by the (right) action of the Betti  Galois group $G^B_{\HH} = \Aut^{\otimes}_{\HH} (\omega_B)$.

\subsection{Some variants}

Denote the subspace $\Pe^{\mm, +}_{\HH} \subset \Pe^{\mm}_{\HH}$ of  \emph{effective $\HH$-periods} \label{gloss: effectiveHp}  to be the subspace of $\HH$-periods of
objects with non-negative Hodge numbers
$$\Pe^{\mm, +}_{\HH} = \langle [M, \sigma , \omega]^{\mm} : M\in Ob(\HH) \hbox{ such that } h_{p,q}(M)=0   \hbox{ unless } p,q\geq 0\rangle_{\Q}$$
 It  forms a ring and is stable under the action of $G^{\dR}_{\HH}\times G^B_{\HH}$.

Similarly, the ring of \emph{mixed Artin-Tate $\HH$-periods}\footnote{the ring of mixed Tate periods corresponds 
to the $\HH$-periods of mixed Tate objects: those whose associated weight-graded  is a direct sum of Tate objects $\Q(n)$}
 \label{gloss: mixedTate}  is  the subspace
$$\Pe^{\mm}_{HT} = \langle [M, \sigma ,\omega]^{\mm} : M\in Ob(\HH) \hbox{ such that } F^p W_{p+q} M_{dR}=0 \hbox{ if } p>q \rangle_{\Q}\ .$$
It is again a ring and is stable under the action of $G^{\dR}_{\HH} \times G^B_{\HH}$. 
The notation is justified as follows: let  $HT\subset \HH$ be the full Tannakian subcategory of $\HH$ consisting of triples $(V_B, V_{dR}, c)$,
whose underlying mixed Hodge structure has Hodge numbers $h_{p,q}=0$ if $p\neq q$. Its ring of motivic periods is $\Pe^{\mm}_{HT}$.
Suppose that $M$ is any object of $\HH$ of this type. Then the weight filtration on its de Rham vector space splits via
$$\gr^W_{2n} M_{dR} = W_{2n} \cap F^n M_{dR}\ .$$
It follows that  $\Pe^{\mm}_{HT}$ is also graded by the weight and we can write
$$\Pe^{\mm}_{HT} = \bigoplus_{n\in \Z}  \gr^W_{2n}  \Pe^{\mm}_{HT}\ .$$
Note that the weight grading is not preserved by $G^{dR}_{\HH}$, only the weight filtration. The weight grading is, however, preserved by 
$G^B_{\HH}$ since the latter preserves the Hodge filtration. 

Combining the two, we have a ring of \emph{effective mixed Artin Tate $\HH$-periods}
$$\Pe^{\mm,+}_{HT} = \Pe^{\mm, +}_{\HH} \cap \Pe^{\mm}_{HT}\ . $$
One can show that it is the largest subalgebra of $\Pe^{\mm}_{HT}$ which is stable under the action of $G^{dR}_{\HH}$ 
and has non-negative weights.

The ring of  $\HH$-\emph{de Rham} \label{gloss: HdeRham}  periods $\Pe_{\HH}^{\dR}$ has similar properties to $\Pe^{\mm}_{\HH}$ (weight filtration, left $G^{dR}_{\HH}$ action), except that it
does not have a real Frobenius involution, and the  period map is replaced by the evaluation map (counit)
 \begin{eqnarray} \mathrm{ev}: \Pe^{\dR}_{\HH} & \To & k \nonumber \\
 {[}M, e,v]^{\dR} &\mapsto&  e(v)\ ,  \nonumber
 \end{eqnarray}
 which is nothing other than evaluation on the element $1 \in G^{\dR}_H(k)$.  There are  analogous effective and Artin-Tate versions.

 \begin{rem}
 The fact that the weight filtration is strict on the category of mixed Hodge structures \cite{De2} implies that the functor 
 $$(V_B, V_{dR}, c) \,   \mapsto \, \gr^W_{\bullet} (V_B, V_{dR}, c) = (\gr^W V_B, \gr^W V_{dR}, \gr^W c)$$
 is exact. Thus, by composing with $\omega_{dR}$ or $\omega_B$ one obtains  new fiber functors we denote by 
 $\omega_{\underline{dR}} = \omega_{dR}\,  \gr^W $ and $\omega_{\underline{B}} = \omega_{B} \,\gr^W$. 
 \end{rem}

 \subsection{\!\!\!\!*\,\, Weight filtrations on $\Pe^{\dR}_{\HH}$}
 The ring $\Pe^{\dR}_{\HH}$ is none other than $\Or(G^{dR}_{\HH})$ viewed as a left $G^{dR}_{\HH}$-module (or right $\Or(G^{dR}_{\HH})$-comodule). It could be written $\Pe^{l, \dR}_{\HH}$ to distinguish from $\Pe^{r, \dR}_{\HH}$ and $\Pe^{c,\dR}_{\HH}$ which are the same vector space, but considered with the right (respectively 
 conjugation) action of $G^{dR}_{\HH}$.
   We shall never  consider  $\Pe^{r,\dR}$, except to remark that  the antipode (inversion in $G^{dR}_{\HH}$)  interchanges $\Pe^{l,\dR}_{\HH}$ and $\Pe^{r,\dR}_{\HH}$.
 By the Tannaka theorem, these rings all define ind-objects in $\HH$ and  in particular are  equipped with weight filtrations. It is important to note that, since the $G^{dR}_{\HH}$-action is different in each case, these structures are distinct.
 
 To avoid ambiguity,  one 
can distinguish the following two  coactions:
\begin{eqnarray}
\Delta^{\mm, l} : \Pe^{\mm}_{\HH} \To \Pe^{\mm}_{\HH} \otimes_{\Q} \Pe^{l, \dR}_{\HH} \nonumber \\
\Delta^{\mm, c} : \Pe^{\mm}_{\HH} \To \Pe^{\mm}_{\HH} \otimes_{\Q} \Pe^{c, \dR}_{\HH} \nonumber 
\end{eqnarray}
They are given by an identical formula, namely $(\ref{eqn: coaction})$, but differ in that
the interpretation of the right-hand side is slightly different.
If $G$ is a group acting on a set $X$,   the former corresponds to the action of $G$ on $G\times X$ via $g (h, x) = (gh, x)$, where $g,h\in G$ and $x\in X$.  This is the usual formula for a left action. It  satisfies
$\Delta^{\mm,l} ( g\xi) = (\id \otimes g)\Delta^{\mm,l} \xi$  for $g\in G^{dR}_{\HH}$ and hence $\Delta^{\mm,l} W_n\Pe^{\mm}_{\HH}  \subset \Pe_{\HH}^{\mm} \otimes_{\Q} W_n   \Pe^{l, \dR}_{\HH}$. 

The second coaction $\Delta^{\mm,c}$ corresponds to the action of $G$ on $G\times X$ given by the formula $g (h,x) = (ghg^{-1}, gx)$. Therefore 
$$ \Delta^{\mm,c} ( g\xi) = (g \otimes c_g) \Delta^{\mm,c} \xi \qquad \hbox{ for } g\in G^{dR}_{\HH}$$
where $c_g$ is conjugation by $g$. In this case we have
$$\Delta^{\mm,c} W_n\Pe^{\mm}_{\HH}  \subset \sum_{i+j=n} W_i \Pe_{\HH}^{\mm} \otimes_{\Q} W_j   \Pe^{c, \dR}_{\HH}\ .$$ 
 Note that the ring $\Pe^{l, \dR}_{\HH}$ has elements in negative weights, but that
 \begin{equation} \label{NoNegWeights}
 W_{-1} \Pe^{c, \dR}_{\HH} = 0 \ .
 \end{equation} 
To see this, observe that the canonical map $(\ref{omegaMuniversalmap})$
 \begin{equation} \label{canmaptoPecdr}
 \omega_{dR}(M)^{\vee} \otimes_{\Q} \omega_{dR}(M) \rightarrow \Pe^{c,\dR}_{\HH}
 \end{equation} 
 is compatible with the action of $G^{dR}_{\HH}$, and hence respects the weight filtration  on both sides. The left-hand side can be identified with $\mathrm{End}(\omega_{dR}(M))^{\vee}$. 
Since elements of $G^{dR}_{\HH}$ preserve the weight filtration on $\omega_{dR}(M)$ the natural transformation $G^{dR}_{\HH} \rightarrow \mathrm{End}(\omega_{dR}(M))$  of functors from commutative $\Q$-algebras to sets, lands in $W_0 \mathrm{End}(\omega_{dR}(M))$, so dually,  the image of $W_{-1} (\omega_{dR}(M)^{\vee} \otimes_{\Q} \omega_{dR}(M))$ under $(\ref{canmaptoPecdr})$ is zero.
Since the direct sum of the maps $(\ref{canmaptoPecdr})$   generates $\Pe^{c,\dR}_{\HH}$, it follows that $W_{-1}  \Pe^{c,\dR}_{\HH}=0$.

\subsection{Some periods which could be called motivic}
The period homomorphism $\per: \Pe^{\mm}_{\HH} \rightarrow \C$ is surjective since any complex number can be obtained as a period of a mixed Hodge structure.  A \emph{motivic period} will be   an element of $\Pe^{\mm}_{\HH}$ which comes from the cohomology of an algebraic variety in a precise way.
The following family of examples is sufficient for our purposes and covers many cases considered in \cite{KoZa}. 

\begin{example}  \label{ex: motivicperiods} Let $X$ be a smooth  scheme over $\Q$ and $D\subset X$ a  normal crossing divisor over $\Q$.  Consider the triple consisting of
\begin{itemize}
\item relative Betti cohomology $H_B^n (X,D) = H^n(X(\C), D(\C);\Q).$
Since complex conjugation on the topological spaces $X(\C), D(\C)$ is  continuous it  defines an involution $F_{\infty}: H_B^n (X,D) \overset{\sim}{\rightarrow} H_B^n (X,D)$.
\item relative algebraic de Rham cohomology  $H_{dR}^n (X,D)$.
It is the hypercohomology of the sheaf of K\"ahler differential forms on a cosimplicial variety constructed out of the irreducible components of $D$.
\item the comparison isomorphism (\cite{Groth})  $$\comp_{B,dR}: H_{dR}^n (X,D)\otimes_{\Q} \C \overset{\sim}{\To} H^n_{B}(X, D) \otimes_{\Q} \C \ .$$
\end{itemize}
It follows from the existence of a natural  mixed Hodge structure \cite{De2, De3} that 
\begin{equation} \label{ExampleHmotive} 
H^n(X,D) :=   ( H_B^n(X,D), H^n_{dR}(X,D), \comp_{B,dR})
\end{equation}
is an object in the category $\HH$. Given a cohomology class $\omega \in H_{dR}^n (X,D)$ and a relative  homology cycle $\sigma \in H_B^n (X,D)$ we can define the \emph{motivic period}  \label{gloss: motperiod}  associated to this
data to be the matrix coefficient
\begin{equation} \label{eqn: motperiod}
[ H^n(X,D) , \sigma , \omega]^{\mm} \in P^{\mm}_{\HH}\ .
\end{equation} 
Its period   $ \sigma(\comp_{B,dR}\, \omega)\in \C$ could be written  $\int_{\sigma} \omega$ and is given by a linear combination of 
integrals.  The Hodge numbers  $h_{p,q}$ of $H^n(X,D)$ are all zero unless $0\leq p, q \leq 2n$, so  $(\ref{eqn: motperiod})$
is effective and lies in $W_{2n} \Pe^{\mm,+}_{\HH}$.
\end{example}

\begin{defn}
The space of \emph{effective motivic periods} $\Pe^{\mm,+}$ is the subspace of $\Pe^{\mm,+}_{\HH}$  spanned by the elements
$(\ref{eqn: motperiod})$. 
\end{defn}

We shall not need to  define a ring of motivic periods which are not effective (we shall never write $\Pe^{\mm}$ except in this sentence).

This working definition of effective motivic periods amply suffices for many  purposes (e.g.,  the constant cosmic Galois group \cite{Cosmic}).
 By the K\"unneth formula,   $\Pe^{\mm,+}$  is closed under multiplication   and it is immediate from  
$(\ref{eqn: coaction})$ that it is closed under the action of $G_{\HH}^{dR}$ and $G^B_{\HH}$.  
Likewise, the ring of effective de Rham periods $\Pe^{\dR,+}\subset \Pe^{\dR, +}_{\HH}$ is the subspace spanned by 
$[ H^n(X,D) , v, \omega]^{\dR}$, where $v\in H^n_{dR}(X,D)^{\vee}$.

Now define  $G^{dR}$ to be  the quotient of $G_{\HH}^{dR}$ by the subgroup which acts trivially on the ring  $\Pe^{\mm,+}$ of effective motivic periods.
The affine group scheme $G^{dR}$ acts faithfully on  $\Pe^{\mm,+}$, and is an approximation to a  motivic Galois group. 
Its category of representations $\mathrm{Rep}\,  G^{dR}$ is a crude version of  a Tannakian category of mixed motives. 
A key point
is that the groups $G^{dR}$ and $G_{\HH}^{dR}$ act in an identical manner on  $\Pe^{\mm,+}$.

A folklore version of Grothendieck's period conjecture states that
\begin{conj} \label{conj1} The period homomorphism $\per : \Pe^{\mm,+} \rightarrow \C$ is injective.
\end{conj}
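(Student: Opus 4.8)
The plan is to translate the statement into a question about the position of a single $\C$-point inside a pro-algebraic torsor, and then to isolate what is actually left to prove. The kernel of $\per$ is the ideal of regular functions on the period scheme $\Spec\Pe^{\mm,+}$ which vanish at the comparison point $\comp\in\Isom^{\otimes}_{\HH}(\omega_{dR},\omega_B)(\C)$, so Conjecture~\ref{conj1} is equivalent to the assertion that $\comp$ is Zariski-dense in $\Spec\Pe^{\mm,+}$. First I would reduce to one object at a time: given $\xi\in\ker\per$, the minimal object lemma of \S\ref{sectminimalobject} together with Corollary~\ref{corGomegarep} lets one represent $\xi$ by a matrix coefficient of a single object $M=M(\xi)$, and the subring of $\Pe^{\mm,+}$ generated by the matrix coefficients of $M$ is the affine ring of the period torsor $Z_M$ of the Tannakian subcategory $\langle M\rangle^{\otimes}$ generated by $M$, which is a torsor under the corresponding motivic Galois group $G^{dR}_M$. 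Thus it suffices to show that for every $M$ the complex period matrix is a generic point of $Z_M$; equivalently, that the transcendence degree $\mathrm{trdeg}_{\Q}$ of the field generated by the periods of $M$ equals $\dim G^{dR}_M$ (one must also match up the component groups, since $Z_M$ need not be geometrically connected, but that is a secondary point).

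Next I would dispose of the easy inequality. Because the period matrix of $M$ is a $\C$-point of the $G^{dR}_M$-torsor $Z_M$, its $\Q$-Zariski closure has dimension at most $\dim Z_M=\dim G^{dR}_M$, so $\mathrm{trdeg}_{\Q}(\text{periods of }M)\le\dim G^{dR}_M$. This half is purely formal, using only Theorem~\ref{Tannakatheorem} and Corollary~\ref{corGomegarep}; indeed all of the invariants attached to $\Pe^{\mm,+}_{\HH}$ in the later sections are refinements of it. The entire content of the conjecture is therefore the reverse inequality: that there is no algebraic relation over $\Q$ among the periods of $M$ beyond those already forced by its motivic (Hodge-theoretic) structure.

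This is where I expect the argument to stall, and I do not think the formalism of these notes can push through it: no general method is known for bounding transcendence degrees of period rings from below. The cases that have been settled all lie in the ``abelian'' range and rest on Baker--W\"ustholz transcendence theory for commutative algebraic groups --- Lindemann--Weierstrass for $X=\GG_m$, Chudnovsky for abelian varieties with complex multiplication, W\"ustholz's analytic subgroup theorem for general $1$-motives --- and none of these extends to motives with a large unipotent radical or a non-abelian reductive quotient. Even for mixed Tate motives over $\Q$, where $G^{dR}\cong\GG_m\ltimes U$ with $U$ free pro-unipotent on one generator in each odd degree $\ge 3$ and $\Pe^{\mm,+}$ is the algebra of motivic multiple zeta values, the conjecture is equivalent to the conjectural dimension formulas of Zagier and Hoffman for the $\Q$-spans of multiple zeta values, and these remain open --- the conjecture already implies the unknown irrationality of $\zeta(5)$. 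So while the Tannakian reduction described above is routine, supplying a transcendence input valid for arbitrary mixed motives is at present out of reach; accordingly the realistic goal, and the one pursued in these notes, is the weaker one of proving \emph{independence} of specific motivic periods by means of the computable invariants of $\Pe^{\mm,+}_{\HH}$ (weight, Hodge and Frobenius data, and the decomposition map $\Phi$ of \S\ref{sectClassif}), together with the observation that, granting full faithfulness of the Hodge realisation $\MM_{\Q}\to\HH$, Conjecture~\ref{conj1} becomes equivalent to injectivity of the period map on $\Pe^{\mm}_{\MM_{\Q}}$, i.e.\ to Grothendieck's original conjecture.
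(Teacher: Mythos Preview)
Your analysis is entirely apt, but there is nothing to compare it to: the paper does not prove this statement. It is labelled \texttt{conj}, introduced as ``a folklore version of Grothendieck's period conjecture,'' and left open; the paper only remarks that it is weaker than the analogous conjecture for Nori motives. Your reduction to Zariski-density of the comparison point in the period torsor, the easy upper bound $\mathrm{trdeg}_{\Q}(\text{periods of }M)\le\dim G^{dR}_M$, and your explanation of why the reverse inequality is a genuine transcendence problem beyond current methods are all correct and in the spirit of the surrounding discussion (cf.\ the remark following the definition of $\dim_{\tr}(\xi)$ and \S\ref{sectFurther}). So the ``gap'' you identify is real and is exactly why the statement is a conjecture rather than a theorem.
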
 
 
Note that this conjecture is weaker than the analoguous conjecture for the motivic periods of the category of Nori motives, for example.

\subsection{Some terminology} We list a sample of possible quantities to describe $\HH$ periods.
For any object $M$ in $\HH$ one can make the following definitions. 
\begin{itemize}
\item  Let  $M_B^{+}, M_B^-$ denote the $\pm$ eigenspaces for $F_{\infty}$, and
set $\mathrm{rank}^{\pm} M = \dim_{\Q} M^{\pm}_B$. The comparison $M_{dR} \otimes_{\Q} \C \overset{\sim}{\rightarrow} M_{B} \otimes_{\Q} \C$ implies that 
$$\mathrm{rank}(M)= \mathrm{rank}^+(M) + \mathrm{rank}^-(M) = \dim_{\Q} M_{dR}\ .$$

\item Define the  \emph{de Rham Galois group} \label{gloss: GaloisGroup}  $G^{dR}(M)$ of $M$ to be the largest quotient of $G^{dR}_{\HH}$ which acts faithfully on $M_{dR}$. Equivalently, it is the de Rham Tannaka group of the full
 Tannakian subcategory of $\HH$ generated by $M$.  Define the \emph{Betti Galois group} in the same way on replacing $dR$  by $B$. 
 The comparison map gives a canonical  isomorphism
 $$G^{B}(M) \times \C \overset{\sim}{\To} G^{dR}(M) \times \C\ .$$
   Define the \emph{transcendence dimension}  \label{gloss: transdim}  of $M$ to be 
 $$\dim_{\tr} (M) = \dim G^{dR}(M) = \dim G^{B}(M) \ . $$
  Define the \emph{component group}  \label{gloss: component group} 
  $\pi^{\bullet}_0(M) $ to be $\pi_0 (G^{\bullet}(M))$, where $\bullet = B,dR$ and $\pi_0$    is the \'etale group scheme whose  affine ring $\Or(\pi^{\bullet}_0(M))$ is the largest separable subalgebra of $\Or(G^{\bullet}(M))$ (see \cite{Waterhouse}, \S6.5-7). Since $\pi_0$ commutes with change of base field, the  comparison isomorphism gives   $$\pi^{B}_0(M) \times \C \overset{\sim}{\rightarrow} \pi^{dR}_0(M)\times \C\ .$$

 \item Define the \emph{Hodge polynomial}  \label{gloss: Hodgepoly}  of $M$ to be   (see  $(\ref{eqn: Hodgenos})$)
 $$
 h(M) (r,s) = \sum_{r,s}    h_{p,q}(M) \, r^p s^q   \quad \in \quad  \Z[r^{\pm},s^{\pm}]
$$

 \item Define the class of a \emph{period matrix} \label{gloss: periodmatrix}  of $M$ as follows. 
 Let  $M = (M_{B}, M_{dR}, c_{M})$, and $r= \mathrm{rank}(M)$.   Choose  isomorphisms
 $M_B \cong \Q^r$ and $M_{dR} \cong \Q^r$,   which are adapted to the weight (resp. Hodge and weight) filtrations, and write $c_M$ in this basis. This gives a well-defined 
 element
 $$    [c_{M}] \quad \in \quad  W_0 \mathrm{GL}(M_B)  \backslash W_0 \mathrm{GL}( \C^r) / F^0 W_0 \mathrm{GL}(M_{dR})$$
where  $W_0 \mathrm{GL}(M_B)$ denotes  the subgroup of $\mathrm{GL}(M_B)(\Q)$  which preserves $W$, 
 and so on.   It is an equivalence class of square $d\times d$ matrices of complex numbers, where $d$ is the rank  of $M$.  
  The matrix $c_M$ in fact lies in the subspace satisfying $F_{\infty} c_M = \overline{c}_M$. Thus if we furthermore choose our Betti basis $M_B \cong \Q^r$
 to be compatible with the decomposition $M_B = M_B^+ \oplus M^-_B$,  then  the rows corresponding to $M_B^+$ have entries in $\R$, and those corresponding to $M_B^-$ have
 entries in $i\R$. 
 
 The  \emph{determinant}  \label{gloss: determinant}  $\det(M)$ is defined to be  $\det(c_{M})$  in $\C / \Q^{\times}$.

\end{itemize}

\begin{defn}  Now  let  $\xi \in \Pe^{\mm}_{\HH}$ and denote its  minimal object (\S\ref{sectminimalobject})  by   $M(\xi)$. This enables us to attach the following 
invariants to $\xi$:

 \begin{enumerate}
 \item  Define the \emph{space of de Rham Galois conjugates} \label{gloss: conjugates}   of $\xi$ to be the right $\Or(G_{\HH}^{dR})$-subcomodule of $\Pe^{\mm}_{\HH}$ generated by $\xi$. It is isomorphic to  $M(\xi)_{dR}$ by corollary \ref{corGomegarep}. 
 
  Likewise, define the 
 \emph{space of Betti Galois conjugates} to be the left $\Or(G_{\HH}^{B})$-subcomodule of $\Pe^{\mm}_{\HH}$ generated by $\xi$.  It is isomorphic to  $M(\xi)_{B}$ by corollary \ref{corGomegarep}.
 
 Define the \emph{space of biconjugates of} $\xi$ to be the left $\Or(G_{\HH}^{B})$ and right $\Or(G_{\HH}^{dR})$-subcomodule of $\Pe^{\mm}_{\HH}$ generated by $\xi$. It is the vector space spanned by the set of all  matrix coefficients $[M(\xi), M(\xi)_{B}^{\vee}, M(\xi)_{dR}]^{\mm}$. 
 
 Define the \emph{algebra of  de Rham/Betti/bi-conjugates of $\xi$}  to be the subalgebras of $\Pe^{\mm}_{\HH}$ generated by the above vector spaces.\footnote{One might be tempted, by  analogy with algebraic numbers,  to define notions of Betti/de Rham/biconjugates of $\xi$ by considering the orbits of $\xi$ under the group of $R$-points of the corresponding groups $G^{B / dR}_{\HH}(R)$, for $R$ a commutative $\Q$-algebra. We  shall not.}

 \item Define  the 
 $\mathrm{rank}( \xi): = \mathrm{rank} \,M(\xi)$ (similarly  $\mathrm{rank}^{\pm} (\xi) = \mathrm{rank}^{\pm} \,M(\xi)$).
 \label{gloss: rank}

 \item Define the de Rham (resp. Betti) \emph{Galois group} $G^{\bullet}_{\xi}= G^{\bullet}(M(\xi))$, where $\bullet \in \{B, dR\}$.  
   Define the \emph{transcendence dimension} \label{gloss: transdim2} of $\xi$ to be 
 $$\dim_{\tr} (\xi) = \dim G^{dR}_{\xi} = \dim G^{B}_{\xi}  \ . $$
  Define the \emph{component group}  \label{gloss: component group2}   to be 
  $\pi^{\bullet}_0(\xi):=\pi_0 (G^{\bullet}_{\xi})$, and   the \emph{degree} (for want of a better word) of $\xi$ to be $\deg(\xi) = \big| \pi^{\bullet}_0(\xi)(\C) \big| \in \N$.

 \item Define the \emph{Hodge polynomial} \label{gloss: Hodgepoly2}  of $\xi$ to be 
 $
 h(\xi) (r,s) =  h(M(\xi)) (r,s)$.
 
 \item Define the  class of a \emph{period matrix}  \label{gloss: periodmatrix2}  of $\xi$ to be   $[c_{\xi}] = [c_{M(\xi)}]$, and define the \emph{determinant} 
 \label{gloss: det2}  to be $\det(\xi)=\det(c_{\xi})$.
 \end{enumerate}
\end{defn}

Many of the above definitions go through for an $\HH$-de Rham period $\xi \in \Pe^{\dR}_{\HH}$ with the obvious changes, which we leave to the reader.

The Hodge polynomial $h(\xi)$ is symmetric in $r, s$ and satisfies $\mathrm{rank}\, \xi = h(\xi)(1,1)$. 
 The element $\xi$ is \emph{effective} if and only if $h(\xi)\in \Z[r,s]$, and  is  \emph{mixed Artin-Tate} (lies in $\Pe^{\mm}_{HT}$) if and only if  $\xi \in \Z[(rs)^{\pm 1}]$.

It follows from the formulae for sums and  products of matrix coefficients that  if $\xi_1, \xi_2 \in \Pe^{\mm}_{\HH}$ then 
$M_{\xi_1+ \xi_2}$ is a subquotient of $M_{\xi_1} \oplus M_{\xi_2}$ and $M_{\xi_1 \xi_2}$ is a subquotient of $M_{\xi_1} \otimes M_{\xi_2}$. Therefore the 
 rank satisfies 
$$\mathrm{rank} (\xi_1 + \xi_2 ) \leq \mathrm{rank}( \xi_1) +  \mathrm{rank}( \xi_2)  \quad \hbox{ and } \quad  \mathrm{rank} (\xi_1 \xi_2 ) \leq \mathrm{rank}( \xi_1) \,\mathrm{rank}( \xi_2)\ .$$
More generally, the Hodge polynomial satisfies
$$h(\xi_1+ \xi_2)  \preccurlyeq h(\xi_1) + h(\xi_2) \quad \hbox{ and  } \quad h(\xi_1\xi_2)  \preccurlyeq h(\xi_1)  h(\xi_2)$$ where $ \preccurlyeq$ means that the inequality $\leq$ holds coefficient by coefficient.
\begin{rem} The transcendence dimension of $\xi$ is   $\dim G^{dR}_{\xi} = \deg_{\tr} \Or(G^{dR}_{\xi})$. The latter is the ring generated  by the  `de Rham biconjugates' $[M(\xi), v, w]^{\dR}$ for all  $v \in M(\xi)^{\vee}_{dR}$ and $w \in M(\xi)_{dR}$.  Applying the comparison map,  this is isomorphic to the ring generated by the biconjugates $[M(\xi), \sigma, w]^{\mm}$ where $\sigma \in M(\xi)^{\vee}_{B}$ and $w \in M(\xi)_{dR}$, tensored with $\C$.  It follows from this argument that
\begin{equation}
 \dim_{\tr}(\xi) = \deg_{\tr} \langle \hbox{Ring of biconjugates of } \xi \rangle 
 \end{equation}
Therefore the  period conjecture \ref{conj1} implies the following (compare  \cite{An1}):

\begin{conj} Let $\xi \in \Pe^{\mm, +}$ be a motivic period.  Let $P_{\xi} \subset \C$ be the $\Q$-algebra  generated by the images of the Galois biconjugates of $\xi$ under the period homomorphism. Then the transcendence degree of $P_{\xi}$ satisfies
$\deg_{\tr} P_{\xi} = \dim_{\tr} (\xi)$.
\end{conj}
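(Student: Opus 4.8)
The plan is to derive this conjecture from Conjecture~\ref{conj1} together with the identity
$\dim_{\tr}(\xi) = \deg_{\tr}\langle\hbox{Ring of biconjugates of }\xi\rangle$
recorded in the Remark above. Write $R_{\xi}\subseteq\Pe^{\mm}_{\HH}$ for the $\Q$-subalgebra generated by the Galois biconjugates $[M(\xi),\sigma,w]^{\mm}$ of $\xi$, that is, the ring of biconjugates; by definition of $P_{\xi}$ the period map restricts to a surjection $\per\colon R_{\xi}\To P_{\xi}$. I would prove $\deg_{\tr}P_{\xi}\le\dim_{\tr}(\xi)$ unconditionally and $\deg_{\tr}P_{\xi}\ge\dim_{\tr}(\xi)$ assuming Conjecture~\ref{conj1}; together these give the asserted equality.

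First I would check that $R_{\xi}$ really is contained in $\Pe^{\mm,+}$, so that Conjecture~\ref{conj1} is applicable to it. Since $\xi\in\Pe^{\mm,+}$, it is a matrix coefficient of some geometric object $H^{n}(X,D)$ as in Example~\ref{ex: motivicperiods}; by the minimality property proved in \S\ref{sectminimalobject}, the object $M(\xi)$ is a subquotient of $H^{n}(X,D)$, and then the morphism relation $(ii)$ of \S\ref{sectMatrixcoeffs} exhibits every $[M(\xi),\sigma,w]^{\mm}$ as a matrix coefficient of $H^{n}(X,D)$, hence as an element of $\Pe^{\mm,+}$. (Equivalently, one may invoke directly the stated stability of $\Pe^{\mm,+}$ under $G^{dR}_{\HH}$ and $G^{B}_{\HH}$.) Because $\Pe^{\mm,+}$ is a ring, the subalgebra $R_{\xi}$ these elements generate also lies in $\Pe^{\mm,+}$; moreover $R_{\xi}$ is finitely generated over $\Q$, since the span of the $[M(\xi),\sigma,w]^{\mm}$ is finite-dimensional.

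For the first inequality, a transcendence basis of $P_{\xi}$ lifts through the surjection $\per\colon R_{\xi}\To P_{\xi}$ to an algebraically independent subset of $R_{\xi}$, so $\deg_{\tr}P_{\xi}\le\deg_{\tr}R_{\xi}=\dim_{\tr}(\xi)$ by the Remark. For the reverse inequality, Conjecture~\ref{conj1} asserts that $\per$ is injective on $\Pe^{\mm,+}$, hence on $R_{\xi}$; being also surjective onto $P_{\xi}$, it gives an isomorphism of $\Q$-algebras $\per\colon R_{\xi}\overset{\sim}{\To}P_{\xi}$, and transcendence degree is an isomorphism invariant, so $\deg_{\tr}P_{\xi}=\deg_{\tr}R_{\xi}=\dim_{\tr}(\xi)$.

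I do not expect a genuine obstacle here: the statement is in essence a repackaging of Conjecture~\ref{conj1} through the Remark's computation of $\dim_{\tr}(\xi)$. The two points deserving a little care are the verification that $R_{\xi}\subseteq\Pe^{\mm,+}$ (so that the period conjecture is legitimately invoked), and, inside the Remark's identity itself, the insensitivity of $\deg_{\tr}$ to the base change $\otimes_{\Q}\C$ used to pass between the de Rham biconjugate ring $\Or(G^{dR}_{\xi})$ and $R_{\xi}$ — valid precisely because $R_{\xi}$ is a finitely generated $\Q$-algebra. It is also worth stressing that only the inequality $\deg_{\tr}P_{\xi}\le\dim_{\tr}(\xi)$ survives unconditionally; the reverse inequality is exactly where injectivity of $\per$ is indispensable, and the unconditional half already records that $\dim_{\tr}(\xi)$ bounds the transcendence degree of the genuine periods from above.
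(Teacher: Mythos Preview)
Your proposal is correct and follows exactly the route the paper intends: the Remark establishes $\dim_{\tr}(\xi)=\deg_{\tr} R_{\xi}$, and the paper's one-line justification ``the period conjecture~\ref{conj1} implies the following'' is precisely your argument that injectivity of $\per$ on $\Pe^{\mm,+}$ makes $\per\colon R_{\xi}\to P_{\xi}$ an isomorphism. Your added verification that $R_{\xi}\subseteq\Pe^{\mm,+}$ (via the stated stability of $\Pe^{\mm,+}$ under $G^{dR}_{\HH}\times G^{B}_{\HH}$) and your observation that the inequality $\deg_{\tr}P_{\xi}\le\dim_{\tr}(\xi)$ holds unconditionally are correct elaborations the paper leaves implicit.
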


\end{rem}

\subsection{Semi-simple and unipotent periods} \label{sectSemiSimpleUnipotent}
Let $\HH^{ss}$ denote the full Tannakian subcategory of $\HH$ generated by semi-simple objects. Define the \emph{ring of semi-simple (or pure) periods}
\label{gloss: semisimple}   to be 
$ \Pe^{\mm}_{\HH^{ss}}$, and respectively $ \Pe^{\dR}_{\HH^{ss}}$ its de Rham version.

Every object of $\HH^{ss}$ is graded by the weight filtration.  It follows that $\Pe^{\mm}_{\HH^{ss}} $ is also graded by the weight filtration.  
 The action of the group $G^{dR}_{\HH}$ on $\Pe^{\mm}_{\HH^{ss}}$ factors through
a quotient we denote by $S^{dR}_{\HH}$. It is a projective limit of reductive affine algebraic groups over $\Q$, and there is an exact sequence
\begin{equation} \label{UHexactsequence} 
1 \To U^{dR}_{\HH} \To G^{dR}_{\HH} \To S^{dR}_{\HH} \To 1 
\end{equation}
where $U_{\HH}^{dR}$ is pro-unipotent. Define the  ring of \emph{unipotent de Rham periods}  \label{gloss: unipotentperiods}  to be
$$\Pe^{\uu}_{\HH}= \Or(U^{dR}_{\HH})\ ,$$
equipped with the conjugation action of $G^{dR}_{\HH}$. The left action of $U^{dR}_{\HH}$ on $\Pe^{\mm}_{\HH}$ is equivalent to a right coaction, which we call the \emph{unipotent de Rham} coaction
\begin{equation} \label{Deltau} \Delta\!^u: \Pe^{\mm}_{\HH} \To \Pe^{\mm}_{\HH} \otimes_{\Q} \Pe^{\uu}_{\HH}\ .
\end{equation}
It is given by the same formula as   $(\ref{eqn: coaction})$, where elements on the right hand side of the tensor product are viewed as functions on $U^{dR}_{\HH}$.  This coaction  is equivariant with respect to the action of $G^{dR}_{\HH}$, i.e.,   $\Delta\!^u (g \xi) = (g\otimes c_g) \Delta\!^u \xi$, where $c_g$ denotes conjugation by $g\in G^{dR}_{\HH}$.   In particular, $\Delta\!^u W_n \subset \sum_{i+j = n} W_i \otimes W_j$.

The ring of unipotent periods $\Pe^{\uu}_{\HH}$ is equipped with an \emph{antipode} \label{gloss: U-antipode}  
$$S: \Pe^{\uu}_{\HH} \rightarrow \Pe^{\uu}_{\HH}\ ,$$
which is dual to inversion in the group $U^{dR}_{\HH}$.
Using notation introduced earlier, the restriction gives a natural surjective map $\Pe^{c, \dR}_{\HH} \rightarrow \Pe^{\uu}_{\HH}$ which is $G^{dR}_{\HH}$ equivariant, and it follows from a previous calculation
$(\ref{NoNegWeights})$   that  $\Pe^{\uu}_{\HH}$ has non-negative weights:
$$W_{-1} \Pe^{\uu}_{\HH} = 0\ . $$

\begin{rem} \label{remsplit} If one replaces the de Rham functor with the graded de Rham functor $\omega_{\underline{dR}}$, then the analogous sequence to $(\ref{UHexactsequence})$ is canonically split, since $\gr^W$ is a fiber functor from $\HH$ to $\HH^{ss}$ which is the identity on $\HH^{ss}$. Thus  $G^{\underline{dR}}_{\HH} = U^{\underline{dR}}_{\HH} \rtimes S^{dR}_{\HH}$.  
\end{rem}

\begin{prop} \label{lemPemtensor} There is a non-canonical isomorphism of algebras 
$$ \Pe^{\mm}_{\HH}\otimes_{\Q} \overline{\Q} \cong   \Pe^{\mm}_{\HH^{ss}}   \otimes_{\Q}  \Pe^{\uu}_{\HH} \otimes_{\Q}  \overline{\Q} \ .$$
It does not respect the coalgebra structure.  
\end{prop}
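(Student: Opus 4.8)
The plan is to translate the statement into geometry. Write $\Pe^{\mm}_{\HH}=\Or(T)$, where $T=\Isom^{\otimes}_{\HH}(\omega_{dR},\omega_B)$ is a torsor under $G=G^{dR}_{\HH}$; write $\Pe^{\mm}_{\HH^{ss}}=\Or(T^{ss})$, where $T^{ss}=\Isom^{\otimes}_{\HH^{ss}}(\omega_{dR},\omega_B)$ is a torsor under $S=S^{dR}_{\HH}$; and recall $\Pe^{\uu}_{\HH}=\Or(U)$ with $U=U^{dR}_{\HH}$. The inclusion $\HH^{ss}\hookrightarrow\HH$ induces a $G$-equivariant restriction morphism $\pi\colon T\to T^{ss}$, with $G$ acting on $T^{ss}$ through $G\twoheadrightarrow S$; by the exact sequence $(\ref{UHexactsequence})$ the fibres of $\pi$ are torsors under $U=\ker(G\to S)$, so $\pi$ exhibits $T$ as a $U$-torsor over $T^{ss}$. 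Since taking global sections of a product of affine schemes over a field is the tensor product of coordinate rings, the whole proposition follows once we exhibit, over $\overline{\Q}$, an isomorphism of schemes $T_{\overline{\Q}}\cong U_{\overline{\Q}}\times T^{ss}_{\overline{\Q}}$; the role of $\overline{\Q}$ is precisely to trivialise the torsors $T$ and $T^{ss}$, which are genuinely nontrivial over $\Q$.

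I would build this isomorphism in two steps. First, choose a $\overline{\Q}$-point $p^{ss}\in T^{ss}(\overline{\Q})$: at each finite level $T^{ss}$ is a torsor under a (pro-)reductive linear algebraic group over the algebraically closed field $\overline{\Q}$, hence a non-empty variety, and the transition morphisms are faithfully flat, hence surjective on $\overline{\Q}$-points, so a compatible system of points can be assembled. The point $p^{ss}$ trivialises the $S$-torsor $T^{ss}$, giving $T^{ss}_{\overline{\Q}}\cong S_{\overline{\Q}}$ and thus $\Pe^{\mm}_{\HH^{ss}}\otimes_{\Q}\overline{\Q}\cong\Or(S)\otimes_{\Q}\overline{\Q}=\Pe^{\dR}_{\HH^{ss}}\otimes_{\Q}\overline{\Q}$. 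Second, lift $p^{ss}$ to a $\overline{\Q}$-point $p\in T(\overline{\Q})$: its possible lifts form $\pi^{-1}(p^{ss})$, a torsor under the pro-unipotent group $U$ over $\overline{\Q}$, which is trivial since a unipotent group is a successive extension of copies of $\mathbb{G}_a$ and $U(\overline{\Q})\twoheadrightarrow U_i(\overline{\Q})$ at every level. The point $p$ trivialises the $G$-torsor $T$, so $T_{\overline{\Q}}\cong G_{\overline{\Q}}$, hence $\Pe^{\mm}_{\HH}\otimes_{\Q}\overline{\Q}\cong\Or(G)\otimes_{\Q}\overline{\Q}$. Finally invoke a Levi decomposition $G\cong U\rtimes S$ of the pro-affine group scheme $G$ over $\Q$ (the non-canonical, ungraded analogue of Remark $\ref{remsplit}$, which one may quote from Hochschild--Mostow's theory of pro-affine algebraic groups in characteristic $0$). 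As a scheme $U\rtimes S\cong U\times S$, so $\Or(G)\cong\Or(U)\otimes_{\Q}\Or(S)=\Pe^{\uu}_{\HH}\otimes_{\Q}\Pe^{\dR}_{\HH^{ss}}$ as algebras; combining with the first step gives $\Pe^{\mm}_{\HH}\otimes_{\Q}\overline{\Q}\cong\Pe^{\uu}_{\HH}\otimes_{\Q}\Pe^{\mm}_{\HH^{ss}}\otimes_{\Q}\overline{\Q}$. The isomorphism depends on the three choices ($p^{ss}$, its lift $p$, and the Levi splitting), and it fails to respect the coproduct precisely because the Levi splitting does not identify $G$ with the direct product $U\times S$ as a group.

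The main obstacle is the pro-algebraic bookkeeping rather than any new idea: Levi decomposition and triviality of torsors are elementary for a single linear algebraic group, but $G$, $S$ and $U$ are only pro-algebraic, so one must check that the relevant inverse systems — of Levi splittings, of $\overline{\Q}$-points of $T^{ss}$, and of lifts of $p^{ss}$ — have surjective transition maps and non-empty inverse limit. The unipotent case ($U$-torsors, lifting $p^{ss}$) is clean because one is working with affine spaces; the reductive case (trivialising $T^{ss}$ over $\overline{\Q}$) uses non-emptiness of each finite-level torsor together with surjectivity of transition maps on $\overline{\Q}$-points; and for the Levi decomposition one relies on Hochschild--Mostow. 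Everything else — that $\pi\colon T\to T^{ss}$ is a $U$-torsor, that a rational point of a torsor identifies its coordinate ring with that of the group, and that a semidirect product of affine groups is a direct product as a scheme — is routine.
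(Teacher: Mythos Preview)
Your proof is correct and follows the same overall strategy as the paper: trivialise the Betti--de Rham torsor over $\overline{\Q}$ to identify $\Pe^{\mm}_{\HH}\otimes\overline{\Q}$ with $\Or(G)\otimes\overline{\Q}$, then split $G$ as $U\rtimes S$ as schemes. The one difference worth noting is how the Levi splitting is obtained. You invoke Hochschild--Mostow to split $G^{dR}_{\HH}$ over $\Q$ directly, which is valid but is an external pro-affine input. The paper instead exploits the extra structure already present in $\HH$: it chooses a $\overline{\Q}$-point of $\Isom_{\HH}(\omega_{dR},\omega_{\underline{dR}})$, where $\omega_{\underline{dR}}=\omega_{dR}\circ\gr^W$, thereby identifying $G^{dR}_{\HH}$ with $G^{\underline{dR}}_{\HH}$ over $\overline{\Q}$; and the latter group splits \emph{canonically} as $U^{\underline{dR}}_{\HH}\rtimes S^{dR}_{\HH}$ because $\gr^W$ is a fiber functor $\HH\to\HH^{ss}$ restricting to the identity on $\HH^{ss}$ (Remark~\ref{remsplit}). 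So the paper replaces your appeal to Hochschild--Mostow by a second choice of $\overline{\Q}$-point of a torsor between two de Rham-type functors, keeping the argument entirely inside the Tannakian formalism of $\HH$. Both routes yield the same non-canonical algebra isomorphism; the paper's is more self-contained, yours is more direct once the pro-affine Levi theorem is accepted.
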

\begin{proof}
Choose   points in $ \mathrm{Isom}_{\HH}(\omega_{B}, \omega_{dR})(\overline{\Q})$ and
$ \mathrm{Isom}_{\HH} (\omega_{dR}, \omega_{\underline{dR}})(\overline{\Q}). $ They induce  isomorphisms over $\overline{\Q}$ 
$$\mathrm{Isom}_{\HH} (\omega_{dR}, \omega_B) \times \overline{\Q}\overset{\sim}{\To} \mathrm{Isom}_{\HH} (\omega_{dR}, \omega_{dR})\times \overline{\Q} \cong  \mathrm{Isom}_{ \HH} (\omega_{\underline{dR}}, \omega_{\underline{dR}})\times \overline{\Q}\ .$$
The group in the middle is $G_{\HH}^{dR}\times \overline{\Q}$, and the one on the right is 
 $G_{\HH}^{\underline{dR}}\times \overline{\Q}$ which splits canonically since  $G_{\HH}^{\underline{dR}}\cong U_{\HH}^{\underline{dR}} \rtimes S^{dR}_{\HH}$ by remark  \ref{remsplit}. On the level of affine rings we deduce isomorphisms  of algebras   $\Or(G_{\HH}^{dR}) \otimes_{\Q} \overline{\Q}\cong \Or( G_{\HH}^{\underline{dR}})\otimes_{\Q} \overline{\Q} =   \Or(S_{\HH}^{dR})\otimes_{\Q}   \Or( U_{\HH}^{\underline{dR}}) \otimes_{\Q} \overline{\Q}$. 
 This gives a non-canonical isomorphism of algebras 
 $$ \Pe^{\mm}_{\HH} \otimes_{\Q}  \overline{\Q} \cong    \Or(S_{\HH}^{dR})   \otimes_{\Q}  \Or(U_{\HH}^{dR}) \otimes_{\Q} \overline{\Q}  \ ,$$
 which, on taking $U_{\HH}^{dR}$-invariants, induces  $\Pe_{\HH^{ss}}^{\mm} \otimes_{\Q} \overline{\Q} \cong \Or(S_{\HH}^{dR}) \otimes_{\Q} \overline{\Q}$. The statement follows from the identification 
 $\Pe^{\uu}_{\HH} =  \Or(U_{\HH}^{dR}) $.
\end{proof} 
%Note that the comodule structure on $\Pe^{\mm}_{\HH} \otimes \overline{\Q}$ induces a twisted comodule structure on $\Pe^{\mm}_{\HH^{ss}}   \otimes_{\Q}  \Pe^{\uu}%_{\HH} \otimes \overline{\Q}$, 
%corresponding to the fact that $G^{dR}_{\HH} \times \overline{\Q}$ is (non-canonically) isomorphic to  a  semi-direct product of $U^{dR}_{\HH}$ and $S^{dR}_{\HH}$. 

\subsection{Filtration by unipotency and decomposition}
The existence of the weight filtration implies that we can  apply the  constructions of \S\ref{sectCoradical} to
$$U = U^{dR}_{\HH} \qquad \hbox{ and } \qquad M = \Pe^{\mm}_{\HH}\ ,$$
where $\Pe^{\mm}_{\HH}$ is equipped with the comodule structure $\Delta^u:M \rightarrow M \otimes_{\Q} \Or(U)$.
\begin{defn} We shall say that an element  $\xi$   in $\Pe^{\mm}_{\HH}$  is  of \emph{unipotency degree} \label{gloss: unipdegree}  or \emph{coradical degree} $\leq i$ if it lies in $C_i\Pe^{\mm}_{\HH}$.
\end{defn} An element $\xi \in \Pe^{\mm}_{\HH}$ is of unipotency degree zero if and only if $\Delta(\xi)= \xi \otimes 1$, so it is $U^{dR}_{\HH}$-invariant and hence  semi-simple:
$$C_0 \Pe^{\mm}_{\HH} =\Pe^{\mm}_{\HH^{ss}}\ .$$
An element $\xi$ of unipotency degree at most one corresponds
to a period of a simple extension. This is discussed in further detail  in \S\ref{sectClassif}.
 
Recall that $\Delta^{u,r} = \Delta^u - \id \otimes 1.$
Then $\xi \in C_i \Pe^{\mm}_H$ if and only if 
$$ (\Delta^{u,r})^{i+1} \xi  = 0\ .$$

As in \S\ref{sectCoradical}, we deduce the existence of a derivation 
$$\delta : C_n \Pe^{\mm}_{\HH} \To C_{n-1} \Pe^{\mm}_{\HH} \otimes_{\Q} H^1(U) \ . $$

\begin{defn}   The \emph{decomposition into primitives map} \label{gloss: decompprim2}  $$\Phi: \gr^C_{\bullet} \Pe^{\mm}_{\HH} \To  \Pe^{\mm}_{\HH^{ss}} \otimes_{\Q} T^c(H^1(U)   )$$
is defined by iterating $\delta$. 
\end{defn}

The map $\Phi$ is a homomorphism of $S^{dR}_{\HH}$-modules. To see this, recall  that the coaction 
$\Delta^u$  is equivariant with respect to the  action of $G^{dR}_{\HH}$ on the left on  $\Pe^{\mm}_{\HH}$, and by conjugation 
on $\Or(U)$. Therefore so is $\Delta^{u,r}$, and likewise  $\delta$.  On the other hand, $U$ acts trivially on both $\gr^C \Pe^{\mm}_{\HH} $  and $H^1(U)$, 
so the action of $G^{dR}_{\HH}$ factors through its quotient $S^{dR}_{\HH}$. Hence $\delta$ is $S^{dR}_{\HH}$-equivariant, and by iteration, so  is $\Phi$.

The map $\Phi$, together with the invariants defined above, give the first steps towards a classification of motivic periods by group theory 
and provides a   tool for proving linear or algebraic independence of motivic periods (e.g., \cite{BrMTZ}).
 This is discussed  in \S\ref{sectClassif}, where we shall also show that $\Phi$ is in fact an isomorphism.
     Note that $\Phi$ is not to be confused with the notion of symbol (\S \ref{sectSymbols}).

\section{\!\!\!*\,\, Further remarks on motivic periods} \label{sectFurther}
The paragraphs below are independent from each other and can be skipped. 

\subsection{Universal period matrix and `single-valued' periods} \label{sectSVconstant}
Let $M$ be an object of $\HH$. Then there is a canonical morphism
\begin{equation} \label{univcoact}  M_{dR} \To M_{B} \otimes_{\Q} \Pe_{\HH}^{\mm} 
\end{equation} 
 obtained by composing the natural map $\delta^{\vee} \otimes \id:  \Q \otimes_{\Q} M_{dR} \rightarrow M_B\otimes_{\Q} M_B^{\vee} \otimes_{\Q} M_{dR}$ with the map $(\ref{omegaMuniversalmap})$  $M_B^{\vee} \otimes_{\Q} M_{dR} \rightarrow \Pe^{\mm}_{\HH}$. It is given by the formula
 $$ v \mapsto \sum_i e_i \otimes [M, e_i^{\vee}, v]^{\mm}$$
 where $e_i$ (resp. $e_i^{\vee}$) is a basis (resp. dual basis) of $M_B$.
  Extending scalars from $\Q$ to $\Pe^{\mm}_{\HH}$,  it defines an isomorphism
 $$c^{\mm}_M: \,  M_{dR} \otimes_{\Q} \Pe^{\mm}_{\HH} \overset{\sim}{\To} M_B \otimes_{\Q} \Pe^{\mm}_{\HH}$$
 which is functorial in $M$ and which we think of as a \emph{universal comparison map}.  \label{gloss: universalcomparison}
 It is equal to   the isomorphism of fiber functors $\iota_M$  (the notation is defined in $(\ref{phiM})$; set $B_1=B_2=k=\Q$ and $R= \Pe^{\mm}_{\HH}$), where 
 $$\iota \in \Isom_{\HH}^{\otimes}(\omega_{dR}, \omega_B) ( \Pe^{\mm}_{\HH})$$ is the  element corresponding to the identity on $\Pe^{\mm}_{\HH}$. Since $c \in \Isom_{\HH}^{\otimes}(\omega_{dR}, \omega_B)(\C)$ is, by definition of the period homomorphism, equal to $\per(\iota)$, it follows that 
the  comparison map   $c_M: M_{dR}  \rightarrow M_B \otimes_{\Q} \C$ is obtained from $(\ref{univcoact})$ by applying the period homomorphism; i.e.,  $c_M = (\id \otimes \per) c^{\mm}_M$.

As a first application,  the universal coaction  $(\ref{univcoact})$ defines a lift of the period matrix of $M$ to the ring of $\HH$-periods:
 $$    [c^{\mm}_M] \quad \in \quad  W_0 \mathrm{GL}(M_B)  \backslash W_0 \mathrm{GL}(\Pe^{\mm}_{\HH}, r ) / F^0 W_0 \mathrm{GL}(M_{dR})\ $$
 where $r = \mathrm{rank}\, M$.
We have $[c_M] = \per [c^{\mm}_M]$ since the period homomorphism is $\Q$-linear. Applying this to the minimal object $M= M(\xi)$ defines an invariant $[c^{\mm}_{\xi}]$ of any element $\xi \in \Pe^{\mm}_{\HH}$.   Its determinant  is an  element  $\det(c^{\mm}_{\xi}) \in \Pe^{\mm}_{\HH}/\Q^{\times}$.

Another application is to construct single-valued versions of $\HH$-periods, inspired by \cite{Be-De}. We only need the fact that the real Frobenius $F_{\infty}$ defines a $\Q$-linear involution on $\Pe^{\mm}_{\HH}$ which commutes with the action of $G^{dR}_{\HH}$. Let
$$f \in     \Isom_{\HH}^{\otimes}(\omega_{dR}, \omega_B) ( \Pe^{\mm}_{\HH})$$
correspond to $F_{\infty} : \Pe^{\mm}_{\HH} \overset{\sim}{\rightarrow} \Pe^{\mm}_{\HH}$. It satisfies $f_M = (F_{\infty} \otimes \id) \iota_M = (\id \otimes F_{\infty}) \iota_M$.  Since $\Isom_{\HH}^{\otimes}({\omega_{dR}, \omega_B})$ is a right $G^{dR}_{\HH}$-torsor, there is a unique  element 
\begin{equation} \s \in G^{dR}_{\HH}(\Pe^{\mm}_{\HH}) \quad \hbox{such that} \quad  f \, \s = \iota  \ 
\end{equation}
which is computed explicitly below. This gives rise to a homomorphism  (\emph{single-valued map}) \label{gloss: singlevalued1}  
\begin{equation} \label{single-valuedmapdefn} 
\s^{\mm}: \Pe^{\dR}_{\HH} \To \Pe^{\mm}_{\HH}
\end{equation}
which is $G^{dR}_{\HH}$-equivariant if one equips the left-hand side $\Pe^{\dR}_{\HH}=\Or(G^{dR}_{\HH})$ with the action of $G^{dR}_{\HH}$ by conjugation.  

\begin{rem} In \cite{SVMP} a slightly different single-valued map  $\mathrm{sv}^{\mm}$ was defined 
on the ring of mixed Tate periods, which is graded by weight.  It is defined by a similar formula on replacing $F_\infty$ by $F_{\infty}$ twisted by $(-1)^n$ in weight $2n$.
\end{rem} 

The situation is summarised by the following  commutative diagram 
$$
\begin{array}{ccc}
M_{dR} \otimes_{\Q} \Pe^{\mm}_{\HH}   & \overset{\iota_M}{\To}    & M_B \otimes_{\Q} \Pe^{\mm}_{\HH}    \\
 \downarrow_{\s^{\mm}_M}   &   & \uparrow_{ \id \otimes F_{\infty}}    \\
M_{dR} \otimes_{\Q} \Pe^{\mm}_{\HH}   & \overset{
\iota_M}{\To}   &   M_B \otimes_{\Q} \Pe^{\mm}_{\HH}
\end{array}
$$
where all maps are isomorphisms, which  is functorial in $M$.

The \emph{single-valued $\HH$-period matrix} \label{gloss: singlevaluedperiodmatrix1}  $s^{\mm}_M:M_{dR} \rightarrow M_{dR} \otimes_{\Q} \Pe^{\mm}_{\HH}$ can be computed as follows. Since $F_{\infty}^{-1} = F_{\infty}$, it follows from the  above that $s^{\mm}_M$ is given by the composite $\iota_M^{-1} \circ  (\id \otimes F_{\infty})^{-1} \circ \iota_M= \iota_M^{-1} \circ  (\id \otimes F_{\infty}) \circ \iota_M$ which is explicitly
$$M_{dR} \overset{c^{\mm}_M}{\To} M_B \otimes_{\Q} \Pe^{\mm}_{\HH} \,\,{\overset{\id \otimes F_{\infty}}{\To}} \,\, M_B \otimes_{\Q} \Pe^{\mm}_{\HH} \overset{(c^{\mm}_M)^{-1}}{\To} M_{dR} \otimes_{\Q} \Pe^{\mm}_{\HH}\ .$$
Thus if $C_M$ is a matrix representing the map $c^{\mm}_M$ with respect to some choice of bases for $M_{dR}, M_B$, then $s^{\mm}_M$ is represented by $(F_{\infty} C_M)^{-1} C_M$. 
This is indeed invariant under change of basis for $M_B$, which amounts to replacing $C_M$ with $P C_M$ for some  $P \in \mathrm{GL}(M_B;\Q)$; the quantity $(F_{\infty} C_M)^{-1} C_M$ is unchanged since $F_{\infty}$ acts trivially on the coefficients of $P$ because they are rational.

Finally, the \emph{single-valued period matrix}  $ s_M $ is obtained by applying the period map to $s^{\mm}_M$, and is given directly from the usual comparison map by  $s_M = \overline{C}_M^{-1} C_M$.

\subsection{Motivic philosophy} It is  hoped that there  exists a neutral Tannakian category $\MM_{\Q}$ of mixed motives over $\Q$ equipped, in particular, with  Betti and de Rham realisations, and hence a  functor $\MM_{\Q} \rightarrow \HH$  and thus  a homomorphism \begin{equation} \label{eqn:  MMtoH}
\Pe^{\mm}_{\MM_{\Q}} \To \Pe^{\mm}_{\HH} \ .
\end{equation}
The elements $(\ref{ExampleHmotive})$ should certainly be in its image, and  the following diagram \begin{eqnarray}
\Pe^{\mm}_{\MM_{\Q}}  &\To&   \Pe^{\mm}_{\MM_{\Q}}     \otimes_{\Q} \Pe^{\dR}_{\MM_{\Q}} \nonumber\\
\downarrow &  &  \qquad \downarrow  \qquad  \nonumber \\ 
\Pe^{\mm}_{\HH} &\To&   \Pe^{\mm}_{\HH}  \otimes_{\Q}  \Pe^{\dR}_{\HH}     \nonumber \ , 
\end{eqnarray} 
where the horizontal maps are given by the coactions $(\ref{eqn: coaction})$, 
would commute. 
Therefore the action of $G^{dR}_{\HH}$ on the de Rham realisation $M_{dR}$ of an object $M\in \MM_{\Q}$ would be motivic, i.e.,   would factor  through
$G^{dR}_{\HH} \rightarrow G^{dR}_{\MM_{\Q}}$. 
 Grothendieck's period conjecture  states that  the period 
$\per :  \Pe^{\mm}_{\MM_{\Q}} \rightarrow \C$ is injective, which motivates  many classical conjectures in transcendence theory  \cite{An1,An2, Be}.  Since $\per$ factors through $(\ref{eqn:  MMtoH})$ this would imply conjecture \ref{conj1} and 
 a much weaker conjecture: namely that the homomorphism $(\ref{eqn:  MMtoH})$ is injective. In this case, relations between elements of $\Pe^{\mm}_{\MM_{\Q}}$
 could be detected in $\Pe^{\mm}_{\HH}$, and $G^{dR}_{\HH} \rightarrow G^{dR}_{\MM_{\Q}} $ would have the same image in $\Aut(M_{dR})$ for every object  $M \in \MM_{\Q}$ (and likewise for  Betti). 
 It is for this reason that we allow ourselves to call the  periods  $(\ref{eqn: motperiod})$   `motivic'.

One important situation in which much of the above  certainly works is the case $\MT(\Q)$ of mixed Tate (or Artin-Tate) motives over $\Q$\footnote{these exist over any number field, but for the time being we are working only over $\Q$} \cite{LevineTate, DeGo}.  One then has  a morphism
$\Pe^{\mm}_{\MT(\Q)} \rightarrow \Pe^{\mm}_{\HH}$
which is known to be injective (by the full faithfulness of the Hodge realisation).
Borel's deep results on the rational algebraic $K$-theory of $\Q$ (see example $\ref{exampleMTcase}$ below)
give  a precise upper bound for the size of the ring $\Pe^{\mm}_{\MT(\Q)}$. Several applications of motives to number theory rely in an essential way on this upper bound. Note that even if one has the `right' definition of $\MM_{\Q}$, this upper bound is not available 
in general.

\begin{rem} Defining mixed motives as a full subcategory of realisations (\`a la Jannsen, Deligne)
 as opposed to by explicit generators and relations (\`a la Nori) 
 would not give the same answer  if the realisation functors are not fully faithful.    Furthermore, theorems about the \emph{independence} of motivic periods proved in the  ring $\Pe^{\mm}_{\HH}$  will carry over unconditionally to any reasonable
 definition of a category of mixed motives (irrespective of whether  $(\ref{eqn:  MMtoH})$ is injective or not). 
  On the other  hand, 
 when proving \emph{relations} between motivic periods, it is preferable to prove them using morphisms of mixed Hodge structures which come from  geometry, in which case
 they would also carry over to any suitably defined $\Pe^{\mm}_{\MM_{\Q}}$.
\end{rem}

\subsection{Projection map} \label{sectProjection}  An inconvenience of working with de Rham periods is the lack of a (complex) period homomorphism. One
way around this is to construct  single-valued periods as we did in \S\ref{sectSVconstant}. Another approach
is to write de Rham periods as images of motivic periods. The latter works particularly well in the mixed Artin-Tate case as we now explain.

\begin{prop}  Every effective motivic period of weight zero is a motivic algebraic number. The period map gives  an isomorphism
 $$\mathrm{per}: W_0 \Pe^{\mm,+} \overset{\sim}{\rightarrow} \overline{\Q} \ . $$
\end{prop}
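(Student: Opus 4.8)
The plan is to prove the proposition in two steps: first, that every effective motivic period of weight zero is a motivic algebraic number (i.e.\ the $\HH$-period of an Artin motive, an object of $\HH$ whose underlying mixed Hodge structure is pure of weight $0$ with trivial Hodge filtration), and second, that the period map restricts to an isomorphism from $W_0\Pe^{\mm,+}$ onto $\overline{\Q}$.

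For the first step, I would take an effective motivic period $\xi = [H^n(X,D),\sigma,\omega]^{\mm}$ with $\xi \in W_0\Pe^{\mm,+}_{\HH}$, and replace $H^n(X,D)$ by its minimal object $M(\xi)$ (\S\ref{sectminimalobject}), which inherits $W_0 M(\xi) = M(\xi)$ since the weight filtration on $\Pe^{\mm}_{\HH}$ is by subobjects via $(\ref{eqn: Wnsubobj})$. Effectivity forces the Hodge numbers $h_{p,q}(M(\xi))$ to vanish unless $p,q\geq 0$; combined with $W_0 M(\xi)=M(\xi)$, which via $(\ref{eqn: Hodgenos})$ forces $p+q\leq 0$, we get $h_{p,q}(M(\xi))=0$ unless $p=q=0$. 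Hence $\gr^W_0 M(\xi) = M(\xi)$ has Hodge type purely $(0,0)$: it is polarizable of weight $0$ with trivial Hodge filtration, so it is a (polarizable) $\Q$-Hodge structure of type $(0,0)$, which is semisimple with $F^0 = M(\xi)_{dR}$, $F^1 = 0$. Such objects form the category of Artin--Tate--like pure objects; concretely the comparison $c_{M(\xi)}$ is then an isomorphism respecting the (trivial) Hodge filtration, and one checks $\xi = \sigma(c_{M(\xi)}(\omega))$ is algebraic. The cleanest way to see algebraicity: because $M(\xi)$ is pure of weight $0$ and Hodge type $(0,0)$, the space $\Isom_{\HH}(\omega_{dR},\omega_B)$ restricted to the subcategory generated by $M(\xi)$ is a torsor under a finite (pro-\'etale) group scheme over $\Q$, so it has a $\overline{\Q}$-point; thus $c_{M(\xi)}$ is defined over $\overline{\Q}$ and $\per(\xi)\in\overline{\Q}$.

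For the second step, surjectivity of $\per: W_0\Pe^{\mm,+}\to\overline{\Q}$ is the easy direction: for $\alpha\in\overline{\Q}$, the Artin motive $H^0(\Spec\Q(\alpha))$ (relative cohomology of a finite set of $\overline{\Q}$-points) produces a weight-zero effective motivic period whose period is $\alpha$; this is a standard geometric construction as in Example~\ref{ex: motivicperiods} with $X$ a finite \'etale $\Q$-scheme and $D=\emptyset$. For injectivity, I would argue that $\per$ restricted to weight-zero effective motivic periods coincides with a genuine fiber-functor comparison for Artin motives: on the full subcategory $\HH^{(0,0)}\subset\HH$ of polarizable Hodge structures of type $(0,0)$, both $\omega_B$ and $\omega_{dR}$ land in $\mathrm{Vec}_{\Q}$ and the comparison $c$ is now a $\overline{\Q}$-rational isomorphism (being a torsor point of a finite group scheme, as above). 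Injectivity of $\per$ on $W_0\Pe^{\mm,+}$ then amounts to: distinct $\HH$-periods of Artin motives have distinct complex (hence $\overline{\Q}$-) values. This follows because the period conjecture \emph{is} a theorem for Artin motives --- equivalently, $\Pe^{\mm}_{\HH^{(0,0)}} \cong \Or(\Isom_{\HH^{(0,0)}}(\omega_{dR},\omega_B))$ is the affine ring of a finite $\Q$-group scheme, and the period map is evaluation at the canonical $\overline{\Q}$-point $c$, which is injective since $\overline{\Q}$-points of a finite $\Q$-scheme separate functions (the scheme is $\Spec$ of a finite-dimensional $\Q$-algebra embedding into its $\overline{\Q}$-points).

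The main obstacle I anticipate is the injectivity claim: one must be careful that $W_0\Pe^{\mm,+}$ really is the ring of $\HH$-periods of Artin motives and not something larger or with hidden relations, and that the ``period conjecture for Artin motives'' is being invoked correctly. The key technical point is the identification $W_0\Pe^{\mm,+} = \Pe^{\mm}_{\HH^{(0,0)}}$, which uses the minimal-object argument of the first step to show that any weight-zero effective motivic period is already a matrix coefficient of an object of $\HH^{(0,0)}$, together with the fact that $\HH^{(0,0)}$ is generated by Artin motives coming from $\Spec$ of \'etale $\Q$-algebras (a consequence of the description of polarizable type-$(0,0)$ Hodge structures as $\Q$-representations of $\mathrm{Gal}(\overline{\Q}/\Q)$ factoring through a finite quotient). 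Once that identification is in hand, the isomorphism $\per\colon W_0\Pe^{\mm,+}\xrightarrow{\sim}\overline{\Q}$ is the statement that the motivic Galois group of the category of Artin motives over $\Q$ is $\mathrm{Gal}(\overline{\Q}/\Q)$ with its standard action, which is classical.
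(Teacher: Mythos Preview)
There is a genuine gap in your Step 1. You correctly show that the minimal object $M(\xi)$ is effective with $W_0 M(\xi)=M(\xi)$, hence has Hodge numbers concentrated in type $(0,0)$. But the inference from ``$M(\xi)$ has Hodge type $(0,0)$'' to ``the Tannaka group of $\langle M(\xi)\rangle$ is finite'' (and hence $\per(\xi)\in\overline{\Q}$) is false in $\HH$. A type-$(0,0)$ object of $\HH$ is a triple $(V_B,V_{dR},c)$ with trivial weight and Hodge filtrations, but the comparison $c$ is \emph{not} constrained beyond the $F_\infty$-compatibility: for instance $(\Q,\Q,\pi)$ with $F_\infty=1$ is a perfectly good type-$(0,0)$ object of $\HH$ whose period is $\pi$, and the Tannaka group of the subcategory it generates is $\GG_m$, not finite. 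Likewise your claim that ``$\HH^{(0,0)}$ is generated by Artin motives'' and that ``polarizable type-$(0,0)$ Hodge structures are $\mathrm{Gal}(\overline{\Q}/\Q)$-representations'' conflates the underlying Hodge structure (which is indeed just a $\Q$-vector space) with the full data in $\HH$, which includes $c$. The Galois structure on an Artin motive comes from its geometric origin, not from its Hodge type.

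What is missing is precisely the geometric input that $\xi$ is a \emph{motivic} period, i.e.\ a matrix coefficient of some $H^n(X,D)$. The paper's argument (in \S\ref{sectExfamily}, specifically the corollary following Proposition~\ref{prop: weightsandfacemaps} and the remark on the normal-crossing case over $\Spec\,\Q$) computes $W_0 H^n(X,D)$ directly via the relative cohomology spectral sequence $E_1^{p,q}=\bigoplus_{|J|=p} H^q(D_J)\Rightarrow H^{p+q}(X,D)$: the weight-zero piece is supported on the $0$-dimensional strata $\bigcup_{|I|=n} D_I$, which is a finite $\Q$-scheme, so $W_0 H^n(X,D)$ is the realisation of an Artin motive with Galois action coming from the action on these points. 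This geometric identification is what forces $M(\xi)$, as a subquotient of $W_0 H^n(X,D)$, to be an Artin-motive realisation rather than an arbitrary type-$(0,0)$ object of $\HH$. Once that is in hand, injectivity and surjectivity of $\per$ follow from \S\ref{sectAlgebraicNos}, equation $(\ref{perisomonalg})$, as you indicate.
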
 
\begin{proof} See \S\ref{sectExfamily} below.
\end{proof}

Suppose that $M$ is an object of $\HH$ which is  effective  (all Hodge numbers $h_{p,q}(M)$ vanish unless $p, q\geq0$).
Say that $M$ is \emph{separated} \label{gloss: separated}   if 
$$W_0 M_{dR} \To M_{dR} \To M_{dR} / F^1 M_{dR}$$
is an isomorphism. This implies that  there is a splitting
\begin{equation}\label{sepsplit} 
M_{dR} = W_0 M_{dR} \oplus F^1 M_{dR}\ .
\end{equation}
Equivalently,  $h_{p,q}(M) =0$ unless $(p,q)=(0,0)$ or $p,q>0$.

Define a comparison map    $c_0^t:M_B^{\vee}\otimes_{\Q} \Pe^{\mm}(W_0M) \rightarrow M_{dR}^{\vee} \otimes_{\Q} \Pe^{\mm}(W_0M)$ to be the dual of the composition $c_0$ 
of the maps
$$ M_{dR} \To M_{dR}/F^1 M_{dR} \cong W_0 M_{dR} \overset{c^{\mm}_{M}}{\To} W_0 M_B\otimes_{\Q} \Pe^{\mm}(W_0M) \subset M_B \otimes_{\Q} \Pe^{\mm}(W_0M)$$
where $\Pe^{\mm}(W_0M)$ 
is the vector space of motivic  periods  of $W_0M$. By the previous proposition, these are algebraic motivic periods (see \S\ref{sectAlgebraicNos}) in the case where $M$ is an object of the form $(\ref{ExampleHmotive})$. In this case, we obtain a linear map from the motivic periods of $M$ to its de Rham periods:
$$[ M, \sigma, \omega]^{\mm} \mapsto [M, c_0^t (\sigma), \omega]^{\dR} : \Pe^{\mm}(M) \To \Pe^{\dR}(M) \otimes_{\Q} \overline{\Q}\ .$$
If  $M$ is of Artin-Tate type (Hodge numbers equal to $(p,p)$ only) and effective, then it is necessarily separated.  So, writing $\Pe^{\bullet,+}_{HT} = \Pe^{\bullet,+ } \cap \Pe^{\bullet}_{HT}$ 
for $\bullet \in \{\mm, \dR\}$,  we obtain a linear map 
\begin{equation} \label{pidRmm}  \pi_{\dR, \mm+} : \Pe^{\mm,+}_{HT} \To \Pe^{\dR,+}_{HT} \otimes_{\Q} \overline{\Q} \ . \end{equation}

Another way to define $(\ref{pidRmm})$ is by the coaction
$$\Pe^{\mm,+}_{HT} \overset{\Delta}{\To} \Pe^{\mm,+}_{HT}\otimes_{\Q} \Pe^{\dR,+}_{HT} \To \overline{\Q} \otimes_{\Q} \Pe^{\dR,+}_{HT} $$
where the second map is the projection  of $\Pe^{\mm,+}_{HT}$ onto its weight $0$ component (recall that  it is graded by the weight and has non-negative  degrees).
Note that the projection map, restricted to the subring of motivic periods of mixed Tate motives, lands in $\Pe^{\dR,+}$, i.e., without tensoring with $\overline{\Q}$. 

One possible application of the projection map is to prove  identities between de Rham periods of mixed Tate motives using the complex period map.  One can even deduce identities between  $p$-adic periods using complex analysis. The idea is the following. Take a relation 
$P(\xi_1, \ldots, \xi_n)=0$
between motivic periods in, say $\Pe^{\mm,+}_{\MT(\Z)}$ for simplicity.  Such a relation can be proved by combining
 the coaction and complex analysis \cite{MZVdecomp}.   By applying the projection map we deduce a polynomial identity between de Rham periods. Finally,
 take the $p$-adic period to deduce
 $ P(\xi^{(p)}_1, \ldots, \xi^{(p)}_n)=0$
    where $\xi_i^{(p)} =  \per_{p} \pi^{\dR, \mm+} \xi_i $.  This  answers a question of Yamashita (\cite{Yamashita}, remark 3.9): the motivic Drinfeld associator $\mathcal{Z}^{\mm}$ defined in \cite{SVMP} provides a common source for relations between both the complex and $p$-adic multiple zeta values via 
    the period map $\per$ for the former, and via $\per_p \pi^{\dR, \mm+}$ for the latter.
The fact that $\Lef^{\dR}\neq 0$ but $\pi^{\dR, \mm+} \Lef^{\mm}=0$ explains the confusing fact that  it  is  sometimes  stated   that `$2\pi i=0$' and sometimes that `$2 \pi i =1$' 
in this context.

Stated differently, let $\overline{G}_{MT}^{\mm}$ and $\overline{G}_{MT}^{dR}$ be the affine  (monoid) schemes defined by the 
spectra of $\Pe^{\mm,+}_{MT}$ and $\Pe^{\dR, +}_{MT}\subset \Pe^{\dR}_{\MT}$. Then the projection is a morphism
$$G_{MT}^{dR} \To \overline{G}_{MT}^{dR} \To \overline{G}_{MT}^{\mm}$$
and a Frobenius element  $F_p \in  G_{MT}^{dR} (\Q_p)$ maps to a $\Q_p$-valued point on $
\overline{G}_{MT}^{\mm}$.

\section{Some basic examples of motivic periods} \label{sectExamples}
Before proceeding further with the discussion, we list some very simple examples of motivic periods to illustrate the concepts introduced earlier.
\subsection{Algebraic numbers} \label{sectAlgebraicNos} This is the study of Artin motives (\cite{DeP1}, 1.16) which 
in principle reduces to Grothendieck's version of Galois theory. However, the point of view of motivic periods leads to some interesting twists on this well-known tale. 
Let $P\in \Q[x]$ be an irreducible polynomial, set $F=\Q[x]/(P)$, and apply example \ref{ex: motivicperiods} with   $X= \mathrm{Spec}\, F$, $D=\emptyset$,  and $n=0$. 
The  object  $H^0(X)$ is (the realization of) an Artin motive.  Its de Rham and Betti realizations are  $H^0_{dR}(X) = F$, and 
$H^0_B(X)= H_0(X(\C);\Q) ^{\vee}= \mathrm{Hom}(F,\C)^{\vee}.$ 
Let $\overline{\Q}$ denote the algebraic closure of $\Q$ in $\C$. 
Given  $\alpha \in \overline{\Q}$ such that $P(\alpha)=0$, denote by $\sigma_{\alpha}:F \hookrightarrow \C$
the unique embedding of $F$  such that   $ \sigma_{\alpha}(x)=\alpha$. Define a \emph{motivic algebraic number} \label{gloss: motivicalgebraicnumber} 
$$\alpha^{\mm} = [H^0(X), \sigma_{\alpha},x]^{\mm} \in W_0 \Pe^{\mm,+}\ .$$
Its period is by definition $\per (\alpha^{\mm}) = \alpha$.  
The diagonal  $X\rightarrow X\times X$ gives rise to a morphism 
$H^0(X)\otimes H^0(X)\rightarrow  H^0(X)$ in the category $\HH$.  Using this and the defining relations between  matrix coefficients, we
deduce that 
$$f(\alpha^{\mm}) = [ H^0(X), \sigma_{\alpha}, f(x)]^{\mm}$$
for $f = x^n$ and by additivity, for  any polynomial $f\in \Q[x]$. 
By embedding $\Q(\alpha), \Q(\beta)$ into $\Q(\alpha, \beta)$,  we deduce that $\alpha \mapsto \alpha^{\mm} : \overline{\Q} \rightarrow  \Pe^{\mm,+}$ is a homomorphism. It follows that  the period map is an isomorphism
 \begin{equation}
 \label{perisomonalg}  \langle \alpha^{\mm}: \alpha \in \overline{\Q}\rangle_{\Q} \overset{\per}{\To} \overline{\Q} \subset \C
 \end{equation}
 so we can identify algebraic numbers with their motivic versions.
Note that the minimal object $M_{\alpha^{\mm}}$ associated to  $\alpha^{\mm}$ is a strict subquotient of $H^0(X)$ whenever  $\alpha \notin \Q$, since it is a factor of  $\mathrm{coker}(H^0(\Spec \Q)  \rightarrow H^0(\Spec F) )$.  The category of Artin motives  $AM_{\Q}$ over $\Q$ is  equivalent to the full tensor subcategory of $\HH$ generated by the objects $H^0(X)$, and we could take this as its definition.

It is customary to consider only the Betti Galois group. 
The absolute Galois group $\mathrm{Gal}(\overline{\Q}/\Q)$ acts on the left on $\mathrm{Hom}(F,\C) = \mathrm{Hom}(F, \overline{\Q})$ via its action on $\overline{\Q}$
and gives an automorphism of the Betti fiber functor. 
Indeed, as  is well-known,  the Betti Galois group $G_{AM(\Q)}^B$ is the constant group scheme over $\Q$ corresponding to $\mathrm{Gal}(\overline{\Q}/\Q)$.   Therefore the  (right) action of $G_{AM_{\Q}}^{B}(\Q)$ on 
 motivic algebraic numbers  is equivalent to the 
(left) action of  $\mathrm{Gal}(\overline{\Q}/\Q)$ on $\overline{\Q}$ via the isomorphism $(\ref{perisomonalg})$.
  The action of real Frobenius $F_{\infty}$ on the latter corresponds to complex conjugation on the former. The story usually ends here.

Now consider, somewhat unconventionally, the  case of the de Rham Galois group. 
Its action  on $H^0_{dR}(\Spec F)$ respects the diagonal map and hence the  multiplication on $F$. Furthermore,  it preserves $H^0_{dR}(\Spec K)$ for all subfields $ K \subset F$. 

\begin{defn}  Consider  the functor
$$\mathcal{A}_F(R) = \{ \alpha \in \mathrm{Aut}_R( F\otimes_{\Q} R) \hbox{ such that } \alpha (K\otimes_{\Q} R) \subset K\otimes_{\Q} R \hbox{ for all } K \subset F\} $$
 from commutative $\Q$-algebras $R$ to groups. 
 Define the \emph{group  of field automorphisms} to be the projective limit over all  field extensions $F/\Q$ of finite type
$$\mathcal{A}_{\overline{\Q}} = \varprojlim_{F} \mathcal{A}_F\ .$$  
\end{defn} 
\noindent It follows from the fact that all algebraic relations between $\alpha^{\mm}$ are induced by linearity,  inclusions of fields $K \subset F$ and  diagonals $\Spec F \rightarrow \Spec F \times \Spec F$ that   
$$G^{dR}_{AM(\Q)} \cong \mathcal{A}_{\overline{\Q}}$$ (which shows in particular that the right-hand side  is representable: its affine ring is generated by 
matrix coefficients $[H^0(\mathrm{Spec}\, F), f,v]^{\dR}$, where  $f\in F^{\vee}$ and $v\in F$).
The comparison isomorphism implies that 
$$G^B_{AM(\Q)} \times \C \overset{\sim}{\To} G_{AM(\Q)}^{dR} \times \C$$ so the usual absolute Galois group can be  retrieved as the complex points (or $\overline{\Q}$-points) of the de Rham Galois group.

Now consider a motivic algebraic number $\alpha^{\mm}$, for $\alpha \in \overline{\Q}$.   The 
 \emph{degree} of  $\alpha$ can be  retrieved as the number of connected components of $G^{\bullet}_{\alpha^{\mm}}$ for $\bullet = B,dR$.  Its \emph{minimal object} $M(\alpha^{\mm})$
 is an object of $AM_{\Q}$.  Its  periods generate the Galois closure $F'$ of $\Q(\alpha)$.  Its de Rham group scheme is $G^{dR}_{\alpha^{\mm}} = \mathcal{A}_{\Q(\alpha)}$, and its  Betti group $G^{B}_{\alpha^{\mm}}$ is the constant group scheme of $\mathrm{Gal}(F'/\Q)$.   The quantity $\mathrm{rank}( \alpha^{\mm}) = \dim_{\Q} M_B(\alpha^{\mm})$ is the dimension of the  $\Q$-vector space spanned by the Galois conjugates of $\alpha$ over $\Q$. 
This is called the \emph{conjugate dimension}  of $\alpha$ and, surprisingly,  was introduced only very recently  (see  \cite{conjdim} and references therein). 

Note that although the Betti orbit $G^{B}(\Q)\alpha^{\mm}$  of $\alpha^{\mm}$ corresponds the usual notion of Galois conjugates of $\alpha$,
 the de Rham orbit $G^{dR} (R) \alpha^{\mm}$ is sensitive to $R$.

\subsection{Motivic $2 \pi i$ (Lefschetz motive)}  \label{sectLefschetz} Let $X= \Pro^1 \backslash \{0,\infty\}$, $D=\emptyset$. Consider
$ H^1(X) = \Q(-1)$ in example $\ref{ex: motivicperiods}$.
 Its de Rham version is $H^1_{dR}(X;\Q) = \Q [{dx\over x}]$ and its Betti version is
$H_1(X(\C)) = \Q[\gamma_0]$ where $\gamma_0$ is a small loop winding around $0$ in the positive direction.
 Define the Lefschetz motivic period\footnote{In \cite{SVMP}  the Lefschetz motivic period was viewed as an object in $\Pe^{\mm}_{\MT(\Z)}$, where $\MT(\Z)$ is the 
 category of mixed Tate motives unramified over  $\Z$. There is an  injection $\Pe^{\mm}_{\MT(\Z)}\rightarrow\Pe^{\mm,+} \subset  \Pe^{\mm}_{\HH}$, and the object $\Lef^{\mm}$ defined here is its image. A similar remark applies for the later examples.}
$$\Lef^{\mm} = [H^1(X), [\gamma_0],  [{dx/x}]]^{\mm} \quad \in \quad  W_2\, P^{\mm,+}\ . $$
It satisfies $F_{\infty}\Lef^{\mm}=-\Lef^{\mm}$. By Cauchy's theorem, its period is 
$$\per (\Lef^{\mm}) = \int_{\gamma_0} {dx \over x} = 2 \pi i\ .$$
It is the `motivic version' of $2\pi i$. Since $H^1_{dR}(X)$ is  a one-dimensional representation of  $G^{dR}$, we obtain a homomorphism of 
affine group schemes
$$\lambda: G^{dR} \To \mathbb{G}_m\ .$$
Thus the group $G^{dR}(\Q)$ acts upon $\Lef^{\mm}$ by multiplication 
$$ g (\Lef^{\mm})  = \lambda(g) \Lef^{\mm}\ ,$$
where $\lambda(g) \in \Q^{\times}$. 
The character $\lambda_g$ is non-trivial: if  $H^1_{dR}(X)$ were the trivial representation,  then by  theorem \ref{thm:  Tannaka},  $H^1(X)$ would
be equivalent to the trivial object $\Q(0)=H^0(pt)$ which has rational periods.  Since
$$\per(\Lef^{\mm}) = 2  \pi i  \notin \Q$$
is irrational, we conclude that $\lambda$ is non-trivial (this also follows  from the fact that the  Hodge structure on $H^1(X)$ is  $\Q(-1)$ which is pure of  weight 2). It follows that $\Lef^{\mm}$ is transcendental: if there were a polynomial $P \in \Q[x]$ such that
$P(\Lef^{\mm}) = 0 $,
 then every conjugate $\lambda(g) \Lef^{\mm}$ would also be a root of $P$. Since a non-zero polynomial has only finitely many roots,
it would follow that $P=0$. 

On the other hand, it is convenient to define the de Rham version of $\Lef^{\mm}$, denoted  $\Lef^{\dR}\in \Pe_{\HH}^{\dR} = \Or(G^{dR})$, to be  the matrix coefficient
$$\Lef^{\dR} = [H^1(X) ,  [dx/x]^{\vee} ,[dx/x] ]^{\dR} \ .$$
The coaction $\Delta (\Lef^{\mm} ) = \Lef^{\mm} \otimes  \Lef^{\dR} $ is given by application of $(\ref{eqn: coaction})$, and  it follows that 
$g(\Lef^{\dR}) = \lambda(g) \Lef^{\dR}$, and $\ev(g(\Lef^{\dR}))= \lambda(g)$, since $\ev(\Lef^{\dR})=1$.

\subsection{Motivic logarithms (Kummer motive)}  Let $X= \Pro^1\backslash \{0,\infty\}$ and $D= \{1, \alpha\}$ for some $1< \alpha \in \Q$.
Consider the object in $\HH$  known as a Kummer motive
$$K_{\alpha}= H^1(\Pro^1 \backslash \{0,\infty\}, \{1,\alpha\})\ .$$
It sits in an exact sequence $0 \rightarrow \Q(0) \rightarrow K_{\alpha} \rightarrow  H^1(X) \rightarrow 0$.  A basis for the de Rham cohomology
$(K_{\alpha})_{dR}$ is given by the relative cohomology classes of the forms 
${dx \over x}$ and ${dx \over \alpha-1}$, which vanish along $D$. Let $\gamma_0$ be as in \S\ref{sectLefschetz}, and  $\gamma_1$ denote the  interval $
[1,\alpha]\subset X(\R)$. Their boundaries are contained in $D(\C)$, and they form a basis for $(K_{\alpha})^{\vee}_B$. The comparison isomorphism is represented by the matrix
\begin{equation}  \label{eqn: periodmatrixforlog}
\left(
\begin{array}{cc}
\int_{\gamma_1} {dx \over \alpha-1}  & \int_{\gamma_1} {dx \over x}        \\
  \int_{\gamma_0} {dx\over \alpha-1 }  &  \int_{\gamma_0} {dx \over x}  
\end{array}\right) = \left(
\begin{array}{cc}
 1   & \log(\alpha)     \\
 0 & 2\pi i  \end{array}\right) 
 \end{equation} 
with respect to  this choice of basis. Define the motivic logarithm to be 
$$\log^{\mm}(\alpha) = [K_{\alpha}, [\gamma_1], [ \textstyle{dx\over x}]]^{\mm} \in W_2 \Pe^{\mm,+}\ .$$
Its period  is $\per(\log^{\mm}(\alpha)) = \log(\alpha)$, and $F_{\infty} \log^{\mm} \alpha = \log^{\mm} \alpha$.  The group $G^{dR}$ acts on $(K_{\alpha})_{dR}$, fixing the subspace
$\Q(0)_{dR}$ and acting on the quotient $H_{dR}^1(X)=\Q(-1)_{dR}$ via $\lambda_g$ as in the previous example. Thus  we have  a homomorphism 
$$(\nu_{\alpha}, \lambda): G^{dR} \To \GG_a \rtimes \GG_m\ .$$
 Equivalently, the   de Rham action is given for  $g\in G^{dR}(\Q)$  by
\begin{equation} \label{eqn: Gactiononlog} 
g \log^{\mm}(\alpha) = \lambda(g) \log^{\mm}(\alpha) + \nu_{\alpha}(g) \ .
 \end{equation}
For illustration, we can prove the functional equation of the motivic logarithm  as follows.  Let $1<\beta \in \Q$, and
consider the morphisms  of pairs of spaces
\begin{eqnarray} 
 (\GG_m,\{1,\alpha\}) \overset{\times \beta}{\To} (\GG_m, \{\beta, \alpha\beta\}) &  \subseteq  & (\GG_m, \{1, \beta, \alpha\beta\})  \nonumber \\
 \quad (\GG_m, \{1, z\}) &\subseteq &  (\GG_m, \{1, \beta, \alpha\beta\}) \quad  \hbox{ for }  z\in \{\beta, \alpha\beta\} \ . \nonumber 
 \end{eqnarray} 
 Since ${dx \over x}$ is invariant under multiplication, these give relations
\begin{eqnarray} 
\log^{\mm}(\alpha)  &=  & [ H^1 (\GG_m, \{1, \beta, \alpha\beta\}),  [\beta, \alpha\beta], [\textstyle{{dx \over x}}]]^{\mm} \nonumber \\ 
\log^{\mm}(z)  & =  & [ H^1 (\GG_m, \{1, \beta, \alpha\beta\}), [1, z],  [\textstyle{{dx \over x}}]]^{\mm} \quad  \hbox{ for }  z\in \{\beta, \alpha\beta\}\ . \nonumber
\end{eqnarray} 
Finally use additivity with respect to Betti classes $[1, \alpha\beta] = [1, \beta]+ [\beta, \alpha\beta]$ to obtain the expected relation between the three motivic periods
$$\log^{\mm} (\alpha\beta) = \log^{\mm}(\alpha) + \log^{\mm}(\beta)\ .$$
 It follows that the motivic logarithms over $\Q$ are linear combinations of the  motivic periods
$\log^{\mm}(p)$  for  $p\geq 2$ prime. Since $\log : \R_{>0} \rightarrow \R$ has a unique zero at $x=1$, the functional equation of the logarithm implies that  the numbers 
$\log(p)$ are linearly independent over $\Q$, and \emph{a fortiori}  the $\log^{\mm}(p)$.  By $(\ref{eqn: Gactiononlog})$  we have
$$ \Delta^u \log^{\mm}(p) = \log^{\mm}(p)\otimes 1 + 1 \otimes \nu_p$$
where $\nu_p$ is viewed in $\Or(U^{dR}_{\HH})$. We deduce that the decomposition map satisfies
$$\Phi ( \log^{\mm}(p)) = 1 \otimes \nu_p   \qquad \in \qquad \Pe^{\mm}_{\HH^{ss}} \otimes_{\Q} H^1(U^{dR}_{\HH})\ , $$
and the $\nu_p$ are independent, since $\Phi$ is injective.  
Since it is a homomorphism, $$\Phi\big( (\Lef^{\mm})^k \prod_i  \log^{\mm}(p_i)^{n_i} \big)    = (\Lef^{\mm})^k \otimes \prod_i (\nu_{p_i})^{\sha n_i}$$
where the products on the right-hand side are with respect to the shuffle product, $p_i$ are a finite set of primes, and $n_i \geq 0$.

 \begin{cor} Since $\Phi$ is injective, the set of elements $
\{\Lef^{\mm},   \log^{\mm}(p)  \hbox{ for } p \hbox{ prime}\}$  are algebraically independent over  $\Q$.
 \end{cor}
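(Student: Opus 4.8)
The plan is to reduce to a single hypothetical polynomial relation, pass to the associated graded for the coradical filtration, and then apply the injective homomorphism $\Phi$ together with the formula $\Phi\big((\Lef^{\mm})^k \prod_i \log^{\mm}(p_i)^{n_i}\big) = (\Lef^{\mm})^k \otimes \prod_i \nu_{p_i}^{\sha n_i}$ recorded just above the corollary. Since an algebraic dependence among the listed elements involves only finitely many of the $\log^{\mm}(p)$, it suffices to show that for distinct primes $p_1,\dots,p_r$ there is no nonzero $P \in \Q[x_0,x_1,\dots,x_r]$ with $P(\Lef^{\mm},\log^{\mm}(p_1),\dots,\log^{\mm}(p_r)) = 0$. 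Suppose such a $P$ exists; write $P = \sum_{n\geq 0} P_n$ where $P_n$ collects the monomials of total degree $n$ in $x_1,\dots,x_r$, and let $N$ be the largest index with $P_N\neq 0$.

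First I would pin down the coradical filtration. The period $\Lef^{\mm}$ is pure, so $\Lef^{\mm} \in C_0 \Pe^{\mm}_{\HH} = \Pe^{\mm}_{\HH^{ss}}$; each $\log^{\mm}(p)$ is the period of the simple (Kummer) extension $K_p$ and so lies in $C_1 \Pe^{\mm}_{\HH}$, indeed not in $C_0$ since $\Delta^u \log^{\mm}(p) = \log^{\mm}(p)\otimes 1 + 1\otimes\nu_p$ with $\nu_p\neq 0$. Because $\mu(C_i\times C_j)\subseteq C_{i+j}$, every monomial of $P$ of total degree $n$ in $x_1,\dots,x_r$ evaluates into $C_n \Pe^{\mm}_{\HH}$; hence $\sum_{n<N}P_n(\Lef^{\mm},\log^{\mm}(\vec p)) \in C_{N-1}\Pe^{\mm}_{\HH}$, and from $P=0$ the class of $P_N(\Lef^{\mm},\log^{\mm}(\vec p))$ in $\gr^C_N \Pe^{\mm}_{\HH}$ vanishes. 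Now apply the graded $\Q$-algebra homomorphism $\Phi$ term by term using the displayed formula: writing the monomials of $P_N$ as $c_{k,\vec n}\,x_0^k x_1^{n_1}\cdots x_r^{n_r}$ (so $\sum_i n_i = N$), injectivity of $\Phi$ yields $\sum_{k,\vec n} c_{k,\vec n}\,(\Lef^{\mm})^k \otimes (\nu_{p_1}^{\sha n_1}\sha\cdots\sha\nu_{p_r}^{\sha n_r}) = 0$ in $\Pe^{\mm}_{\HH^{ss}} \otimes_{\Q} T^c(H^1(U^{dR}_{\HH}))$.

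It then remains to see that all $c_{k,\vec n}$ vanish, contradicting $P_N\neq 0$. This needs two linear-independence statements over $\Q$: (a) the powers $(\Lef^{\mm})^k$, $k\geq 0$, are independent in $\Pe^{\mm}_{\HH^{ss}}$, which is precisely the transcendence of $\Lef^{\mm}$ established in \S\ref{sectLefschetz}; and (b) the shuffle monomials $\nu_{p_1}^{\sha n_1}\sha\cdots\sha\nu_{p_r}^{\sha n_r}$ with $\sum_i n_i = N$ are independent in $T^c(H^1(U^{dR}_{\HH}))$. For (b) I would use that the $\nu_p$ are linearly independent (noted just above the corollary), extend them to a basis, and observe that in the length-$N$ tensor component the shuffle monomial with exponent vector $\vec n$ is a nonzero combination of exactly the basis words containing $n_i$ copies of $\nu_{p_i}$ for each $i$, so distinct $\vec n$ give combinations of disjoint sets of words; equivalently, $T^c(V)$ with the shuffle product is the polynomial algebra on Lyndon words, whose length-one part is a basis of $V$. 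Granting (a) and (b), the family $\{(\Lef^{\mm})^k\otimes(\nu_{p_1}^{\sha n_1}\sha\cdots\sha\nu_{p_r}^{\sha n_r})\}$ is linearly independent in the tensor product over the field $\Q$, forcing all $c_{k,\vec n}=0$.

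I expect the only delicate point to be the filtration bookkeeping of the second paragraph — checking that the class of the top-degree part $P_N$ in $\gr^C_N$ really is computed by applying the displayed formula term by term, and that the relevant monomials genuinely live in degree $N$ — while input (b) is a routine fact about shuffle algebras.
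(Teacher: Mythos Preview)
Your proposal is correct and is exactly the unpacking of what the paper intends: the corollary is stated as an immediate consequence of the injectivity of the graded ring homomorphism $\Phi$ together with the displayed formula $\Phi\big((\Lef^{\mm})^k\prod_i\log^{\mm}(p_i)^{n_i}\big)=(\Lef^{\mm})^k\otimes\prod_i\nu_{p_i}^{\sha n_i}$, and your coradical-filtration bookkeeping plus inputs (a) and (b) are precisely the details making this inference rigorous. One cosmetic slip: where you write ``injectivity of $\Phi$ yields $\sum\ldots=0$'' you only mean ``applying the homomorphism $\Phi$ yields $\sum\ldots=0$''; injectivity is not used at that step (you instead use the linear independence (a)--(b) of the images to conclude $c_{k,\vec n}=0$).
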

 This completes the description of all algebraic relations between the motivic periods $\log^{\mm} (\alpha)$, for $\alpha \in \Q$,  and $\Lef^{\mm}$.  

\subsubsection{Single-valued versions}

Define the    de Rham version 
$$\log^{\dR}(\alpha) =   [K_{\alpha}, [\textstyle{dx\over \alpha-1}]^{\vee},[ \textstyle{dx\over x}] ]^{\dR} \in W_2 \Pe^{\dR}\ ,$$
where $[{dx\over \alpha-1}]^{\vee} \in  (K_{\alpha})_{dR}^{\vee}$ takes the value $0$ on $  [ \textstyle{dx\over x}]$ and $1$ on $[{dx\over \alpha-1}]$. It is precisely  $\nu_{\alpha} \in \Or(G^{dR})$, and the  coaction formula  $(\ref{eqn: coaction})$ gives 
$$\Delta \log^{\mm}(\alpha) = \log^{\mm}(\alpha) \otimes  \Lef^{\dR}   +   1^{\mm}\otimes  \log^{\dR}(\alpha) \ ,$$
which is equivalent to $(\ref{eqn: Gactiononlog})$. It follows from the computations above that $\log^{\dR}(p)$ for $p$ prime are also algebraically independent over $\Q$
(use the de Rham version of $\Phi$).  The motivic period matrix associated to $\log^{\mm} \alpha$ is 
\begin{equation} \label{logmotivicperiodmatrix} C^{\mm} =  
\left(
\begin{array}{cc}
 1   & \log^{\mm}(\alpha)     \\
 0 & \Lef^{\mm} \end{array}\right) \ .
 \end{equation}
 The real Frobenius $F_{\infty}$ acts by $-1$ on the second row. Therefore 
 $$(F_{\infty} C^{\mm})^{-1} C^{\mm} =
\left(
\begin{array}{cc}
 1   & 2 \log^{\mm}(\alpha)      \\
 0 &  -1 \end{array}\right) $$
 and we deduce  that $\s^{\mm} (\Lef^{\dR}) = -1$ and $\s^{\mm} (\log^{\dR}(\alpha))= (1+ F_{\infty}) \log^{\mm}(\alpha) =2 \log^{\mm}(\alpha) $.

\subsection{Motivic multiple zeta values}  \label{sectMotivicMZV} Iterated integrals on the punctured projective line provide a  class of motivic periods for which one knows how to compute the motivic coaction. This is most developed in the case of multiple zeta values.
For any $n_1,\ldots, n_{r-1}\geq1$ and $n_r\geq 2$, there are motivic multiple zeta values
\begin{equation} \label{mmzv}
\zeta^{\mm}(n_1,\ldots, n_r) \in \Pe^{\mm}_{HT} \cap\Pe^{\mm,+}  \subset \Pe^{\mm,+}
\end{equation} 
of weight $2n_1+ \ldots +2n_r$ (recall $\Pe^{\mm,+}_{HT}$ is graded by $W$) whose periods are
$$\per(\zeta^{\mm}(n_1,\ldots, n_r)) = \zeta(n_1,\ldots, n_r) = \sum_{1\leq k_1 <\ldots  < k_r} {1 \over k_1^{n_1} \ldots k_r^{n_r}}\ .$$ 
They are defined as follows. Let $X= \Pro^1 \backslash \{0,1,\infty\}$ and set
$$\zetam(n_1,\ldots, n_r) = [ \Or(\pi_1^{\mm}(X, \tone_0, -\tone_1),    \mathrm{dch} , w ]^{\mm}$$
where $\pi_1^{\mm}(X)$ is the motivic torsor of paths on $X$ \cite{DeGo} from the unit tangent vector at $0$ to minus the unit tangent
vector at $1$, $w$ is the  word  $w = e_0^{n_1-1} e_1 \ldots e_0^{n_r-1} e_1$ in $e_0={dx \over x}$, and $e_1 = {dx \over 1-x}$, and $\mathrm{dch}$ is the Betti image of the straight line path from $0$ to $1$.
For further details, see \cite{BrICM}.  This actually defines the motivic multiple zeta values as motivic periods of the category $\MT(\Z)$ of mixed Tate 
motives over $\Z$. The latter admits  a fully faithful functor to the category $\HH$  \cite{DeGo}, and so the ring of periods  $\Pe^{\mm}_{\MT(\Z)}$ injects into  $\Pe^{\mm}_{\HH}$
and we can  identify it with its image.  Furthermore, Beilinson's construction of the motivic  torsor of path  given in \cite{DeGo} can be realised in the form of example $\ref{ex: motivicperiods}$,
so we can also view the images of the $\zetam(n_1,\ldots, n_r) \in \Pe^{\mm}_{\HH}$ as elements of $\Pe^{\mm,+}$ as claimed above.

The depth of $(\ref{mmzv})$ is defined to be $r$. 
The fact that the depth filtration is motivic implies the following bound for the unipotency degree
$$ \hbox{u. d.}( \zeta^{\mm}(n_1,\ldots, n_r)) \leq r   \   .$$
The unipotency degree has sometimes been referred to as the  `motivic depth'. A fascinating feature of multiple zeta values is  the existence of a discrepancy between the unipotency degree and the depth,  related to modular forms for $\mathrm{SL}_2(\Z)$. 

  The simplest examples are the motivic zeta values, $\zetam(2n+1) \in C^1 \Pe^{\mm,+}$, for $n\geq 1$, which admit  
a motivic period matrix of the expected form
$$\left(
\begin{array}{cc}
 1   & \zetam(2n+1)     \\
 0 & (\Lef^{\mm})^{2n+1} \end{array}\right) \ 
  $$
   Let us define some symbols $f_{2n+1} \in H^1(U^{dR}_{\HH})$, for $n \geq 1$, as the images of the motivic zeta values under the decomposition map
   $$\Phi (\zetam(2n+1)) = 1\otimes f_{2n+1}$$
Each $f_{2n+1}$, for $n\geq 1$ spans a copy of $\Q(-2n-1)$ and has weight $4n+2$. The interpretation of these elements will be explained in \S \ref{sectClassif} below.  Either from the explicit
formula for the coaction on motivic multiple zeta values, or from the results of \S \ref{sectClassif}, the decomposition map gives an injective homomorphism 
$$\Phi: \gr^C \Pe^{\mm,+}_{\MT(\Z)} \To \Q[ \Lef^{\mm}] \otimes_{\Q} \Q \langle f_3, f_5,\ldots  \rangle $$
where the right-hand side denotes the shuffle algebra (tensor coalgebra) on symbols $f_{2n+1}$ over $\Q$. 
In this case, it is in fact known that $\Phi$ is an isomorphism (theorem \ref{thmdecompisom}). Now, the main result of \cite{BrMTZ}
is a computation of the image  under $\Phi$ of the elements 
$\zetam(n_1,\ldots, n_r)$ where $n_i \in \{2,3\}$, and a proof  that their images are linearly independent.\footnote{The use of the  decomposition map considerably simplifies  many of the arguments of \cite{BrMTZ}}  Thus we can use these elements to split the 
coradical filtration $C$ on $\Pe^{\mm}_{\MT(\Z)}$, and deduce the existence of a canonical  isomorphism \cite{MZVdecomp} 
$$\phi \quad : \quad  \Pe^{\mm,+}_{\MT(\Z)} \cong \gr^C \Pe^{\mm,+}_{\MT(\Z)}  \overset{\Phi}{\To} \Q[ \Lef^{\mm}] \otimes_{\Q} \Q \langle f_3, f_5,\ldots  \rangle\ . $$
One could use a different splitting of the coradical filtration $C$, which would lead to a different choice of isomorphism $\phi$.

It is  hard  to  understand Galois aspects of  multiple zeta values without some sort of model of this kind.
Indeed, using this model we can  easily write down the invariants defined earlier. 
If $\xi \in \Pe^{\mm,+}_{\MT(\Z)}$ corresponds to $(\Lef^{\mm})^{k} f_{a_1} f_{a_2} \ldots f_{a_r}$ under $\phi$, then
the representation generated by $\xi$ is the vector space
$$M(\xi)_{dR} = \langle  \ell^{k} f_{a_1} \ldots f_{a_i}:  \hbox{ for } 0\leq i \leq r \rangle_{\Q}\ $$
 obtained by slicing off letters from the right.   A representative for the period matrix for $\xi = (\Lef^{\mm})^{k} f_{a_1} f_{a_2} $ is
$$
\per\left(
\begin{array}{ccc}
      (\Lef^{\mm})^{k}   &  (\Lef^{\mm})^{k} f_{a_1} &     (\Lef^{\mm})^{k} f_{a_1} f_{a_2}     \\
0 &    (\Lef^{\mm})^{k+a_1}   &     (\Lef^{\mm})^{k +a_1} f_{a_2}     \\
0 & 0 &  (\Lef^{\mm})^{k +a_1+ a_2}           \end{array}
\right)
$$
which means the top left-hand entry is $\per((\Lef^{\mm})^k) = (2 \pi i)^k$,  and so on.
The general pattern is clear from this example.

Applying the projection map $\pi$ to motivic multiple zeta values leads to \emph{de Rham} multiple zeta values
$\zeta^{\dR}(n_1,\ldots, n_r) = \pi^{\dR,\mm +} \zetam(n_1,\ldots, n_r)$.
It is proved in \cite{BrMTZ} that the kernel of $ \pi_{\dR,\mm +}$ on the ring generated by motivic multiple zeta values is the ideal generated by $\zetam(2)$. Thus de Rham 
multiple zeta values are motivic MZV's modulo $\zetam(2)$. The former have single-valued periods, and a calculation using the period matrix for $\zetam(2n+1)$  similar to the one for the logarithm gives the single-valued versions 
  $\s^{\mm}(\zeta^{\dR}(2n+1)) = 2 \zetam(2n+1)$.
  The de Rham versions of multiple zeta values also have $p$-adic periods, 
  which  can be thought of as follows. There are canonical Frobenius elements \cite{Yamashita}
$$F_p \in G^{dR}_{\MT(\Z)}(\Q_p)\ ,$$
and hence  homomorphisms $\per_p : \Pe^{\dR}_{\MT(\Z)} = \Or(G^{dR}_{\MT(Z)}) \rightarrow \Q_p$. 
The projection map enables us to associate $p$-adic periods to motivic multiple zeta values, which are a certain kind of  $p$-adic multiple zeta values.\footnote{This point of view  quickly leads to new constructions. For example, one can consider      curious  hybrid quantities defined by    the
 convolution of $\per$ with $\per_p$:
 \begin{equation}  \nonumber
 \zeta_{\R *p }(n_1,\ldots, n_r ) : =  m( \per \otimes \per_p \pi^{\dR,\mm+}) \Delta \zetam(n_1,\ldots, n_r)  \in    \R \otimes_{\Q} \Q_p \ ,
 \end{equation}
where $m$ is multiplication, and $\Delta$ the coaction $(\ref{eqn: coaction})$.  }

\subsection{Motivic Euler sums} \label{sectMotEuler}
Euler sums are defined by the nested sums
$$\zeta(n_1,\ldots, n_r) = \sum_{1 \leq k_1 < \ldots<  k_r}  { \mathrm{sign}(n_1)^{k_1} \ldots \mathrm{sign}(n_r)^{k_r}  \over k_1^{|n_1|} \ldots k_r^{|n_r|} }$$
where $n_i \in \Z\backslash \{0\}$ and $n_r\neq 1$. Their depth is defined to be the quantity $r$. They can be written as iterated integrals
on $X= \Pro^1\backslash \{0,\pm 1, \infty\}$ from $0$ to $1$, which leads to a definition of motivic Euler sums
$$\zetam(w) = [\Or(\pi_1^{\mm}(X, \tone_0, -\tone_1),  \mathrm{dch}, w]^{\mm}$$
where  $w$ is  a certain word in  $e_0= {dx\over x}$ and $e_{\pm 1} = {dx \over x \pm 1}$, and $\mathrm{dch}$ is as above.   These are motivic periods of the category 
$\MT(\Z [ {1 \over 2}])$ of mixed Tate motives ramified at $2$. The decomposition map now provides an injective homomorphism
$$\Phi: \gr^C \Pe^{\mm, +}_{\MT(\Z[{1\over 2}])} \To \Q[\Lef^{\mm}] \otimes_{\Q} \Q \langle \nu_2, f_3, f_5, \ldots \rangle \ ,$$
where $\nu_2$, corresponding to the logarithm of $2$, was defined earlier.  It is an isomorphism by theorem \ref{thmdecompisom}.
The results of Deligne \cite{DeN}  can be translated into this setting. He proves that $\zetam(n_1,\ldots, n_{r-1}, -n_r)$ where the $n_i$ are odd $\geq 1$
and form a Lyndon word, are algebraically independent. 
An important difference with the case of multiple zeta values, which considerably simplifies matters, is that the 
depth filtration in this case coincides with the unipotency filtration. We can construct a splitting of the coradical filtration using this basis and hence  an isomorphism 
$$\phi^{(2)} :\Pe^{\mm, +}_{\MT(\Z[{1\over 2}])} \cong  \gr^C \Pe^{\mm, +}_{\MT(\Z[{1\over 2}])}  \overset{\Phi}{\To} \Q[\Lef^{\mm}] \otimes_{\Q} \Q \langle \nu_2, f_3, f_5, \ldots \rangle \ .$$
The periods of $\MT(\Z)$ correspond to elements with no $\nu_2$ in their $\phi$-image. Note, however, that the maps $\phi$ and $\phi^{(2)}$ are not  compatible.
To remedy this, one could replace  $\phi$ with the restriction of $\phi^{(2)}$  on $\Pe^{\mm,+}_{\MT(\Z)} \subset \Pe^{\mm,+}_{\MT(\Z[{1 \over 2}])}$, but this does not quite lead to an explicit basis for the periods of $\MT(\Z)$. These ideas are  studied in Glanois' thesis \cite{Glanois}, who also constructed a new basis for the motivic periods of $\MT(\Z)$ using certain modified Euler sums
where the summation involves non-strict inequalities,  weighted with certain powers of $2$.

\section{Towards a classification of motivic periods} \label{sectClassif}
We can use the decomposition into primitives  to classify $\HH$-periods up to elements 
of lower unipotency degree. 
In this section,  we shall drop the superscript $dR$  and subscript $\HH$ and write
$S, U, G$ instead of $S^{dR}_{\HH}$, $U^{dR}_{\HH}$, $G^{dR}_{\HH}.$

The decomposition map involves a space  $\gr^C_1(\Or(U))$, which is exactly
$$
  \mathrm{Prim} (\Or(U)) := \{f \in \Or(U): \Delta f = f \otimes 1 + 1 \otimes f \}  
$$
In this paragraph we analyse this space in some detail, which leads to further invariants of motivic periods, and a first step
towards their classification.

\subsection{Cohomology of $U$}
The exact sequence  $(\ref{UHexactsequence})$ will  now be written
$$ 1 \To U \To G \To S \To 1\ . $$

\begin{prop} \label{propcohomU} Let $n \geq 0$.  There is an isomorphism of (right) $S$-modules
$$H^n (U) \cong \bigoplus_{M \in \mathrm{Irr}(\HH^{ss})} \Ext^n_{\HH}(\Q, M) \otimes_{\mathrm{End}(M)} M^{\vee}_{dR}\ , $$
 where $\mathrm{Irr}(\HH^{ss})$ denotes a set of representatives of  isomorphism classes of simple objects in $\HH^{ss}$ (or equivalently, of irreducible $\Or(S)$-comodules).

\end{prop}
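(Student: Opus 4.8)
The plan is to compute $H^n(U)$ by relating it to cohomology in the Tannakian category $\HH$ via the exact sequence $1 \to U \to G \to S \to 1$ and the fact (Corollary \ref{cordecompisom} and the preceding lemmas) that $\Or(U)$ is ``cohomologically trivial'' as a $U$-module. First I would recall that, since $S$ is pro-reductive and hence $\Or(S)$ is semisimple as an $S$-comodule, the Hochschild--Serre spectral sequence $H^p(S; H^q(U; k)) \Rightarrow H^{p+q}(G; k)$ degenerates and yields $H^n(G; k) \cong H^0(S; H^n(U; k)) = H^n(U)^S$. More generally, for any object $N$ of $\HH$ regarded as a $G$-module, restriction to $U$ gives $H^n(U; N_{dR})$ with its residual $S$-action, and $H^n(G; N_{dR}) = H^n(U; N_{dR})^S$. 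Because $\HH$ is equivalent to $\mathrm{Rep}(G)$ by Theorem \ref{thm: Tannaka}, we have $H^n(G; N_{dR}) = \Ext^n_{\mathrm{Rep}(G)}(k, N_{dR}) = \Ext^n_{\HH}(\Q, N)$.

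The key device is then to take $N_{dR} = \Or(S)$, which as an $S$-comodule decomposes, by semisimplicity of $S$ and the Peter--Weyl/matrix-coefficient description, as $\Or(S) \cong \bigoplus_{M \in \mathrm{Irr}(\HH^{ss})} M_{dR} \otimes_{\mathrm{End}(M)} M_{dR}^\vee$, where the two tensor factors carry, respectively, the left and the right regular $S$-actions. Inflating $\Or(S)$ to a $G$-module (via $G \to S$), $U$ acts trivially on it, so $H^n(U; \Or(S)) = \Or(S) \otimes_k H^n(U; k) = \Or(S)\otimes_k H^n(U)$ as $S$-modules, where the $S$-action on the left factor is still the regular one. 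On the other hand, I want to identify $H^n(U; \Or(S))$ directly. Here I would use that $\Or(G) \cong \Or(U) \otimes_k \Or(S)$ as left $S$-comodules after choosing a splitting (or, more cleanly, that $\Or(G)$ is coinduced/induced appropriately), together with the vanishing $H^{>0}(U; \Or(U)) = 0$ and $H^0(U;\Or(U)) = k$ from the lemma just before Proposition \ref{propcohomU}; this gives $H^n(G; \Or(S)) = H^n(U; \Or(S))^S$, and combining with $H^n(G;\Or(S)) = \Ext^n_{\HH}(\Q, \bigoplus_M M\otimes_{\mathrm{End}(M)} M^\vee)$ decomposed over $\mathrm{Irr}(\HH^{ss})$, I extract
$$
H^n(U) \cong \bigoplus_{M \in \mathrm{Irr}(\HH^{ss})} \Ext^n_\HH(\Q, M) \otimes_{\mathrm{End}(M)} M^\vee_{dR}
$$
as right $S$-modules, the right $S$-action coming from the action on $M^\vee_{dR}$.

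There are two points I expect to be the main obstacles. The first is bookkeeping the $S$-equivariance carefully: $\Or(S)$ carries two commuting $S$-actions, and one must be sure that taking $U$-cohomology of the inflated module isolates the correct (right) one on the surviving factor $M^\vee_{dR}$ while the other is ``used up'' computing $\Ext^n_\HH(\Q,M)$; dualities and the appearance of $M^\vee$ versus $M$ must be tracked through the matrix-coefficient conventions of \S\ref{sectMatrixcoeffs}. The second, more serious point is justifying the comparison $H^n(U;\Or(S)) = \Or(S)\otimes_k H^n(U)$ compatibly with \emph{all} of the $S$-structure — i.e., that the Hochschild--Serre edge map is an $S$-module isomorphism, not merely a $k$-isomorphism — which ultimately rests on the semisimplicity of $\Or(S)$ as an $S$-comodule (so that $H^{>0}(S;-)$ vanishes on everything in sight) and on the projectivity/exactness hypotheses already in force. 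Once these are in place, the statement follows by comparing the two computations of $H^n(G;\Or(S))$ and decomposing over the simple objects; the case $n=0$ is a consistency check, giving $H^0(U) = k = \Ext^0_\HH(\Q,\Q)\otimes \Q^\vee$ since only $M = \Q$ contributes.
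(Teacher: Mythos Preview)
Your proposal is correct and uses essentially the same ingredients as the paper: the Hochschild--Serre spectral sequence for $1\to U\to G\to S\to 1$, the cohomological triviality of the pro-reductive quotient $S$, and the Tannaka equivalence identifying $H^n(G;M_{dR})$ with $\Ext^n_\HH(\Q,M)$. The only difference is packaging: the paper works with one irreducible $M$ at a time, obtaining $(H^n(U)\otimes_\Q M_{dR})^S\cong\Ext^n_\HH(\Q,M)$ and then reassembling $H^n(U)$ from its $S$-isotypical components via Schur's lemma, whereas you bundle all irreducibles together by taking coefficients in $\Or(S)$ and invoking Peter--Weyl. These are the same argument; your identity $(\Or(S)\otimes_k V)^S\cong V$ is exactly the isotypical decomposition summed over $M$.

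One remark: your digression about $\Or(G)\cong\Or(U)\otimes_k\Or(S)$ and the vanishing of $H^{>0}(U;\Or(U))$ is not needed here. Once you inflate $\Or(S)$ along $G\to S$, the $U$-action is trivial by construction, so $H^n(U;\Or(S))=H^n(U)\otimes_k\Or(S)$ is immediate; there is nothing further to ``identify directly''. The lemma on $H^*(U;\Or(U))$ plays its role later, in the proof of Theorem~\ref{thmdecompisom}, not in this proposition.
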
 
\begin{proof}
First of all, we can write $U= \varprojlim U_n$ as a projective limit of unipotent affine group schemes $U_n$ of finite type (unipotent algebraic matrix groups). Likewise, a representation $V$ of $U$ is an inductive limit $V= \varinjlim V_n$   of  $U_n$-representations. Since $ \varinjlim  H^i(U_n, V_n)\overset{\sim}{\rightarrow } H^i( U, V) $,  the arguments which follow can be deduced from well-known results for matrix groups  and by taking limits. 

 Let $M$ be an irreducible  object of $\HH^{ss}$. Then  $M_{dR}$ is an irreducible $\Or(S)$-comodule. 
 A Hochschild-Serre spectral sequence gives
 $$ H^p(S, H^q(U,M_{dR})) \Rightarrow H^{p+q} (G, M_{dR})$$
and one knows that $S$ is of cohomological dimension $0$, since it is pro-reductive. Therefore since $U$ acts trivially on $M_{dR}$, 
$$ H^0(S, H^n(U,M_{dR})) =  H^n (U, M_{dR}) ^{S} \cong ( H^n(U) \otimes_{\Q} M_{dR})^{S}   \ ,$$
where $H^n(U)$ denotes $H^n(U;\Q)$, and we deduce that 
$$\big(H^n (U) \otimes_{\Q} M_{dR}\big)^S \cong H^n (G, M_{dR})\ .$$
Let $M, N$ be  irreducible $S$-modules. Then $(N_{dR}^{\vee} \otimes_{\Q}  M_{dR})^S = \mathrm{End}_S(M_{dR})$ if $N$ and $M$ are isomorphic, and zero otherwise,
by Schur's lemma.  It follows that
$$H^n( U) \cong \bigoplus_{M \in \mathrm{Irr}(\HH^{ss})} H^n(G,M_{dR})   \otimes_{\mathrm{End}_S(M_{dR})  } M^{\vee}_{dR}\ . $$
Note that since $U$ acts trivially on $M_{dR}$, we have $\mathrm{End}_G(M_{dR})= \mathrm{End}_S(M_{dR}) $. 
Since $\omega_{dR}: \HH \rightarrow \mathrm{Rep}(G)$ is an equivalence,  we deduce that 
$$H^n(G, M_{dR})= \Ext^n_{\mathrm{Rep}(G)} (\Q, M_{dR}) = \Ext^n_{\HH}(\Q(0), M)\ , $$
and $\mathrm{End}_S(M_{dR})= \mathrm{End}(M)$. 
\end{proof} 

It is a well-known fact due to Beilinson that  
$$\Ext^n_{\HH}(\Q, M) =0 \qquad \hbox{ for } n \geq 2\ .$$
\begin{cor} The cohomology  $H^n(U;M) $ vanishes  for all $n \geq 2$.  \end{cor}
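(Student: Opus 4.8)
The plan is to bootstrap from the case of trivial coefficients, which is already settled. Combining Proposition~\ref{propcohomU} with Beilinson's vanishing $\Ext^n_{\HH}(\Q,M)=0$ for $n\geq 2$ gives at once
$$H^n(U) \;=\; H^n(U;\Q) \;=\; 0 \qquad \text{for all } n\geq 2 ,$$
and hence $H^n(U;T)=0$ for $n\geq 2$ whenever $T$ is a trivial $U$-module: for $T$ finite-dimensional this is a direct sum of copies of the line above, and the general case follows by writing $T$ as the filtered union of its finite-dimensional trivial submodules.

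Next I would reduce the general statement to finite-dimensional coefficients. Any left $U$-module $M$ (equivalently, right $\Or(U)$-comodule) is the filtered union of its finite-dimensional subcomodules, and, exactly as in the proof of Proposition~\ref{propcohomU} (writing $U=\varprojlim U_\alpha$ as a cofiltered limit of unipotent matrix groups and using $\varinjlim H^i(U_\alpha,-)\overset{\sim}{\to} H^i(U,-)$, and noting that $H^n(U;-)=H^n(\mathcal{R}_{(-)})$ commutes with filtered colimits of comodules), it is enough to prove $H^n(U;M)=0$ for $n\geq 2$ when $M$ is finite-dimensional.

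The core of the argument is then a dévissage. Since $U$ is pro-unipotent, its action on a finite-dimensional $M$ factors through a unipotent algebraic matrix group, so by Engel's theorem --- equivalently, by the exhaustion argument used for the coradical filtration $C_\bullet$ in~\S\ref{sectCoradical}, which shows $M^{U}\neq 0$ whenever $M\neq 0$ --- the module $M$ admits a finite filtration $0=M_0\subset M_1\subset\cdots\subset M_r=M$ by $U$-submodules whose successive quotients $M_i/M_{i-1}$ are trivial $U$-modules. Each short exact sequence $0\to M_{i-1}\to M_i\to M_i/M_{i-1}\to 0$ yields a long exact cohomology sequence
$$\cdots \To H^n(U;M_{i-1}) \To H^n(U;M_i) \To H^n(U;M_i/M_{i-1}) \To \cdots .$$
For $n\geq 2$ the right-hand term vanishes by the trivial-coefficient case, and the left-hand term vanishes by induction on $i$ (the base case $M_1$ being itself a trivial module); hence $H^n(U;M_i)=0$, and taking $i=r$ finishes the proof. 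Together with $H^1(U;\Q)=\gr^C_1(\Or(U))\neq 0$ in general, this shows $U$ has cohomological dimension exactly $1$, which is the hypothesis needed for Corollary~\ref{cordecompisom}.

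As for difficulty, there is essentially no obstacle: the only genuine input is Proposition~\ref{propcohomU} together with Beilinson's theorem, both already in hand, and the rest is standard dévissage. The one point requiring a moment's care is that the conclusion includes $n=2$, so the long exact sequence must be invoked with both neighbouring terms $H^2(U;M_{i-1})$ and $H^2(U;M_i/M_{i-1})$ already known to vanish; this is precisely what the trivial-coefficient computation supplies, so the induction goes through unchanged.
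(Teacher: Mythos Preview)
Your proof is correct and is precisely the argument the paper leaves implicit. The paper states the corollary without proof, relying on the reader to supply the standard d\'evissage: Proposition~\ref{propcohomU} together with Beilinson's vanishing gives $H^n(U;\Q)=0$ for $n\geq 2$, and then the coradical filtration plus long exact sequences extends this to arbitrary coefficients, exactly as you wrote. Your reduction to finite-dimensional comodules via filtered colimits and the inductive use of the long exact sequence are the right moves, and your remark that this yields cohomological dimension exactly $1$ (the hypothesis of Corollary~\ref{cordecompisom}) is to the point.
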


Recall that 
$$H^1(U) \cong  \gr^C_1 \Or(U)\cong C^1 \Or(U)_+   \cong \mathrm{Prim} (\Or(U)) \ . $$ 

\begin{thm}  \label{thmdecompisom} The decomposition map
$$\Phi: \gr^C_{\bullet}  \Pe^{\mm}_{\HH} \To\Pe^{\mm}_{\HH^{ss}} \otimes_{\Q} T^c( \gr^C_1 \Or(U) ) $$
is an isomorphism of $S$-modules. 
\end{thm}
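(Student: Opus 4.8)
The plan is to reduce the theorem to the abstract situation already treated in Corollary \ref{cordecompisom}, namely to show that $\Pe^{\mm}_{\HH}$ is, as a comodule over $\Or(U)$, isomorphic to $T \otimes_{\Q} \Or(U)$ for a trivial $U$-module $T$, and that $U$ has cohomological dimension $1$. The second point is exactly the content of the Corollary following Proposition \ref{propcohomU}, which uses Beilinson's vanishing $\Ext^n_{\HH}(\Q,M)=0$ for $n\geq 2$. The first point is where the real work lies. By Proposition \ref{lemPemtensor} there is a (non-canonical) isomorphism of algebras $\Pe^{\mm}_{\HH}\otimes_{\Q}\overline{\Q}\cong \Pe^{\mm}_{\HH^{ss}}\otimes_{\Q}\Pe^{\uu}_{\HH}\otimes_{\Q}\overline{\Q}$, with $\Pe^{\uu}_{\HH}=\Or(U)$, but this does not respect the coaction; the idea is to promote it to a statement that \emph{does} respect the $U$-coaction, at the cost of working only up to the associated graded for $C$ or after a harmless base change.

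Concretely, first I would observe that as a right $\Or(U)$-comodule, $\Pe^{\mm}_{\HH}$ is a sum of copies of $\Or(U)$ if and only if the corresponding $U$-representation is injective (equivalently, $H^1(U;\Pe^{\mm}_{\HH})=0$). This is implied by the following: $\Pe^{\mm}_{\HH}$, via the Tannaka theorem applied to $\omega_{dR}$, underlies an ind-object of $\HH$, and by Corollary \ref{corGomegarep} its $U$-socle filtration is controlled by the subobjects of $\HH$ of which a given $\xi$ is a matrix coefficient. The cleanest route is to use the splitting of Remark \ref{remsplit}: replacing $\omega_{dR}$ by the graded fibre functor $\omega_{\underline{dR}}$ splits the sequence $1\to U\to G\to S\to 1$, so $G^{\underline{dR}}_{\HH}\cong U\rtimes S$, and then $\Or(G^{\underline{dR}}_{\HH})\cong \Or(U)\otimes_{\Q}\Or(S)$ is a free $\Or(U)$-comodule. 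Transporting along an $\overline{\Q}$-point of $\Isom_{\HH}(\omega_{dR},\omega_{\underline{dR}})$ as in the proof of Proposition \ref{lemPemtensor} gives $\Pe^{\mm}_{\HH}\otimes_{\Q}\overline{\Q}$ the structure of a free $\Or(U)$-comodule over $\overline{\Q}$, i.e.\ $\cong T\otimes_{\overline{\Q}}(\Or(U)\otimes\overline{\Q})$ with $T=\Pe^{\mm}_{\HH^{ss}}\otimes\overline{\Q}$ a trivial $U$-module.

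Now apply Corollary \ref{cordecompisom} to $M=\Pe^{\mm}_{\HH}\otimes_{\Q}\overline{\Q}$ over the base field $\overline{\Q}$: since $U$ (base-changed to $\overline{\Q}$) still has cohomological dimension $1$ and $M\cong T\otimes(\Or(U)\otimes\overline{\Q})$, the map $\Phi$ is an isomorphism $\gr^C M\overset{\sim}{\to} T\otimes_{\overline{\Q}}T^c(H^1(U_{\overline{\Q}};\overline{\Q}))$. Since the formation of $\Phi$, of the $C$-filtration, of $H^1(U)$, and of the tensor coalgebra all commute with the flat base change $\Q\to\overline{\Q}$, faithful flatness of $\overline{\Q}/\Q$ lets me descend: $\Phi:\gr^C\Pe^{\mm}_{\HH}\to \Pe^{\mm}_{\HH^{ss}}\otimes_{\Q}T^c(\gr^C_1\Or(U))$ is an isomorphism over $\Q$. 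Finally, $S$-equivariance was already established in \S\ref{sectClassif} (the maps $\delta$, hence $\Phi$, are $S$-equivariant because $U$ acts trivially on $\gr^C$ and on $H^1(U)$), and $S$-equivariance is preserved under the descent since the base change is $S$-equivariant.

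The main obstacle is making the base-change/descent argument genuinely clean: one must check that the $\overline{\Q}$-isomorphism furnished by Proposition \ref{lemPemtensor} can be chosen (or the argument rearranged) so that it is compatible with the $\Or(U)$-coaction, not merely with the algebra structure — this is the only place the choice of fibre functor $\omega_{\underline{dR}}$ versus $\omega_{dR}$ matters, and one should verify that the $U$-comodule structure on $\Pe^{\mm}_{\HH}$ really is carried by the conjugation coaction $\Delta^u$ to the free comodule structure on $\Or(U)$. Alternatively, one can bypass \ref{lemPemtensor} entirely and argue directly that $H^1(U;\Pe^{\mm}_{\HH})=0$: by Corollary \ref{corGomegarep} every finite-dimensional $U$-subrepresentation of $\Pe^{\mm}_{\HH}$ embeds into $\Or(U)$ (it is $\omega_{dR}(N)$ for a subobject $N$ of an ind-object whose matrix coefficients it realizes), and an injective hull of the trivial representation inside an $\Or(U)$-comodule-algebra is $\Or(U)$ itself; this shows $\Pe^{\mm}_{\HH}$ is an injective $\Or(U)$-comodule, which is exactly what is needed to run Corollary \ref{cordecompisom} directly over $\Q$ with $T=C_0\Pe^{\mm}_{\HH}=\Pe^{\mm}_{\HH^{ss}}$. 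I would present this direct route as the primary argument and mention the base-change route as a remark.
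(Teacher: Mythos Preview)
Your base-change route is precisely the paper's argument: invoke Proposition~\ref{lemPemtensor} to obtain $\Pe^{\mm}_{\HH}\otimes\overline{\Q}\cong\Pe^{\mm}_{\HH^{ss}}\otimes\Or(U)\otimes\overline{\Q}$, use cohomological dimension~$1$ from Beilinson's vanishing, apply Corollary~\ref{cordecompisom} over~$\overline{\Q}$, then descend. Your concern about comodule compatibility is well-placed and is exactly what makes the argument work: the torsor trivialisations in the proof of Proposition~\ref{lemPemtensor} are by composition on the Betti side, so they automatically respect the right $G^{dR}$-action and hence the $U$-coaction; the paper leaves this implicit. The paper's descent is a shade simpler than yours: since $\Phi$ is already injective over~$\Q$ by construction (\S\ref{sectCoradical}), surjectivity after~$\otimes\,\overline{\Q}$ immediately gives surjectivity over~$\Q$, with no explicit appeal to faithful flatness needed.

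Your ``direct route'', however, is not justified as written. Corollary~\ref{corGomegarep} identifies the $G^{dR}$-submodule of $\Pe^{\mm}_{\HH}$ generated by a single element~$\xi$ with~$\omega_{dR}(M(\xi))$; it says nothing about an arbitrary finite-dimensional $U$-subrepresentation (which need not even be $G^{dR}$-stable), and it certainly does not produce a $U$-equivariant embedding into~$\Or(U)$. The subsequent assertion about injective hulls then amounts to assuming what you want to prove, namely that $\Pe^{\mm}_{\HH}$ is $U$-injective. A genuinely rational argument avoiding~$\overline{\Q}$ would require a Levi splitting of~$G^{dR}_{\HH}$ over~$\Q$, which the paper does not claim (only the graded version $G^{\underline{dR}}_{\HH}$ splits, Remark~\ref{remsplit}). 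So present the base-change argument as primary, make the comodule-compatibility point explicit as you suggest, and drop the alternative.
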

\begin{proof}  By proposition $\ref{lemPemtensor}$,   there is a non-canonical isomorphism $ \Pe^{\mm}_{\HH} \otimes_{\Q} \overline{\Q} \cong  \Pe_{\HH^{ss}}^{\mm}   \otimes_{\Q} \Or(U) \otimes_{\Q} \overline{\Q}.$
 The group $U$ is of cohomological dimension $1$, by the previous corollary. Now apply corollary $\ref{cordecompisom}$  with $T=  \Pe_{\HH^{ss}}^{\mm} \otimes_{\Q} \overline{\Q}$
 to conclude that the decomposition map, after extending scalars to $\overline{\Q}$, is an isomorphism. Since it was already  injective over $\Q$, it follows that it is surjective over $\Q$.
 \end{proof}

 \begin{rem} One can view $T^c(H^1(U) )$ as the associated graded, for the length filtration, of $H^0(\mathbb{B}(N))$ where $N$ is a DGA which
 computes the cohomology of $U$, and $\mathbb{B}$ is the (reduced) bar construction.  The decomposition map $\Phi$ therefore resembles  the bar construction of a fibration, and suggests thinking about elements of
  $\Pe^{\mm}_{\HH^{ss}}$ as functions on a `base' corresponding to $S$, and $T^c( H^1(U) )$ as iterated integrals on a `fiber' corresponding to $U$. 
 From this point of view, $\delta$ can be thought  of as a kind of  Gauss-Manin connection.
   If one wants to copy this setup for mixed Tate motives over number fields rather than mixed Hodge structures, this suggests replacing $N$ with 
 Bloch's cycle complex
 $\mathcal{N}$   and echoes the construction of \cite{Bloch-Kriz}.
 \end{rem}

\subsection{Primitives in $\Or(U)$} We analyse the
statement of  proposition  $\ref{propcohomU}$ in more detail in the case $n=1$. It gives  an isomorphism of $S$-modules
 \begin{equation} \label{primtoext} 
 \mathrm{Prim}(\Or(U)) \overset{\sim}{\To}  \bigoplus_{M \in \mathrm{Irr}(\HH^{ss})}\Ext^1_{\HH} (\Q(0), M^{\vee}) \otimes_{\mathrm{End}(M)} M_{dR}\ .
 \end{equation}
 First of all,  observe from remark $\ref{remunipmonod}$. 
 that 
 $$\mathrm{Prim}(\Or(U^{ab})) \overset{\sim}{\rightarrow} \mathrm{Prim} (\Or(U))\ .$$ 
 The action of  $S$  by conjugation on $U^{ab}$ induces an action of $S$ on  $\Or(U^{ab})$, and preserves the space of primitive elements.
Since $S$ is the (de Rham)  Tannaka group of  $\HH^{ss}$, the $S$-module generated by any element $f\in \mathrm{Prim}(\Or(U))$ defines a representation of $S$, and hence an  object  of $\HH^{ss}$ by theorem $\ref{thm:  Tannaka}$.
 
\begin{defn} For any $f\in \mathrm{Prim}(\Or(U))$, let $M_f$ denote the associated object of $\HH^{ss}$.  Its  de Rham vector space is the $\Or(S)$-comodule generated by $f$. It  comes equipped with a distinguished element $f\in (M_f)_{dR}$.  \end{defn}

 Let $f\in \mathrm{Prim}(\Or(U))$. One  associates an extension to $f$ as follows.  Consider  the short exact sequence of  right $U$-modules 
$$0 \To C_0 \Or(U) \To C_1 \Or(U) \To \mathrm{Prim}(\Or(U)) \To 0 \ .$$
It is not split, although   the underlying sequence of $\Q$-vector spaces  is  split by the augmentation map. By remark $\ref{remunipmonod}$, it can be rewritten
$$ 0 \To \Q \To C_1 \Or(U^{ab}) \To \mathrm{Prim}(\Or(U^{ab})) \To 0 \ .$$
It is an exact sequence in the category of right $U^{ab} \rtimes S$-modules.   We can pull back this extension along the 
inclusion $  (M_f )_{dR} \subseteq \mathrm{Prim}(\Or(U^{ab}))$ to obtain 
$$0 \To \Q \To \mathcal{E}_{dR} \To (M_f)_{dR} \To 0\ .$$
Now  choose an isomorphism $G/[U,U] \rightarrow U^{ab} \rtimes S$, i.e., a splitting of $1 \rightarrow U^{ab} \rightarrow G / [U,U] \rightarrow S \rightarrow 1$.  It exists by Levi's (Mostow's) theorem. Via this isomorphism, the  previous exact sequence  can be viewed  in the category of $G$-modules, and
hence, via the Tannaka theorem, as an exact sequence in $\HH$:
\begin{equation} \label{extclassdef} 
0 \To \Q \To \mathcal{E} \To M_f \To 0\ .
\end{equation} 
Another  choice of isomorphism $G/[U,U] \cong U^{ab} \rtimes S$ yields an isomorphic extension.
 The  dual extension $0 \rightarrow M_f^{\vee}  \rightarrow\mathcal{E}^{\vee} \rightarrow \Q \rightarrow 0$, together with the vector $ f \in (M_f)_{dR}$,  defines  a class
$$ [\mathcal{E}^{\vee}] \otimes f  \quad \in \quad  \mathrm{Ext}^1_{\HH}(\Q, M_f^{\vee}) \otimes_{\Q}  (M_f)_{dR}  $$
as required. By decomposing $M_f$ into $S$-isotypical components, we can project this element
into the right-hand side of $(\ref{primtoext})$. 
\begin{defn} Let us 
denote the extension class of $(\ref{extclassdef})$  by $\mathcal{E}_f$. 

\end{defn}

 In the other direction, consider an extension  in $\HH$
$$0 \To M^{\vee} \To \mathcal{E} \To \Q \To 0\ ,$$
and a vector $v\in M_{dR}$, where $M$ is a simple object. 
Choose a lift of  the element $1 \in \Q_{dR}$ to $f \in \mathcal{E}_{dR}$, and  a lift of $v$ to $\widetilde{v} \in \mathcal{E}_{dR}^{\vee}$ along the map
$\mathcal{E}_{dR}^{\vee} \rightarrow M_{dR}$. 
The image of the following unipotent matrix coefficient (\S\ref{sectSemiSimpleUnipotent})   in $\Or(U)_+$
$$\xi = [ \mathcal{E}, \widetilde{v}, f]^{\uu} \in \Or(U)_+$$
does not depend on the choices of $ \widetilde{v}, f$. For instance, if $f'$ is another lift of $1$, then $f-f' \in M^{\vee}_{dR}$, and 
$[ \mathcal{E}, \widetilde{v}, f -f']^{\uu}$ is equivalent to $[M^{\vee}, v, f-f']^{\uu}$, which is constant because $U$ acts trivially on $M_{dR}$. Similarly, if
$\widetilde{v}'$ is a lift of $v$ then $\widetilde{v}'-\widetilde{v} \in \Q_{dR}$ and $[ \mathcal{E}, \widetilde{v}'-\widetilde{v},f]^{\uu}$ is equivalent to 
a unipotent period of $\Q(0)$, hence  constant.  By Schur's lemma, a non-zero endomorphism  $\alpha: M \rightarrow M$ is an automorphism. If $\mathcal{E}^{\alpha}$ denotes  the extension $\mathcal{E}$ twisted
by $\alpha^{\vee}$, then the identity map  $\mathcal{E}\overset{\sim}{\rightarrow} \mathcal{E}^{\alpha}$ gives an equivalence of matrix coefficients $\xi = [ \mathcal{E}^{\alpha}, \widetilde{v}, \alpha_{dR}^{-1}(f)]$. It is straightforward to check using $(\ref{eqn: coaction})$ and the formulae which follow 
 that $\xi$ is a primitive element. This construction provides an inverse to  $(\ref{primtoext})$.

\subsection{Extensions in $\HH$} The contents of this section are standard and well-known.
Let $M =(M_B, M_{dR}, c)$ be an object in $\HH$. 
The following complex 
$$  W_0 M^+_B \oplus F^0 W_0 M_{dR} \overset{\id - c }{\To} (W_0 M_{B}\otimes_{\Q} \C)^{c_{dR}}$$
represents $R\mathrm{Hom}_{\HH}(\Q(0), M)$.  Recall that $c_{dR}$ is  complex conjugation on the right-hand factor of  $M_{dR} \otimes_{\Q} \C$. Its action 
on $M_B \otimes_{\Q} \C$ is   $c \,c_{dR} \,c^{-1}  = F_{\infty} \otimes c_B$ where $c_B$ is complex conjugation on the right-hand factor of $M_B \otimes_{\Q} \C$.

The kernel of the above complex is 
$$\mathrm{Hom}_{\HH} (\Q(0), M) \overset{\sim}{\To} W_0M_B^+ \cap  c(F^0 W_0 M_{dR})$$
and the map is given by the image of $1\in \Q_{dR} \overset{c}{\cong} \Q_{B}$. 
The cokernel is \cite{Carlson} 
\begin{equation} \label{Ext1formula}
\mathrm{Ext}^1_{\HH}(\Q, M)  \overset{\sim}{\To} W_0M_B^+\backslash  (W_0M_B\otimes_{\Q} \C)^{c_{dR}} / c( F^0W_0 M_{dR})\ .  \end{equation}  
  The map is given as follows. If $\mathcal{E}$ is an extension of $\Q(0)$ by $M$ in $\HH$, 
it gives rise, after applying a fiber functor $\bullet = B/dR$,  to two exact sequences
$$0 \To M_{\bullet} \To \mathcal{E}_{\bullet} \To \Q_{\bullet} \To 0 \ .$$
 Choose $B$ and $dR$ splittings  by choosing a lift of $1_B \in \Q_B$ to  $1_B \in W_0 \mathcal{E}_B^+$ and of $1_{dR} \in \Q_{dR}$ to $1_{dR} \in F^0 W_0 \mathcal{E}_{dR}$. 
Then  $1_B - c(1_{dR})$ gives a well-defined element in the right-hand side of $(\ref{Ext1formula})$.
Note that $(\ref{Ext1formula})$ is uncountably generated.
 Let $\HH(\R)$ denote the category of triples $(M_B, M_{dR}, c)$ where now $M_B, M_{dR}$ are vector spaces over $\R$ (replace the ground field
$\Q$ by $\R$). There is a  functor 
$  \otimes \R : \HH \rightarrow \HH(\R)$, 
sending $(M_B, M_{dR}, c)$ to $(M_B \otimes \R, M_{dR} \otimes \R, c\otimes \id)$. 

\begin{cor} Suppose that $W_{-1}M=M$. Then 
\begin{equation} \label{dimensionformula}
\dim_{\R} \Ext_{\HH(\R)}^1(\R(0),M\otimes \R)=  \dim_{\Q}  M^-_B  - \dim_{\Q} F^0 M_{dR}
\end{equation} 
\end{cor}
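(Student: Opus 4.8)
The plan is to apply the explicit complex $(\ref{Ext1formula})$ (and the kernel computation immediately preceding it) with $\Q$ replaced by $\R$, using the hypothesis $W_{-1}M=M$ to simplify. Since $W_{-1}M = M$ means $W_0 M = M$, all the weight-$0$ truncations disappear, so over $\R$ the complex representing $R\Hom_{\HH(\R)}(\R(0), M\otimes\R)$ reads
$$
M_B^+ \oplus F^0 M_{dR} \overset{\id - c}{\To} (M_B \otimes_{\Q}\C)^{c_{dR}}\ ,
$$
where on the right I have written $M_B$ for the $\R$-vector space $M_B\otimes_{\Q}\R$, and $c_{dR}$ acts on it via $F_\infty\otimes c_B$. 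First I would observe that $\Hom_{\HH(\R)}(\R(0),M\otimes\R) = M_B^+ \cap c(F^0 M_{dR})$ vanishes: any such element lies in $W_0M_B^+\cap c(F^0 W_0 M_{dR})$, but that is $\Hom_{\HH}(\Q(0),M)\otimes\R$, and a nonzero morphism $\Q(0)\to M$ would force a weight-$0$ sub of $M$, contradicting $W_{-1}M=M$. Hence the map $\id-c$ is injective, and $\dim_{\R}\Ext^1 = \dim_{\R}(M_B\otimes_\Q\C)^{c_{dR}} - \dim_{\R}M_B^+ - \dim_{\R}F^0M_{dR}$.

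Next I would compute each term. The space $(M_B\otimes_\Q\C)^{c_{dR}}$ is the $+1$-eigenspace of the $\C$-antilinear involution $F_\infty\otimes c_B$ on the complex vector space $M_B\otimes_\Q\C = (M_B\otimes_\Q\R)\otimes_\R\C$; such an eigenspace is an $\R$-form, so its $\R$-dimension equals $\dim_\C(M_B\otimes_\Q\C) = \dim_\Q M_B = \dim_\R(M_B\otimes_\Q\R)$. Writing $d = \dim_\Q M_B$ and using $\dim_\Q M_B^+ + \dim_\Q M_B^- = d$, the first two terms combine to $d - \dim_\Q M_B^+ = \dim_\Q M_B^-$. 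This gives
$$
\dim_{\R}\Ext^1_{\HH(\R)}(\R(0),M\otimes\R) = \dim_\Q M_B^- - \dim_\Q F^0 M_{dR}\ ,
$$
which is exactly $(\ref{dimensionformula})$ once one notes $\dim_\R F^0(M_{dR}\otimes_\Q\R) = \dim_\Q F^0 M_{dR}$ and $\dim_\R(M_B^-\otimes_\Q\R) = \dim_\Q M_B^-$.

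The only point requiring a little care — and the one I would flag as the main (minor) obstacle — is justifying that the displayed two-term complex really computes $R\Hom_{\HH(\R)}(\R(0), M\otimes\R)$ over the base field $\R$ rather than $\Q$; this is the $\R$-analogue of the complex stated just before $(\ref{Ext1formula})$, and since the whole formalism of \S\ref{sect: PeriodsoverQ} goes through verbatim with $\Q$ replaced by $\R$ (mixed Hodge structures over $\R$ still form a Tannakian category, and Carlson's description of $\Ext^1$ is insensitive to the ground field), this is routine. One should also double-check the finiteness and non-negativity implicit in the statement: $\dim_\Q M_B^- - \dim_\Q F^0 M_{dR} \geq 0$ follows from the vanishing of the $\Hom$ term together with $\dim_\C(M_B\otimes\C)^{c_{dR}} \geq \dim_\C c(F^0M_{dR}\otimes\C) + \dim_\C M_B^+$ being an equality of the source and target of an injection, but this is not needed for the dimension count itself.
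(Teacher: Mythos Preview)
Your proof is correct and follows essentially the same approach as the paper: both compute the dimension from the two-term complex and use the vanishing of $\Hom$ to get injectivity of $\id - c$. The only cosmetic difference is in how that vanishing is argued: the paper shows directly that $c(F^0 M_{dR}) \cap M_B^+ \subset F^0 \cap \overline{F}^0 = 0$ via the relation $F_\infty \otimes c_B = c\,c_{dR}\,c^{-1}$, whereas you invoke the weight argument; and the paper writes out $(M_B\otimes\C)^{c_{dR}} = (M_B^+\otimes\R)\oplus(M_B^-\otimes i\R)$ explicitly rather than appealing to the general fact that the fixed set of an antilinear involution is an $\R$-form.
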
   
\begin{proof}  Since $c_{dR}, F_{\infty}$ act trivially on $c(F^0 M_{dR}) \cap M_B^+$,  so too must $c_B$, since  by \S \ref{sect: PeriodsoverQ}  we have $F_{\infty} \otimes c_B = c c_{dR} c^{-1}$.
It follows that $c(F^0 M_{dR}) \cap M_B^+ \subset F^0 \cap \overline{F}^0 = 0$, since $M$ has weights $\leq -1$.  
Now $(M_B\otimes_{\Q} \C)^{c_{dR}}  = (M_B^+ \otimes \R) \oplus (M_B^{-} \otimes i \R)$, and  conclude using $(\ref{Ext1formula})$ together with the fact that $W_0M= M$.  
\end{proof}

The formula $(\ref{Ext1formula})$, together with $(\ref{primtoext})$ and theorem $\ref{thmdecompisom}$ provides a   complete  description of $\HH$-periods, graded for the coradical filtration,  in terms of semi-simple objects in $\HH^{ss}$. 
In practice, we often wish to fix a full Tannakian subcategory of pure objects in $\HH^{ss}$ (such as the one generated by  Tate objects $\Q(n)$), and consider all $\HH$-periods of objects whose semi-simplifications are of this type (periods of mixed Tate objects, in this case).  The above results give a precise description for the structure of $\HH$-periods of this type. 
  
  In order for this to be an accurate reflection of the structure of motivic periods, we need to know something about the image
  of the decomposition map $\Phi$ on the subspace $\Pe^{\mm,+}$, which we address presently.

\subsection{Speculation and context} 
Recall that $\Pe^{\mm,+}\subset \Pe^{\mm}_{\HH}$ was the ring of motivic periods, i.e., those which come from the cohomology
of an algebraic variety, and $G^{dR}$ is the quotient of $G^{dR}_{\HH}$ acting faithfully on $\Pe^{\mm,+}$. Let $U^{dR}$ denote its unipotent radical.
Let $\Pe^{\mm,+}_{ss} =( \Pe^{\mm,+})^{U^{dR}}$ denote the invariants under $U^{dR}$,  and  set
$$\mathcal{M} = C^1 \Or(U^{dR})_+ = \mathrm{Prim} (\Or(U^{dR}))\ .$$
Via $(\ref{primtoext})$ we think of $\mathcal{M}$ as `motivic' extension classes.

\begin{conj} \label{conjDecomp}  The decomposition map induces an isomorphism 
$$\Phi : \gr^{\mathcal{C}} \Pe^{\mm,+}\To  \Pe^{\mm,+}_{ss}  \otimes_{\Q} T^c(\mathcal{M})\  .$$
 \end{conj}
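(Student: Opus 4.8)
The plan is to mimic the proof of Theorem \ref{thmdecompisom}, but working inside the subring $\Pe^{\mm,+}$ rather than all of $\Pe^{\mm}_{\HH}$. The key structural inputs are: (1) an analogue of Proposition \ref{lemPemtensor} exhibiting $\Pe^{\mm,+}\otimes_{\Q}\overline{\Q}$ as (non-canonically) $\Pe^{\mm,+}_{ss}\otimes_{\Q}\Or(U^{dR})\otimes_{\Q}\overline{\Q}$, and (2) the fact that $U^{dR}$ has cohomological dimension $\leq 1$, so that Corollary \ref{cordecompisom} applies with $T=\Pe^{\mm,+}_{ss}\otimes_{\Q}\overline{\Q}$. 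Granting these, the argument is formal: $\Phi$ is always injective (Definition \ref{gloss: decompprim2} and the general theory of \S\ref{sectCoradical}), so it suffices to prove surjectivity, and one deduces surjectivity over $\Q$ from surjectivity after base change to $\overline{\Q}$, exactly as in Theorem \ref{thmdecompisom}. One must first check that the coradical filtration $\mathcal{C}$, the invariants $\Pe^{\mm,+}_{ss}$, and the primitives $\mathcal{M}$ used here are compatible with the ambient structures on $\Pe^{\mm}_{\HH}$ — i.e. that $\mathcal{C}_i\Pe^{\mm,+}=C_i\Pe^{\mm}_{\HH}\cap\Pe^{\mm,+}$ and $\Pe^{\mm,+}_{ss}=\Pe^{\mm,+}\cap\Pe^{\mm}_{\HH^{ss}}$ — which follows from functoriality of $C$ and the fact that $\Pe^{\mm,+}$ is a $G^{dR}_{\HH}$-stable subalgebra (Example \ref{ex: motivicperiods} and the remarks following it), so that the $U^{dR}$-action, hence $\Delta^{u}$, restricts.

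For input (1), one wants to know that $\Pe^{\mm,+}$ is faithfully a torsor-like object under the relevant groups: concretely, that the quotient map $G^{dR}_{\HH}\to G^{dR}$ identifies $\Or(U^{dR})$ with a sub-Hopf-algebra of $\Pe^{\uu}_{\HH}$ and that $\Pe^{\mm,+}$ is, after $\otimes\overline{\Q}$, free of rank $|U^{dR}|$ over its $U^{dR}$-invariants. This should follow by running the proof of Proposition \ref{lemPemtensor} with $G^{dR}$ in place of $G^{dR}_{\HH}$: choose $\overline{\Q}$-points of $\mathrm{Isom}_{\HH}(\omega_B,\omega_{dR})$ and $\mathrm{Isom}(\omega_{dR},\omega_{\underline{dR}})$ compatible with the realisations coming from geometry, use Levi–Mostow to split $G^{\underline{dR}}=U^{\underline{dR}}\rtimes S^{dR}$, and pass to affine rings; the subtlety is that $\Pe^{\mm,+}$ need not be the full $\Or(G^{dR})$, so one must argue that it is nonetheless a $U^{dR}\rtimes S^{dR}$-subcomodule-algebra on which the splitting induces the desired tensor decomposition of $\Pe^{\mm,+}\otimes\overline{\Q}$ into invariants times $\Or(U^{dR})$. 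For input (2), cohomological dimension $\leq 1$ of $U^{dR}$ is the analogue of the Corollary following Proposition \ref{propcohomU}, and would follow from the vanishing $\Ext^2$ in whatever category of motives $\Pe^{\mm,+}$ sees — but since $\Pe^{\mm,+}$ is defined via $\HH$ and $U^{dR}$ is a quotient of $U^{dR}_{\HH}$, one can try to deduce $\mathrm{cd}(U^{dR})\leq\mathrm{cd}(U^{dR}_{\HH})=1$ directly from the surjection $U^{dR}_{\HH}\twoheadrightarrow U^{dR}$ of prounipotent groups (a quotient of a prounipotent group of cohomological dimension one again has cohomological dimension $\leq 1$, by the Lyndon–Hochschild–Serre spectral sequence together with the fact that the kernel, being prounipotent, has $H^0=\Q$ and the relevant $\Ext$-vanishing).

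The main obstacle is input (1): establishing the tensor-product decomposition $\Pe^{\mm,+}\otimes_{\Q}\overline{\Q}\cong\Pe^{\mm,+}_{ss}\otimes_{\Q}\Or(U^{dR})\otimes_{\Q}\overline{\Q}$ of \emph{algebras}. In the ambient case this was Proposition \ref{lemPemtensor}, whose proof crucially used that $\Pe^{\mm}_{\HH}=\Or(G^{dR}_{\HH})$ is the full affine ring of a group, so that a splitting of the group gives a splitting of the ring. Here $\Pe^{\mm,+}$ is genuinely a subobject — it is $\Or(\overline{G}^{\mm}_{\cdot})$ for a monoid-like scheme, not a group — so the naive argument breaks, and one must instead show that $\Pe^{\mm,+}$, as a $G^{dR}$-stable subalgebra of $\Or(G^{dR})$, is induced from its $U^{dR}$-invariants in the appropriate sense; equivalently, that $\Pe^{\mm,+}$ is a cofree $\Or(U^{dR})$-comodule, which is where the geometric input (the abundance of mixed motives realising the needed extensions, cf. the discussion in \S\ref{sectClassif}) really enters. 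This is exactly the point at which the statement becomes a genuine conjecture rather than a theorem: without knowing that $\Pe^{\mm,+}$ exhausts the periods predicted by the motivic $\Ext$-groups, one cannot conclude surjectivity of $\Phi$, and I would expect the honest proof to require either a concrete category $\MM_{\Q}$ with the expected $\Ext$-calculus or a direct geometric construction of enough mixed Tate–type motives to fill out the target of $\Phi$.
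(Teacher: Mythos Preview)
This statement is labelled a \emph{conjecture} in the paper and is not proved there; the paper explicitly presents it as a generalisation of Goncharov's freeness conjecture for mixed Tate motives, encoding the expectation that ``there should be no relations between the decompositions of motivic periods.'' So there is no proof in the paper to compare against. Your final paragraph correctly diagnoses why input (1) is genuinely open: $\Pe^{\mm,+}$ is not known to be the affine ring of a torsor for $G^{dR}$, so the splitting argument of Proposition \ref{lemPemtensor} does not transfer.

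However, your handling of input (2) contains a real error. The claim that a quotient of a pro-unipotent group of cohomological dimension $1$ again has cohomological dimension $\leq 1$ is false: the free pro-unipotent group on two generators has cohomological dimension $1$, but its abelianisation $\GG_a^2$ has cohomological dimension $2$ (its Lie-algebra cohomology is the full exterior algebra on two classes). The Lyndon--Hochschild--Serre spectral sequence for a surjection $U\twoheadrightarrow U'$ with kernel $K$ only yields a surjection $H^1(K)^{U'}\twoheadrightarrow H^2(U')$ from the vanishing of $H^2(U)$, not the vanishing of $H^2(U')$. Thus $\mathrm{cd}(U^{dR})\leq 1$ does not follow from $\mathrm{cd}(U^{dR}_{\HH})=1$; it is itself a freeness-type statement for the motivic Galois group, equivalent to $\Ext^2$-vanishing in the Tannakian category generated by the geometric objects, and this is exactly why the paper invokes Goncharov's freeness conjecture in describing Conjecture \ref{conjDecomp}. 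Your first instinct --- that input (2) would follow from $\Ext^2$-vanishing in a suitable category of mixed motives --- is the correct formulation, and it is open. So both of your inputs (1) and (2) are genuinely conjectural, not only (1).
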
 
This  conjecture  is a generalisation of Goncharov's  freeness conjecture for mixed Tate motives, and states that there should be 
no relations between the decompositions  of motivic periods.  We can now try to describe the constituent pieces. This is all conjectural, so 
we shall be brief.

The   putative Tannakian category of mixed motives over $\Q$ should have a functor 
$$\MM_\Q \overset{h}{\To} \HH $$
where $h= (\omega_B, \omega_{dR}, \comp_{B,dR})$ is fully faithful and hence morphisms
$$\Ext^1_{\MM_\Q}(\Q, M) \hookrightarrow \Ext^1_{\HH}(\Q, h(M)) \overset{ \otimes \R}{\To} \Ext^1_{\HH(\R)}(\R, h(M)\otimes \R) \ . $$
One expects $\Pe^{\mm,+}_{ss}$ to be generated by the cohomology of smooth projective algebraic varieties $X$ over $\Q$. 
There is a  definition for the  group $\Ext^1_{\MM_\Q}(\Q(0), M)$,  when $M=H^p(X)(q)$,
 in terms of  motivic cohomology \cite{MotCohom}.  Thus we expect $\mathcal{M}$ to be generated by the image of 
$H^{p+1}_{\mathcal{M}} (X, \Q(q)) \otimes H^p_{dR}(X)^{\vee}(-q)$  in the right-hand side of $(\ref{primtoext})$. Here, motivic cohomology
$H^{p+1}_{\mathcal{M}}(X,\Q(q))$ can be  defined either as a piece of the Adams grading of the algebraic $K$-theory of $X$, or via Bloch's 
higher Chow groups.  Finally,  Beilinson's   conjectures predict  the  rank of these groups.  In the simplest possible  case when  $M$ is pure and of weight $\leq -3$, then the  image of $\Ext^1_{\MM_\Q}(\Q, M)$  in $ \Ext^1_{\HH(\R)}(\R, h(M)\otimes \R) $ should be a lattice and its rank given by $(\ref{dimensionformula})$. See \cite{Nekovar} for further details.

  Putting these conjectural pieces together   gives a fairly complete  but highly speculative  picture for the structure of the ring of  motivic periods.
  In particular, we obtain a precise prediction for the `size' of the ring of motivic periods of given types. 
A strategy that one can pursue is to fix a given tensor category of pure motives (for example Tate motives), and try to construct geometrically
the iterated extensions (or equivalently, their motivic periods). The decomposition map is a tool to show, by computing periods, that the extensions one has are independent (\S\ref{sectMotivicMZV}, \S\ref{sectMotEuler})

\begin{rem} A more detailed account of this subject would include a discussion of regulators and special values of $L$-functions. In the present framework, one can translate Deligne's conjecture on critical $L$-values as giving a formula  for certain motivic periods of unipotency degree $0$. Beilinson's conjectures give a formula
for certain determinants of  motivic periods of unipotency degree $1$. It is tantalising to speculate that this might be the beginning of a tower of conjectural formulae describing certain  periods of all higher unipotency degrees.
\end{rem}

\subsection{Representatives for primitive elements} \label{sectRepresentatives} In this section we return  to general  $\HH$-periods.
One would like to  represent elements of $\gr^C_1 \Or(U^{dR}_{\HH})$ as concretely as possible.
It follows from corollary  \ref{cordecompisom} that  the decomposition in degree one
$$\Phi: \gr^C_1 \Pe^{\mm}_{\HH} \To  \Pe^{\mm}_{\HH^{ss}} \otimes_{\Q} \gr^C_1 \Or(U^{dR}_{\HH} )  $$
is surjective.  However, there is no canonical  map from $\gr^C_1 \Or(U^{dR}_{\HH})$ to $\Pe^{\mm}_{\HH}$, nor can we 
assign a  period to an element of $\gr^C_1 \Or(U^{dR}_{\HH})$ in any obvious way.
In some special cases, one can in fact assign numbers to primitive elements via the following two constructions:
\begin{enumerate}
\item  Call   $f\in \mathrm{Prim}(\Or(U^{dR}_{\HH}))$ \emph{stable}  \label{gloss: stable}  if $F^1 M_f =M_f$ and $M_f$ is effective.  This implies that all its Hodge numbers $h_{p,q}$ vanish for $p\leq 0$ or $q\leq 0$.  In this case, a representative  for the extension class  $ \mathcal{E}_f$
is separated (\S\ref{sectProjection}), and its de Rham realisation  splits by $(\ref{sepsplit})$ :  $$(\mathcal{E}_f)_{dR}  \cong \Q \oplus (M_f)_{dR} .$$
Thus we can view  $f\in (M_f)_{dR} =F^1 (\mathcal{E}_f)_{dR} $ and $1\in \Q^{\vee}_{dR} = F^0 \mathcal{E}^{\vee}_{dR}$ and define a canonical de Rham period
$$\xi_f= [ \mathcal{E}_f,  1, f  ]^{\dR} \in \Pe^{\dR}_{\HH}\ .$$
Its single-valued version $\s^{\mm}(\xi_f)$ lies in  $\Pe^{\mm}_{\HH}$ and we can take its period to obtain a number. The action of  $G^{dR}_{\HH}$ on $\s^{\mm}(\xi_f)$ is compatible with the conjugation action   of $G^{dR}_{\HH}$ on $\xi_f$.

\item  As in $(1)$, but also assume  that $(M_f)_B^+=0$. Then
  $(\mathcal{E}^+_f)^{\vee}_{B} \cong \Q^{\vee}_B$, and $1 \in \Q^{\vee}_B$ lifts
to $1\in (\mathcal{E}^+_f)^{\vee}_B$. We can directly define a motivic period
$[\mathcal{E}_f, 1 , f]^{\mm} \in \Pe^{\mm}_{\HH}$. Taking its period  assigns a number to such a primitive element.

\end{enumerate}

\subsection{Example: Mixed Tate motives over $\Q$}
 \label{exampleMTcase}   One of the  few situations in which Beilinson's conjectures are completely known  is  the category $\MT(\Q)$ of mixed
Tate motives   over $\Q$. Its simple objects are  Tate motives $\Q(n)$.   The  real Frobenius $F_{\infty}$ acts  on $\Q(n)_B$ by $(-1)^n$.   Thus
$$\Ext^1_{\MT(\Q)}(\Q(0), \Q(n))   \To \Ext^1_{\HH}(\Q(0), \Q(n)) = \R/(2 i \pi \Q)^n   $$
which has rank one if $n$ is odd, and zero if $n$ is even. In this case, Beilinson's conjecture is known as a consequence of  deep theorems due to  Borel, and we have
\begin{equation}  \label{MTextn}  \Ext^1_{\MT(\Q)}(\Q(0), \Q(n))  \cong K_{2n-1}(\Q) \otimes_{\Z} \Q
\end{equation}
which has rank $1$ for $n\geq 3$ odd and rank $0$ for $n$ even.  For $n=1$, 
\begin{equation}  \label{MTextn2}
\Ext^1_{\MT(\Q)}(\Q(0), \Q(1)) \cong K_1(\Q) \otimes_{\Z} \Q = \Q^*\otimes_{\Z} \Q\ 
\end{equation}
 is isomorphic to the infinite dimensional $\Q$-vector space with one generator for every prime $p$.  Furthermore, all higher Ext groups vanish. 
It follows that 
$$H^1(U^{dR}_{\MT(\Q)}) = \bigoplus_{n \geq 1}  \big(  K_{2n-1}(\Q) \otimes_{\Z} \Q(-n)_{dR}\big) \ .$$
Let  $ \Pe^{\mm,+}_{\MT(\Q)}$ denote the  ring of effective periods of $\MT(\Q)$. The subspace of  semi-simple periods  is generated
by  $\Lef^{\mm}$. 
The decomposition is an isomorphism 
\begin{equation} \label{eqndecompMTQ} 
 \gr^C_{\bullet } \Pe^{\mm,+}_{\MT(\Q)}  \overset{\sim}{\To}   \Q[\Lef^{\mm}]  \otimes T^c ( \bigoplus_{n\geq 1} K_{2n-1}(\Q) \otimes_{\Z}\Q(-n))\ .
 \end{equation}
  The  Tate objects $\Q(n)$ satisfy both conditions $(1)$ and $(2)$ of \S\ref{sectRepresentatives}. A generator  $f_{2n-1}$ of the image
of $(\ref{MTextn})$ in $\Ext^1_{\HH}$ gives a rational multiple of $\zetam(2n-1)$ under the second prescription. Choose the rational multiple to be one.\footnote{Note that prescription $(1)$ applied to $f_{2n-1}$ gives the single-valued motivic  zeta value which is exactly  double that, namely $2\zetam(2n-1)$.} Similarly, choose generators  $\nu_{p}$ of $(\ref{MTextn2})$ which correspond under $(2)$ to $\log^{\mm}(p)$ for $p$ prime.

 With this choice of generators, there is an isomorphism
$$T^c( H^1(U^{dR}_{\MT(\Q)})) \cong \Q \langle \nu_{p},   f_{3}, f_5, f_7, \ldots \rangle$$
where the right-hand side denotes the shuffle algebra (tensor coalgebra) on generators
 $\nu_{p}$, for $p$ prime, which span a copy of $\Q(-1)$  of weight 2, and $f_{2n+1}$, for $n\geq 1$  which span a copy of $\Q(-1-2n)$ of weight $4n+2$. 
 
 \begin{thm}  \label{thmdecompforMTisom} The decomposition into primitives  
 $(\ref{eqndecompMTQ})$ gives an isomorphism
 \begin{equation} \label{effectiveMTQperiodsmodel}
 \gr^C \Pe^{\mm,+ }_{\MT(\Q)}  \overset{\sim}{\To} \Q[\Lef^{\mm}]    \otimes    \Q \langle \nu_{p},   f_{3}, f_5, f_7, \ldots \rangle  \ ,
 \end{equation} 
where $\Lef^{\mm}$ is the Lefschetz period \S\ref{sectLefschetz}. 
\end{thm}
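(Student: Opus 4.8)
The plan is to read off $(\ref{effectiveMTQperiodsmodel})$ from the abstract decomposition $(\ref{eqndecompMTQ})$ by making its right-hand side explicit via Borel's theorem and the prescriptions of \S\ref{sectRepresentatives}. I will first recall why $(\ref{eqndecompMTQ})$ holds, then carry out the identification of generators.

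To obtain $(\ref{eqndecompMTQ})$ one re-runs the proof of Theorem \ref{thmdecompisom} with $\HH$ replaced by the Tannakian subcategory $\MT(\Q)$. This is legitimate because that proof uses only: $(\mathrm{a})$ that $\MT(\Q)$ is Tannakian with de Rham Galois group an extension $1 \to U \to G^{dR}_{\MT(\Q)} \to \GG_m \to 1$, with $U := U^{dR}_{\MT(\Q)}$ pro-unipotent of cohomological dimension $1$ --- the vanishing $\Ext^n_{\MT(\Q)}(\Q, M) = 0$ for $n \geq 2$ being part of the known structure of $\MT(\Q)$; $(\mathrm{b})$ the non-canonical tensor decomposition of Proposition \ref{lemPemtensor}, whose proof needs only the graded de Rham fibre functor $\omega_{\underline{dR}}$ and a Levi splitting of $G^{dR}_{\MT(\Q)}$, both available here; and $(\mathrm{c})$ Corollary \ref{cordecompisom}. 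One then restricts to effective periods, using that the semi-simple effective periods of $\MT(\Q)$ are exactly $\Q[\Lef^{\mm}]$ (the simple objects being the $\Q(n)$, with rational periods, and $\Lef^{\mm}$ transcendental) and that $T^c(H^1(U))$ is concentrated in non-negative weights, so that the effective part of the target is $\Q[\Lef^{\mm}] \otimes_{\Q} T^c(H^1(U))$. Finally, Proposition \ref{propcohomU}, in the case where the only simple objects are Tate objects (so $\mathrm{End}(M) = \Q$ throughout), gives $H^1(U) \cong \bigoplus_{n \geq 1} \Ext^1_{\MT(\Q)}(\Q, \Q(n)) \otimes_{\Q} \Q(-n)_{dR}$; substituting the values $(\ref{MTextn})$ and $(\ref{MTextn2})$ of these Ext groups yields $(\ref{eqndecompMTQ})$.

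It then remains to identify $T^c\big(\bigoplus_{n\geq 1} K_{2n-1}(\Q)\otimes_{\Z}\Q(-n)\big)$ with the shuffle algebra $\Q\langle \nu_p, f_3, f_5, \ldots \rangle$. Borel's theorem says $K_{2n-1}(\Q)\otimes\Q$ vanishes for $n$ even, is one-dimensional for odd $n \geq 3$, and for $n = 1$ has a basis indexed by the primes; hence $H^1(U)$ has a homogeneous basis of classes $\nu_p$ spanning copies of $\Q(-1)$ (weight $2$), one for each prime $p$, together with classes $f_{2n+1}$ spanning $\Q(-1-2n)$ (weight $4n+2$) for $n \geq 1$. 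The Tate objects satisfy conditions $(1)$ and $(2)$ of \S\ref{sectRepresentatives}, so each extension class yields a genuine motivic period through prescription $(2)$; unwinding the construction, a generator of $(\ref{MTextn2})$ is sent to a non-zero rational multiple of $\log^{\mm}(p)$ and a generator of $(\ref{MTextn})$, for odd $n \geq 3$, to a non-zero rational multiple of $\zetam(n)$. These are non-zero --- the $\log^{\mm}(p)$ are $\Q$-linearly independent, as shown earlier, and $\zetam(2n+1)\neq 0$ since its underlying extension of $\Q(0)$ by $\Q(2n+1)$ is non-split, equivalently the Borel regulator is non-zero --- and, the Ext groups in question being at most one-dimensional, they span. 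Normalising the scalars to $1$ pins down $\nu_p$ and $f_{2n+1}$, and with this choice $T^c(H^1(U)) = \Q\langle \nu_p, f_3, f_5, \ldots \rangle$ as a shuffle algebra; combining with the previous paragraph gives $(\ref{effectiveMTQperiodsmodel})$.

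The only genuinely non-formal input is Borel's computation of the rational $K$-theory of $\Q$, together with its identification $(\ref{MTextn})$ with $\Ext^1$ in $\MT(\Q)$ and the vanishing of the higher Ext groups; these I would simply cite. Granting them, the steps where I expect the real work to lie are the transfer of the proof of Theorem \ref{thmdecompisom} to the subcategory $\MT(\Q)$ --- chiefly checking that $U^{dR}_{\MT(\Q)}$ has cohomological dimension $1$ and that Proposition \ref{lemPemtensor} applies there --- and the explicit matching of the abstractly defined primitive elements with the concrete periods $\log^{\mm}(p)$ and $\zetam(2n+1)$, up to the scalars that are then fixed by the normalisation.
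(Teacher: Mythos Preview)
Your proposal is correct and follows essentially the same approach as the paper: the theorem is stated as a direct consequence of the preceding discussion, namely the general decomposition isomorphism of Theorem~\ref{thmdecompisom} transported to $\MT(\Q)$ (using that the higher Ext groups vanish so $U^{dR}_{\MT(\Q)}$ has cohomological dimension $1$), Borel's computation of $K_{2n-1}(\Q)\otimes\Q$, and the choice of generators $\nu_p$, $f_{2n+1}$ via prescription $(2)$ of \S\ref{sectRepresentatives}. You have simply spelled out the steps in more detail than the paper does.
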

Since in this case we can associate to each $f_{2n-1}$ its  canonical period $(2)$ of \S\ref{sectRepresentatives} which equals $\zeta(2n-1)$, this gives an  elementary way 
to think about periods of mixed Tate motives over $\Q$ as formal words in tensor products of odd zeta values $\zeta(2n-1)$ and 
logarithms of prime numbers. See \cite{MZVdecomp} for examples.

\begin{rem} Beilinson's conjectures hold more generally for mixed Tate motives over number fields. Since for the time being
we are considering only periods over $\Q$, this discussion is  postponed to  \S \ref{sectMToverF}.

  \end{rem}

\section{Families of periods} \label{sectFamilies}
We now sketch a possible formalism for studying  periods varying in a family, and explain how this generalises
several concepts which have been used in the physics literature in the case of iterated integrals and polylogarithms.

\subsection{Vector bundles and local systems} See  \cite{DeP1}, \S10.24-10.52.  Let $S$ be a smooth geometrically  connected scheme  over a field $k\subset \C$.  An algebraic vector bundle $V$ on $S$ is  a locally free $\Or_S$-module of finite type.  Denote the corresponding analytic vector bundle on $S^{\an}$ by  $V^{\an}=  \Or_{S^{\an}}\otimes_{\Or_S} V $. Consider the category 
 \begin{multline}  \nonumber
 \mathcal{A}(S) = \hbox{Algebraic vector bundles on } S \hbox{  equipped with an }  \\
 \hbox{integrable connection with regular singularities at infinity}  \ . 
 \end{multline}
 Let $\omega$ denote the functor which to any object of $\mathcal{A}(S)$ 
 associates the underlying vector bundle and forgets the connection. 
 The category $\mathcal{A}(S)$
 is  a Tannakian category over $k$, and $\omega$ is a fiber functor over $S$.

 \begin{defn}  The de Rham algebraic fundamental groupoid
  is the groupoid (in the category of schemes over  $k$, acting on $S$) defined by  
 $$\pi_1^{\mathrm{alg},dR}(S) = \mathrm{Aut}_{\mathcal{A}(S)}^{\otimes}(\omega)\ .$$ 
  \end{defn}

 Consider also the category 
 $$\mathcal{L}(S) =  \hbox{Local systems of  finite-dimensional } k\hbox{-vector spaces on } S(\C) \ .   $$
 For any complex point $t\in S(\C)$, the  `fiber at $t$'  defines a functor
 $\omega_t : \mathcal{L}(S) \rightarrow \mathrm{Vec}_{k},$
 and $\mathcal{L}(S)$, equipped with $\omega_t$, is a neutral Tannakian category over $\Q$. 
 
 \begin{defn} The Betti algebraic fundamental group is the affine group scheme over $k$  defined by the Tannaka group of $\mathcal{L}(S)$ 
  $$\pi_1^{\mathrm{alg},B}(S, t) = \mathrm{Aut}_{\mathcal{L}(S)}^{\otimes}(\omega_t)\ .$$ 
   Given two complex points $t, t' \in S(\C)$, the fibers of the Betti algebraic  groupoid over $(t,t')$ are 
    $\pi_1^{\mathrm{alg},B}(S, t,t') =\mathrm{Isom}^{\otimes}_{\mathcal{L}(S)}(\omega_t, \omega_{t'}).$ 
      \end{defn} 

Denote the ordinary topological fundamental group of $S(\C)$ at a point $t \in S(\C)$ by $\pi_1^{\mathrm{top}}(S(\C), t)$, and recall that there is an equivalence of 
categories 
$$ \mathcal{L}(S) \overset{\sim}{\To}  \hbox{Finite-dimensional representations over $k$ of }  \pi_1^{\mathrm{top}}(S(\C), t)\ ,$$
which to a local system associates its fiber at $t$ together with its action of the topological fundamental group. Thus every element of $ \pi_1^{\mathrm{top}}(S(\C), t)$ defines an automorphism of the fiber functor $\omega_t$, giving   a natural homomorphism
 \begin{equation}  \pi_1^{\mathrm{top}}(S(\C), t) \To  \pi_1^{\mathrm{alg},B}(S, t)(k)\ 
 \end{equation}
which is  Zariski-dense. Similarly  there  is  a natural  morphism of groupoids $$  \pi_1^{\mathrm{top}}(S(\C), t,t') \To  \pi_1^{\mathrm{alg},B}(S, t,t')(k)\  $$
 where $\pi_1^{\mathrm{top}}(S(\C), t,t')$ are homotopy classes of paths from $t$ to $t'$ in $S(\C)$. 
  
Recall that  the  Riemann-Hilbert correspondence \cite{De4}, \cite{DeP1} 10.32(a),   is an equivalence of categories 
$\mathcal{A}(S \times \C)  \rightarrow \mathcal{L}(S \times  \C)$ over $\C$ (\cite{DeP1} 10.35). To a complex  vector bundle with  integrable connection $(V,\nabla)$   it assigns the locally constant sheaf of flat sections
$ (V^{\an})^{\nabla}$ of the corresponding analytic bundle.  Thus there is an isomorphism
of affine groupoid schemes over $\C$
 $$ \pi_1^{\mathrm{alg},B}(S \times \C , a,b)  \overset{\sim}{\To}  \pi_1^{\mathrm{alg},dR}(S\times \C, a,b)\ , $$
where $\pi_1^{\mathrm{alg},dR}(S, a,b)$ denotes the fiber of $\pi_1^{\mathrm{alg},dR}(S\times \C)$  over  $(a,b) \in (S\times S)(\C)$, or equivalently the 
affine group scheme over $\C$ given by $\mathrm{Isom}^{\otimes}_{\A(S\times \C)}(\omega_a, \omega_b)$ where $\omega_a,\omega_b$ denote the functors `fiber at $a,b$' respectively. 

\begin{rem} \label{remContractible} The basepoint $s$ can be replaced by any simply-connected subset $\X \subset S(\C)$. 
For any point $x\in X$, the fiber at $x$ defines a fiber functor $\omega_x: \mathcal{L}(S) \rightarrow \mathrm{Vec}_k$. A path $\gamma$ from $x$ to $x'$
defines an isomorphism of fiber functors $\omega_x \overset{\sim}{\rightarrow} \omega_{x'}$. Since   $X$ is  simply connected, such a path is unique
up to homotopy, and  the isomorphism $\omega_x =\omega_{x'}$ is canonical.  Thus $\omega_x$ depends only on  $x$ up to unique isomorphism.
We shall abusively denote  the resulting fiber functor, for any choice of $x\in X$, by $\omega_X$. 
Likewise, one can define the fundamental group of $S(\C)$ based at $\X$, which we denote by  $\pi^{\mathrm{top}}_1(S(\C), \X)$.  \end{rem} 

\subsection{A category of realizations} Let  $k=\Q$ and $S$ be as in the previous paragraph.
Based on  \cite{DeP1} \S1.21, consider the category $\HH(S)$   consisting of  triples $$( \V_{B}, \V_{dR}, c)$$
given by the following data:
 
 \begin{enumerate}
 \item  A local system $\V_B$ of finite-dimensional $\Q$-vector spaces over $S(\C)$, equipped with a finite  increasing  filtration 
 $W_\bullet \V_B$ of local sub-systems.
 
 \item  An algebraic vector bundle $\V_{dR}$ on $S$  in $\mathcal{A}(S)$ equipped with an integrable connection $\nabla: \V_{dR} \rightarrow \V_{dR} \otimes_{\Or_S} \Omega^1_S$ with regular singularities at infinity,
 a finite increasing filtration $W_\bullet \V_{dR}$ of $\V_{dR}$ by sub-objects in $\mathcal{A}(S)$,  and a finite decreasing fitration $F^{\bullet}$ 
 of  algebraic sub-bundles  satisfying Griffiths transversality
 $ \nabla : F^p \V_{dR} \subset   F^{p-1} \V_{dR} \otimes_{\Or_S}  \Omega^1_S$.
  \item An  isomorphism  of analytic vector bundles with connexion
 $$ c: \V^{\an}_{dR}  \overset{\sim}{\To} \V_B \otimes_{\Q} \Or_{S^{\an}}\ ,$$
 which respects the filtrations $W$, and where the connexion on $ V_B \otimes_{\Q} \Or_{S^{\an}}$ is the one for which sections of $V_B$ are flat. This is equivalent to an isomorphism 
 $ (\V^{\an}_{dR} )^{\nabla} \cong \V_B \otimes_{\Q} \C$ of  local systems of  complex vector spaces on $S(\C)$ which respects the weight filtrations on both sides.  
 \item The data $\V_B, c$ is functorial in the choice of algebraic closure $\C$ of $\R$. 
 In particular,  there is an isomorphism of local systems
 $$F_{\infty} : \V_B \overset{\sim}{\To} \sigma^* \V_B$$
 where $\sigma: S(\C) \overset{\sim}{\rightarrow} S(\C)$ is induced by  complex conjugation.
  \end{enumerate}
 This data is subject to  the following conditions:
\begin{itemize}
\item At each point $t\in S(\C)$, the vector space $(\V_B)_t$ equipped with the filtration $W$ and $c F$ on $(\V_B)_t \otimes_{\Q} \C$  is a  graded-polarisable mixed Hodge structure.
\item We shall not consider taking limits in these notes, but if one wishes to, one should  add further constraints \cite{SteenbrinkZucker}
to demand that $\V_B$  defines an admissible variation of mixed Hodge structures, has  locally quasi-unipotent monodromy, admits relative weight filtrations, and so on.   
\item Let $\Or_{\overline{S}^{\an}}$ denote the sheaf of antiholomorphic functions on $S^{\an}$.  Pulling back the  comparison  $(3)$ to $\overline{S}(\C)$ via $\sigma^*$ induces an $\Or_{\overline{S}^{\an}}$-linear  isomorphism $ \overline{c}: \V_{dR} \otimes_{\Or_S} \Or_{\overline{S}^{\an}} \overset{\sim}{\rightarrow}  \sigma^*(\V_B) \otimes_{\Q} \Or_{\overline{S}^{\an}}.$ The following diagram commutes:
$$
\begin{array}{ccc}
  c: \V_{dR}\otimes_{\Or_S} \Or_{S^{\an}}  & \overset{\sim}{\To}   &  \V_{B}\otimes_{\Q} \Or_{S^{\an}}   \\
 \downarrow &   & \downarrow   \\
  \overline{c}:   \V_{dR}\otimes_{\Or_S} \Or_{\overline{S}^{\an}} &    \overset{\sim}{\To}  & \sigma^*(\V_{B})\otimes_{\Q} \Or_{\overline{S}^{\an}}     
\end{array}
$$
where the vertical map on the left (resp. right) is   the identity on $\V_{dR}$  (resp. $F_{\infty}: \V_{B} \rightarrow \sigma^*(\V_{B})$) tensored with the map $f\mapsto \overline{f}: \Or_{S^{\an}} \rightarrow \Or_{\overline{S}^{\an}}.$
\end{itemize}
 The morphisms in $\HH(S)$ respect the above data.  
 
 The category $\HH(S)$  is Tannakian and has exact, faithful, tensor functors:
$$\begin{array}{ccccccc}
   \omega_{dR}:  \HH(S)  &\To  &\mathcal{A}(S) & \qquad,\qquad &   \omega_{B}:  \HH(S)  &\To  &\mathcal{L}(S)   \\
    (\V_{B}, \V_{dR}, c) & \mapsto &   \V_{dR} &  &(\V_{B}, \V_{dR}, c) & \mapsto &   \V_{B}  \nonumber \ .
  \end{array}
$$
One can think of $c$ as an isomorphism of functors (not strictly speaking fiber functors) from $\omega_{RH} \circ \omega_{dR}$ to $\omega_B \otimes \C$, where $\omega_{RH} : \mathcal{A}(S) \rightarrow \Lo(S) \otimes \C$ is $\V \mapsto (\V^{\an})^{\nabla}$.

\subsubsection{Fiber functors} \label{sectFiberfunctors} In order to motivate the following definitions, note that:
\begin{itemize}
\item families of periods (such as the dilogarithm $\Li_2(x)$) are multivalued functions, i.e., functions on a universal covering space of $S(\C)$. In applications, we are often given a region $X\subset S(\C)$ where the function has a prescribed branch, or, for example, a local Taylor expansion (the dilogarithm has an expansion  $\sum_{n\geq 1} {x^n\over n^2}$ which converges on   $ X = \{x\in \C : \quad |x|<1\}$).  This is the region where the chain of integration is unambiguous.  
\item in some  applications, including Feynman integrals, we cannot control the poles of the integrand. However  we may be given a region $Y\subset S(\C)$ which  is guaranteed to be free of poles. In this region, the integrals are finite and can be evaluated (bearing in mind that the integrals may also be multi-valued).   We shall allow the region $Y$ to be empty.
\end{itemize}

Having made these preliminary coments, we now define Betti and de Rham fiber functors relative to some extra data $X, Y$, as follows. 
  First of all, for any simply-connected  $\X\subset S(\C)$ we have, by remark \ref{remContractible}, a fiber functor
$$ \omega^{\X}_{B}=\omega_{\X} \omega_B  : \HH(S) \To \mathrm{Vec}_{\Q}$$
which neutralizes $\HH(S)$ over $\Q$. On the other hand, the functor
$$\omega : \HH(S) \To S$$
which to a triple $(\V_{B}, \V_{dR}, c)$ associates the vector bundle underlying $\V_{dR}$  and forgets the connection, is a fiber functor according to the definition \S\ref{sectGeneralities}. 
For any morphism $u:T \rightarrow S$  of schemes over $\Q$, the composite of $\omega$ followed by $u^*$ defines a fiber functor over $T$. 
Now consider a region $\Y \subset S(\C)$, and furthermore assume that $\Y$ is contained in some $U(\C)$ where $U\subset S$ is affine.
Then we can consider the ring 
$\Or_{S,Y} = \varinjlim_{U} \Or_U $
where the limit is over all open affine    $U\subset S$ such that $\Y \subset U(\C)$.  It is non-zero by assumption. 
 Define a fiber functor by pulling back along the morphism    $u_Y: \Spec(\Or_{S,\Y}) \rightarrow S$, which we denote by 
$$\omega^{\Y}_{dR} = u_Y^* \omega : \HH(S) \To \mathrm{Proj}(\Or_{S,Y})$$
and takes values in  the category of projective modules of finite type over $\Or_{S,Y}$.  The fiber functor $\omega^{\Y}_{dR}$ is simply   $\Gamma( \Or_{S,\Y}, \bullet)$.
 We shall mainly consider:
  \begin{enumerate}
\item $Y=\emptyset$. Then $u_Y$ is the generic point of $S$, and $\Or_{S,Y}= K_S$, where $K_S$ is the field of fractions of $S$. Our fiber functor is
$$\omega^{\mathrm{gen}}_{dR}: \HH(S) \To \mathrm{Vec}_{K_S}\ .$$
\item Let $Y = \{s\}$ where  $s\in S(\Q)\subset S(\C)$ is a rational point of $S$. Then  $\Or_{Y,S}= \Or_s$ is the local ring of $S$ at $s$. The fiber functor
$$ \omega^{s}_{dR}  : \HH(S) \To \mathrm{Proj}( \Or_s)  \ ,$$
takes values in  projective (hence free) modules over $\Or_s$ of finite type. 
\item $S = \Spec B$ is affine, and $Y= S(\C)$. Then $\Or_{S,Y} = \Or_S = B$, and the fiber functor
$\omega^{S}_{dR}: \HH(S) \rightarrow  \mathrm{Proj}(B)$ is the global sections functor $\Gamma(S, \omega(\V_{dR}))$. \end{enumerate}

 Denote the corresponding Tannaka groups by $G^{B}_{\HH(S),\X}=\Aut_{\HH(S)}^{\otimes}(\omega^B_{X})$,  and $G^{dR}_{\HH(S),\Y}=\Aut_{\HH(S)}^{\otimes}(\omega^{\Y}_{dR})$. The functor $\omega_{dR}$ gives a morphism  
 \begin{equation} \label{restrict2algdR}
 \pi_1^{dR, \mathrm{alg}} (S, \omega^{\Y}_{dR}) \To G^{dR}_{\HH(S), \Y}
 \end{equation} of affine group schemes over $\Or_{S,\Y}$, where $   \pi_1^{dR, \mathrm{alg}} (S, \omega^{\Y}_{dR}) = \Aut^{\otimes}_{\mathcal{A}(S)}(\omega^{\Y}_{dR}).$ Similarly, 
 the functor  $\omega^{\X}_{B}$ defines  a morphism $ \pi_1^{\mathrm{alg}, B}(S,\X)   \rightarrow  G^{B}_{\HH(S),\X} $    of affine group schemes  over $\Q$, and in particular 
   a   \emph{monodromy homomorphism}: \label{gloss: monodromyhomorphism}  
\begin{equation} \nonumber
  \pi_1^{\mathrm{top}}(S(\C),\X)   \To  G^{B}_{\HH(S),\X}(\Q)\ . 
 \end{equation}

 \subsection{Ring of $\HH(S)$-periods.}  Let $\X , \Y \subset S(\C)$ be as above, and let $\omega^{\X}_{B}$, $\omega^{\Y}_{dR}$ be the corresponding fiber functors on $\HH(S)$.  Define a ring   
 $$\Pe^{\mm,\X,\Y}_{\HH(S)}= \Or(\mathrm{Isom}^{\otimes}_{\HH(S)}(\omega^{\Y}_{dR},  \omega^{\X}_{B})) $$
of  matrix coefficients (denoted $ \Pe^{\omega_{B}^{\X}, \omega^{\Y}_{dR}}_{\HH(S)}$ as in    \S\ref{sectMatrixcoeffs}), where $B_1= \Q, B_2 = \Or_{S,\Y},$ and $k=\Q$.  It is a $\Q \otimes_{\Q} \Or_{S,\Y} $-bimodule, and generated by matrix coefficients
\begin{equation} \label{eqnfamilymatrixcoeff} [(\V_{B}, \V_{dR}, c), \sigma, \omega]^{\mm}
\end{equation}
where $\sigma \in  \omega^{\X}_{B}\V_B^{\vee}$,  $\omega \in \omega^{\Y}_{dR} \V_{dR}$.
 Similarly, define  a ring of `de Rham periods' to be $\Pe^{\dR,\Y}_{\HH(S)} = \Aut^{\otimes}_{\HH(S)}(\omega^{\Y}_{dR})$. It is an  $\Or_{S,\Y} \otimes_{\Q} \Or_{S,\Y}$-bimodule.   It is generated by matrix coefficients $ [(\V_{B}, \V_{dR}, c), v, \omega]^{\mm}$ where $\omega$ is as above and $v\in \omega^{\Y}_{dR} \V_{dR}^{\vee}$.

 These rings  are functorial in the following way. 
Let  $S'$ be a smooth geometrically  connected scheme over $\Q$, and  $f:S \rightarrow S'$ a smooth morphism. Let $\X \subset S'(\C)$ be simply-connected such that $f(\X) \subseteq \X'$, 
 and  let $Y'\subset S'(\C)$ such that $f(\Y)\subseteq \Y'$.   The pull-back  defines a functor  $f^*:\HH(S') \rightarrow \HH(S)$ and hence a
  morphism 
 \begin{equation} \label{Pmfunctorial} 
 f^* :\Pe^{\mm,\X',\Y'}_{\HH(S')} \rightarrow \Pe^{\mm,\X,\Y}_{\HH(S)}  
 \end{equation}
 and a similar map on replacing $\mm$ by $\dR$ and making the obvious changes.

\subsubsection{Constants} Now apply this to $S'= \Spec\, \Q$, $\X'=\Y'=S'(\C) $  (example (3) of the previous section)  and $f: S \rightarrow \Spec(\Q)$ the structural map.  One checks that 
 the category $\HH(S')$ is equivalent to $\HH$, and that the Betti and de Rham fiber functors on $\HH$ and $\HH(S')$ coincide. Therefore 
  $$\Pe^{\mm, pt, pt}_{\HH(\Spec(\Q))} = \Pe^{\mm}_{\HH}\ ,$$
  and we obtain  canonical homomorphisms (\emph{`constant' maps}) \label{gloss: constantmap}
  \begin{equation} \label{constant}
   \Pe^{\mm}_{\HH} \To \Pe^{\mm,\X,\Y}_{\HH(S)}  \quad \hbox{ and  } \quad \Pe^{\dR}_{\HH} \To \Pe^{\dR,\Y}_{\HH(S)}
   \end{equation}
 In this way,  $\HH$-periods can be viewed as  `constant' $\HH(S)$-periods, since the functor $f^*:    \HH \rightarrow \HH(S)$ associates 
 to $(V_{B}, V_{dR}, c)$ a triple $(\V_{B}, \V_{dR}, c)$
 where $\V_{dR}=V_{dR} \otimes_{\Q} \Or_S$  with $\nabla = \id \otimes d$,  and $\V_B$ is the constant local system with fibres $V_B$.  
 
 \subsubsection{Evaluation} Now suppose that  there is a rational point
 \begin{equation}  \label{tcondition} t\in S(\Q) \quad  \hbox{ such that }\quad   t \in X \cap Y  \ .
 \end{equation}
There are    \emph{evaluation  maps} \label{gloss: evaluationmap}    at the point  $t$
 \begin{equation} \label{eqn: specialisation} 
 \ev_t: \Pe^{\mm,\X,\Y}_{\HH(S)} \To \Pe^{\mm}_{\HH} \qquad \hbox{ and  } \quad  \ev_t: \Pe^{\dR,\Y}_{\HH(S)} \To \Pe^{\dR}_{\HH}\ ,  \end{equation}
which are  induced by the functor `fiber at $t$' from $\HH(S) \rightarrow \HH$ via $t: \Or_{S,\Y} \rightarrow \Q$. 
   The constant maps $(\ref{constant})$ are 
  sections of the evaluation maps $(\ref{eqn: specialisation})$.
  Note that one may wish to weaken the condition $(\ref{tcondition})$ if one bears in mind that 
  for $t\notin X$ the evaluation map is not well-defined (it is ambiguous up to the action of monodromy), and for $t \notin Y$ the evaluation may be infinite due to  the presence of poles.

\subsection{Some properties of $\HH(S)$-periods}
  The ring $\Pe^{\mm,\X,\Y}_{\HH(S)}$ has a  left \emph{Galois action} by the group $G^{dR,\Y}_{\HH(S)}$, or equivalently, a right coaction
\begin{equation}   \label{H(S)coaction}
\Delta^{\!\mm}: \Pe^{\mm,\X,\Y}_{\HH(S)} \To\Pe^{\mm,\X,\Y}_{\HH(S)} \otimes_{\Or_{S,\Y}} \Pe^{\dR,\Y}_{\HH(S)} \ , 
\end{equation}
given by the same formula as $(\ref{eqn: coaction})$.   Since the fiber functor $\omega^{\Y}_{dR}$ factors through $\omega_{dR}$, the action of   $G^{dR,\Y}_{\HH(S)}$ restricts to an action of the algebraic de Rham fundamental group $\pi_1^{dR, \mathrm{alg}}(S, \omega^{\Y}_{dR})$ via the map  $(\ref{restrict2algdR})$.  More generally,  there is an action of the 
de Rham algebraic fundamental groupoid:
$$  \pi_1^{dR, \mathrm{alg}}(S, \omega^{\Y_1}_{dR}, \omega^{\Y_2}_{dR}) \times   \Pe^{\mm,\X,\Y_1}_{\HH(S)} \To\Pe^{\mm,\X,\Y_2}_{\HH(S)} $$ 
for any $\Y_1,\Y_2 \subset S(\C)$ as above, where the left-hand side is $\Isom^{\otimes}_{\mathcal{A}(S)}(   \omega^{\Y_1}_{dR}, \omega^{\Y_2}_{dR})$.

The ring $\Pe^{\mm,\X,\Y}_{\HH(S)}$   has an increasing \emph{weight filtration}  \label{gloss: weightfiltration2}  
$W_{\bullet}$ which is inherited from the weight filtration on the category $\HH(S)$ (specifically, $W_n$ is generated by  matrix coefficients $(\ref{eqnfamilymatrixcoeff})$ such that the de Rham class satisfies
$ \omega \in \omega^Y_{dR} W_n \V_{dR}$), and  is preserved by $G^{dR,\Y}_{\HH(S)}$.  The morphisms $(\ref{constant})$ and $(\ref{eqn: specialisation})$ preserve the weight filtrations.  The same comments apply to the rings of de Rham periods  $\Pe^{\dR,\Y}_{\HH(S)}$. This notion, applied in the setting   \cite{Cosmic} gives a rigorous meaning to intuitive notions of `transcendental weight' of functions in the physics literature.

 The ring $\Pe^{\mm,\X,\Y}_{\HH(S)}$ also has a  right action by 
$\pi_1^{\mathrm{alg},B}(S,\X)$, and in  particular,  a right action by 
the topological fundamental group or \emph{monodromy action}: \label{gloss: monodromyaction}
\begin{equation}\label{monodromyonPe} 
 \Pe^{\mm,\X,\Y}_{\HH(S)} \times \pi_1^{\mathrm{top}}(S(\C),\X) \To \Pe^{\mm,\X,\Y}_{\HH(S)}\ . 
 \end{equation}
It commutes with the action of $G^{dR,\Y}_{\HH(S)}$ and also respects $W$.
 The monodromy action can be read off  matrix coefficients   $(\ref{eqnfamilymatrixcoeff})$ by its action on the Betti class $\sigma$, since   the $\Q$-vector space $(\V_B)_{\X}$ naturally carries  a right $\pi_1^{\mathrm{top}}(S(\C),\X) $-action. More generally, for any $\X_1,\X_2 \subset S(\C)$ simply connected we have an action of the topological fundamental groupoid or 
 \emph{continuation along paths}  
 \begin{equation}\label{continuationonPe}
 \Pe^{\mm,\X_1,\Y}_{\HH(S)} \times \pi_1^{\mathrm{top}}(S(\C),\X_1,\X_2) \To \Pe^{\mm,\X_2,\Y}_{\HH(S)}\ . 
 \end{equation} These actions  commute with the action of $G^{dR,\Y}_{\HH(S)}$ and respect the weight filtration, since the latter are defined entirely in terms of the de Rham class. \vspace{0.05in}

The following   structures on $\Pe^{\mm,\X,\Y}_{\HH(S)}$ are \emph{not preserved} by the action of $G^{dR,\Y}_{\HH(S)}$. 
 By the Tannaka theorem $\ref{thm:  Tannaka}$,  $\Pe^{\mm,\X,\Y}_{\HH(S)}$ is the $\omega^{\Y}_{dR}$-image of an ind-object in $\HH(S)$.  Denote its image under $\omega_{dR}$  by $\widetilde{\Pe}^{\mm,\X,\Y}_{\HH(S)}$. It  is an (infinite-dimensional) algebraic vector bundle  on $S$, or ind-object of $\mathcal{A}(S)$,  whose image under $u^*_{\Y}$, the restriction to $\Spec (\Or_{S,\Y})$, is  $\Pe^{\mm,\X,\Y}_{\HH(S)}$. Furthermore, it  is equipped with an increasing weight filtration $W$, decreasing Hodge filtration $F$, and an integrable connection
 $$\nabla : \widetilde{\Pe}^{\mm,\X,\Y}_{\HH(S)} \To \widetilde{\Pe}^{\mm,\X,\Y}_{\HH(S)} \otimes_{\Or_S} \Omega^1_S$$
 which satisfies Griffiths transversality. Restricting to $\Spec (\Or_{S,\Y})$, we deduce the existence of a \emph{Hodge filtration} \label{gloss: Hodgefilt2} 
 $ F^{\bullet}  \Pe^{\mm,\X,\Y}_{\HH(S)}$ and \emph{connection} \label{gloss: connection} 
 \begin{equation} \label{nablaonPe}
 \nabla: \Pe^{\mm,\X,\Y}_{\HH(S)} \To \Pe^{\mm,\X,\Y}_{\HH(S)} \otimes_{\Or_{S,\Y}} \Omega^1_{\Or_{S,Y}}\ ,
 \end{equation} 
 where $\Omega^1_{\Or_{S,Y}}$ is the ring of K\"ahler differentials on $\Or_{S,\Y}$. The connection $(\ref{nablaonPe})$ is integrable, respects $W$, and satisfies Griffiths transversality with respect to  $F$.  On matrix coefficients the connection $(\ref{nablaonPe})$  is given  by  
 $$\nabla [(\V_{B}, \V_{dR}, c), \sigma, \omega]^{\mm} = [(\V_{B}, \V_{dR}, c), \sigma, \nabla \omega]^{\mm}\ , $$ 
 where $\nabla : u_\Y^* (\V_{dR}) \rightarrow  u_\Y^* (\V_{dR}) \otimes_{\Or_{S,\Y}}  \Omega^1_{S,\Y} $ is the connection on $\V_{dR}$ restricted to $u_Y: \Spec(\Or_{S,\Y})\rightarrow S$. The space $F^n  \Pe^{\mm}_{\HH(S),\X,\Y}$ is generated by 
  matrix coefficients
 $ [(\V_{B}, \V_{dR}, c), \sigma, \omega]^{\mm}$ where $\omega \in \omega^Y_{dR}  F^n   \V_{dR} $. 
 There is an analogous connection on the ring of left de Rham periods $\Pe^{dR}_{\HH(S)}$, where the connection acts on   $ [(\V_{B}, \V_{dR}, c), v, \omega]^{\dR}$ through its action on $\omega$. 
     From  formula $(\ref{eqn: coaction})$, namely
 $$   \Delta {[}V, \sigma, \omega]^{\mm}  =  \sum_{i} [V, \sigma, e_i]^{\mm} \otimes  [V, e^{\vee}_i, v]^{\dR}  $$
        one can check that   the following diagram commutes
  $$
\begin{array}{ccc}
 \Pe_{\HH(S)}^{\mm,\X,\Y} & \overset{\Delta}{\To}   & \Pe^{\mm,\X,\Y}_{\HH(S)} \otimes_{\Or_{S,\Y}} \Pe^{\dR}_{\HH(S),\Y}   \\
  \downarrow &   &   \downarrow  \\
  \Pe_{\HH(S)}^{\mm,\X,\Y} \otimes_{\Or_{S,\Y}} \Omega^1_{\Or_{S,\Y}} &  \overset{\Delta\otimes \id}{\To}   &    \Pe_{\HH(S)}^{\mm,\X,\Y} \otimes_{\Or_{S,\Y}}  \Pe^{\dR,\Y}_{\HH(S)} \otimes_{\Or_{S,Y}} \Omega^1_{S,\Y}
\end{array}
$$ 
where the vertical map on the left is $\nabla$, and on the right is $\id \otimes \nabla$.  Hence
\begin{equation} 
(\Delta \otimes \id )\nabla = (\id \otimes \nabla) \Delta \ ,
\end{equation}
  which relates the Galois coaction to the connection. Since the  connection $(\ref{nablaonPe})$ only invokes the de Rham framing, it commutes with the monodromy action $(\ref{monodromyonPe})$. 
  
  \begin{rem}
  The previous remarks give  a proof of two formulae $(5.23)$ conjectured in \cite{Duhr} in the case of the multiple polylogarithms (iterated integrals on the moduli space
  of curves of genus $0$ with $n$ marked points), and generalise to all families of motivic periods (and in particular, motivic Feynman amplitudes).  
  \end{rem} 
  
  Finally, let us suppose that $\X, \Y$ are preserved by a subgroup $A $ of the group of automorphisms of $S$. Then the functoriality $(\ref{Pmfunctorial})$ gives rise to an action
  \begin{equation}  \label{AutosonPm}
   A \times \Pe^{\mm, \X, \Y }_{\HH(S)} \To  \Pe^{\mm, \X, \Y}_{\HH(S)}
   \end{equation}
  of $A$ on $\HH(S)$-periods, and similarly on $\Pe^{\dR}_{\HH(S), \Y}$ under the weaker assumption that only  $\Y$ is stable under $A$. 

 \subsection{The period homomorphism}
 Let $\X,\Y\subset S(\C)$  with $X$ simply-connected. The period is defined by pairing the Betti and de Rham classes of a matrix 
 coefficient $(\ref{eqnfamilymatrixcoeff})$ using the comparison $c$. In order to obtain a multi-valued function on the whole of  $S(\C)$, and not just on $X$,  these classes must be suitably extended as follows.
     Let $\pi: \widetilde{S}(\C)_{\X}\rightarrow S(\C)$ denote the universal covering space of  $S(\C)$ based at $\X$, and let   $M_{\X,\Y}(S(\C))$ denote the ring of meromorphic functions on $\widetilde{S}(\C)_{\X}$ which have no poles on $\pi^{-1}(\Y)$.   By this we mean that for every $f \in M_{\X,\Y}(S(\C))$,  and any  $x \in \widetilde{S}(\C)_{\X}$, there exists a $g$, an element in the fraction field of $S$,  such that 
 $f \times  \pi^{-1}(g) $ is analytic in some open neighbourhood of $x$. If $x\in \pi^{-1}(Y)$ then we can take $g=1$, and $f$ is already analytic in some neighbourhood of $x$.
 Elements of $M_{\X,\Y}(S(\C))$ can be thought of  as multivalued meromorphic functions on $S(\C)$ with a prescribed  branch on the set $\X$, and poles outside $\Y$. 
 
Suppose that $\Y$ is contained in the complex points of some open affine subset of $S$, as earlier. 
The \emph{period map}  \label{gloss: periodmap}  is then a homomorphism
$$\per : \Pe^{\mm,\X,\Y}_{\HH(S)} \To M_{\X,\Y}(S(\C))\ ,$$ 
and is  defined on matrix coefficients $[(\V_B, \V_{dR}, c), \sigma, v]^{\mm}$ as follows.  The local system $\pi^*(\V_B^{\vee})$ is trivial on the simply connected space $\widetilde{S}(\C)_{\X}$ and $\sigma$ extends to a unique global section
 $\sigma \in \Gamma( \widetilde{S}(\C)_{\X},  \pi^*(\V_B^{\vee}))$. Let $ x\in\widetilde{S}(\C)_{\X}$, and let   $N_x$ be a sufficiently small neighbourhood of $x$ such that the restriction of $\pi$ to $N_x$ is an isomorphism.  We obtain a local section 
 $\sigma_x \in \Gamma(\pi(N_x), \V_B^{\vee})$, defined by  $\sigma_x= (\pi|_{N_x}^{-1})^*\sigma$.

  On the other hand, by assumption on $\Y$, there exists an open affine $U\subset S$
  with $\Y \subset U(\C)$ such that $ v\in \Gamma(U, \V_{dR})$. Let  $W\subset S$ be an open affine such that $\pi(x) \in W(\C)$ and the restriction of $\V_{dR}$ to $W$ is trivial as a vector bundle. 
  Since $S$ is irreducible, $U \cap W \neq \emptyset$ and we have $v|_{U\cap W}\in \Gamma(U \cap W, \V_{dR}) = \Gamma(W,\V_{dR}) \otimes_{\Or_W} \Or_{U \cap W}$. It can have poles on $W\backslash U$. We can `clear its denominator', since $\V_{dR}$ is of finite type, there exists an  $\alpha \in \Or_W$ such that $\alpha  v \in \Gamma(W, \V_{dR})$.   By making $N_x$ smaller, we can assume that $W(\C)$ contains $\pi(N_x)$, and we can view $\alpha v$, by restriction and passing to the associated analytic vector bundle, as an element in $\Gamma(\pi(N_x), \V_{dR}^{an})$.  We have defined
  $$\sigma_x \in  \Gamma(\pi(N_x), \V_B^{\vee}) \qquad \hbox{ and } \qquad   \alpha  v \in \Gamma(\pi(N_x), \V_{dR}^{an})\ .$$
   The comparison map $c : \V^{an}_{dR} \rightarrow \V_B \otimes_{\Q} \Or_S^{an}$  yields an element
 $$\sigma_x( c ( \alpha v)) \in \Gamma( \pi(N_x), \Or_S^{an})\ $$
 which can be viewed as a locally analytic function on $N_x$. 
 The period  homomorphism is defined  on the following matrix coefficient by  
 $$ \per( [(\V_B, \V_{dR}, c), \sigma, \alpha v]^{\mm})  = \sigma_x( c (  \alpha v))  \ ,$$
 and the period of $ [(\V_B, \V_{dR}, c), \sigma, v]^{\mm}$ is obtained by dividing by the rational function $\alpha$. 
It is well-defined (does not depend on the representative for the matrix coefficient) because  morphisms in $\HH(S)$ respect the comparison $c$.  Note that it locally has poles along the zeros of $\alpha$. In the case when $\pi(x) \in Y \subset U(\C)$,  we may assume $W=U$ in the above and hence $\alpha=1$, and the period has no poles.  The period homomorphism
therefore takes values in $M_{\X,\Y}(S(\C))$ as claimed.

The period map satisfies the following properties, which follow from the definitions.
First of all, the period is functorial with respect to smooth morphisms, and in particular is compatible with the constant map $(\ref{constant})$. This means that the  following diagram commutes:
$$
\begin{array}{ccc}
 \Pe^{\mm}_{\HH} & \overset{(\ref{constant})}{\To}  &  \Pe^{\mm,\X,\Y}_{\HH(S)}  \\
 \downarrow_{\mathrm{per}}&   & \downarrow_{\mathrm{per}}  \\
  \C &  \subset   & M_{\X,\Y}(S(\C))  
\end{array}
$$
where the inclusion on the bottom line is the inclusion of constant functions. 
 The period is also compatible with monodromy; there is a commutative diagram
$$
\begin{array}{ccc}
 \Pe^{\mm,\X,\Y}_{\HH(S)}    \times \pi_1^{\mathrm{top}}(S(\C), \X) & \To  &  \Pe^{\mm,\X,\Y}_{\HH(S)}  \\
 \downarrow_{\per \times \id}&   & \downarrow_{\mathrm{per}}  \\
  M_{\X,\Y}(S(\C))   \times \pi_1^{\mathrm{top}}(S(\C), \X) &  \To  &     M_{\X,\Y}(S(\C))    
\end{array}
$$
where the action of the topological fundamental group on $M_{\X,\Y}(S(\C))$ is induced by the action of  the group of deck transformations on $\widetilde{S}(\C)_{\X}$. If one thinks of elements of $M_{\X,\Y}(S(\C))$ as multivalued functions on an open subset of $S(\C)$, this is just analytic continuation along loops.  More generally, given two simply-connected subsets $\X_1, \X_2$ of $S(\C)$, we have a compatibility of groupoid actions
$$
\begin{array}{ccc}
 \Pe^{\mm,\X_1,\Y}_{\HH(S)}    \times \pi_1^{\mathrm{top}}(S(\C), \X_1,\X_2) & \To  &  \Pe^{\mm,\X_2,\Y}_{\HH(S)}  \\
 \downarrow_{\per \times \id}&   & \downarrow_{\mathrm{per}}  \\
  M_{\X_1,\Y}(S(\C))   \times \pi_1^{\mathrm{top}}(S(\C), \X_1,\X_2) &  \To  &     M_{\X_2,\Y}(S(\C))    
\end{array}
$$
where the map along the bottom is defined by analytic continuation.

 Now let $x \in \mathrm{Der}_{\Q}(\Or_{S,\Y})$ be a $\Q$-linear derivation of $\Or_{S,\Y}$.
 It defines a map $\Omega^1_{S,Y} \rightarrow \Or_{S,\Y}$. Let $\partial_x = (\id \otimes x) \nabla$, where $\nabla$ is $(\ref{nablaonPe})$.  We have a commutative diagram 
 $$
\begin{array}{ccc}
 \Pe^{\mm,\X,\Y}_{\HH(S)}  &  \overset{\partial_x} \To  &  \Pe^{\mm,\X,\Y}_{\HH(S)}  \\
 \downarrow_{\per }&   & \downarrow_{\mathrm{per}}  \\
  M_{\X,\Y}(S(\C))   &  \To  &     M_{\X,\Y}(S(\C))    
\end{array}
$$
where the map along the bottom is differentiation of locally analytic functions along the  vector field  defined  by $x$ on some open subset of $S(\C)$ containing $Y$. Thus the connexion on the ring of periods corresponds to differentiation of functions.

The period map is functorial. Suppose we are in the situation described in the lines preceeding $(\ref{Pmfunctorial})$. Then there is a commutative diagram
$$
\begin{array}{ccc}
 \Pe^{\mm,\X',\Y'}_{\HH(S')}  &  \overset{f_*} \To  &  \Pe^{\mm,\X,\Y}_{\HH(S)}  \\
 \downarrow_{\per }&   & \downarrow_{\mathrm{per}}  \\
  M_{\X',\Y'}(S'(\C))   &  \overset{f_*}{\To}  &     M_{\X,\Y}(S(\C))    
\end{array}
$$
where the map along the bottom is composition  $\phi \mapsto \phi \circ f$.

 Suppose that $t\in S(\Q)$ is a rational point  and the image of $t$ in $S(\C)$ lies in $X \cap Y$. Then the period map is well-defined at $t$, and  we have a commutative diagram 
 $$
\begin{array}{ccc}
 \Pe^{\mm,\X,\Y}_{\HH(S)} & \overset{\mathrm{ev}_t}{\To}  &  \Pe^{\mm}_{\HH}  \\
 \downarrow_{\per_t}&   & \downarrow_{\per}  \\
  \C &  =  & \C   
\end{array}
$$
where $\per_t$ is evaluation  of elements of $M_{\X,\Y}(S(\C))$ at $t$.
Thus $(\ref{eqn: specialisation})$  corresponds to taking the value of a function at a point. In this manner many classical notions for multivalued functions have 
analogues on the ring of  motivic periods. Nonetheless, the ring of motivic periods has extra features such as the weight and Galois group which are invisible on functions.

 \subsection{Complex conjugation}
 Consider the category $\HH(\overline{S})$ consisting of triples $(\V_{B}, \V_{dR}, \overline{c})$ defined in an identical manner as above, except that 
 $$\overline{c}: \V_{dR} \otimes_{\Or_S} \Or_{\overline{S}^{\an}} \overset{\sim}{\To} \V_B \otimes_{\Q} \Or_{\overline{S}^{\an}} $$
 is antiholomorphic (and respects $W$, etc).  The real Frobenius defines an equivalence $F_{\infty} : \HH(S) \rightarrow \HH(\overline{S})$ which maps $(\V_{B}, \V_{dR}, c)$ to $(\sigma^* \V_B, \V_{dR}, \overline{c})$. We can form a ring of periods 
 $\Pe^{\mm, \X, \Y}_{\HH(\overline{S})}$ as before,  and we have an isomorphism
 $$F_{\infty} : \Pe^{\mm, \X, \Y}_{\HH(S)} \overset{\sim}{\To} \Pe^{\mm, \overline{\X}, \Y}_{\HH(\overline{S})} \ .$$
Composing with the map $\sigma^* : \Pe^{\mm ,\overline{\X}, \Y}_{\HH(\overline{S})}  \rightarrow \Pe^{\mm,\X, \Y}_{\HH(\overline{S})}  $, which sends
 $(\V_B, \V_{dR} , \overline{c} ) $ to $(\sigma^* \V_B, \V_{dR}, \overline{c} \sigma^*)$, gives an isomorphism 
 $ \sigma^* F_{\infty}  : \Pe^{\mm, \X, \Y}_{\HH(S)} \overset{\sim}{\rightarrow} \Pe^{\mm ,\X, \Y}_{\HH(\overline{S})} .$
 The period map on $ \Pe^{\mm, \X, \Y}_{\HH(\overline{S})}$ takes values in the ring
 $\overline{M}_{\X,\Y}(S(\C))$ of antiholomorphic functions on $S(\C)$ with prescribed branch on $\X$.
  The following diagram commutes
 $$
 \begin{array}{ccc}
  F_{\infty}\sigma^*:  \Pe^{\mm, \X, \Y}_{\HH(S)}  & \overset{\sim}{\To}    & \Pe^{\mm, \X, \Y}_{\HH(\overline{S})}    \\
  \downarrow_{\per} &   &   \downarrow_{\per}   \\
   M_{\X,\Y}(S(\C)) &  \overset{\sim}{\To}   &  \overline{M}_{\X,\Y}(S(\C))
\end{array}
$$ 
where the map along the bottom is complex conjugation $f \mapsto \overline{f}$.

\section{Further remarks}\label{sectfamilyfurther}
There are many constructions that   can be made involving   families of periods.  I shall only mention the minimum required for applications to 
\cite{Cosmic}, and single-valued functions and symbols which have  independent applications to physics. 

 \subsection{Some jargon} \label{sectjargon}  Most of the definitions of earlier paragraphs generalise in an evident way to the case of families of periods. I will not repeat all of them here except to mention that 
 an element $\xi \in \Pe^{\mm}_{\HH(S),\X,\Y}$ generates a representation of $G^{dR}_{\HH(S),\Y}$ which, by the Tannaka theorem,  defines a (minimal) object $M(\xi)$ of $\HH(S)$ (see \S\ref{sectminimalobject}).  The \emph{Hodge numbers} and \emph{Hodge polynomial} of $\xi$ 
are defined as the Hodge numbers and polynomial of the fiber of $M(\xi)$ at any point $t\in \X$. Define the \emph{local system associated to $\xi$} to be $M(\xi)_B$. It is equivalent to the \emph{monodromy representation} of $\xi$ which is the vector space
$\omega_{B,\X} (M(\xi)) \in \mathrm{Vec}_{\Q}$
together with its left $\pi_1(S(\C), \X)$-action. Likewise, define the \emph{vector bundle with connexion} associated to $M(\xi)$ to be $M(\xi)_{dR}$, equipped with its integrable connexion $\nabla$. The vector bundle and local system associated to $\xi$ are equivalent under the Riemann-Hilbert correspondence after tensoring with $\C$.

 \subsection{Variant: families of periods with single-valued branch}
For applications such as \cite{Cosmic}, we are often faced with the situation where we have a family of periods on some open subvariety $S$ of a given variety $Z$, without knowing   what $S$ is. This is in fact the typical situation; $S$ will be the complement of a  discriminant locus which is complicated and   not computable. We often have some further information, namely that the periods are well-defined on some connected open subset $U\cap S(\C) \subset S(\C)$ for the analytic topology.  This motivates the following 
definition.

Let $Z$ be a smooth, geometrically connected algebraic variety over $\Q$, and let $U \subset Z(\C)$ be a connected open analytic subset.
 For any geometrically connected $S\subset Z$,  consider the ring of periods 
 $\Pe^{\mm}_{\HH(S), s, \emptyset}$
 for any $s\in S(\C)\cap U$. For any two points $s_1,s_2 \in U \cap S(\C)$, analytic continuation along paths gives 
 $$\Pe^{\mm}_{\HH(S), s_1, \emptyset} \times \pi_1^{\mathrm{top}}(U \cap S(\C), s_1, s_2) \overset{\sim}{\To} \Pe^{\mm}_{\HH(S), s_2, \emptyset}$$
 and in particular, since $U\cap S(\C)$ is connected, a canonical isomorphism
 $$ \big(\Pe^{\mm}_{\HH(S), s_1, \emptyset} \big)^{\pi_1(U\cap S(\C), s_1)} = \big(\Pe^{\mm}_{\HH(S), s_2, \emptyset} \big)^{\pi_1(U\cap S(\C), s_2)}\ . $$
  Thus, by moving the base-point $s$ if necessary, we can define
  $$\Pe^{\mm,U}_{\HH(Z)} = \varinjlim_{S} \big(\Pe^{\mm}_{\HH(S), s, \emptyset}\big)^{\pi_1(U\cap S(\C), s)} 
$$ 
 where the limit is over all such $S\subset Z$, since any two such opens $S_1 , S_2 \subset Z$ have a non-empty intersection $U\cap S_1 \cap S_2(\C)$.
  The periods of elements of $\Pe^{\mm,U}_{\HH(Z)}$ restrict to  single-valued meromorphic functions  on $U$. 
 The ring $\Pe^{\mm,U}_{\HH(Z)}$ has similar properties to the rings of periods discussed earlier, except for the  monodromy action.

\subsection{Single-valued versions}
We can define single-valued versions of families of motivic periods in a similar way to \S\ref{sectSVconstant}, except that there are some slight differences.
Let $M= (\V_{B}, \V_{dR}, c)$ by an object of $\HH(S)$.  As in \S\ref{sectSVconstant}, there is a universal comparison homomorphism
$$c_M^{\mm}\ :\  \omega_Y(\V_{dR}) \To \omega_X(\V_B) \otimes_{\Q} \Pe^{\mm}_{\HH(S), \X, \Y}\ .$$
Applying $F_{\infty}\sigma^*$ to the right-hand factor gives a homomorphism
$$\overline{c_M}^{\mm}:= F_{\infty} \sigma^* c_M^{\mm}\ :\  \omega_Y(\V_{dR}) \To \omega_X(\V_B) \otimes_{\Q} \Pe^{\mm}_{\HH(\overline{S}), \X, \Y}\ .$$
Define a ring 
$$ \Pe = \Pe^{\mm}_{\HH(\overline{S}), \X, \Y}  \otimes_{\Q} \Pe^{\mm}_{\HH(S), \X, \Y}\  .$$
Embed $\Pe^{\mm}_{\HH(S), \X, \Y}$ into $\Pe$ via $x \mapsto 1 \otimes x$, and $\Pe^{\mm}_{\HH(\overline{S}), \X, \Y}$ via $x\mapsto x\otimes 1$.
Thus
$$c_M^{\mm} \quad ,\quad  \overline{c_M}^{\mm} \quad \in \quad \Hom( \omega_Y(\V_{dR}), \omega_X(\V_B)) \otimes_{\Q} \Pe$$
Finally, define 
$$\s^{\mm}_M = (\overline{c_M}^{\mm})^{-1} c_M^{\mm}   \quad \in  \quad \mathrm{End}( \omega_Y(\V_{dR})) \otimes_{\Q} \Pe \ .$$ We leave it to the reader to  replace the above argument  with universal arguments on torsors, exactly as in \S\ref{sectSVconstant}, to define a canonical element
$$ \s^{\mm} \in G^{dR}_{\HH(S), \Y} (\Pe) $$
or equivalently, a  homomorphism 
\begin{equation} \label{singlevaluedhomforfunctions} 
 \s^{\mm}: \Pe^{\dR}_{\HH(S), \Y} \To \Pe^{\mm}_{\HH(\overline{S}), \X, \Y}  \otimes_{\Q} \Pe^{\mm}_{\HH(S), \X, \Y}\ ,
 \end{equation} 
where the left (resp. inverse right) action of $G^{dR}_{\HH(S), \Y} $ on $ \Pe^{\dR}_{\HH(S), \Y}$ corresponds to the  left  action of $G^{dR}_{\HH(S), \Y} $ on $\Pe^{\mm}_{\HH(S), \X, \Y}$ (resp. $ \Pe^{\mm}_{\HH(\overline{S}), \X, \Y}$).
It follows from the definition that the single-valued homomorphism 
$(\ref{singlevaluedhomforfunctions})$ is compatible with the connexion:
\begin{equation}
(\s^{\mm} \otimes \id )\nabla ( \xi) = ( \id  \otimes \nabla) \s^{\mm}(\xi) \ .
\end{equation} 
This means that, after taking the period homomorphism, the single-valued map respects the \emph{holomorphic} (and only the holomorphic) differential.

\begin{rem} When $S= \Spec(\Q)$ is a point, 
$\Pe^{\mm}_{\HH(\overline{S}), \X, \Y} \cong   \Pe^{\mm}_{\HH(S), \X, \Y}  \cong \Pe^{\mm}_{\HH}$
and the earlier definition  $(\ref{single-valuedmapdefn})$ is obtained as the composition 
$$  \Pe^{\dR}_{\HH} \overset{\s^{\mm}} \To   \Pe^{\mm}_{\HH}  \otimes_{\Q} \Pe^{\mm}_{\HH} \To \Pe^{\mm}_{\HH}  $$
where the second map is multiplication.
\end{rem} 

\begin{ex} \label{exdilog} Consider the dilogarithm motivic period on $S = \Pro^1 \backslash \{0,1,\infty\}$, defined in $(\ref{defnmultiplepolylog})$ below. Here $X\subset S(\C)$ is the open interval $(0,1)$.   Its universal period matrix over a point $x\in X\subset S(\C)$ is 
$$c^{\mm}_M = 
\left(
\begin{array}{ccc}
 1  & \Li_1^{\mm}(x)   &   \Li_2^{\mm}(x)  \\
 0 & \Lef^{\mm}      &    \Lef^{\mm} \log^{\mm}(x)      \\
  0 & 0    &    (\Lef^{\mm})^2
\end{array}
\right) \ .$$
Let $\gamma_0$ (resp. $\gamma_1$) denote a small path around $0$ (resp. $1$)  based at $X$.  Under the monodromy homomorphism they act by  left   multiplication by 
$$ \rho(\gamma_0) = \left(
\begin{array}{ccc}
 1  & 1   &  0  \\
 0 & 1      &    0      \\
  0 & 0    &   1
\end{array}
\right) \quad \hbox{ and }  \quad \rho(\gamma_1) = \left(
\begin{array}{ccc}
 1  & 0   &  0  \\
 0 & 1      &    1      \\
  0 & 0    &   1
\end{array}
\right)\ .$$
Denoting the image of $\Li_k^{\mm}(x)$ under $F_{\infty} \sigma_*$ by $\overline{\Li}_k^{\mm}(x)$ and similarly for $\log^{\mm}(x)$, the matrix  $\overline{c_M}^{\mm}$ is  given by 
$$\overline{c_M}^{\mm}  = 
\left(
\begin{array}{ccc}
 1  & \overline{\Li_1^{\mm}}(x)   &   \overline{\Li_2^{\mm}}(x)  \\
 0 & -\Lef^{\mm}      &    -  \Lef^{\mm} \overline{\log^{\mm}}(x)      \\
  0 & 0    &    (\Lef^{\mm})^2
\end{array}
\right) \ .$$
By computing $(\overline{c_M}^{\mm} )^{-1} c_M^{\mm} $ we find that $\s^{\mm}(\Lef^{\dR}) = (-1)$ (yet again),
$$ \s^{\mm}(\log^{\dR}(x)) = \log^{\mm}(x) +  \overline{\log^{\mm}}(x)\ ,$$
and similarly   for $\Li_1^{\dR}(x) = -\log^{\dR}(1-x)$. The top-right corner gives
$$  \s^{\mm}(\Li_2^{\dR}(x)) = \Li_2^{\mm}(x) -    \overline{\Li_2^{\mm}}(x)  + ( \log^{\mm}(x) +  \overline{\log^{\mm}}(x))\overline{\Li_1^{\mm}}(x)  \quad \in \Pe $$
whose period is $2 i$ times the Bloch-Wigner dilogarithm. Equivalent calculations for the associated period matrices were first carried out  in \cite{Be-De} for the classical polylogarithms. 
For multiple polylogarithms, the computations are made much simpler using the language of  non-commutative formal power series \cite{SVMP}. 
\end{ex}

\section{Symbols} \label{sectSymbols} 
We briefly indicate how a certain class of motivic periods give rise to invariants involving differential forms.
There are several possible variations on this theme which all specialise (in the mixed Tate case) to the  notion of `symbol of differential forms' as currently
used in the physics literature.

\subsection{Some pitfalls} \label{sectpitfall} The symbol is commonly understood by physicists to be  a tensor product of differential forms obtained by differentiating a family of period  integrals with respect to a parameter.  Consider the following examples:
\begin{enumerate}
\item The classical polylogarithm $\Li_k(x) = \sum_{n\geq 1} {x^n \over n^k}$, where $k\geq 1$,   satisfies the following differential  equation for all $k\geq 2$:
$$ d \, \Li_k(x) =   \Li_{k-1}(x)  \, {dx \over x}\ .$$
 Furthermore $d \Li_1(x) = {dx \over 1-x}$. The recipe in the physics literature for constructing the symbol is recursive by repeated differentiation. For example,  $\mathrm{symbol}(\Li_1(x)) = {dx \over 1-x}$ and $\mathrm{symbol} ( \Li_k(x)) = \mathrm{symbol} (\Li_{k-1}(x) )\otimes {dx \over x}$, e.g.,
 $$\mathrm{symbol}( \Li_2(x)) =  {dx \over 1-x} \otimes {dx \over x}\ .$$
 Occasionally, the arguments $d\log f$ in the right-hand side are represented by their arguments $f$, and the symbol of $\Li_2(x)$ is written $1-x \otimes x$.  This notion is already very useful for capturing functional relations between polylogarithms and is ubiquitous in the literature.  Note that the symbol captures the information that $\Li_2(x)$ is an iterated integral of two one-forms on the punctured projective line, but some information is lost: one requires a path of integration to reconstruct the function $\Li_2(x)$ from its symbol. 
  The symbol is sometimes used to infer notions such as `transcendental weight' and monodromy data about functions.  
 
\item Simply differentiating  functions is too naive, as  one sees by considering examples such as $ f(x) \Li_2(x) + g(x) \Li_1(x) \log(x)$,  where $f, g$ are rational functions.  Repeated differentiation leads to an infinite sequence of more complicated functions.   To make sense of the definition $(1)$, one must find differential operators whose application successively decreases some quantity called the `length' (in the examples in $(1)$, this happens to coincide with   the `transcendental weight', although this will not necessarily be true in more general settings). This will  depend on choices, and shows that the symbol should in fact be defined as a tensor product \emph{modulo certain equivalence relations}. These are described in \S \ref{sectReducedBar}. 
 This recursive structure will be  encoded by the  notion of a \emph{unipotent  connection}.

  \item  The symbol  of a constant family of periods is necessarily zero since it is defined in terms of differentiation.   However,  the notion of symbol  in the physics literature has somehow morphed into a version in which the arguments are formally allowed to be  constants.  This is the basis for the definition of `motivic amplitudes' in \cite{Cluster}. 
One finds  equations such as 
  $$\mathrm{symbol} ( \Li_2(3)  )  =   -2 \otimes 3  , $$
  which are obtained by specialising the earlier example to $x=3$, and  
  possibly based on  Goncharov's notion of symbol (remark \ref{remGonchsymbol}). This notion does not go very far to capture constants, since  most are zero under this map:   for example, the Goncharov symbol of the quantity $\zeta(3) \Li_1(x)$ is zero.

  \item A further problem with the   formalism mentioned in $(3)$, which is only defined in terms of de Rham classes and with no mention of Betti classes, is that the de Rham analogue of $\zeta(2)$, or $2 \pi i$,  is irretrievably zero, and so attempting to take the period leads to contradictions. This problem is fixed by replacing `coproduct' with `coaction', which is part of the structure of our definition of families of $\HH$-periods. 
  
   \item There have been attempts  to incorporate  constants into a common `symbolic' framework such as  \cite{Duhr}, which contains both  the coaction for motivic multiple zeta values, and the symbols of polylogarithms. The main properties of this framework were mostly conjectural and shown to be   equivalent to complicated combinatorial identities. 
   We shall show how these problems can be  easily overcome in our setting
   using  the notion of `symbol based at a point $t$', which has  all the properties   conjectured in  \cite{Duhr}.

     \end{enumerate}

Our notion of families of motivic periods fixes these problems and    subsumes  all  of the above notions. It also generalises these concepts to situations which are non-polylogarithmic. More precisely,  the  notions of symbol presently found in the physics literature are derived from the  coaction on families of motivic periods  and discarding more or less information. One must bear in mind, however, that one side of the motivic coproduct involves de Rham periods, not $\HH$-periods, and the former do not have canonical periods.  They do, however,  possess single-valued periods. 

\subsection{Abstract definition of the symbol} 
The symbol will be defined for a certain class of motivic periods.
Recall that an algebraic vector bundle with connection $(\V_{dR}, \nabla)$  on $S$ is \emph{unipotent}  of length $n$ if there exists a filtration 
 \begin{equation} \label{Vfilt} 
 0=\V_{-1} \subset \V_0 \subset \ldots \subset \V_n=\V
 \end{equation}
by algebraic sub-bundles, with   $\nabla: \V_k \rightarrow \V_k \otimes_{\Or_{S}} \Omega^1_{S}$, such that each graded quotient
$\V_k/ \V_{k-1}$ is isomorphic to a direct sum of trivial vector bundles $(\Or_{S}, d)$.  
Such a vector bundle automatically has regular singularities at infinity. The category of unipotent algebraic vector bundles with connection forms a full Tannakian subcategory $\mathcal{A}^{un}(S)$ of $\mathcal{A}(S)$.

A local system $V$ on $S$ is \emph{unipotent} if it admits a finite increasing filtration 
 \begin{equation} \label{Vfilt} 
 0=V_{-1} \subset V_0 \subset \ldots \subset V_n=V
 \end{equation}
of local sub-systems, such that each graded quotient $V_k/V_{k-1}$ is trivial, i.e., constant. Equivalently, for any basepoint $s \in S(\C)$, the  representation $\omega_s(V)$ of $\pi_1(S(\C), s)$ associated to $V$ is unipotent, i.e., admits a finite increasing filtration such that the associated graded quotients are trivial representations. The category of unipotent local systems forms a full Tannakian subcategory $\mathcal{L}^{un}(S)$ of $\mathcal{L}(S)$. 
Since the category of unipotent vector bundles with connection (resp. local systems) behaves well with respect to extensions of scalars, the Riemann-Hilbert correspondence  induces an equivalence
$$\mathcal{A}^{un}(S) \otimes \C  \sim  \mathcal{L}^{un}(S) \otimes \C\ .$$

\begin{defn}   A family of motivic periods $\xi \in \Pe^{\mm,\X,\Y}_{\HH(S)}$ is \emph{differentially unipotent}, or has \emph{unipotent monodromy},  \label{gloss: diffunip} \label{gloss: unipmonod} if the associated vector bundle with connection (\S\ref{sectjargon}) $M(\xi)_{dR}$  is unipotent. 
Equivalently, $\xi$ is differentially unipotent if the associated  local system on $M(\xi)_B$  is unipotent. 
 This means that $\xi$ can be represented as a matrix coefficient $(\ref{eqnfamilymatrixcoeff})$ where  one of  $\V_B, \V_{dR}$ (and hence both)  are unipotent. 
 
 We define a family of de Rham motivic periods $\xi \in \Pe^{\dR,\Y}_{\HH(S)}$ to be \emph{differentially unipotent} in an identical manner. It follows immediately from the definition that the motivic coaction $(\ref{H(S)coaction})$ preserves the quality of  unipotency. 
 \end{defn}

Note that the filtration involved in the definition of differential unipotency will not in general be the weight filtration.

The above categories give rise to Tannaka group schemes over $\Q$:
\begin{eqnarray} \pi_1^{un} (S(\C) ,s )  &= &  \mathrm{Aut}^{\otimes}_{\Lo^{un}(S)}(\omega_s) \qquad \hbox{ for } s \in S(\C) \ ,  \nonumber \\ 
 \pi_1^{dR} (S ,s ) &  = &  \mathrm{Aut}^{\otimes}_{\Ao^{un}(S)}(\omega_s)  \qquad \hbox{ for } s \in S(\Q) \ .   \nonumber 
 \end{eqnarray}
The first group is the unipotent or Mal\v{c}ev completion of the topological fundamental group, the second is the de Rham fundamental group. 
They are naturally quotients of the Betti algebraic and de Rham algebraic fundamental groups. 

A family of motivic or de Rham periods is differentially  unipotent if and only if the  natural  actions by the 
Betti algebraic fundamental group, de Rham algebraic fundamental group, or topological fundamental group factor through their unipotent quotients. 
It follows that for a differentially  unipotent de Rham period, its image under the dual of  the natural map $(\ref{restrict2algdR})$:
\begin{eqnarray} 
 \Pe^{\dR,\Y}_{\HH(S)} = \Or( G^{dR}_{\HH(S), \Y})  & \To &  \Or(  \pi_1^{dR, \mathrm{alg}} (S, \omega^{\Y}_{dR}) )\ , \nonumber \\
{ [}(\V_B, \V_{dR}, c), v, \omega]^{\dR} & \mapsto&   (\alpha \mapsto v(\alpha(\omega))) \nonumber \ ,
\end{eqnarray}  
where $\alpha \in  \pi_1^{dR, \mathrm{alg}} (S, \omega^{\Y}_{dR})$, 
actually lands in the unipotent subspace 
$$ \Or(  \pi_1^{dR} (S, \omega^{\Y}_{dR} )) \quad  \subset  \quad \Or(  \pi_1^{dR, \mathrm{alg}} (S, \omega^{\Y}_{dR})) \ . $$ 
Next restrict to the affine open  $\Spec(\Or_{S,\Y}) $ of $S$, which gives a map on affine rings
$$ \Or(  \pi_1^{dR} (S, \omega^{\Y}_{dR})) \To \Or(  \pi_1^{dR} (\Spec \Or_{S,\Y}, \omega^{\Y}_{dR}))\ .$$
Finally it follows either from Chen's $\pi_1$-de Rham theorem (over the complex numbers), or from the universal properties of the reduced bar construction $\BB$ (whose zero'th cohomology is the universal unipotent extension of $\Or_{S,\Y}$),  that the affine ring of the de Rham fundamental group on $\Spec \Or_{S,\Y}$  is  described  explicitly:
$$ \Or(  \pi_1^{dR} (\Spec \Or_{S,\Y}, \omega^{\Y}_{dR})) \cong H^0 (\BB(\Omega_{S,\Y})) \ . $$
We shall recall the relevant definitions below.

 \begin{defn} We shall define the \emph{symbol} \label{gloss: symbol}   of  a differentially   unipotent family of  de Rham periods $\xi \in \Pe^{\dR, \Y}_{\HH(S)}$ to be its image: \begin{equation} \label{symboldefinition} 
 \smb (\xi)   \in H^0(\BB(\Omega_{S,\Y})) \ .
  \end{equation}
  There is a natural  \emph{length filtration} on $H^0(\BB(\Omega_{S,\Y}))$. The  \emph{length}  \label{gloss: length}  of $\xi$ is bounded above by the length $n$ in  the filtration $(\ref{Vfilt})$.
  \end{defn} 
 This a  generalisation of the notion of  symbol as used by physicists.
Our abstract  definition of the symbol could be generalised further still. For example, one could also consider relative  unipotent completion instead of  unipotent completion. 

\subsection{Computing the symbol} We now explain how to compute the symbol in the spirit of the  recursive differentiation procedure described in  \S\ref{sectpitfall} $(1)$. 
Consider a   differentially  unipotent  de Rham period   $\xi = [(\V_B, \V_{dR}, c),  f, \omega]^{\dR} $ in $  \Pe^{\dR,\Y}_{\HH(S)}$,
 where $f \in \omega_{\Y}(\V_{dR})^{\vee}$ and $ \omega \in \omega_{\Y}(\V_{dR})$, and let $\V  $ denote the pull-back of $\V_{dR}$ to $\Spec \Or_{S,\Y}$.  We shall  assume  that  $H_{dR}^0 (\Or_{S,\Y})=\Q$.
 Since $\xi$ is  differentially  unipotent, we can assume by  equivalence of matrix coefficients that $\V_{dR}$ and hence $\V$ is unipotent.   There  is a filtration 
 \begin{equation} \label{Vfilt2} 
 0=\V_{-1} \subset \V_0 \subset \ldots \subset \V_n=\V
 \end{equation}
by algebraic sub-bundles  such that  $\nabla: \V_k \rightarrow \V_k \otimes_{\Or_{S,\Y}} \Omega^1_{S,\Y}$, and with respect to which $\nabla$ is unipotent. It splits because $H^1(\Spec \Or_{S,\Y}, \Or_{S,\Y}) =0 $ since $\Spec \Or_{S,\Y}$ is affine and $\Or_{S,\Y}$ coherent. 
  Choose a splitting of this  filtration: 
  \begin{equation} \label{splitchoose} 
  \V \cong \gr\,  \V\ .
  \end{equation}
  The associated graded $(\gr\,  \V , \gr\, \nabla)$ is  a direct sum of trivial vector bundles  $(\Or_{S,\Y}, d)$, and 
  $\gr\, \V \cong \Or_{S,\Y}\otimes_{\Q} H^0(\gr\, \V, \gr \, \nabla)$ has a  $\Q$-structure given by flat sections. 
   With respect to this choice of splitting, the connection map
  $$N= \nabla - d \quad \in \quad \mathrm{Hom}_{\Or_{S,\Y}}(     \V, \V \otimes_{\Q} \Omega^1_{S,\Y})$$
  satisfies the integrability condition $$d N +  N \wedge  N=0\ ,$$  which is equivalent to $\nabla^2=0$,  and  it also satisfies $N^m=0$ for some $m$ by the assumption of unipotency. It is an $\Or_{S, \Y}$-linear operator,
  where $\V$ and $\V \otimes_{\Q} \Omega^1_{S,\Y}$ are left $\Or_{S,\Y}$-modules. 
  For computations, let us write $V_k = \Gamma( \Or_{S,\Y}, \V_k)$. Then $V_k/V_{k-1}$ is a free $\Or_{S,\Y}$-module and it follows by induction
  that the $V_k$ are free $\Or_{S,\Y}$-modules too. We can choose a basis of $V_n$ which is adapted to the filtration $V_k$, and  is flat on the graded quotients $\gr\, V$, i.e., a basis element $e$ of $V_k$ satisfies $\nabla(e) \subset V_{k-1}$.  Write the connection as $\nabla = d+ N$ in this basis. Thus  $N$ is represented as an  upper-triangular matrix  of one-forms. 
    Consider the element
  $$\smb_N (\xi) = \sum_{k \geq 0 } \langle f, N^k \omega\rangle   \quad \in \quad \Or_{S,\Y}\otimes_{\Q}  T^c(\Omega^1_{S,\Y}) \ ,$$
  where we recall that the tensor coalgebra $T^c(\Omega^1_{S,\Y}) = \bigoplus_{k\geq 0} ( \Omega^1_{S,\Y})^{\otimes k}$, and tensors are over $\Q$. 
  It depends on the choice of splitting $(\ref{splitchoose})$ (resp.  choice of basis of $V$).

One can think of the symbol in the following way. View $N$ as an $n\times n$ matrix with coefficients in $T^c(\Omega^1_{S,\U})$, since its entries can be considered as tensors of length one.   Consider the following matrix 
\begin{equation}  \label{seriesinN} 1 + N+ N^2 + N^3 +  \ldots \qquad \in M_{n\times n}( T^c(\Omega^1_{S,\T}))\ , \end{equation} 
where the multiplication of matrix entries is given by the (non-commutative) concatenation product in $T^c(\Omega^1_{S,\U})$.
 The series  is finite by the nilpotence of $N$. The vector $\omega$ and covector $f$ define an entry of this matrix, which is exactly $\smb_N(\xi)$.

  \subsubsection{Reduced bar construction}\label{sectReducedBar}
   Define an internal differential 
   $$d_I :  T^c(\Omega^{\bullet} _{S,\Y})   \rightarrow   T^c(\Omega^{\bullet}_{S,\Y})    $$
   by
\begin{multline} d_I {[}\omega_1| \ldots | \omega_n ]  =  \sum_{i=1}^n (-1)^i [ j \omega_1| \ldots |j\omega_{i-1} | d\omega_i | \omega_{i+1} |  \ldots | \omega_n]  \nonumber \\  +  \sum_{i=1}^{n-1} (-1)^{i+1} [ j \omega_1| \ldots | j \omega_{i-1} | j\omega_i \wedge \omega_{i+1} | \omega_{i+2}| \ldots | \omega_n] \ . \nonumber 
\end{multline}
where $j$ acts on $\Omega^n_{S,\Y}$ by $(-1)^n$.  Define a  grading on $T^c(\Omega^{\bullet}_{S,\Y})$ by
$$ \deg \, {[}\omega_1| \ldots | \omega_n ]  = \sum_{i=1}^n \deg(\omega_i)-1 \ .$$ Consider the linear map $d_C: T^c(\Omega^{\bullet} _{S,\Y})   \rightarrow  \Or_S \otimes_{\Q} T^c(\Omega^{\bullet}_{S,\Y})$ defined by
$$d_C   {[} \omega_1 | \ldots | \omega_n ] =  - \varepsilon(\omega_1) [ \omega_2| \ldots | \omega_n] +  (-1)^{\nu} \varepsilon(\omega_n) [\omega_1 | \ldots | \omega_{n-1}] $$
where $\varepsilon: \Omega^{\bullet}_{S,\Y} \rightarrow \Or_{S,\Y}$ is projection onto degree $0$ and   $\nu$ is given by  $(\deg(\omega_n)-1)\deg [\omega_1|\ldots |\omega_{n-1}]$. One verifies that 
$$d=\id \otimes d_I+\id \otimes d_C$$ satisfies $d^2=0$. See, for example, the presentation in \cite{HainZucker} (3.4).  
Note that the signs simplify drastically when all $\omega_i$ are of degree one, which is the case we are mainly interested in.
Consider the smallest subspace $\mathcal{R}$ in  $\Or_{S}\otimes_{\Q} T^c(\Omega_{S,\Y})$ generated by   
$$\mathcal{R} : \qquad \qquad [\omega_1 | \ldots |\omega_i | f | \omega_{i+1} | \ldots | \omega_n]$$
where $f \in \Or_{S,\Y}$, and stable under the differential $d$.   The quotient  of $\Or_{S,\Y}\otimes_{\Q} T^c(\Omega_{S,\Y})$ by $\mathcal{R}$ is a complex which we denote by 
 $\BB(\Omega^{\bullet}_{S,\Y})$. 
    It is a  close relative of Chen's reduced circular bar complex\footnote{In the usual formulation, due to Chen \cite{Chen}, one considers the tensor algebra $T^c(\Omega^{\geq 1}_{S,\Y})$ and quotients out by      a certain family of relations. The reader may like to check that Chen's relations are boundaries in the complex we have defined here and are therefore incorporated automatically.} 
on $\Omega_{S,\Y}$.

\begin{example}  \label{exofrelation} If $f\in \Or_{S,\Y}$ and $\omega \in \Omega^1_{S,\Y}$ is closed we have:
\begin{eqnarray}  d [ f | \omega] & = & - [df | \omega]  + [f\omega] - f[\omega]  \nonumber \\ 
d [ \omega | f] & =&   - [\omega | df] - [f\omega] + f [\omega] \ . \nonumber
\end{eqnarray} 
The expressions on the right-hand side are therefore in $\mathcal{R}$. 
\end{example}

\subsubsection{The symbol}
The integrability of $N$  implies   that the element 
  $\smb_N(\xi)$ is  \emph{integrable}, i.e., is in the kernel of  the differential $d$  in  $\BB(\Omega^{\bullet}_{S,\Y})$. Furthermore, it is of degree zero.
Its cohomology class
 $$
 \smb (\xi)   \in H^0(\BB(\Omega_{S,\Y})) \ , 
 $$
  is exactly the symbol as defined in $(\ref{symboldefinition})$. It is true, but  not  obvious from the computational procedure defined above,  that the cohomology   class of $\smb(\xi)$ is well-defined, i.e., independent of the choice of $N$. 
The integrability of $\smb(\xi)$ can be seen directly by writing the matrix $(\ref{seriesinN})$ in the form 
  $$[1] + [N] + [N| N] + \ldots \   . $$
The equation $dN + N\wedge N =0$ implies that it lies in  the kernel of the differential $d$ applied formally to the previous expression in the obvious way.

\begin{rem}
The bar complex is equipped with a shuffle product (with signs) which is compatible with $d$, and this induces a commutative algebra structure on its cohomology. It follows from the definition $(\ref{symboldefinition})$ that   the symbol is  a homomorphism:
$$\smb(\xi_1 \xi_2) = \smb(\xi_1) \sha \smb(\xi_2)\  .$$
The shuffle product restricted to  $H^0(\BB(\Omega_{S,\Y}))$ has no signs.
\end{rem}

\begin{example}  \label{example symbol} Suppose that $\gr\,  \V$ has length two ($n=2$), so 
$$0=\V_{-1}  \subset \V_0 \subset \V_1 \subset \V_2 = \V$$  and that each graded quotient $\V_k / \V_{k-1}$ for $k=0,1,2$ is a rank one $\Or_{S,\Y}$-module. Choose a basis
 $e_0,e_1,e_2$ of $V = \Gamma(\Or_{S,\Y}, \V)$ such that $\nabla e_0=0$, $\nabla e_1 =   e_0 \otimes \omega_1$ and $\nabla e_2 = e_1 \otimes  \omega_2 + e_0  \otimes \omega_{12}$. 
In this basis,  the matrix $N$ is 
$$N= 
\left(
\begin{array}{ccc}
0  &    \omega_1 & \omega_{12}   \\
 0 & 0  &  \omega_2  \\
  0& 0  &0   
\end{array}
\right)\ .
$$ 
For this to define a connection (or equivalently, $dN + N\wedge N=0$), we must assume that
 $\omega_1,\omega_2$ are closed, and $d\omega_{12} + \omega_1 \wedge \omega_2 =0$. 
If $\xi = [\V, e_0^{\vee}, e_2]^{\dR}$, then 
\begin{eqnarray} \smb_N(\xi)  & =  & \langle e_0^{\vee}, e_2\rangle + \langle e_0^{\vee}, N e_2 \rangle + \langle e_0^{\vee}, N^2 e_2\rangle \nonumber \\
& = & \quad  0 \quad +  \qquad  \omega_{12}   \quad  +   \langle e_0^{\vee}, N e_1 \rangle \otimes   \omega_2 +   \langle e_0^{\vee}, N e_0 \rangle   \otimes \omega_{12} \nonumber \\
& = &  0 +  \omega_{12}  + \omega_1 \otimes \omega_2 + 0 \ .\nonumber 
\end{eqnarray} 
In bar notation, this is denoted by: 
$$\smb_N( \xi )= [\omega_1| \omega_2] + [\omega_{12}]\ .$$
 Now change basis to $e_0',e_1',e_2'$ where 
$e_0'=e_0$, $e_1'= e_1$ and $e_2'= e_2+f e_1 $,  
where $f \in \Or_{S,\Y}$. In the new basis, the matrix $N$ is replaced by 
$$N'= 
\left(
\begin{array}{ccc}
0  &    \omega_1 & \omega_{12}  +f\omega_1 \\
 0 & 0  &  \omega_2 +df   \\
  0& 0  &0   
\end{array}
\right)\ ,
$$ 
and  since $e_2 = e_2' - f e_1$ one checks  by a similar computation that
$$\smb_{N'}(\xi) = [\omega_1 | \omega_2 + df] +[\omega_{12} + f\omega_1] - f[ \omega_1] \ .$$
The difference between the two elements
$$\smb_{N'}(\xi) - \smb_N(\xi) =   [\omega_1| df] + [f \omega_1 ] - f [\omega_1] $$
which is exactly a boundary $- d ([\omega_1 | f])$, by example \ref{exofrelation}. 
\end{example} 

\subsubsection{Variants}
The above construction  was defined for de Rham periods but can be embellished  in any number of ways.  
Let $\xi \in \Pe^{\mm}_{\HH(S), \X,\Y}$ be differentially unipotent. Then using the coaction $(\ref{H(S)coaction})$ we can define
$$(\id \otimes \smb)  \Delta(\xi)  \quad  \in \quad    \Pe^{\mm, \X,\Y}_{\HH(S)}\otimes_{\Or_{S,\Y}} H^0(\BB(\Omega_{S,\Y})) \ .$$
 A further  possibility is to introduce a base point as follows. Suppose that 
$t\in S(\Q)$, whose image in $S(\C)$ lies in  $ X\cap Y$ so that the evaluation $(\ref{eqn: specialisation})$ is defined. Let $\xi \in \Pe^{\mm, \X,\Y}_{\HH(S)}$
be differentially unipotent, and define the symbol `based at $t$'  by  \label{gloss: symbolatt}
$$\smb_t(\xi) = (\ev_t \otimes \smb) \Delta (\xi)  \qquad \in \qquad  \Pe^{\mm}_{\HH} \otimes_{\Q} H^0(\BB(\Omega_{S,\Y}))\ .$$
 This notion captures constants and satisfies similar properties to the symbol defined above. See for \cite{Duhr} for applications of such a construction.

\begin{rem}
Suppose that $\xi$ is a period of a mixed Tate variation. By this we mean that 
we can write $\xi = [ (\V_B, \V_{dR},c), \sigma, \omega]^{\mm}$, where $\gr_{n}^W (\V_B, \V_{dR},c)$
is zero if $n$ is odd, and  isomorphic to a direct sum of constant Tate elements $\Q(-k)_{/S}$ if $n=2k$ is even (images of the pull-back of the Tate objects
$\Q(-k)$ in $\HH$ along the structural map $\pi: S \rightarrow \Spec (\Q)$).  The connection underlying $\Q(-k)_{/S}$ is isomorphic to $(\Or_{S}, d)$ and is trivial.  It follows that the  element $\xi$ is automatically  differentially  unipotent with respect to the weight filtration 
$\V_n = W_{2n} \V_{dR}$. 
If, furthermore, $\xi$ is effective then we can apply  a version of the projection map
of \S$\ref{sectProjection}$ to associate to $\xi$ a de Rham period $\xi^{\dR}$,  which is necessarily unipotent, and take its symbol $\smb(\xi^{\dR})$.

Thus  we have shown that de Rham, and effective mixed Tate periods, always have symbols.  All the examples used in physics seem to be of this  special type.
\end{rem}

\subsection{Cohomological symbol} The bar complex is somewhat cumbersome. We can define a coarser version of  a symbol  of length $n$ 
by passing to the associated length-graded of the bar complex.

\begin{defn} Let $\xi \in \Pe^{\dR,\Y}_{\HH(S)}$ be a differentially  unipotent de Rham period of length $\leq n$.  Recall that this means the filtration $(\ref{Vfilt})$  satisfies $\V_n=\V$.   Define its  \emph{cohomological symbol} \label{gloss: cohomologicalsymbol}  in length $n$
to be its class
$$\csmb_n(\xi)  = [\smb(\xi)] \quad \in \quad \gr^{\ell}_n  H^0(\BB(\Omega_{S,\Y}))\ ,$$
where $\ell$ denotes the length filtration. 
 \end{defn}

 An Eilenberg-Moore spectral sequence implies  that the associated graded for the length filtration 
$ \gr^{\ell}  H^0(\BB(\Omega_{S,\Y})) \cong H^0 (\BB (H (\Omega_{S, \Y})))$
 is the bar complex
on the cohomology of $\Omega_{S, \Y}$, equipped with the trivial differential.   Therefore
$$ \csmb_n (\xi)  \in  \Or_{S,\Y} \otimes_{\Q} H^1(  \Omega_{S,\Y} ) ^{\otimes n} \ , $$
and  lies in the kernel of the map
\begin{eqnarray} \label{cohommap}
H^1(  \Omega_{S,\Y} ) ^{\otimes n}   &\To&  \bigoplus_{k}  H^1(  \Omega_{S,\Y} ) ^{\otimes k-1} \otimes_{\Q} H^2( \Omega_{S,\Y}) \otimes_{\Q} H^1(  \Omega_{S,\Y} ) ^{\otimes n-k-1}  \nonumber  \\ \qquad \qquad 
{[}\omega_1 | \ldots | \omega_n ] & \mapsto & \sum_k [ \omega_1 | \ldots  | \omega_{k-1} | \omega_{k} \wedge \omega_{k+1}  | \omega_{k+2} |  \ldots | \omega_n] \ .
\end{eqnarray} 

\begin{example} The cohomological symbol  is the  length $n$ part of the symbol, after replacing forms with their cohomology classes. In our  example $\ref{example symbol}$
it  gives 
$$\csmb_2(\xi) = \big[ [\omega_1] \big| [\omega_2]\big]\ . $$
This lies in the kernel of $(\ref{cohommap})$ since $[\omega_1] \wedge [\omega_2] = [\omega_1 \wedge \omega_2 ] = [-d \omega_{12}]=0$.
\end{example}

This invariant can be computed directly and more simply in the following way.
With the previous notations, consider the operator 
$$ \overline{N} = \gr_{\bullet} \,  (\nabla- d)  : \gr_{\bullet} \,  \V \To \gr_{\bullet-1} \V \otimes_{\Q} \Omega^1_{S,\Y}\ .$$
  Iterating it defines an operator
$$   (\overline{N})^{\otimes n} : \gr_n \V  \To \gr_0 \V  \otimes_{\Q} ( \Omega^1_{S,\Y})^{\otimes n}\ .$$
The vector $\omega$  defines a section in the associated graded via the map $\V \rightarrow \gr_n \V$. Similarly, 
since $\V_{-1}=0$, we have $\gr_0 \V =  \V_0 \subset \V$, and we can consider the image of the covector $f$ along the dual  map $\V^{\vee} \rightarrow \gr_0 \V^{\vee}$.  Now consider
\begin{equation} \label{csmbdef} \langle f,  (\overline{N})^{\otimes n} \omega\rangle \quad \in \quad  \Or_{S,\Y} \otimes_{\Q} ( \Omega^1_{S,\Y})^{\otimes n}\ .
\end{equation}
Because $dN+N \wedge N =0$, it follows that $d \overline{N}=0$ and $\overline{N} \wedge \overline{N} =0$.

\begin{ex}  Let $S = \Pro^n  \backslash D$ where $D = \cup_{i=0}^m D_i$ is a  union of $m+1\geq 1$ distinct  hyperplanes over $\Q$ and $Y = S(\C)$.  Let $f_i=0$ be an equation of $D_i$, where $f_i \in \Q[x_0,\ldots, x_n]$ is homogeneous  of degree one. 
A basis for $H_{dR}^1(S)$ is given by the cohomology classes of forms
$$ \omega_i  =  d  \log \Big({f_i\over f_0} \Big) \qquad \hbox{ for }  1 \leq i \leq m\ .  $$
A (cohomological) symbol is simply a linear combination of  tensor products of  $\omega_i$
which satisfies the integrability condition $(\ref{cohommap})$. Since this  case is mixed Tate, the length filtration on the bar construction  coincides with (one half of) the weight filtration, and is canonically split by the Hodge filtration. It follows either from this, or from formality of the cohomology of a hypersurface complement \cite{BrENS} \S3.2, that 
$H^0(\BB(\Omega^{\bullet}_{S})) \cong H^0(\BB(H^{\bullet}(\Omega_S))$ and so there is no significant difference between  symbols $\smb$ and their cohomological versions $\csmb_n$. They are integrable words in the one-forms $\omega_i$ (resp. their cohomology classes $[\omega_i]$).\footnote{Furthermore,  $H^1(\Pro^n;\Or)=0$, so the canonical extension $\overline{\V}$ of a unipotent vector bundle $\V$ is trivial, and we can take as de Rham fiber functor the global sections functor  $\V \mapsto \Gamma(\Pro^n, \overline{\V})\in \mathrm{Vec}_{\Q}$.}
In the case $n=1$, the integrability condition $(\ref{cohommap})$ is trivially satisfied. 
 Example:  
 $$\smb(\Li_{n}^{\dR}(x)) = [d\log (1-x)  |   d \log x | \ldots | d \log x]\ .$$
 This setting  covers much of the recent work of physicists on symbols. Furthermore, using the motivic fundamental group of such a space \cite{DeGo} 4.12,  one can define a notion of motivic iterated integrals. 
  \end{ex} 

\begin{rem} The theory of iterated integrals enables us to construct a map in the opposite direction and associate a family of motivic periods to  a symbol together with some extra data.  This discussion would take us too far afield.  A full treatment  should also incorporate the mixed Hodge structure on the reduced bar construction \cite{Ha} and a discussion of admissible variations and tangential base points.  
\end{rem} 

It is important to remark that  symbols  \emph{do not} have periods in their own right:  to define periods one requires a  path of integration, or at the very least a base-point (if one only wishes to define the associated single-valued periods).

\section{Some  geometric examples} \label{sect:  geometric examples}

 Let $S$ be as in \S\ref{sectFamilies} and let $\pi: S \rightarrow \Spec(\Q)$ be the structural map. Denote the Tate variation on $S$ by $ \Q(-n)_{/S}$. It is  the object   of $\HH(S)$ which is defined by  $\pi^* \Q(-n)$. Concretely, it is the triple $(\V_B, \V_{dR}, c)$ where $\V_{dR} = (\Or_{S}, d)$ is the trivial vector bundle with trivial  connection;    $\V_B$ is the constant local system $\Q$; and $c :\C= (\V_{dR}^{an})^{\nabla} \overset{\sim}{\rightarrow} \V_B \otimes \C =\C$ is multiplication by $(2 \pi i)^{n}$.  It has weight $2n$, and the Hodge filtration satisfies $F^{n} \V_{dR}= \V_{dR}$, $F^{n+1} \V_{dR}=0$.

\subsection{Mixed Tate motives over number fields} \label{sectMToverF}
In the  following example, let  $S= \Spec(F)$ for $F$ a number field. Since $S(\C)= \Hom(F,\C)$, it  is not geometrically connected   and therefore does not immediately fit into the framework of the previous paragraphs. However, one can define a category $H(S)$ of realisations without difficulty. An element of $H(S)$ consists of: $V_{dR} \in \mathrm{Vec}_{F}$ a vector space with the zero connexion; $\V_B$ a collection of vector spaces $V_{\sigma} \in \mathrm{Vec}_\Q$ for every $\sigma \in S(\C)$; and a  comparison $c= (c_{\sigma})_{\sigma}$, where  $c_{\sigma}: V_{dR} \otimes_{F,\sigma} \C \cong V_{\sigma} \otimes_{\Q} \C$ for every $\sigma\in S(\C)$, together with filtrations $W, F$ defined as before. There are  Frobenius isomorphisms $F_{\infty} : V_{\sigma} \cong V_{\overline{\sigma}}$   which  are compatible with the comparison isomorphisms via the action of complex conjugation. 
For example, the  object $\Q(0)_{/S}$ is  the triple $(\{V_\sigma\}, V_{dR}, c)$ where $V_{dR} = F$, and $V_{\sigma}= \Q $ for each $\sigma$, and  the isomorphisms $c_{\sigma}$ are the canonical ones. Taking  $\alpha \in F$, and $\tau: F \hookrightarrow \C$, we can view  algebraic numbers as $H(S)$-periods
$$ \alpha^{\mm,\tau} = [ \Q(0)_{/S} , \tau, \alpha]^{\mm} \in \Pe^{\mm, \tau, \Spec \, F}_{H(S)}\ .$$
Its period $\per (\alpha^{\mm,\tau})=\tau(\alpha) \in \C$.  Note that the motivic Galois action is trivial on $ \alpha^{\mm,\tau}$, since $\alpha^{\mm}$ is viewed as a family of periods over $F$ (but observe that  if $F$ is Galois, the Galois action on algebraic numbers could be retrieved by considering  the action of the automorphism group  of $\Spec(F)$).

Now  consider the category  $\MT(\Or)$ 
 of mixed Tate motives over a ring of integers $\Or$ in $F$ defined in \cite{DeGo}. The de Rham fiber functor is
 $\omega_{dR}: \MT(\Or) \rightarrow \mathrm{Vec}_{F}$, and there is a Betti fiber functor $\omega_{\sigma}: \MT(\Or) \rightarrow \mathrm{Vec}_{\Q}$ for 
 every $\sigma \in S(\C)$. Hence there is a functor $\MT(\Or) \rightarrow H(S)$, which is known to be fully faithful \cite{DeGo}. In this manner, we can view
 the periods of $\MT(\Or)$ as families of periods over $\Spec(F)$.\footnote{In order to capture better  the idea of a family of periods ramified over certain primes, 
 then the current set-up  in which we only consider Betti and de Rham information is inadequate. One could proceed  along the lines of \cite{DeP1}, \S1.18.}

 However, in this, mixed Tate, situation, the de Rham functor is in fact obtained from  a canonical fiber functor $\omega: \MT(\Or) \rightarrow \mathrm{Vec}_{\Q}$ 
 by extension of scalars  $\omega_{dR} = \omega \otimes_{\Q} F$. 
 This leads to a slightly different point of view. Define, for every $\sigma \in S(\C)$, a ring of motivic periods $\Pe^{\mm_{\sigma}}_{\MT(\Or)}$ over $\Q$ where $\mm_{\sigma}$ is the pair of fiber functors $(\omega_{\sigma}, \omega)$   in the manner of \S \ref{sectTannakianCase}. It is spanned by matrix coefficients $[M, x, v]^{\mm}$ where $M \in \MT(\Or)$, $ x\in M_{\sigma}^{\vee}$ and $v \in \omega(M)$. Every automorphism $\alpha \in \mathrm{Gal}(\overline{\Q}/\Q)$ defines an isomorphism $\Pe^{\mm_{\sigma}}_{\MT(\Or)} \cong \Pe^{\mm_{\sigma\alpha}}_{\MT(\Or)}$ via its action on the Betti class. 
 
 Following an identical argument to theorem \S\ref{thmdecompisom},  we deduce the following
 \begin{thm} The decomposition map gives a canonical isomorphism
 \begin{equation} \label{PhiforMT} 
 \Phi: \gr^C \Pe^{\mm_{\sigma},  +}_{\MT(\Or)} \overset{\sim}{\To} \Q[\Lef^{\mm}] \otimes_{\Q} T^c(\bigoplus_n K_{2n-1}(\Or) \otimes_{\Z} \Q(-n)_{dR})
 \end{equation} 
 in an identical manner to \S \ref{exampleMTcase}, which describes the structure of its motivic periods. 
  There is a de Rham version of the previous isomorphism:
 \begin{equation} \label{dRPhiforMT} \gr^C \Pe^{\dR}_{\MT(\Or)} \overset{\sim}{\To} \Q[\Lef^{\dR}, (\Lef^{\dR})^{-1}] \otimes_{\Q} T^c(\bigoplus_n K_{2n-1}(\Or) \otimes_{\Z} \Q(-n)_{dR})
 \end{equation}
 where $\dR$ means  $(\omega, \omega)$, for $\omega$ the canonical fiber functor. The projection map $\Pe^{\mm_{\sigma},  +}_{\MT(\Or)} \rightarrow
 \Pe^{\dR}_{\MT(\Or)}$ corresponds to the homomorphism  $\Lef^{\mm} \rightarrow 0$.
 \end{thm}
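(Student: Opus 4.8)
The statement has three parts: the isomorphism $(\ref{PhiforMT})$, the de Rham version $(\ref{dRPhiforMT})$, and the identification of the projection map. The plan is to reduce each to the corresponding statement over $\Q$ that was already proved in \S\ref{exampleMTcase} (specifically Theorems \ref{thmdecompisom} and \ref{thmdecompforMTisom}), the only genuinely new input being Borel's computation of the rational $K$-theory of $\Or$, which replaces the case $F=\Q$.

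First I would set up the Tannakian picture exactly as in \S\ref{exampleMTcase}. The category $\MT(\Or)$ with the canonical fiber functor $\omega$ is neutral Tannakian over $\Q$; write $G^{dR}_{\MT(\Or)}$ for its Tannaka group, with unipotent radical $U$ and reductive quotient $\GG_m$ (coming from the Tate grading), so that there is an exact sequence $1 \to U \to G^{dR}_{\MT(\Or)} \to \GG_m \to 1$. The simple objects are the $\Q(n)$, the real Frobenius acts on $\Q(n)_B$ by $(-1)^n$, and $\Ext^1_{\MT(\Or)}(\Q(0),\Q(n))$ vanishes for $n$ even (by the functoriality $\Ext^1_{\MT(\Or)} \hookrightarrow \Ext^1_{\HH}$ combined with the $F_\infty$-sign computation, exactly as in \S\ref{exampleMTcase}) while for $n\ge 1$ one has $\Ext^1_{\MT(\Or)}(\Q(0),\Q(n)) \cong K_{2n-1}(\Or)\otimes_\Z\Q$ by Borel; all higher $\Ext$'s vanish, so $U$ has cohomological dimension $1$. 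Proposition \ref{propcohomU} then gives $H^1(U) \cong \bigoplus_n K_{2n-1}(\Or)\otimes_\Z \Q(-n)_{dR}$, and Proposition \ref{lemPemtensor} gives a (non-canonical) algebra isomorphism $\Pe^{\mm_\sigma,+}_{\MT(\Or)}\otimes_\Q\overline\Q \cong \Pe^{\mm_\sigma,+}_{\MT(\Or)^{ss}}\otimes_\Q\Or(U)\otimes_\Q\overline\Q$ with $\Pe^{\mm_\sigma,+}_{\MT(\Or)^{ss}} = \Q[\Lef^\mm]$. Now Corollary \ref{cordecompisom}, applied with $T = \Q[\Lef^\mm]\otimes_\Q\overline\Q$ and cohomological dimension $1$, shows the decomposition map $\Phi$ becomes an isomorphism after $\otimes\overline\Q$; since it is injective over $\Q$ (Theorem \ref{thmdecompisom} / the general construction of $\Phi$), it is an isomorphism over $\Q$, which is $(\ref{PhiforMT})$. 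The de Rham version $(\ref{dRPhiforMT})$ follows the same way, now with $M = \Pe^{\dR}_{\MT(\Or)} = \Or(G^{dR}_{\MT(\Or)})$ and $\Pe^{\dR}_{\MT(\Or)^{ss}} = \Or(\GG_m) = \Q[\Lef^{\dR},(\Lef^{\dR})^{-1}]$, using $H^n(U;\Or(U))=0$ for $n\ge1$ (the lemma preceding Proposition \ref{lemPemtensor}) so that Corollary \ref{cordecompisom} applies verbatim.

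For the last assertion, I would argue that the projection map $\Pe^{\mm_\sigma,+}_{\MT(\Or)}\to\Pe^{\dR}_{\MT(\Or)}$ constructed as in \S\ref{sectProjection} (via the coaction followed by projection onto the weight-zero component of the semi-simple part) sends $\Lef^\mm$ to $0$: indeed $\Lef^\mm$ is a pure period of weight $2$, so its image under the weight-zero projection of $\Pe^{\mm_\sigma,+}_{\MT(\Or),ss}=\Q[\Lef^\mm]$ vanishes, while the primitive generators $f_{2n-1}$ and their tensor products are carried across compatibly with $\Phi$. This is the content of the final sentence, and it is formal once one knows both sides of the coaction are governed by the same $H^1(U)$.

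\textbf{Main obstacle.} The substantive point is the $\Ext$-computation $\Ext^1_{\MT(\Or)}(\Q(0),\Q(n)) \cong K_{2n-1}(\Or)\otimes_\Z\Q$ together with the vanishing of higher $\Ext$'s: this is precisely Borel's theorem (plus the construction of $\MT(\Or)$ in \cite{DeGo} realising these $\Ext$'s via $K$-theory), and it is the one ingredient not reducible to the formalism already developed. Granting it, everything else is a routine transcription of \S\ref{exampleMTcase} with $\Q$ replaced by $\Or$; the only care needed is to keep track of the field of definition ($\omega_{dR}$ takes values in $\mathrm{Vec}_F$ while $\omega$ takes values in $\mathrm{Vec}_\Q$) and to note that the decomposition into primitives is insensitive to this, since it is built from the unipotent radical $U$, which is a group scheme over $\Q$.
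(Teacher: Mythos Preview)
Your proposal is correct and follows precisely the approach the paper intends: the paper states only that the theorem follows ``following an identical argument to theorem \ref{thmdecompisom}'', and you have faithfully unpacked that argument (Proposition \ref{lemPemtensor} plus Corollary \ref{cordecompisom} applied with cohomological dimension $1$, the latter coming from the vanishing of higher $\Ext$'s in $\MT(\Or)$, together with Proposition \ref{propcohomU} and Borel's theorem to identify $H^1(U)$). The treatment of the de Rham version and the projection map is likewise in line with the paper's indications in \S\ref{sectProjection} and \S\ref{exampleMTcase}.
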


 \begin{rem} \label{remGonchsymbol}
  Goncharov considered  the image  (of what, in our language would be a $U_{\MT(\Or)}^{dR}$-period)  in  the de Rham version of $(\ref{dRPhiforMT})$ in the quotient 
 $$T^c (   K_1(F) \otimes_{\Z} \Q(-1)) \cong \bigoplus_{n\geq 0} (K_1(F) \otimes_{\Z} \Q)^{\otimes n} = \bigoplus_{n\geq 0} (F^{\times})^{\otimes n}\ ,$$
 (see discussion preceding lemma 3.7 in \cite{GoMTM}).   One can ignore the coradical grading in this case, since in this particular quotient  it is equivalent to  the weight-grading (this is false in general).  Thus Goncharov's notion of symbol is the homomorphism 
 $$ \Or(U_{\MT(\Or)}^{dR} ) \To \bigoplus_{n\geq 0} (F^{\times})^{\otimes n}\ . $$
 This map has a huge kernel and   loses  most of the    information about periods. For example, if $\Or= \Z$, then 
 this homomorphism is identically zero and all (unipotent de Rham) multiple zeta values map to zero.

 A version of this notion of symbol for variations, defined in \cite{GoSymbol}, \S1.3, is used in the physics literature, and  the `motivic amplitude' considered in \cite{Cluster} 
 is defined by extrapolation as an element of $(F^{\times})^{\otimes n}$ for $F$ a certain field. For the reasons above, this notion  loses information about periods and  does not apply   in the non-mixed Tate case.  It is not to be confused with the notion  of  motivic periods defined  here.
 \end{rem}

 \subsection{A family of examples} \label{sectExfamily} The following family of examples is  sufficient for the purposes of \cite{Cosmic}. Let  $D\subset X$ be a family of simple normal crossing divisors  relative 
 to  a smooth morphism $\pi: X\rightarrow S$, where $S$ smooth over $\Q$ and geometrically connected.  Furthermore, we assume that $\pi$ is topologically trivial on the underlying
 analytic varieties  (it is a locally trivial fibration of stratified varieties, according to \cite{Go-Ma}).  
 Let $j: X\backslash D \hookrightarrow X$ be the inclusion.  Define an object
 $H^n(X, D)_{/S}$ in the category $\HH(S)$
 as follows. Its Betti realisation is 
 $$H^n_B(X,D)_{/S} = R^n \pi_* j_{!} \Q$$
 where $\Q$ is the constant sheaf on $(X\backslash D)(\C)$.  Since $\pi$ is topologically trivial, this is a local system, and  its fibres at a point $s\in S(\C)$ are
 $H^n_B(X_s,D_s)$, where  $X_s, D_s$ denote the fibres of $X,D$.
  For de Rham, denote the irreducible components of $D$  by $D_i$,  for $i\in I$,  where $I$ is an ordered set, and write $D_J = \cap_{j\in J} D_i$
 for any $\emptyset \neq J\subset I$. Consider the double complex of sheaves of relative differential forms on $X$
 $$\Omega^{\bullet}_{D_{\bullet}/S} : \qquad\qquad \Omega^{\bullet}_{X/S} \To \bigoplus_{|J|=1} \Omega^{\bullet}_{D_J/S} \To \ldots \To \bigoplus_{|J|=n} \Omega^{\bullet}_{D_J/S}$$
 where the horizontal maps are determined by the usual rule: if $i_j$ denotes the inclusion of $D_{J\backslash \{i_j\} } \hookrightarrow D_{J }$,
 where $i_j$ is the $k\mathrm{th}$ element of $J$ then $i_j^*$ occurs with the sign $(-1)^k$.  The  $\Omega^{\bullet}_{D_J/S}$ denote the  direct images of the corresponding sheaves on $D_J$, and vanish outside $D_J$.

 Define the de Rham realisation by 
 $$H^n_{dR}(X,D)_{/S} = \mathbb{R}^n \pi_* ( \Omega^{\bullet}_{D_{\bullet}/S})\ .$$
It is the sheaf associated to the presheaf whose sections over an affine open $U\subset S$ are  the hypercohomology of $\Omega^{\bullet}_{D_{\bullet}/S}  (\pi^{-1}(U))$. It is a locally free sheaf of $\Or_S$-modules and its fibres at the point $s$ are  the relative de Rham cohomology groups:
 $$(H^n_{dR}(X,D)_{/S} )(s) =  H^n_{dR}(X_s,D_s)\ .$$
 It admits an integrable connection $\nabla$ by a relative version of \cite{Katz-Oda}. 
 To check the comparison isomorphism, denote by $\Q_J$ the constant sheaf $\Q$ on $D_J(\C)$, extended by zero to the whole of $X(\C)$.  The complex of sheaves
 $$\Q_{D_{\bullet}/S} : \qquad \qquad  \Q \To  \bigoplus_{|J|=1} \Q_{J} \To \bigoplus_{|J|=2} \Q_{J} \To \ldots \To \bigoplus_{|J|=n} \Q_{J}$$
where the sign conventions are exactly as defined for the complex $\Omega^{\bullet}_{D_{\bullet}/S}$, defines a resolution of $j_!\Q$.  The analytification 
$\Omega^{\bullet,an}_{D_{\bullet}/S}$ 
of $\Omega^{\bullet}_{D_{\bullet}/S} $
is a resolution of $\Q_{D_{\bullet}/S}\otimes \C$ over $S^{an}(\C)$. Using the  triviality of $\pi$, and arguing  as in \cite{De4}, Proposition 2.28, there is  a natural isomorphism
$$ c^{-1}: H^n_B(X,D)_{/S} {\otimes_{\Q}} \Or_S^{an} \overset{\sim}{\To} (H^n_{dR}(X,D)_{/S})^{an}\ .$$
  It is known  that $H^n_B(X,D)_{/S}$, equipped with its  weight filtration and Hodge filtration from  $c H^n_{dR}(X,D)_{/S}$, is a variation of mixed Hodge
 structure. It is effective: the  Hodge numbers on every fiber satisfy $h_{p,q}=0$ if $p$ or $q$ are $<0$.

\subsection{Face maps}\label{sect:  facemapsgeneral}
 With the  above  notations, let $D_I =\cap_{i\in I} D_i$  denote an intersection of  ireducible components of $D$ of codimension $k$, 
 and let $D^I = \bigcup_{j \notin I} D_j$ denote the union of all remaining irreducible components. The pair $D_I \supset D_I \cap D^I$ satisfies the conditions of the previous
 paragraph.

 There are natural morphisms, that we shall call  \emph{face maps},  in the category $\HH(S)$ 
 \begin{equation}  \label{eqn: facemapsgeneral} 
 H^{n-k}( D_I, D_I \cap D^I)_{/S} \To H^{n}( X, D)_{/S} \ .
 \end{equation} 
 For the de Rham (respectively Betti) realisation, this is given by the inclusion of complexes
 $\Omega^{\bullet}_{D^I_{\bullet}/S} \rightarrow \Omega^{\bullet}_{D_{\bullet}/S}$ 
 (respectively   $\Q_{D^I_{\bullet}/S}\rightarrow \Q_{D_{\bullet}/S}$).

On the other hand, let $0\leq k \leq n-1$  and let $D^{(k)} = \bigcup_{|I|=n-k}  D_I$ denote the $k$-dimensional skeleton of $D$. 
Then $H^k(D^{(k)})_{/S}$ defines an object of $\HH(S)$ given by truncating the complexes 
$  \Omega^{\bullet}_{D_{\bullet}/S}$  and $\Q_{D_{\bullet}/S}$  on the left so that the non-zero components are  $|J| \geq n-k$.
The inclusion of these complexes similarly defines
\begin{equation}  \label{skelmap}  H^k(D^{(k)})_{/S} \To  H^n(X, D)_{/S}\ . 
\end{equation} 
The case $k=n-1$ is the boundary map in the relative cohomology sequence 
$$\cdots\To H^{n-1}(X)_{/S} \To H^{n-1}(D^{(n-1)})_{/S} \To H^n(X,D)_{/S}\To  H^n(X)_{/S}\To  \cdots$$

 The face maps $(\ref{eqn: facemapsgeneral})$ factor through  $(\ref{skelmap})$.
 \subsection{Weight filtration} 
 By strictness, apply the weight functor to the previous long exact cohomology sequence to obtain an exact sequence 
 $$\cdots  \rightarrow W_k H^{n-1}(X)_{/S}  \rightarrow W_k H^{n-1}(D^{(n-1)})_{/S} \rightarrow W_k H^n(X,D)_{/S} \rightarrow W_k H^n(X)_{/S} \rightarrow \cdots$$
Since $H^n(X)$ has smooth fibers, it has weights concentrated in degrees between $n$ and $2n$  (by   \cite{De2} 8.2.4). It follows that
$$W_k H^{n-1}(D^{(n-1)})_{/S} \To W_k H^n(X,D)_{/S}$$
is surjective if $k={n-1}$, and an isomorphism if $k< n-1$. 
 The following proposition generalises the previous fact, and is presumably well-known. We include a quick proof for completeness.  It requires the following lemma.
 
 \begin{lem} \label{lemsurjcohom}
  Let $\phi: C \rightarrow C'$ be a morphism of cochain  complexes such that $\phi_i: C_i \rightarrow C'_{i}$ is surjective, and $\phi_j : C_j \rightarrow C'_j$
  are isomorphisms for all $j>i$. Then the induced maps on cohomology have the same properties: $ H^i(C) \rightarrow H^i(C')$ is surjective, and $ H^j(C) \overset{\sim}{\rightarrow} H^j(C')$ is an isomorphism  for all $j>i$. 
 \end{lem}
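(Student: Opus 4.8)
The plan is to prove the lemma by a standard diagram chase on the cohomology long exact sequences, or equivalently by considering the mapping cone. Write $K$ for the cone of $\phi: C \to C'$, so that there is a long exact sequence
$$\cdots \to H^{j}(K) \to H^j(C) \to H^j(C') \to H^{j+1}(K) \to \cdots$$
The hypotheses say that the cochain complex $K$, which in degree $j$ is $C'_{j-1} \oplus C_j$ with the usual cone differential, has $K_j = 0$ for $j > i+1$ (because $\phi_j$ is an isomorphism forces $C'_{j-1} \cong C_j$ to contribute nothing to cohomology once we account for the differential — more precisely, one checks directly that $K$ is acyclic in degrees $> i+1$), and $K_{i+1} = C'_i \oplus C_{i+1}$ with the component $C'_i$ hit surjectively. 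The cleanest route is therefore: first reduce to showing $H^j(K) = 0$ for all $j > i$ and $H^{i+1}(K) = 0$ as well would be too strong; instead I will show $H^j(K)=0$ for $j \geq i+2$ and analyze $H^{i+1}(K)$ directly. From the long exact sequence, $H^j(C) \to H^j(C')$ is an isomorphism whenever $H^j(K) = H^{j+1}(K) = 0$, which will hold for $j > i$ once we know $H^{\geq i+1}(K)$ vanishes; and $H^i(C) \to H^i(C')$ is surjective as soon as $H^{i+1}(K) = 0$.

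So the key step is to prove $H^{m}(K) = 0$ for all $m \geq i+1$. For $m \geq i+2$ this is immediate because the complex $C \to C'$ restricted to degrees $\geq i+1$ is a termwise isomorphism (each $\phi_j$, $j \geq i+1$, is an isomorphism of abelian groups compatible with differentials), hence a quasi-isomorphism in that range, so its cone is acyclic there. The remaining case $m = i+1$ is where the surjectivity hypothesis on $\phi_i$ enters: a class in $H^{i+1}(K)$ is represented by a pair $(b, a) \in C'_i \oplus C_{i+1}$ with $d_K(b,a) = 0$, i.e. $d_{C'}b = 0$ and $\phi_{i+1}(a) = \pm\, d_{C}? $ — one writes out the cone differential carefully — and one uses that $\phi_i$ surjects onto $C'_i$ together with the fact that $\phi_{i+1}$ is an isomorphism to modify $(b,a)$ by a coboundary $d_K$ of an element of $K_i = C'_{i-1} \oplus C_i$ until it becomes zero. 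Concretely: pick $c \in C_i$ with $\phi_i(c) = b$; then subtract $d_K(0, c)$ and check the result lies in the image of the isomorphism range, hence is a coboundary by the degrees $\geq i+2$ argument.

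I would then assemble the conclusion: from $H^{i+1}(K) = 0$ the map $H^i(C) \to H^i(C')$ is surjective; from $H^j(K) = 0$ for $j \geq i+1$ together with $H^{j+1}(K)=0$ (both hold for $j > i$, noting $j+1 > i+1 \geq i+1$) the map $H^j(C) \to H^j(C')$ is an isomorphism for all $j > i$. This is exactly the assertion of the lemma.

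The main obstacle, such as it is, is purely bookkeeping: getting the signs and indices in the cone differential right so that the degree $m = i+1$ chase is airtight, and making sure the "termwise isomorphism in high degrees implies acyclic cone in high degrees" step correctly handles the boundary term $C'_i$ appearing in $K_{i+1}$. There is no conceptual difficulty — this is a soft homological-algebra statement — so I would keep the write-up to a short paragraph, perhaps even just invoking the long exact sequence of the pair $(C, C')$ directly (i.e. the exact triangle $C \to C' \to K \to C[1]$) and doing the two diagram chases for surjectivity at degree $i$ and isomorphism above, rather than spelling out the cone explicitly.
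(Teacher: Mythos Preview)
The paper's own proof is simply ``Exercise'', so there is no approach to compare against. Your mapping-cone strategy is sound: with your shifted convention $K_j = C'_{j-1}\oplus C_j$, the vanishing $H^m(K)=0$ for all $m\geq i+1$ is exactly what the long exact sequence requires, and it does hold.

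One small wobble: in the step at $m=i+1$, after you pick $c\in C_i$ with $\phi_i(c)=b$ (or $-b$, depending on your sign conventions) and subtract $d_K(0,c)$, you do not need to appeal to any ``degrees $\geq i+2$ argument''. The remaining cocycle has the form $(0,a')\in C'_i\oplus C_{i+1}$, and the original cocycle condition on $(b,a)$ combined with $\phi_{i+1}(d_C c)=d_{C'}\phi_i(c)$ forces $\phi_{i+1}(a')=0$; since $\phi_{i+1}$ is injective, $a'=0$ and the element vanishes outright. As written, your appeal to acyclicity of $K$ in degrees $\geq i+2$ cannot help here, because you are still sitting in degree $i+1$ of $K$.

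Alternatively, the shortest route is a direct chase with no cone at all: for surjectivity of $H^i(\phi)$, lift a cocycle $x'\in C'_i$ to $x\in C_i$ via $\phi_i$ and use injectivity of $\phi_{i+1}$ on $\phi_{i+1}(d_C x)=d_{C'}x'=0$ to conclude $d_C x=0$; for injectivity of $H^j(\phi)$ with $j>i$, if $\phi_j(x)=d_{C'}y'$ then lift $y'$ through $\phi_{j-1}$ (surjective since $j-1\geq i$) and use injectivity of $\phi_j$ on $\phi_j(x-d_C y)=0$ to get $x=d_C y$. This is presumably the ``exercise'' the paper has in mind, and it is shorter than routing through the cone.
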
 
 \begin{proof} Exercise.
 \end{proof}
 \begin{prop} Let $m<n$.  The  map
  $$W_k H^m(D^{(m)})_{/S} \To W_k H^n(X, D)_{/S}$$
 is surjective if $m=k$ and is an isomorphism if $k < m$. 
\end{prop}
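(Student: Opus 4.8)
The plan is to reduce the statement to the lemma just proven (Lemma~\ref{lemsurjcohom}) by exhibiting an explicit morphism of cochain complexes computing the two weight-truncated cohomologies, and showing that this morphism is a surjection in degree $m$ and an isomorphism in all higher degrees. First I would recall that, by the construction of \S\ref{sectExfamily}, the object $H^n(X,D)_{/S}$ is computed (on the de Rham side, say) by the hypercohomology of the complex of sheaves $\Omega^{\bullet}_{D_{\bullet}/S}$, and that $H^m(D^{(m)})_{/S}$ is computed by the left-truncated subcomplex whose components are those $\bigoplus_{|J|=p}\Omega^{\bullet}_{D_J/S}$ with $p\geq n-m$. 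The face/skeleton maps $(\ref{skelmap})$ are induced by the inclusion of this truncated complex into the full one. Because the weight filtration is strict on mixed Hodge structures (and on admissible variations), applying $W_k$ commutes with taking cohomology, so it suffices to prove the statement after passing to weight-graded pieces, or equivalently to control the weights of the cone of the map of complexes.

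The key computational input is the same one used in the paragraph preceding the proposition: for each nonempty $J$, the fiber $D_J$ is smooth and projective (or at least smooth with a further normal crossing divisor $D_J\cap D^J$), so $H^a(D_J, D_J\cap D^J)_{/S}$ has weights concentrated in the range $[a, 2a]$ by \cite{De2}~8.2.4. The cone of the inclusion of the truncated complex into $\Omega^{\bullet}_{D_{\bullet}/S}$ is (up to shift) built out of the terms $\bigoplus_{|J|=p}\Omega^{\bullet}_{D_J/S}$ with $p\leq n-m-1$, i.e.\ out of the cohomologies $H^{a}(D_J, D_J\cap D^J)_{/S}$ with $|J|=p\leq n-m-1$ contributing to total degree $a+p$. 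Since $a\geq n - p \geq m+1$ for the relevant range of $p$, these contributions have weights $\geq a \geq m+1$; more precisely I would run the spectral sequence of the filtration of $\Omega^{\bullet}_{D_{\bullet}/S}$ by the number of components $|J|$, observe that the piece cut out by the skeleton map is exactly the part with $|J|\geq n-m$, and check that the complementary part contributes only to cohomological degrees $\leq$ something and with weights that, after intersecting with $W_k$, either vanish (when $k<m$) or are killed in the appropriate range (when $k=m$). Then Lemma~\ref{lemsurjcohom}, applied to the map of (truncated) complexes of $\HH(S)$-objects after applying $W_k$, yields surjectivity in degree $m$ and isomorphism in degrees $>m$; the case $k<m$ gives the full isomorphism because the cone then has no $W_k$-part in any relevant degree.

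I expect the main obstacle to be bookkeeping of \emph{which} cohomological degree each term of the truncated complex lands in, and correctly matching the indexing of Lemma~\ref{lemsurjcohom} (which is about a map of complexes with a sharp transition at index $i$) to the weight/skeleton filtration here; in particular one must be careful that the skeleton $D^{(m)}$ is glued from the $D_I$ with $|I|=n-m$ and that $H^m(D^{(m)})_{/S}$ genuinely corresponds to the left-truncation in the stated range rather than an off-by-one variant. A secondary point to verify is that all of this is compatible with the variation structure, i.e.\ that strictness of $W$ and the weight bound of \cite{De2}~8.2.4 apply fiberwise and hence globally over $S$ because $\pi$ is topologically trivial (so everything in sight is a genuine local system / admissible variation and the weight filtration is by sub-variations). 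Once those indexing and compatibility issues are pinned down, the proof is a direct application of the lemma and the weight-concentration estimate, with no further hard input.
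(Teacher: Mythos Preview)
Your plan is essentially the paper's own argument: pass to fibres, use the relative-cohomology spectral sequence filtered by $|J|$, apply the exact functor $\gr^W_j$ so that the weight bound of \cite{De2}~8.2.4 forces vanishing for $q>j$, and then invoke Lemma~\ref{lemsurjcohom} by induction on the page $r$ of the spectral sequence. Two small corrections are worth making before you write it up: the $E_1$-terms of the spectral sequence (and hence of your cone) are the \emph{absolute} cohomologies $\bigoplus_{|J|=p} H^q(D_J)$, not the relative groups $H^q(D_J, D_J\cap D^J)$; and you cannot literally apply $W_k$ to the sheaf complex $\Omega^{\bullet}_{D_{\bullet}/S}$, since that is not a complex of objects of $\HH(S)$ --- only its hypercohomology groups and the pages $E_r^{p,q}$ carry mixed Hodge structures, so the correct move (as in the paper) is to apply $\gr^W_j$ to the map of spectral sequences and run Lemma~\ref{lemsurjcohom} on each page.
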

 
  \begin{proof}  Since the comparison isomorphism respects the weight filtration, it suffices to verify the statement in the Betti realisation. For this it is enough
  to check the statement on every fiber: for every $t \in S(\C)$, 
  $$   W_k H_B^{m} ( D^{(m)})_t \To   W_k H_B^n(X, D)_t  $$
 is an isomorphism for $k<m$ and surjective for $k=m$. 
 To alleviate the notation, write $B_I= (D_I)_t(\C)$  and $Y = X_t(\C)$. Consider 
 \begin{equation} \label{eqn: inproofHmBtoHYB}
 H^{m} (B^{(m)})  \To   H^n(Y, B)  \ .
 \end{equation}
   There  are relative cohomology spectral sequences
  \begin{equation} \label{relcomss}
  E_1^{p,q} (Y)= \bigoplus_{|J|=p} H^q(B_J)   \quad \Longrightarrow \quad  H^{p+q}(Y,B)
  \end{equation} 
  and
  $$E_1^{p,q}(B^{(m)}) = \bigoplus_{|J|=p+n-m } H^q(B_J)   \quad \Longrightarrow \quad  H^{p+q}(B^{(m)})$$
  The morphism $(\ref{eqn: inproofHmBtoHYB})$ induces a  map of spectral sequences
  $$ E_1^{p,q}(B^{(m)}) \To E_1^{p+n-m,q} (Y)$$
  which is the identity  on each summand $H^q(B_J)$.  
    Let $j \leq k$ and  apply the functor $\gr^W_j$. It is exact, giving a morphism  of spectral sequences
  \begin{equation} \label{Cpnmq} 
  \gr^W_{j} E_r^{p,q}(B^{(m)}) \To \gr^W_j E_r^{p+n-m,q} (Y) \ .
  \end{equation}  
   Since $B_J$ is smooth,   $H^q(B_J)$ has weights in the interval
  $[q,2q]$ by \cite{De2} 8.2.4, and therefore  both sides of $(\ref{Cpnmq})$ vanish  for all $q >  j$.
  The entries of $(\ref{Cpnmq})$ for $r=1$ are identical in the range   $p\geq 0$.  By running the spectral sequence, and applying the previous lemma, 
  one verifies by induction on $r$ that $(\ref{Cpnmq})$ is an isomorphism  for $p\geq r-1$ or $p+ q \geq m+1$ and surjective for other values of $p\geq 0$.
      \end{proof}
  
  The spectral sequence $(\ref{relcomss})$  implies the 
      \begin{cor} Let $0 \leq k \leq n$. Then $\gr^W_k H^n(X,D)_{/S}$ is  isomorphic to a subquotient of $\bigoplus_{|I| \geq  n-k}\gr^W_k \ H^{n-|I|} (D_{I})_{/S}$.            \end{cor}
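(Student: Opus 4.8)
The corollary should follow immediately by combining the proposition just proved with the relative cohomology spectral sequence $(\ref{relcomss})$. The plan is to run $(\ref{relcomss})$ in the family setting (equivalently, fibrewise, since the comparison respects $W$ and everything can be checked on each $H^\bullet$ in the Betti realisation over a point $t\in S(\C)$), and to apply the weight functor $\gr^W_k$, which is exact on $\HH(S)$ by strictness of $W$ on mixed Hodge structures.

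First I would write down the spectral sequence computing $H^n(X,D)_{/S}$, namely
$$E_1^{p,q}(X,D) = \bigoplus_{|J|=p} H^q(D_J)_{/S} \quad \Longrightarrow \quad H^{p+q}(X,D)_{/S},$$
with $D_\emptyset = X$, so that $p$ ranges over $0,1,\dots,n$. Applying the exact functor $\gr^W_k$ gives a spectral sequence $\gr^W_k E_1^{p,q} = \bigoplus_{|J|=p}\gr^W_k H^q(D_J)_{/S}$ converging to $\gr^W_k H^{p+q}(X,D)_{/S}$. Since each $D_J$ (including $X=D_\emptyset$) has smooth fibres, $H^q(D_J)_{/S}$ has weights in $[q,2q]$ by \cite{De2} 8.2.4, so $\gr^W_k H^q(D_J)_{/S}=0$ unless $q\le k\le 2q$; in particular the $E_1$-term on the line $p+q=n$ is concentrated in the summands with $q\le k$, i.e. $p=n-q\ge n-k$, which is exactly the range $|J|\ge n-k$. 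Hence $\gr^W_k H^n(X,D)_{/S}$, being a subquotient of the $E_1$ (indeed $E_\infty$) terms along the antidiagonal $p+q=n$, is a subquotient of $\bigoplus_{q\le k,\ |J|=n-q}\gr^W_k H^q(D_J)_{/S}$. Reindexing $q = n-|I|$ gives precisely $\bigoplus_{|I|\ge n-k}\gr^W_k H^{n-|I|}(D_I)_{/S}$, which is the claimed statement.

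The one genuine point to check is that the differentials and extensions in the spectral sequence are morphisms in $\HH(S)$ (so that "subquotient" is meant in the category $\HH(S)$, as the corollary implicitly requires), which is automatic since the face maps $(\ref{eqn: facemapsgeneral})$ and hence all the maps in the double complex $\Omega^\bullet_{D_\bullet/S}$, $\Q_{D_\bullet/S}$ are morphisms of realisations, and the comparison isomorphisms are compatible across the complex as established in \S\ref{sectExfamily}. I do not expect a serious obstacle here; the main (and only mild) subtlety is the bookkeeping of which $(p,q)$ survive after applying $\gr^W_k$, which is handled entirely by the weight bound $[q,2q]$ for smooth varieties. Alternatively, one could deduce the corollary directly from the proposition by induction on the codimension of the skeleta $D^{(m)}$, using the surjectivity/isomorphism statements to peel off one skeleton at a time, but the spectral sequence argument is cleaner and is the one I would present.
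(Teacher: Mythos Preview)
Your argument is correct and is exactly what the paper intends: the paper's entire proof is the single clause ``The spectral sequence $(\ref{relcomss})$ implies the [corollary]'', and you have spelled this out --- apply the exact functor $\gr^W_k$ to the relative cohomology spectral sequence, use the weight bound $[q,2q]$ for the smooth $D_J$ to kill all $E_1$-terms on the antidiagonal $p+q=n$ with $q>k$, and read off that only $|J|\ge n-k$ contribute. One small remark: your opening sentence says you will combine the spectral sequence with the proposition just proved, but in fact your argument (like the paper's) uses only the spectral sequence and the weight bound; the proposition is not needed here and you can drop that reference.
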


 Putting $k=0$, $m=1$ in the previous proposition gives the following corollary.
  \begin{cor} We have
 \begin{equation}  \label{eqn: Weight0HXD}
 W_0 H^n(X,D)_{/S} \cong  \Q(0)^{\oplus m}_{/S}
 \end{equation} 
 where $m = \dim_{\Q}  \widetilde{H}^{n-1}(D_t(\C))$  for any $t\in S(\C)$. 
 In particular, the motivic periods of $H^n(X,D)_{/S}$ of weight zero are constant and rational. 
      \end{cor}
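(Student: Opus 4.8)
The statement to establish is the final corollary: that $W_0 H^n(X,D)_{/S} \cong \Q(0)^{\oplus m}_{/S}$ with $m = \dim_{\Q}\widetilde{H}^{n-1}(D_t(\C))$, and hence that the weight-zero motivic periods of $H^n(X,D)_{/S}$ are constant and rational. The plan is to deduce this from the proposition immediately preceding it (applied with $k=0$, $m=1$) and the corollary extracted from the spectral sequence $(\ref{relcomss})$, and then to invoke the description of weight-zero motivic periods already available in the excerpt.

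\emph{Step 1: identify $W_0$ with the weight-zero part of the skeleton.} Apply the preceding proposition with $k=0$ and $m=1$. Since $0 < 1$, it gives a canonical isomorphism $W_0 H^1(D^{(1)})_{/S} \overset{\sim}{\To} W_0 H^n(X,D)_{/S}$. Now $D^{(1)}$ is a union of smooth curves (the one-dimensional skeleton of $D$), so by strictness of the weight filtration on variations of mixed Hodge structure, $W_0 H^1(D^{(1)})_{/S}$ is the largest weight-zero sub-VMHS of $H^1$ of this skeleton. More directly, one can short-circuit this: since $H^n(X)$ has weights $\geq n > 0$ on smooth fibers (\cite{De2} 8.2.4), the relative cohomology long exact sequence shows $W_0 H^n(X,D)_{/S} \cong W_0 \widetilde H^{n-1}(D^{(n-1)})_{/S}$, and iterating down the skeleton filtration (as the corollary to the proposition records) identifies $W_0 H^n(X,D)_{/S}$ with $W_0$ of the reduced cohomology of $D$ itself in degree $n-1$. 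The cleanest route is to combine the proposition with the Betti description: on every fiber $t\in S(\C)$, the reduced cohomology $\widetilde{H}^{n-1}(D_t(\C);\Q)$ has a weight filtration whose weight-zero graded piece is the image of $H^0$ of the normalization of the top strata, i.e., the combinatorial part. This weight-zero piece is precisely a trivial Hodge structure $\Q(0)^{\oplus m}$ with $m$ its dimension over $\Q$.

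\emph{Step 2: show the VMHS structure is trivial.} Having identified $W_0 H^n(X,D)_{/S}$ fiberwise with $\Q(0)^{\oplus m}$, I need to check that as an object of $\HH(S)$ it is genuinely constant, i.e., isomorphic to $\pi^*$ of a weight-zero object, hence to $\Q(0)^{\oplus m}_{/S}$. The Betti local system $R^n\pi_* j_! \Q$ restricted to its $W_0$ is, by the topological triviality of $\pi$ (assumed in \S\ref{sectExfamily}: $\pi$ is a locally trivial fibration of stratified varieties by \cite{Go-Ma}), a constant local system with fiber $\widetilde{H}^{n-1}(D_t(\C))$'s weight-zero piece. A weight-zero polarizable VMHS whose underlying local system is constant is necessarily constant as a VMHS — there are no nontrivial extensions and the Hodge filtration on a weight-zero pure VHS of type $(0,0)$ is forced. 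Concretely, $\gr^W_0$ of a VMHS is itself a pure VHS of weight $0$; being of Hodge type $(0,0)$ it has $F^0 = $ everything, $F^1 = 0$, so it is rigid and determined by its (constant) local system. Therefore $W_0 H^n(X,D)_{/S} \cong \Q(0)^{\oplus m}_{/S}$ as claimed.

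\emph{Step 3: deduce the statement about motivic periods.} A matrix coefficient $[H^n(X,D)_{/S}, \sigma, \omega]^{\mm}$ with $\omega \in \omega^Y_{dR} W_0 H^n_{dR}(X,D)_{/S}$ factors through $W_0 H^n(X,D)_{/S} \cong \Q(0)^{\oplus m}_{/S}$ by $(\ref{eqn: Wnsubobj})$, so it is a $\Q$-linear combination of periods of $\Q(0)_{/S}$. But the period of $\Q(0)_{/S}$ is $1$, hence constant and rational; more precisely, the constant map $(\ref{constant})$ and the fact (proved in the proposition on $W_0\Pe^{\mm,+}$, whose proof is deferred to \S\ref{sectExfamily}) that $\per: W_0\Pe^{\mm,+}\overset{\sim}{\to}\overline{\Q}$ — and in the mixed Tate case lands in $\Q$ — give the result. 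The main obstacle is Step 2: verifying rigorously that the weight-zero piece is constant \emph{as a variation} rather than merely fiberwise, which is where the topological triviality hypothesis on $\pi$ and the rigidity of weight-zero polarizable VHS of type $(0,0)$ are essential; everything else is bookkeeping with the weight spectral sequence already set up above.
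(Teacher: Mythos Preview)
Your argument is correct, but it takes a more circuitous route than the paper's. The key difference lies in how you establish that $W_0 H^n(X,D)_{/S}$ is constant \emph{as an object of $\HH(S)$}. You work fiberwise to see that the weight-zero piece is a trivial Hodge structure of type $(0,0)$, and then invoke rigidity of weight-zero polarizable VHS together with topological triviality of $\pi$ (your Step~2) to promote this to a statement in $\HH(S)$. The paper sidesteps this entirely: it uses the preceding subquotient corollary, which says $\gr^W_0 H^n(X,D)_{/S}$ is a subquotient \emph{in $\HH(S)$} of $\bigoplus_{|I|=n} H^0(D_I)_{/S}$, and since each $H^0(D_I)_{/S}=\Q(0)_{/S}$ already as an object of $\HH(S)$, any subquotient is automatically $\Q(0)^{\oplus m}_{/S}$. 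So the constancy is obtained for free, and your rigidity step is unnecessary. The paper then reads off $m$ as $\dim E_2^{n,0}(Y)$ in the Betti fiber spectral sequence $(\ref{relcomss})$, which (using connectedness of the $B_I$) it identifies with $\dim \widetilde{H}^{n-1}(B)$.

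Your Step~1 is also somewhat diffuse: you begin with the proposition at $k=0$, $m=1$, then pivot to other descriptions without cleanly pinning down $m$; the paper's route via the $E_2^{n,0}$ term is more direct. What your approach buys is that it makes explicit why the variation is rigid (trivial local system plus trivial Hodge filtration), whereas the paper's approach hides this inside the abelian-category statement that subquotients of $\Q(0)^{\oplus k}_{/S}$ are again of that form.
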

 \begin{proof}  Note that if $|I|=n$ then  $D_{I} \cong \Spec\, S$ and $H^0(D_{I})_{/S} = \Q(0)_{/S}$, so $(\ref{eqn: Weight0HXD})$ holds for some $m$. To determine $m$,
pass to the Betti realisation at the fiber $s$. With the notations of the previous proposition, we have 
$$\gr^W_0 H^n(Y,B) \cong  E_2^{n,0}(Y) =   \mathrm{coker}\,\, \big( \bigoplus_{|I|=n-1} H^0(B_{I})\To\bigoplus_{|I|=n} H^0(B_{I})\big)  $$
Since the $B_I$ are connected, the dimension of this cokernel is the dimension of the reduced cohomology $\dim \widetilde{H}^{n-1}(B)$. 
 \end{proof}
  
 We considered earlier the case $S= \Spec \Q$, and $D$  normal crossing, rather than simple normal crossing. Then a similar argument proves that the weight zero part of $H^n(X,D)_{/S}$
is the (realisation of a) constant Artin motive. The action of $\mathrm{Gal}(\overline{\Q}/\Q)$ upon its Betti realisation  is induced  by the Galois action on the points  $\bigcup_{|I|=n} D_I(\C)$.

 Under some further assumptions, the face maps provide information about  the mixed Hodge structure on $H^n(X,D)_{/S}$ in low weights.

  \begin{prop} \label{prop: weightsandfacemaps}  Suppose that $H^k((D_{I})_s)=0$ for all $k> n- |I|$ (for example, if the
  strata $D_I$ have  affine fibres of dimension $n-|I|$).

  Let $m<n$. Then for any $k\leq m$,  the sum of  face maps
  $$ \bigoplus_{|I|=n-m}  W_k H^{m} (D_I,  D_I \cap D^I)_{/S} \To   W_k H^n(X, D)_{/S}\ .$$
 is surjective.  \end{prop}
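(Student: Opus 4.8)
\emph{Proof sketch.} The plan is to run the same reduction as in the proof of the preceding proposition and then to split each face map through the skeleton map already analysed there. Since the comparison isomorphism and all the maps in sight respect $W$, and since the objects of $\HH(S)$ involved are local systems whose formation commutes with restriction to a fibre, it suffices to fix $t\in S(\C)$ and to prove that $\bigoplus_{|I|=n-m} W_k H^m_B(B_I, B_I\cap B^I)\to W_k H^n_B(Y,B)$ is surjective for $k\le m$, where $Y=X_t(\C)$, $B=D_t(\C)$, $B_I=(D_I)_t(\C)$, and $B^I$, $B^{(m)}$ are the fibres of $D^I$, $D^{(m)}$. By construction the face map $H^m_B(B_I,B_I\cap B^I)\to H^n_B(Y,B)$ factors through $H^m_B(B^{(m)})\to H^n_B(Y,B)$ (this is the factorisation of $(\ref{eqn: facemapsgeneral})$ through $(\ref{skelmap})$), and by the preceding proposition the second arrow is surjective on $W_k$ for $k\le m<n$. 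So it is enough to prove that
$$\bigoplus_{|I|=n-m} H^m_B(B_I, B_I\cap B^I)\;\longrightarrow\; H^m_B(B^{(m)})$$
is surjective, and then to invoke strictness of $W$.

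For this, recall from \S\ref{sect:  facemapsgeneral} that, on the fibre, $H^\bullet_B(B_I,B_I\cap B^I)$ is the hypercohomology of the Betti complex $\mathcal C_I$ whose $a$-th column is $\bigoplus_{J\supseteq I,\ |J|=|I|+a}\Q_{B_J}$, and $H^\bullet_B(B^{(m)})$ of the complex $\mathcal C$ whose $a$-th column is $\bigoplus_{|J|=n-m+a}\Q_{B_J}$ (these are the restrictions to the fibre of $\Q_{D^I_\bullet/S}$ and of the left-truncation of $\Q_{D_\bullet/S}$). The face maps are induced by the inclusions of subcomplexes $\mathcal C_I\hookrightarrow\mathcal C$, which are degree-preserving since $|I|=n-m$, and their sum $\bigoplus_{|I|=n-m}\mathcal C_I\to\mathcal C$ is surjective in every degree: in the bottom column it is the identity, and in column $a\ge1$ each summand $\Q_{B_J}$ of the target is hit by the $\binom{n-m+a}{n-m}\ge1$ summands of the source indexed by the $(n-m)$-subsets $I\subset J$. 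Let $\mathcal K$ be the kernel complex; it vanishes in the bottom column. Now the hypothesis enters: since $H^q_B(B_J)=0$ for $q>n-|J|$, the column spectral sequences of $\mathcal C$, of each $\mathcal C_I$, and of $\mathcal K$ have $E_1$-terms supported in bidegrees $(a,q)$ with $q\le n-|J|=m-a$, hence are concentrated in total degree $\le m$; in particular $H^{m+1}(\mathrm{Tot}\,\mathcal K)=0$. The long exact sequence of $0\to\mathcal K\to\bigoplus_I\mathcal C_I\to\mathcal C\to0$ then shows $\bigoplus_I H^m_B(B_I,B_I\cap B^I)\to H^m_B(B^{(m)})$ is surjective.

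Finally, the morphism $\bigoplus_{|I|=n-m} H^m(D_I, D_I\cap D^I)_{/S}\to H^m(D^{(m)})_{/S}$ of $\HH(S)$ is, on each fibre, the map just shown to be onto; by strictness of the weight filtration (\cite{De2}) it remains surjective after applying $W_k$ for every $k$. Composing with the surjection $W_k H^m(D^{(m)})_{/S}\twoheadrightarrow W_k H^n(X,D)_{/S}$ of the preceding proposition (available because $k\le m<n$) gives the assertion, since the composite is precisely the sum of the face maps $(\ref{eqn: facemapsgeneral})$ restricted to $W_k$. The one step needing care is the vanishing of $H^{>m}$ of the kernel complex $\mathcal K$, which is exactly where the affineness-type hypothesis $H^k((D_I)_s)=0$ for $k>n-|I|$ is indispensable; without it the cokernel of $\bigoplus_I H^m\to H^m(D^{(m)})$ need not vanish. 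The remaining verifications are the same bookkeeping with relative cohomology complexes that already appears in the proof of the preceding proposition. $\qed$
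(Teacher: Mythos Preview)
Your argument is correct, and it takes a somewhat different route from the paper's. The paper works directly with the map of relative cohomology spectral sequences from $\bigoplus_{|I|=n-m} H^{\bullet}(D_I, D_I\cap D^I)$ to $H^{\bullet}(X,D)$: it applies $\gr^W_j$ for $j\le k$ to the $E_1$-pages, observes that both sides vanish above the diagonal $p+q=n$ by the hypothesis and for $q>j$ by weight reasons, checks surjectivity on the diagonal at $E_1$, and then pushes this through all pages by induction on $r$ via Lemma~\ref{lemsurjcohom}. It does not pass through the skeleton.

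You instead factor through $H^m(D^{(m)})_{/S}$, invoke the preceding proposition for the second arrow, and prove that $\bigoplus_{|I|=n-m} H^m(B_I,B_I\cap B^I)\to H^m(B^{(m)})$ is surjective outright (not merely on $W_k$) by exhibiting a short exact sequence of Betti complexes and using the hypothesis to kill $H^{m+1}$ of the kernel. Strictness of $W$ then gives the weight-filtered statement for free. This is more modular---you genuinely reuse the preceding proposition rather than reproving its content---and it yields the slightly stronger intermediate fact that the map to the skeleton is surjective before taking $W_k$. The paper's approach, by contrast, is a single self-contained spectral sequence computation that does not separate the two steps; it is shorter on the page but re-runs the machinery of the previous proof. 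Both arguments use the hypothesis in the same essential way, namely to force the relevant $E_1$-terms to vanish in total degree $>m$.
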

  \begin{proof} A similar spectral sequence argument as in the previous proposition.
  The assumption implies that $E_1^{p,q}$ vanishes above the diagonal, i.e.,  for all $p+q >n$. 
   Now consider the map of spectral sequences induced by the sum of face maps. Take their fibers as in the previous proposition and apply $\gr^W_j$ for $j\leq k$.  On $E^{p,q}_1$ terms we obtain the natural map
  \begin{equation} \label{natssE1grw}     \gr^W_j     \bigoplus_{|I|=n-m}   \bigoplus_{J \supset I , |J|=p} H^q(B_J) \To    \gr^W_j      \bigoplus_{ |J|=p} H^q(B_J)   \ . \end{equation}
 This is an isomorphism above the diagonal ($p+q>n$) since both sides are identically zero in that region by assumption. 
 Furthermore, both sides vanish of $(\ref{natssE1grw})$  in the region $q> j$ (since the $B_J$ are smooth and $H^q(B_J)$ has weights in the interval $[q,2q]$) and therefore in particular in the region $q> m$. It follows that $(\ref{natssE1grw})$ is 
 also surjective along the diagonal $p+q=n$.  
   By induction on $r$ and   lemma
 \ref{lemsurjcohom}, the induced map is surjective on the diagonal for all $r$. 
    \end{proof} 

\subsection{The periods}
In the  situation of \S\ref{sectExfamily}, let  $s\in S(\C)$ and let $\sigma_s\subset X_s(\C)$ be a topological $n$-chain whose boundary is contained in $D_s(\C)$.  It defines a relative homology class
$[\sigma_s] \in H^n_B (X_s(\C), D_s(\C))^{\vee} = (H^n_B (X,D)_{/S})^{\vee}_s.$
 By local triviality, there exists a
small neighbourhood $N\subset S(\C)$ of $s$, and isomorphism
$$(X(\C), D(\C)) \cap \pi^{-1}(N) \cong N \times (X_s(\C), D_s(\C))\ .$$
Via this isomorphism, the chain 
  $\sigma_s$ uniquely extends to  a family of topological $n$-chains $\sigma_t \subset X_t(\C)$ whose boundaries are contained in $D_t(\C)$ for all $t\in N$. 

For simplicity, let us consider the  particular case when  we are given a global form $\omega \in \Omega^n_{\pi^{-1}(U)/U}$ for some Zariski open $U \subset S$
and suppose that the fibres of $X$ are of dimension $n$. Since the restrictions of $\omega$ to components of $D$ vanish for  reasons of dimension,  it defines a relative class $[\omega ] \in  \Gamma(U, H^n_{dR} (X, D))$, and
$$ \xi = [ H^n(X,D)_{|S}, [\sigma_s] , [\omega] ]^{\mm} \quad \in \quad \Pe^{\mm, \{s\}, \Y}_{\HH(S)}$$
for any  $Y\subset U(\C)$. For any $t\in U(\C)$, let $\omega_t$ denote the restriction of the form $\omega$ to a fiber $t$.  The period is then
$$ \per(\xi)(t) =  \int_{\sigma_t} \omega_t $$
for all $t$ in the open set  $U(\C) \cap N$, and is extended by analytic continuation to  a meromorphic function on the universal covering of $S(\C)$ based at $s$.

 \subsection{Example: iterated integrals on the projective line minus 3 points}
 Let $S= \Pro^1 \backslash \{0,1,\infty\}$ throughout this section. 
 
 \subsubsection{Motivic logarithm} 
 Let $Z= S \times \GG_m$ and $\pi: Z \rightarrow S$ the projection 
 onto the first factor. If $x$ is the coordinate on $S$, and $y$ the coordinate on $\GG_m$, let $D= \{y=1 \} \cup \{y=x\}$. 
 Let $\X\subset S(\C)$ denote the real interval $(0,1)$, and for all $x\in \X$, let   $\sigma_x \subset \GG_m(\C)$ denote the straight path  from $1$ to $x$ in the fiber over $x$. Its  image is $\{x\}\times [1,x]$.  Define the motivic logarithm by 
 $$\log^{\mm}(x) = [ H^1(Z,D)_{|S}, [\sigma_x], [ \textstyle{dy \over y}]]^{\mm} \qquad \in \qquad \Pe^{\mm}_{\HH(S), \X,  \Y}$$ 
 where $\Y = S(\C)$. Note that $\Or_{S,\Y}= \Or_S = \Q[x, x^{-1}, (1-x)^{-1}]$ (this was example $(3)$ in \S \ref{sectFiberfunctors}). 
  Its period is the logarithm as expected:
  $$\per(\log^{\mm}(x)) = \log (x) = \int_{\sigma_x} {dy \over y} \qquad \hbox{ for } x \in \X \ . $$ 
  The long exact cohomology sequence reduces to 
 $$ 0 \To \Q(0)_{|S} \To H^1(Z,D)_{|S} \To \Q(-1)_{|S}\To 0 \ , $$
 and with respect to the de Rham  basis $[ {dy\over y}]$, $[{dy \over x-1}]$ and Betti basis $[\sigma_x], [\gamma]$, where $\gamma$ is a small loop around $0$ in the fiber, 
  the period matrix is given by the identical  formula to $(\ref{logmotivicperiodmatrix})$,  where $\Lef^{\mm}$ is now viewed a constant family of periods over $S$ and $\alpha=x$. 
The coaction satisfies $\Delta \log^{\mm}(x) = \log^{\mm}(x) \otimes \Lef^{\dR} + 1 \otimes \log^{\dR}(x)$,  and the Galois group is $\GG_a \rtimes \GG_m$, and is in fact defined over $\Q$ in this case.   
 Recall that $\log^{\dR}(x)$ does not have a period, but has a single-valued period $2 \log |x|$.
 
 \subsubsection{Motivic fundamental groupoid} Now consider the  ind-object  of $\HH(S)$ 
 $$ \Or(\pi_1^{\HH}(S,  \tone_0, \bullet  ) ): = ( \Or(\pi_1^{un}(S, \tone_0, \bullet  )), \Or(\pi_1^{dR}(S ))\otimes_{
 \Q} \Or_S, \mathrm{comp})  $$
 where $\tone_0$ is the tangential base-point at $0$ with unit length.  The first entry (Betti local system) is the affine ring of the unipotent completion of the torsor of paths beginning at $\tone_0$  on $S(\C)$ 
 and defines a local system on $S(\C)$: its fiber at  a point $x\in S(\C)$ is $  \Or(\pi_1^{un}(S, \tone_0, x))$ with the action of $\pi^{\mathrm{top}}(S(\C), x)$.  The second entry does not in fact depend on basepoints and  is the affine ring of the (unipotent) de Rham fundamental group. It is a shuffle algebra on two generators
 $$\Or(\pi_1^{dR}(S)) \cong  T^c (\Q e_0 \oplus \Q e_1 ) $$
  where $e_0,e_1$ correspond to the one-forms ${dx \over x}$ and ${dx \over 1-x}$. It defines a  trivial vector bundle on $S$, and is equipped with the Kniznhik-Zamolodchikov 
  connection
   $$\nabla e_{i_1} \ldots e_{i_n}  = \begin{cases}
e_{i_1}\ldots e_{i_{n-1}} \otimes {dx \over x} \qquad  \,\,\, \hbox{ if } i_n=0 \\   
   e_{i_1}\ldots e_{i_{n-1}} \otimes {dx \over 1-x} \qquad \hbox{ if } i_n=1 \ .  
   \end{cases} $$
   It is clearly unipotent with respect to the weight filtration. The weight filtration on $\Or(\pi_1^{dR}(S))$ is in fact a grading in this case (it is split by the Hodge filtration since it is mixed Tate), and  the grading assigns degree two to 
    $e_0,e_1$.

      We take $\X= (0,1)$ and $\Y=S(\C)$ as before. 
 Define the motivic multiple polylogarithm  to be the family of motivic periods: 
 \begin{equation} \label{defnmultiplepolylog}
 \Li^{\mm}_w(x) =   [ \Or(\pi_1^{\HH}(S,  \tone_0, \bullet )) , \sigma_x, w]^{\mm} \quad \in \quad \Pe^{\mm, \X, \Y}_{\HH(S)} 
 \end{equation} 
 where $w $ is a word in $e_0,e_1$ and $\sigma_x$ is the straight line path from $\tone_0$ to $x$.  The path $\sigma_x$ is viewed as an element of $\Or(\pi_1^{un}(S, \tone_0, x ))^{\vee}$
 via the natural map 
 \begin{equation} \label{naturalpi1mapend} 
 \pi^{\mathrm{top}}_1(S(\C), \tone_0, x) \To \pi_1^{un}(S, \tone_0, x)(\Q)\ .
 \end{equation}
 The period of $\Li^{\mm}_w(x)$ is  $\Li_w(x)$, which is the iterated integral $\int_{\sigma_x} w$, and we have $\Li^{\mm}_{e_0}(x) = \log^{\mm}(x)$.  More generally we write $\Li^{\mm}_n(x)$ for $\Li^{\mm}_{e_1 e_0^{n-1}}(x)$, and call it the motivic classical polylogarithm. 
  The connection satisfies    $$\nabla \Li^{\mm}_{we_s } (x) =(-1)^s \Li^{\mm}_w(x) \otimes {dx \over x - s} \qquad \hbox{ where } s \in \{0,1\}\ , \    w \in \{e_0,e_1\}^{\times}     $$
 and $\Li^{\mm}_{\emptyset}(x)$ is the constant motivic period $1$.  
  The de Rham motivic multiple polylogarithms  are defined by 
  $$
 \Li^{\dR}_w(x) =   [ \Or(\pi_1^{\HH}(S,  \tone_0, \bullet )) , \varepsilon, w]^{\mm} \quad \in \quad \Pe^{\dR, \Y}_{\HH(S)} 
 $$
  where $\varepsilon: \Or(\pi_1^{dR}(S)) \rightarrow \Q$ is the augmentation map (it sends every non-trivial word $w$ to zero). The de Rham versions  are the images
 of $\Li^{\mm}_w(x)$ under the projection map \S\ref{sectProjection}.    Our definition of the symbol  satisfies, as expected, 
  $$\smb (\Li^{\dR}_{w}(x))  = w \qquad \in \qquad T^c (\Omega^1(S)) \cong H^0(\BB(\Omega^1(S)))\ .$$
The  single-valued versions of $\Li^{\dR}_w(x)$ are obtained in an identical way to \cite{SVMP}, by simply writing superscript $\mm$ everywhere  (with a possible sign difference of $(-1)^{|w|}$).

Recall that for every word $w$ we defined the (image in the ring of $\HH$-periods) of the shuffle-regularised motivic multiple zeta values: 
$$\zetam(w) =  [ \Or(\pi_1^{\HH}(S,  \tone_0, -\tone_1)) , \varepsilon, w]^{\mm} \quad \in \quad \Pe^{\mm}_{\HH}\ ,$$ 
where $-\tone_1$ is the tangent vector $-1$ at the point $1$. Denote their pull-backs to $\Pe^{\mm,X,\Y}_{\HH(S)}$ via the structural map $\pi: S \rightarrow \mathrm{Spec}\, \Q$ by the same symbol. Let 
$$\Pe^{\mm, +}_{\mathcal{M}_{0,4}}  \qquad \subset \qquad \Pe^{\mm, X, \Y}_{\HH(S)}$$
 denote the $\Or_S$-module  generated by the $\zetam(w), \Li^{\mm}_w(x)$ and $\Lef^{\mm}$.  Then $\Pe^{\mm,+}_{\mathcal{M}_{0,4}}$ is a ring of motivic periods which is stable under the monodromy action of the fundamental group of $S(\C)$ at the base-point $\tone_0$. 
 More generally, one can consider in a similar manner the  motivic periods given by iterated integrals on the moduli spaces of curves $\mathcal{M}_{0,n}$ whose periods are multiple polylogarithms in several variables. 
For example,  the space of functions $H$ used in the analytic bootstrap for the 6-point function in the planar limit of $N=4$ SYM theory  described in the introduction of \cite{Cosmic} is contained in the space  $\Pe^{\mm,+}_{\mathcal{M}_{0,6}}$.

\begin{rem}
These examples can be expressed geometrically in the spirit of \S\ref{sectExfamily}  using Beilinson's construction for the unipotent fundamental group, and  indeed  defines a variation of mixed Tate motives  in the sense of \cite{DeGo}, 4.12. \end{rem}

If $t$ is a rational point on $S$ (or even a tangential base point), we can define the evaluation $\ev_t$ at the point $t$.  In this 
situation, the de Rham coaction $(\ref{H(S)coaction})$   commutes with evaluation at $t$:
$$\Delta  \ev_t( \Li^{\mm}) =  ( \ev_t \otimes \ev_t)  \Delta \Li^{\mm}(x)\ . $$
The follows from the triviality of  the de Rham vector bundle $\Or(\pi_1^{dR}(S))$  and 
 equation $(\ref{eqn: coaction})$. The unipotent coaction can be computed by transposing a formula due to Goncharov \cite{Go} to  this setting. By way of  example, the de Rham coaction on the   motivic dilogarithm satisfies
 $$\Delta \Li_2^{\mm}(x) =\Li^{\mm}_2(x)  \otimes  (\Lef^{\dR})^2  +  \Li_{1}^{\mm}(x) \otimes \Lef^{\dR} \log^{\dR}(x)   + 1 \otimes \Li^{\dR}_2(x) $$
 and was discussed in further  detail in example $\ref{exdilog}.$  The unipotent coaction is obtained by replacing $\dR$ in the right-hand terms by $\uu$, and noting that $\Lef^{\uu}=1$.

    In conclusion,  the notion of families of  motivic periods  provides a  framework for motivic multiple polylogarithms which  includes constants and is sufficient for  many applications to high-energy physics. It  satisfies  the properties conjectured in \cite{Duhr}.

 \section{Glossary of non-standard terms} \label{sectGlossary}

 \vspace{0.05in}
\noindent  Types of  periods:
\vspace{0.02in}

      \emph{$\HH$-periods} : page       
       \hfill{  \emph{$\HH$-de Rham} : page \pageref{gloss: mixedTate}, \S \ref{gloss: mixedTate}}

      \emph{motivic} : page \pageref{gloss: motperiod}, \S \ref{gloss: motperiod}  
   \hfill{    \emph{effective} : page   \pageref{gloss: HdeRham}, \S  \ref{gloss: HdeRham} }

      \emph{mixed Tate} : page \pageref{gloss: mixedTate}, \S \ref{gloss: mixedTate}
  \hfill{    \emph{single-valued}: page  \pageref{gloss: singlevalued1}, \S \ref{gloss: singlevalued1}   }

     \emph{semi-simple}: page  \pageref{gloss: semisimple}, \S \ref{gloss: semisimple}
   \hfill{ \emph{unipotent}: page  \pageref{gloss: unipotentperiods}, \S  \ref{gloss: unipotentperiods}}
     
       \emph{primitive, stable}: page   \pageref{gloss: stable}, \S   \ref{gloss: stable}

 \vspace{0.05in}
 \noindent Invariants  of periods and auxiliary constructions:
\vspace{0.02in}

      \emph{conjugates}: page \pageref{gloss: conjugates}, \S \ref{gloss: conjugates}  
  \hfill{  \emph{rank}: page \pageref{gloss: rank}, \S \ref{gloss: rank} }

     \emph{Hodge numbers}: page   \pageref{gloss: Hodgenumbers}   , \S \ref{gloss: Hodgenumbers} 
   \hfill{   (\emph{polynomial}: page  \pageref{gloss: Hodgepoly}, \S   \ref{gloss: Hodgepoly}   %;  page \pageref{gloss: Hodgepoly2}, \S \ref{gloss: Hodgepoly2} )
   ) }

      \emph{period matrix}: page  \pageref{gloss: periodmatrix}, \S \ref{gloss: periodmatrix} %  ; page  \pageref{gloss: periodmatrix2}, \S \ref{gloss: periodmatrix2} 
   \hfill{   (\emph{single-valued}: page \pageref{gloss: singlevaluedperiodmatrix1}, \S \ref{gloss: singlevaluedperiodmatrix1}) }
     
     \emph{determinant}: page \pageref{gloss: determinant}, \S \ref{gloss: determinant} %; page  \pageref{gloss: det2}, \S  \ref{gloss: det2}
 \hfill{    \emph{unipotency degree}: page  \pageref{gloss: unipdegree}, \S \ref{gloss: unipdegree}               } 
    
      \emph{Transcendence dimension}: page \pageref{gloss: transdim}, \S \ref{gloss: transdim} %;  page \pageref{gloss: transdim2}, \S \ref{gloss: transdim2}
  \hfill{  \emph{Galois group}: page \pageref{gloss: GaloisGroup} , \S \ref{gloss: GaloisGroup} }

   \emph{Decomposition into primitives}: page  \pageref{gloss: decompprim}, \S\ref{gloss: decompprim}; page \pageref{gloss: decompprim2}, \S \ref{gloss: decompprim2} 
   
    \vspace{0.05in}
    \noindent Operations on periods
        \vspace{0.02in}
        
     \emph{antipode}: page  \pageref{gloss: U-antipode}, \S \ref{gloss: U-antipode}
   \hfill{
   \emph{projection map}: page \pageref{sectProjection}, \S \ref{sectProjection}  }

   \vspace{0.05in}
\noindent  Families of periods   
      \vspace{0.02in}
     
        \emph{monodromy homomorphism/action}: page \pageref{gloss: monodromyhomorphism}, \S \ref{gloss: monodromyhomorphism}
   , $(\ref{monodromyonPe})$, page  \pageref{gloss: monodromyaction}, \S  \ref{gloss: monodromyaction}

     \emph{constant map}: page \pageref{gloss: constantmap}, \S \ref{gloss: constantmap} 
    \hfill{          \emph{evaluation map}: page \pageref{gloss: evaluationmap}, \S \ref{gloss: evaluationmap} }

     \emph{Weight/Hodge  filtration}: page  \pageref{gloss: weightfiltration2}, \S \ref{gloss: weightfiltration2} 
  \hfill{    \emph{connection}: page \pageref{gloss: connection}, \S \ref{gloss: connection} }

     \emph{period homomorphism}: page  \pageref{gloss: periodmap}, \S \ref{gloss: periodmap}
     
 \vspace{0.05in}
\noindent Symbols
 \vspace{0.02in}

       \emph{symbol}: page \pageref{gloss: symbol}, \S\ref{gloss: symbol} 
    \hfill{    \emph{symbol at a point}: page \pageref{gloss: symbolatt}, \S\ref{gloss: symbolatt} }

     \emph{cohomological symbol}: page  \pageref{gloss: cohomologicalsymbol}, \S \ref{gloss: cohomologicalsymbol}
 \hfill{    \emph{length}: page \pageref{gloss: length}, \S\ref{gloss: length} } 
      
  \emph{differentially unipotent/unipotent monondromy}:  page \pageref{gloss: diffunip}, \ref{gloss: diffunip}

\end{document}